\definecolor{rouge}{rgb}{0.85,0.1,.4}
\definecolor{bleu}{rgb}{0.1,0.2,0.9}
\definecolor{violet}{rgb}{0.7,0,0.8}
\DeclareMathAlphabet{\mathpzc}{OT1}{pzc}{m}{it}
\theoremstyle{plain}
\newtheorem{theorem}{Theorem}[section]
\newtheorem{lemma}[theorem]{Lemma}
\newtheorem{theo}[theorem]{Theorem}
\newtheorem{coro}[theorem]{Corollary}
\newtheorem{prop}[theorem]{Proposition}
\theoremstyle{definition}
\theoremstyle{remark}
\newtheorem{rema}[theorem]{Remark}
\newtheorem{claim}[theorem]{Claim}
\def\g{{\mathfrak{g}}}
\def\k{{\Bbbk}}
\def\x{{\mathrm {x}}}
\def\rg{\ell}               
\def\r{{\rm reg}}
\def\rs{{\rm reg,ss}}          
\def\mycom#1#2{\genfrac{}{}{0pt}{}{#1}{#2}}
\def\poie#1#2#3#4#5#6#7#8#9{\def\un{#5#6#7#8#9}\def\deux{#6#7#8#9}\def\trois{#2#4#8#9}
\def\quatre{#8#9}\def\cinq{#5#6#7}\def\six{#6#7}\def\sept{#2#4}
\ifx\un\empty {#1}_{#2}{#3 \hskip 0.15em}{#1}_{#4} \else \ifx\deux\empty 
{#5}(#1_{#2}){#3 \hskip 0.15em}{#5}(#1_{#4})
\else \ifx\trois\empty {#5}_{#6}(#1){#3 \hskip 0.15em}{#5}_{#7}(#1) 
\else \ifx\quatre\empty {#5}_{#6}(#1_#2){#3 \hskip 0.15em}{#5}_{#7}(#1_#4) 
\else \ifx\cinq\empty {#1}_{#2}^{#8}{#3 \hskip 0.15em}#1_#4^{#9} 
\else \ifx\six\empty {#5}(#1_{#2}^{#8}){#3 \hskip 0.15em}{#5}(#1_{#4}^{#9}) 
\else \ifx\sept\empty {#5}_{#6}^{#8}(#1){#3 \hskip 0.15em}{#5}_{#7}^{#9}(#1) \else
{#5}_{#6}(#1_{#2}^{#8})^{#9}{#3 \hskip 0.15em}{#5}_{#7}(#1_{#4}^{#8})^{#9} 
\fi \fi \fi \fi \fi \fi \fi}
\def\poi#1#2#3#4#5#6#7{\poie {#1}{#2}{#3}{#4}{#5}{#6}{#7}{}{}}
\def\rond{\raisebox{.3mm}{\scriptsize$\circ$}}
\def\tens{\raisebox{.3mm}{\scriptsize$\otimes$}}
\def\dv#1#2{\langle {#1},{#2}\rangle}
\def\tk#1#2{{#2}\otimes _{#1}}
\def\ec#1#2#3#4#5{\def\un{#3#4#5}\def\deux{#3#5}\def\trois{#3}
\def\four{#2#4#5}\def\five{#2#5}\def\six{#2}\def\seven{#3#4}
\def\eight{#2#4} \def\nine{#2#3#4}
\ifx\nine\empty {\rm #1}_{#5} \else
\ifx\un\empty {\rm #1}({\goth #2}) \else
\ifx\deux\empty {\rm #1}({\goth #2}_{#4}) \else
\ifx\trois\empty {\rm #1}_{#5}({\goth #2}_{#4}) \else
\ifx\four\empty {\rm #1}(#3) \else
\ifx\five\empty {\rm #1}(#3_{#4}) \else
\ifx\six\empty {\rm #1}_{#5}(#3_{#4}) \else
\ifx\seven\empty {\rm #1}_{#5} ({\goth#2})\else
\ifx\eight\empty {\rm #1}_{#5}({#3})
\fi \fi \fi \fi \fi \fi \fi \fi \fi}
\def\hec#1#2#3#4#5{\def\un{#3#4#5}\def\deux{#3#5}\def\trois{#3}
\def\four{#2#4#5}\def\five{#2#5}\def\six{#2}\def\seven{#3#4}
\def\eight{#2#4} \def\nine{#2#3#4}
\ifx\nine\empty \hat{{\rm #1}}_{#5} \else
\ifx\un\empty \hat{{\rm #1}}({\goth #2}) \else
\ifx\deux\empty \hat{{\rm #1}}({\goth #2}_{#4}) \else
\ifx\trois\empty \hat{{\rm #1}}_{#5}({\goth #2}_{#4}) \else
\ifx\four\empty \hat{{\rm #1}}(#3) \else
\ifx\five\empty \hat{{\rm #1}}(#3_{#4}) \else
\ifx\six\empty \hat{{\rm #1}}_{#5}(#3_{#4}) \else
\ifx\seven\empty \hat{{\rm #1}}_{#5} ({\goth#2})  \else
\ifx\eight\empty \hat{{\rm #1}}_{#5}({#3})
\fi \fi \fi \fi \fi \fi \fi \fi \fi}
\def\e#1#2{\ec {#1}#2{}{}{}}
\def\es#1#2{\ec {#1}{}{#2}{}{}}
\def\ai#1#2#3{\def\deux{#2#3} \def\trois{#3} \def\quatre{#2} 
\ifx\deux\empty \es S{{\goth #1}}^{{\goth #1}} \else
\ifx\trois\empty \es S{{\goth #1}^{#2}}^{{\goth #1}^{#2}} \else
\ifx\quatre\empty \es S{{\goth #1}_{#3}}^{{\goth #1}_{#3}} \else
\es S{{\goth #1}_{#3}^{#2}}^{{\goth #1}_{#3}^{#2}} \fi \fi \fi}
\def\Bbb{\mathbb}
\def\goth{\mathfrak}
\def\cal{\mathcal}
\def\gi#1#2#3#4{\def\trois{#3#4} \def\quatre{#4}\def\cinq{#3}\ifx\trois\empty 
{\rm i}_{#1,{\goth #2}}
\else \ifx\quatre\empty {\rm i}_{#1_{#3},{\goth #2}} \else\ifx\cinq\empty 
{\rm i}_{#1,{\goth #2}_{#4}} \else {\rm i}_{#1_{#3},{\goth #2}_{#4}} \fi \fi \fi}
\def\j#1#2{\def\deux{#2} \ifx\deux\empty {\rm rk}\hskip .125em{{\goth #1}} \else 
{\rm rk}\hskip .125em{{\goth #1}_{#2}} \fi}
\def\aj#1#2{\def\deux{#2} \ifx\deux\empty {\rm j}_{{\goth #1}} \else 
{\rm j}_{{\goth #1}_{#2}} \fi}
\def\an#1#2{\def\deux{#2} \ifx\deux\empty {\cal O}_{#1} \else {\cal O}_{#1,#2} \fi }
\def\han#1#2{\def\deux{#2} \ifx\deux\empty {\hat{{\cal O}}}_{#1} \else 
{\hat{{\cal O}}}_{#1,#2} \fi }
\def\gg#1#2{{\goth #1}_{#2}\times {\goth #1}_{#2}}
\def\sgg#1#2{\es S{\gg {#1}{#2}}}
\def\dim{{\rm dim}\hskip .125em}
\def\dd{{\rm d}}
\def\ad{{\rm ad}\hskip .1em}
\def\j#1#2{{{\mathrm {rk}}}_{{\goth #1}_{#2}}}
\def\pr#1{{\rm pr}_{#1}}
\def\n{{\rm n}}
\def\s{{\rm s}}
\def\u{{\rm u}}
\def\b#1#2{{\mathrm {b}}_{{\mathfrak{#1}}_{#2}}}
\def\sqx#1#2{{#1}\times _{B}{#2}}
\def\sqxx#1#2{G^{#1}\times _{B^{#1}}{#2}}
\title
[Commuting variety]
{On the Commuting variety of a reductive Lie algebra and other related varieties.}
\author
[J-Y Charbonnel]{Jean-Yves Charbonnel}
\address{Jean-Yves Charbonnel, Universit\'e Paris Diderot - CNRS \\
Institut de Math\'ematiques de Jussieu - Paris Rive Gauche\\
UMR 7586 \\ Groupes, repr\'esentations et g\'eom\'etrie \\
B\^atiment Sophie Germain \\ Case 7012 \\ 
75205 Paris Cedex 13, France}
\email{jean-yves.charbonne@imj-prg.fr}
\author
[M. Zaiter]{Mouchira Zaiter}
\address{Mouchira Zaiter, Universit\'e Libanaise Al- Hadath\\
Facult\'e des sciences, branche I\\
Beyrouth Liban}
\email{zaiter.mouchira@hotmail.fr}
\subjclass
{14A10, 14L17, 22E20, 22E46 }
\keywords
{polynomial algebra, commuting variety, desingularization, Cohen-Macaulay,
rational singularities}
\date\today
\begin{document}

\large

\begin{abstract}
The nilpotent cone of a reductive Lie algebra has a desingularization given by the 
cotangent bundle of the flag variety. Analogously, the nullcone of a cartesian 
power of the algebra has a desingularization given by a vector bundle over the 
flag variety. As for the nullcone, the subvariety of elements whose components 
are in a same Borel subalgebra, has a desingularization given by a vector bundle over
the flag variety. In this note, we study geometrical properties of these varieties. For 
the study of the commuting variety, the analogous variety to the flag variety is 
the closure in the Grassmannian of the set of Cartan subalgebras. So some 
properties of this variety are given. In particular, it is smooth in codimension $1$.
We introduce the generalized isospectral commuting varieties and give some properties. 
Furthermore, desingularizations of these varieties are given by fiber bundles over a 
desingularization of the closure in the grassmannian of the set of Cartan subalgebras 
contained in a given Borel subalgebra.  
\end{abstract}

\maketitle

\setcounter{tocdepth}{1}
\tableofcontents

\section{Introduction} \label{int}
In this note, the base field $\k$ is algebraically closed of characteristic $0$, 
${\goth g}$ is a reductive Lie algebra of finite dimension, $\rg$ is its rank,
$\dim {\goth g}=\rg + 2n$ and $G$ is its adjoint group. As usual, ${\goth b}$ denotes a 
Borel subalgebra of ${\goth g}$, ${\goth h}$ a Cartan subalgebra of ${\goth g}$, 
contained in ${\goth b}$, and $B$ the normalizer of ${\goth b}$ in $G$.

\subsection{Main results.} \label{int1}
Let ${\cal B}^{(k)}$ be the subset of elements 
$(\poi x1{,\ldots,}{k}{}{}{})$ of ${\goth g}^{k}$ such that $\poi x1{,\ldots,}{k}{}{}{}$ 
are in a same Borel subalgebra of ${\goth g}$. This subset of ${\goth g}^{k}$ is closed 
and contains two interesting subsets: the nullcone of ${\goth g}^{k}$ denoted by 
${\cal N}^{(k)}$ and the generalized commuting variety of ${\goth g}$ that is the  
closure in ${\goth g}^{k}$ of the subset of elements $(\poi x1{,\ldots,}{k}{}{}{})$ such 
that $\poi x1{,\ldots,}{k}{}{}{}$ are in a same Cartan subalgebra of ${\goth g}$. We 
denote it by ${\cal C}^{(k)}$. According to~\cite[Ch.2, \S 1, Theorem]{Mu0}, for 
$(\poi x1{,\ldots,}{k}{}{}{})$ in ${\cal B}^{(k)}$, $(\poi x1{,\ldots,}{k}{}{}{})$ is in 
${\cal N}^{(k)}$ if and only if $\poi x1{,\ldots,}{k}{}{}{}$ are nilpotent. According to 
a Richardson Theorem \cite{Ric}, ${\cal C}^{(2)}$ is the commuting variety of 
${\goth g}$. 

There is a natural projective morphism 
$\xymatrix{\sqx G{{\goth b}^{k}}\ar[r] & {\cal B}^{(k)}}$.
For $k=1$, this morphism is not birational but for $k\geq 2$, it is birational (see
Lemma~\ref{lbo1} and Lemma~\ref{lbo2}). 
Furthermore, denoting by ${\cal X}$ the subvariety of elements $(x,y)$ of 
${\goth g}\times {\goth h}$ such that $y$ is in the closure of the orbit of $x$ under 
$G$, the morphism 
$$ \xymatrix{G\times {\goth b} \ar[r] & {\cal X}}, \qquad 
(g,x) \longmapsto (g.x,\overline{x})$$
with $\overline{x}$ the projection of $x$ onto ${\goth h}$ defines through the quotient
a projective and birational morphism $\xymatrix{\sqx G{{\goth b}}\ar[r]& {\cal X}}$
and ${\goth g}$ is the categorical quotient of ${\cal X}$ under the action
of $W({\cal R})$ on the factor ${\goth h}$, with $W({\cal R})$ the Weyl group of 
${\goth g}$. For $k\geq 2$, the inverse image of ${\cal B}^{(k)}$ by the canonical 
projection from 
${\cal X}^{k}$ to ${\goth g}^{k}$ is not irreducible but the canonical action of 
$W({\cal R})^{k}$ on ${\cal X}^{k}$ induces a simply transitive action on the set of its 
irreducible components. Setting:
$${\cal B}_{\x}^{(k)} := 
\{((g(x_{1}),\overline{x_{1}}),\ldots,(g(x_{k}),\overline{x_{k}}))
\; \vert \; (g,\poi x1{,\ldots,}{k}{}{}{}) \in G\times {\goth b}^{k} \},$$ 
${\cal B}_{\x}^{(k)}$ is an irreducible component of the inverse image of ${\cal B}^{(k)}$
in ${\cal X}^{k}$ (see Corollary~\ref{cbo3}) and we have a commutative diagram
$$ \xymatrix{{\sqx G{{\goth b}^{k}}} \ar[rr] \ar[rd]_{\gamma} && 
{\cal B}_{\x}^{(k)} \ar[ld]^{\eta }\\  & {\cal B}^{(k)} &  }  $$
with $\eta $ the restriction to ${\cal B}_{\x}^{(k)}$ of the canonical projection
$\varpi $ from ${\cal X}^{k}$ to ${\goth g}^{k}$. The first main theorem of this note is
the following theorem:

\begin{theo}\label{tint}
{\rm (i)} The variety ${\cal N}^{(k)}$ is normal if and only if so is
${\cal B}_{\x}^{(k)}$.

{\rm (ii)} The variety ${\cal N}^{(k)}$ is Cohen-Macaulay if and only if so is
${\cal B}_{\x}^{(k)}$.

{\rm (iii)} The variety ${\cal N}^{(k)}$ has rational singularities if and only if 
it is Cohen-Macaulay.

{\rm (iv)} The variety ${\cal B}_{\x}^{(k)}$ has rational singularities if and only if 
it is Cohen-Macaulay.
 
{\rm (v)} The algebra $\k[{\cal B}_{\x}^{(k)}]$ is a free extension of 
$\k[{\cal B}_{\x}^{(k)}]^{G}$ which identifies with $\es S{{\goth h}^{k}}$.

{\rm (vi)} The algebra $\k[{\cal B}^{(k)}]^{G}$ identifies with
$\es S{{\goth h}^{k}}^{W({\cal R})}$ with respect to the diagonal action of $W({\cal R})$
in ${\goth h}^{k}$.

{\rm (vii)} The ideal $\k[{\cal B}^{(k)}]\k[{\cal B}^{(k)}]^{G}_{+}$ of 
$\k[{\cal B}^{(k)}]$ is strictly contained in the ideal of definition of ${\cal N}^{(k)}$
in $\k[{\cal B}^{(k)}]$.
\end{theo}

According to K. Vilonen and T. Xue~\cite{VX}, ${\cal N}^{(k)}$ and 
${\cal B}_{\x}^{(k)}$ are not normal in general. In the study of the generalized 
commuting variety, the closure in $\ec {Gr}g{}{}{\rg}$ of the orbit of 
${\goth h}$ under the action of $G$ plays an analogous role to the flag variety. Denoting
by $X$ the closure in $\ec {Gr}b{}{}{\rg}$ of the orbit of ${\goth h}$ under $B$, $G.X$ 
is the closure of the orbit of $G.{\goth h}$ and we have the following second main result:

\begin{theo}\label{t2int}
Let $X'$ be the set of centralizers of regular elements of ${\goth b}$ whose semisimple
components is regular or subregular. 

{\rm (i)} All element of $X$ is a commutative algebraic subalgebra of ${\goth g}$.

{\rm (ii)} For $x$ in ${\goth g}$, the set of elements of $G.X$ containing $x$ has 
dimension at most $\dim {\goth g}^{x}-\rg$.

{\rm (iii)} The sets $X\setminus B.{\goth h}$ and $G.X\setminus G.{\goth h}$ are 
equidimensional of dimension $n-1$ and $2n-1$ respectively.

{\rm (iv)} The sets $X'$ and $G.X'$ are smooth big open subsets of $X$ and $G.X$
respectively.
\end{theo}

This is a main result with respect to the generalized commuting varieties as it will be 
shown in the next two notes. We recall that an element of ${\goth g}$ is subregular if 
its centralizer in ${\goth g}$ has dimension $\rg +2$. Let ${\goth X}_{0,k}$ be the 
closure in ${\goth b}^{k}$ of $B.{\goth h}^{k}$ and let $\Gamma $ be a desingularization 
of $X$ in the category of $B$-varieties. Let ${\cal E}_{0}$ be the tautological bundle 
over $X$ and set:
$$ {\cal E}_{\s} := {\cal E}_{0}\times _{X}\Gamma , \qquad {\cal E}_{\s}^{(k)} := 
\underbrace{ {\cal E}_{\s} \times _{\Gamma } \cdots \times _{\Gamma } {\cal E}_{\s}}
_{k \hbox{ \footnotesize{factors}}} .$$
Then ${\cal E}_{\s}^{(k)}$ is a desingularization of ${\goth X}_{0,k}$. 
Set: ${\cal C}_{\x}^{(k)}:=\eta ^{-1}({\cal C}^{(k)})$. The following theorem is the 
third main result of this note:

\begin{theo}\label{t3int}
The variety ${\cal C}_{\x}^{(k)}$ is irreducible and $\sqx G{{\cal E}_{\s}^{(k)}}$ is
a desingularization of ${\cal C}_{\x}^{(k)}$.
\end{theo}

It will be proved in a next note that the normalizations of ${\goth X}_{0,k}$, 
${\cal C}^{(k)}$ and ${\cal C}_{\x}^{(k)}$ are Gorenstein with rational singularities.
As a matter of fact, as a consequence, ${\goth X}_{0,k}$ is normal.

{\bf Acknowledgments} We are grateful to K. Vilonen and T. Xue for pointing out 
a negative result about the nullcone in a previous version. We are also grateful to
the referee for his remarks and suggestions.

\subsection{Notations}\label{int2}
$\bullet$ An algebraic variety is a reduced scheme over $\k$ of finite type.

$\bullet$ For $V$ a vector space, its dual is denoted by $V^{*}$ and the augmentation 
ideal of its symmetric algebra $\es SV$ is denoted by $\ec S{}V{}+$. For $A$ a graded
algebra over ${\Bbb N}$, $A_{+}$ is the ideal generated by the homogeneous elements of
positive degree.

$\bullet$
All topological terms refer to the Zariski topology. If $Y$ is a subset of a topological
space $X$, denote by $\overline{Y}$ the closure of $Y$ in $X$. For $Y$ an open subset
of the algebraic variety $X$, $Y$ is called {\it a big open subset} if the codimension
of $X\setminus Y$ in $X$ is at least $2$. For $Y$ a closed subset of an algebraic 
variety $X$, its dimension is the biggest dimension of its irreducible components and its
codimension in $X$ is the smallest codimension in $X$ of its irreducible components. For 
$X$ an algebraic variety, $\an X{}$ is its structural sheaf, $\k[X]$ is the algebra of 
regular functions on $X$ and $\k(X)$ is the field of rational functions on $X$ when $X$ 
is irreducible. 

$\bullet$
For $X$ an algebraic variety and for ${\cal M}$ a sheaf on $X$, $\Gamma (V,{\cal M})$
is the space of local sections of ${\cal M}$ over the open subset $V$ of $X$. For 
$i$ a nonnegative integer, ${\mathrm {H}}^{i}(X,{\cal M})$ is the $i$-th group of 
cohomology of ${\cal M}$. For example, 
${\mathrm {H}}^{0}(X,{\cal M})=\Gamma (X,{\cal M})$.

\begin{lemma}\label{lint}~\cite[Corollaire 5.4.3]{Gro}
Let $X$ be an irreducible affine algebraic variety and let $Y$ be a desingularization of 
$X$. Then ${\mathrm {H}}^{0}(Y,\an Y{})$ is the integral closure of $\k[X]$ in its 
fraction field. 
\end{lemma}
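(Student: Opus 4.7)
\medskip

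\noindent\textbf{Proof proposal for Lemma~\ref{lint}.}

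The plan is to exhibit a chain of inclusions $\k[X]\subset \mathrm{H}^{0}(Y,\an Y{})\subset \k(X)$ and then check the two containments characterizing the integral closure. Let $\pi \colon Y\rightarrow X$ denote the desingularization morphism; by assumption $\pi $ is proper and birational, and $Y$ is smooth, hence normal. Since $\pi $ is birational and $X$ is irreducible, $Y$ is irreducible and $\k(Y)=\k(X)$, and $\pi ^{*}$ realizes $\k[X]$ as a subring of $\mathrm{H}^{0}(Y,\an Y{})$, which itself sits inside $\k(Y)=\k(X)$.

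First I would prove that every $f\in \mathrm{H}^{0}(Y,\an Y{})$ is integral over $\k[X]$. Because $\pi $ is proper, the direct image $\pi _{*}\an Y{}$ is a coherent $\an X{}$-module; since $X$ is affine, $\mathrm{H}^{0}(X,\pi _{*}\an Y{})=\mathrm{H}^{0}(Y,\an Y{})$ is therefore a finitely generated $\k[X]$-module. Any element of a module-finite ring extension satisfies a monic polynomial over the base ring (standard determinant-trick argument), so every $f\in \mathrm{H}^{0}(Y,\an Y{})\subset \k(X)$ is integral over $\k[X]$.

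Next I would prove the converse: every $f\in \k(X)$ integral over $\k[X]$ lies in $\mathrm{H}^{0}(Y,\an Y{})$. Pick such an $f$ and any $y\in Y$; set $x:=\pi (y)$. The local homomorphism $\pi ^{*}\colon \an X{x}\rightarrow \an Y{y}$ is injective with both rings sharing the fraction field $\k(X)=\k(Y)$. A monic relation for $f$ with coefficients in $\k[X]\subset \an X{x}$ transports via $\pi ^{*}$ into a monic relation for $f$ with coefficients in $\an Y{y}$. Because $Y$ is smooth, $\an Y{y}$ is a regular local ring, hence integrally closed in its fraction field, and therefore $f\in \an Y{y}$. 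As this holds at every $y\in Y$, $f$ is a global section of $\an Y{}$, which concludes the proof.

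The only nontrivial input is the coherence of $\pi _{*}\an Y{}$, i.e.\ the finiteness of $\mathrm{H}^{0}(Y,\an Y{})$ as a $\k[X]$-module, which is exactly where the properness of the desingularization enters; I expect this to be the single delicate point, everything else being formal commutative algebra plus the normality of smooth varieties.
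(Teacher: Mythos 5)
Your proof is correct. Note, however, that the paper gives no argument of its own for this lemma; it is stated with a citation to EGA~II, Corollaire~5.4.3, so there is no internal proof to compare against. Your two-inclusion argument is the standard one that underlies the cited result: properness of the desingularization morphism $\pi$ gives coherence of $\pi_{*}\an Y{}$, hence $\mathrm{H}^{0}(Y,\an Y{})$ is a finite $\k[X]$-module sitting inside $\k(Y)=\k(X)$, and finiteness forces integrality by the determinant trick; conversely, an element of $\k(X)$ integral over $\k[X]$ is integral over each local ring $\an {Y}{y}$, which is regular (hence integrally closed) by smoothness of $Y$, and since $Y$ is integral, lying in every $\an {Y}{y}$ is the same as being a global section. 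Two small remarks: you only need $Y$ normal rather than smooth for the second step, which is exactly the generality in which the EGA statement is phrased; and, as you observe, the coherence of $\pi_{*}\an Y{}$ is the one genuinely nontrivial ingredient — in this paper all desingularizations are projective, so one may invoke the easier projective case of Grothendieck's finiteness theorem rather than the full proper case.
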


$\bullet$ For $K$ a group and for $E$ a set with a group action of $K$, $E^{K}$ is the
set of invariant elements of $E$ under $K$. The following lemma is straightforward
and will be used in the proof of Corollary~\ref{c4bo5}.

\begin{lemma}\label{l2int}
Let $A$ be an algebra generated by the subalgebras $A_{1}$ and $A_{2}$. Let $K$ be a group
acting on $A_{2}$. Suppose that the following conditions are 
verified:
\begin{itemize}
\item [{\rm (1)}] $A_{1}\cap A_{2}$ is contained in $A_{2}^{K}$,  
\item [{\rm (2)}] $A$ is a free $A_{2}$-module having a basis contained in $A_{1}$,
\item [{\rm (3)}] $A_{1}$ is a free $A_{1}\cap A_{2}$-module having the same basis.
\end{itemize}
Then there exists a unique group action of $K$ on the algebra $A$ extending 
the action of $K$ on $A_{2}$ and fixing all the elements of $A_{1}$. Moreover, if
$A_{1}\cap A_{2}=A_{2}^{K}$ then $A^{K} = A_{1}$. 
\end{lemma}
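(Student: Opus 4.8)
The plan is to construct the $K$-action on $A$ directly using the module structure, verify it is well-defined and multiplicative, and then handle uniqueness and the final statement about $A^{K}$. First I would fix a basis $(e_{i})_{i\in I}$ of $A$ as a free $A_{2}$-module with each $e_{i}\in A_{1}$, which by hypothesis (3) is simultaneously a basis of $A_{1}$ as a free module over $A_{1}\cap A_{2}$. Every $a\in A$ then has a unique expression $a=\sum_{i} a_{i}e_{i}$ with $a_{i}\in A_{2}$, and I define, for $g\in K$, the map $g\cdot a:=\sum_{i} g(a_{i})\,e_{i}$, where $g(a_{i})$ is the given action on $A_{2}$. This is manifestly a well-defined $\k$-linear bijection (its inverse is given by $g^{-1}$), it restricts to the given action on $A_{2}$ (take $a\in A_{2}$; writing $A_{2}\ni a = \sum a_i e_i$ forces, by uniqueness over $A_2$ and the fact that some $e_{i_0}$ may be taken to be $1$—or more carefully, by expanding and re-collecting—agreement; I would phrase this by choosing the basis so that $e_{i_{0}}=1$ for some index, which is legitimate since $1\in A_{1}$), and it fixes each $e_{i}$, hence fixes $A_{1}$: indeed for $a\in A_{1}$ we have $a=\sum_{i} c_{i} e_{i}$ with $c_{i}\in A_{1}\cap A_{2}\subseteq A_{2}^{K}$ by hypothesis (1), so $g\cdot a=\sum_{i} g(c_{i}) e_{i}=\sum_{i} c_{i} e_{i}=a$.

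The main point to check is that $g\cdot(-)$ is an algebra homomorphism, i.e. $g\cdot(ab)=(g\cdot a)(g\cdot b)$. The key observation is that $A$ is generated as an algebra by $A_{1}$ and $A_{2}$, so it suffices to verify multiplicativity on generators. For $a,b\in A_{2}$ this is the hypothesis that $K$ acts on $A_{2}$ by algebra automorphisms; for $a,b\in A_{1}$ both sides equal $ab$ since the action fixes $A_{1}$ pointwise (using that $A_{1}$ is a subalgebra, so $ab\in A_{1}$); for $a\in A_{2}$, $b\in A_{1}$ I write $b=\sum_{i}c_{i}e_{i}$ with $c_{i}\in A_{1}\cap A_{2}$, so $ab=\sum_{i}(ac_{i})e_{i}$ is the $A_{2}$-module expansion of $ab$, whence $g\cdot(ab)=\sum_{i}g(ac_{i})e_{i}=\sum_{i}g(a)g(c_{i})e_{i}=g(a)\sum_{i}c_{i}e_{i}=(g\cdot a)(g\cdot b)$, using again that $c_{i}\in A_{2}^{K}$. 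A short induction on word length in the generators then upgrades this to all of $A$: if $g\cdot(uv)=(g\cdot u)(g\cdot v)$ and $g\cdot(uw)=(g\cdot u)(g\cdot w)$ for the relevant subproducts, associativity and linearity of $g\cdot(-)$ give the general case. Finally, $(gh)\cdot a=g\cdot(h\cdot a)$ is immediate from the definition since $(gh)(a_{i})=g(h(a_{i}))$ in $A_{2}$, so we do get a genuine group action.

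For uniqueness: any group action of $K$ on $A$ extending the one on $A_{2}$ and fixing $A_{1}$ must, on $a=\sum_{i}a_{i}e_{i}$, send $a$ to $\sum_{i}g(a_{i})\,(g\cdot e_{i})=\sum_{i}g(a_{i})\,e_{i}$ by multiplicativity, so it coincides with the action constructed above. For the last assertion, suppose $A_{1}\cap A_{2}=A_{2}^{K}$. The inclusion $A_{1}\subseteq A^{K}$ is already established. Conversely, if $a=\sum_{i}a_{i}e_{i}\in A^{K}$ then $\sum_{i}g(a_{i})e_{i}=\sum_{i}a_{i}e_{i}$ for all $g$, and by uniqueness of the $A_{2}$-module expansion each $a_{i}$ is fixed by $K$, i.e. $a_{i}\in A_{2}^{K}=A_{1}\cap A_{2}\subseteq A_{1}$; since each $e_{i}\in A_{1}$ and $A_{1}$ is a subalgebra, $a=\sum_{i}a_{i}e_{i}\in A_{1}$. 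Hence $A^{K}=A_{1}$. The only step requiring genuine care is the multiplicativity check, and there the crucial input is hypothesis (1), which is exactly what makes the "mixed" products $a\in A_{2}$, $b\in A_{1}$ behave correctly; everything else is bookkeeping with the module bases. \fin
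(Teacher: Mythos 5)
Your construction is the natural one and the argument is essentially correct; the paper declares this lemma straightforward and gives no proof, so there is nothing to compare against. The one step that needs repair is the parenthetical claim that the basis $(e_{i})$ of $A$ over $A_{2}$ may be rechosen so that some $e_{i_{0}}=1$. That $1\in A_{1}$ does not by itself make $1$ unimodular in the free $A_{2}$-module $A$, so such a change of basis is not always available. Your hinted alternative (``expanding and re-collecting'') is the right fix and should be spelled out: write $1=\sum_{i}c_{i}e_{i}$ with $c_{i}\in A_{1}\cap A_{2}\subseteq A_{2}^{K}$, using hypotheses (3) and (1); then for $a\in A_{2}$ the unique $A_{2}$-module expansion of $a=a\cdot 1$ is $\sum_{i}(ac_{i})e_{i}$, so $g\cdot a=\sum_{i}g(ac_{i})e_{i}=g(a)\sum_{i}c_{i}e_{i}=g(a)$, which is the needed restriction to $A_{2}$ without any change of basis.

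Once this is in place, the multiplicativity check also organizes itself more cleanly than by induction on word length. From the definition one has $g\cdot(ab)=g(a)(g\cdot b)$ for all $a\in A_{2}$ and $b\in A$ (expand $b$ and pull $a$ into the coefficients), and for $a\in A_{1}$ and $b\in A$ one writes $a=\sum_{i}c_{i}e_{i}$ with $c_{i}\in A_{2}^{K}$, expands the products $e_{i}e_{j}\in A_{1}$ again with coefficients in $A_{2}^{K}$, and gets $g\cdot(ab)=a(g\cdot b)$. A general $a=\sum_{i}a_{i}e_{i}$ with $a_{i}\in A_{2}$, $e_{i}\in A_{1}$ then gives, by linearity, $g\cdot(ab)=\sum_{i}g(a_{i})\,e_{i}\,(g\cdot b)=(g\cdot a)(g\cdot b)$. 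Note that both your mixed-case computation and this one silently use that $A$ is commutative (your three cases omit $a\in A_{1}$, $b\in A_{2}$); this is harmless here since the lemma is invoked only for commutative rings of regular functions in Corollary~\ref{cbo4}, but it is worth stating the hypothesis. The uniqueness argument and the identification $A^{K}=A_{1}$ under the extra hypothesis are fine as written.
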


$\bullet$
For $E$ a finite set, its cardinality is denoted by $\vert E \vert$. For $E$ a vector 
space and for $x=(\poi x1{,\ldots,}{k}{}{}{})$ in $E^{k}$, $E_{x}$ is the subspace of $E$
generated by $\poi x1{,\ldots,}{k}{}{}{}$. Moreover, there is a canonical action of 
${\mathrm {GL}}_{k}(\k)$ in $E^{k}$ given by:
$$ (a_{i,j},1\leq i,j\leq k).(\poi x1{,\ldots,}{k}{}{}{}) := 
(\sum_{j=1}^{k} a_{i,j}x_{j},i=1,\ldots,k)$$ 
In particular, the diagonal action of $G$ in ${\goth g}^{k}$ commutes with the 
action of ${\mathrm {GL}}_{k}(\k)$.

$\bullet$ For ${\goth a}$ reductive Lie algebra, its rank is denoted by $\j a{}$ and the
dimension of its Borel subalgebras is denoted by $\b a{}$. In particular, 
$\dim {\goth a}=2\b a{} - \j a{}$.

$\bullet$
If $E$ is a subset of a vector space $V$, denote by span($E$) the vector subspace of
$V$ generated by $E$. The grassmanian of all $d$-dimensional subspaces of $V$ is denoted
by Gr$_d(V)$. By definition, a {\it cone} of $V$ is a subset of $V$ invariant under the 
natural action of $\k^{*}:=\k\setminus \{0\}$ and a \emph{multicone} of $V^{k}$ is a
subset of $V^{k}$ invariant under the natural action of $(\k^{*})^{k}$ on $V^{k}$.

\begin{lemma}\label{l3int}
Let $X$ be an open cone of $V$ and let $S$ be a closed multicone of $X\times V^{k-1}$. 
Denoting by $S'$ the image of $S$ by the first projection, 
$S'\times \{0\}=S\cap (X\times \{0\})$. In particular, $S'$ is closed in $X$. 
\end{lemma}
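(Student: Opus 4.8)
The plan is to show the two inclusions $S_1\times\{0\}\subseteq S\cap(X\times\{0\})$ and $S\cap(X\times\{0\})\subseteq S_1\times\{0\}$, where the second one is immediate from the definition of $S_1$ as the first projection of $S$ (any point of $S$ of the form $(x,0,\dots,0)$ has $x\in S_1$). The content is entirely in the first inclusion: given $x\in S_1$, we must produce a witness showing $(x,0,\dots,0)\in S$, and the natural candidate is to take a point $(x,v_2,\dots,v_k)\in S$ and scale the last $k-1$ coordinates to $0$ using the $(\k^*)^k$-action, then pass to a limit.

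First I would fix $x\in S_1$ and choose $(x,v_2,\dots,v_k)\in S$. For $t\in\k^*$, the element $(1,t,t,\dots,t)\in(\k^*)^k$ acts on this point to give $(x,tv_2,\dots,tv_k)$, which lies in $S$ because $S$ is a multicone. Now let $t\to 0$: the family $(x,tv_2,\dots,tv_k)$ extends to a morphism $\k\to X\times V^{k-1}$ (note the first coordinate stays equal to $x\in X$, so the curve genuinely stays inside the \emph{open} cone $X$ times $V^{k-1}$, which is where $S$ is closed), and its value at $t=0$ is $(x,0,\dots,0)$. Since $S$ is closed in $X\times V^{k-1}$ and contains $(x,tv_2,\dots,tv_k)$ for all $t\neq 0$, it contains the limit $(x,0,\dots,0)$. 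Hence $(x,0,\dots,0)\in S\cap(X\times\{0\})$, giving the inclusion $S_1\times\{0\}\subseteq S\cap(X\times\{0\})$.

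Combining the two inclusions yields $S_1\times\{0\}=S\cap(X\times\{0\})$. For the final assertion, $X\times\{0\}$ is closed in $X\times V^{k-1}$, so $S\cap(X\times\{0\})$ is closed in $X\times\{0\}$, and under the isomorphism $X\times\{0\}\simeq X$ this says exactly that $S_1$ is closed in $X$.

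The only subtle point — and the reason the hypothesis that $X$ is \emph{open} in $V$ matters — is that $S$ is only assumed closed in $X\times V^{k-1}$, not in $V^k$; so I must make sure the limiting argument takes place inside $X\times V^{k-1}$. This is automatic here because the degeneration $(1,t,\dots,t)\cdot$ leaves the first coordinate untouched at the fixed point $x\in X$, so no escape from $X$ can occur. I do not expect any further obstacle.
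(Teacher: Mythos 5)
Your proof is correct and follows essentially the same route as the paper: both use the $(\k^{*})^{k}$-action $(1,t,\ldots,t)$ to scale the last $k-1$ coordinates of a point of $S$ toward zero, then invoke closedness of $S$ in $X\times V^{k-1}$ to conclude that the limit $(x,0,\ldots,0)$ lies in $S$. Your remark that the first coordinate stays fixed at $x\in X$, so the limiting argument never leaves $X\times V^{k-1}$, is precisely the point the paper leaves implicit.
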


\begin{proof}
For $x$ in $X$, $x$ is in $S'$ if and only if for some $(\poi v2{,\ldots,}{k}{}{}{})$ 
in $V^{k-1}$, $(x,\poi {tv}2{,\ldots,}{k}{}{}{})$ is in $S$ for all $t$ in $\k$ since $S$
is a closed multicone of $X\times V^{k-1}$, whence the lemma.
\end{proof}

$\bullet$
The dual ${\goth g}^{*}$ of ${\goth g}$ identifies with ${\goth g}$ by a given non 
degenerate, invariant, symmetric bilinear form $\dv ..$ on $\gg g{}$
extending the Killing form of $[{\goth g},{\goth g}]$. 

$\bullet$
Let ${\cal R}$ be the root system of ${\goth h}$ in ${\goth g}$ and ${\cal R}_{+}$ the 
positive root system of ${\cal R}$ defined by ${\goth b}$. The Weyl group of ${\cal R}$ 
is denoted by $W({\cal R})$ and the basis of ${\cal R}_{+}$ is denoted by $\Pi $. The 
neutral elements of $G$ and $W({\cal R})$ are denoted by $1_{{\goth g}}$ and 
$1_{{\goth h}}$ respectively. For $\alpha $ in ${\cal R}$, the corresponding root 
subspace is denoted by ${\goth g}^{\alpha }$ and a generator $x_{\alpha }$ of 
${\goth g}^{\alpha }$ is chosen so that $\dv {x_{\alpha }}{x_{-\alpha }} = 1$ for all 
$\alpha $ in ${\cal R}$. 

$\bullet$ The normalizers of ${\goth b}$ and ${\goth h}$ in $G$ are denoted by $B$ and
$N_{G}({\goth h})$ respectively. For $x$ in ${\goth b}$, $\overline{x}$ is the element
of ${\goth h}$ such that $x-\overline{x}$ is in the nilpotent radical ${\goth u}$ of 
${\goth b}$.

$\bullet$ 
For $X$ an algebraic $B$-variety, denote by $\sqx GX$ the quotient of 
$G\times X$ under the right action of $B$ given by $(g,x).b := (gb,b^{-1}.x)$. More 
generally, for $k$ positive integer and for $X$ an algebraic $B^{k}$-variety, denote 
by $\sqxx kX$ the quotient of $G^{k}\times X$ under the right action of $B^{k}$ given by 
$(g,x).b := (gb,b^{-1}.x)$ with $g$ and $b$ in $G^{k}$ and $B^{k}$ respectively. 

\begin{lemma}\label{l4int}
Let $P$ and $Q$ be parabolic subgroups of $G$ such that $P$ is contained in $Q$. Let 
$X$ be a $Q$-variety and let $Y$ be a closed subset of $X$, invariant under $P$. Then 
$Q.Y$ is a closed subset of $X$. Moreover, the canonical map from 
$Q\times _{P}Y$ to $Q.Y$ is a projective morphism.
\end{lemma}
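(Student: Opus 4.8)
The plan is to reduce the problem to a properness statement for a morphism out of a variety fibred over the projective variety $Q/P$. First I would recall the standard fact that $Q/P$ is a projective variety, since $P$ and $Q$ are parabolic subgroups of $G$ with $P\subseteq Q$; indeed $Q/P$ is a closed subvariety of the flag variety $G/P$. The associated bundle $Q\times_P Y$ is then a $Q$-variety which fibres over $Q/P$ with fibre $Y$, via the map $[g,y]\mapsto gP$; since $Y$ is a closed subvariety of the $Q$-variety $X$, this bundle embeds $Q$-equivariantly as a closed subvariety of $Q\times_P X$, and the latter is isomorphic, as a $Q$-variety, to $(Q/P)\times X$ through $[g,x]\mapsto (gP,g.x)$. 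Under this isomorphism the action map $Q\times_P Y\to Q.Y\subseteq X$ becomes the restriction to the image of $Q\times_P Y$ of the second projection $(Q/P)\times X\to X$.

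The key point is then that the second projection $\mathrm{pr}_2\colon (Q/P)\times X\to X$ is a proper morphism, because $Q/P$ is complete (properness is preserved under base change, so $\mathrm{pr}_2$ is proper since the structural morphism $Q/P\to\mathrm{Spec}\,\k$ is proper). Now $Q\times_P Y$ is a closed subvariety of $(Q/P)\times X$: indeed its image under the isomorphism above is exactly the set of pairs $(gP,x)$ with $g^{-1}.x\in Y$, which is closed because $Y$ is closed in $X$ and the action is a morphism (one checks this locally over a trivializing cover of $Q/P$, where the condition reads $x\in g.Y$ for the local section $g$, a closed condition since $Y$ is closed and $X$ is a $Q$-variety). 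The restriction of a proper morphism to a closed subvariety is proper, so the map $Q\times_P Y\to X$ is proper; its image is therefore closed in $X$, and that image is precisely $Q.Y$. This simultaneously gives that $Q.Y$ is closed and that the canonical map $Q\times_P Y\to Q.Y$ is proper. To upgrade properness to projectivity, I would invoke that $Q/P$, being a closed subvariety of the projective variety $G/P$, carries a $Q$-linearized ample line bundle; pulling it back along $Q\times_P Y\to Q/P$ gives a line bundle that is relatively ample for the morphism $Q\times_P Y\to X$ (ampleness being checked fibrewise, and the fibres of $Q\times_P Y\to Q\times_P X\cong (Q/P)\times X$ over a point of $X$ are closed subvarieties of copies of $Q/P$), so the morphism is projective.

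The main obstacle I anticipate is the careful verification that the image of $Q\times_P Y$ in $(Q/P)\times X$ is closed, i.e. that $\{(gP,x) : g^{-1}.x\in Y\}$ is a closed subset; this is where one really uses that $Y$ is $P$-invariant (so the condition is well defined independently of the representative $g$) and closed, together with continuity of the $Q$-action on $X$. Once this is in hand, everything else is a formal consequence of properness of the projection from a product with a complete factor and of the stability of properness and projectivity under restriction to closed subvarieties. I would also note that the hypothesis $P\subseteq Q$ is essential precisely so that $Q/P$ is a (complete) homogeneous space and the associated bundle construction $Q\times_P Y$ makes sense.
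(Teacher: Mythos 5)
Your proof is correct and follows the same route as the paper: identify $Q\times_{P}X$ with $(Q/P)\times X$ via $[g,x]\mapsto(\overline{g},g.x)$, realize $Q\times_{P}Y$ as a closed subvariety of $(Q/P)\times X$, and conclude from the completeness of $Q/P$ that the second projection is proper, so that $Q.Y$ is closed and the action map is projective. You are in fact more careful than the paper at one point: the paper states that the image of $Q\times_{P}Y$ under this isomorphism equals $Q/P\times Q.Y$, which is an overstatement — the image is the incidence locus $\{(gP,x)\ \vert\ g^{-1}.x\in Y\}$, in general a proper closed subset of $Q/P\times Q.Y$ (for instance the diagonal when $Q=G$, $P=B$, $X=G/B$, $Y=\{{\goth b}\}$). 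You correctly identify this locus, justify its closedness (pulling back to $Q\times X$, using the $P$-invariance of $Y$ to see that the condition descends to $Q/P$), and then conclude by properness of the projection, which is evidently the argument the paper has in mind since it invokes projectivity of $Q/P$ at exactly this step. Your further remark that projectivity (rather than mere properness) of the morphism follows by pulling back a $Q$-linearized relatively ample bundle from $Q/P$ is a welcome precision that the paper's diagram-chase leaves implicit.
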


\begin{proof}
Since $P$ and $Q$ are parabolic subgroups of $G$ and since $P$ is contained in $Q$, 
$Q/P$ is a projective variety. Denote by $Q\times _{P}X$ and $Q\times _{P}Y$ the 
quotients of $Q\times X$ and $Q\times Y$ under the right action of $P$ given by 
$(g,x).p := (gp,p^{-1}.x)$. Let $g\mapsto \overline{g}$ be the quotient map from
$Q$ to $Q/P$. Since $X$ is a $Q$-variety, the map 
$$\xymatrix{Q\times X \ar[rr] && Q/P \times X}, \qquad 
(g,x) \longmapsto (\overline{g},g.x) $$
defines through the quotient an isomorphism from $Q\times _{P}X$ to $Q/P\times X$. 
Since $Y$ is a $P$-invariant closed subset of $X$, $Q\times _{P}Y$ is a closed subset
of $Q\times _{P}X$ and its image by the above isomorphism is closed. Hence
$Q.Y$ is a closed subset of $X$ since $Q/P$ is a projective variety. From the commutative
diagram:
$$\xymatrix{ Q\times _{P}Y \ar[r] \ar[rd] & Q/P\times Q.Y \ar[d] \\ & Q.Y } $$
we deduce that the map $Q\times _{P}Y\rightarrow Q.Y$ is a projective morphism.
\end{proof}

$\bullet$
For $k\geq 1$ and for the diagonal action of $B$ in ${\goth b}^{k}$, ${\goth b}^{k}$ is a
$B$-variety. The image of $(g,\poi x1{,\ldots,}{k}{}{}{})$ in $G\times {\goth b}^{k}$ in 
$\sqx G{{\goth b}^{k}}$ is denoted by $\overline{(g,\poi x1{,\ldots,}{k}{}{}{})}$. The 
sets ${\cal B}^{(k)}$ and ${\cal N}^{(k)}$ are the images of $G\times {\goth b}^{k}$ and 
$G\times {\goth u}^{k}$ respectively by the map 
$(g,\poi x1{,\ldots,}{k}{}{}{})\mapsto (\poi x1{,\ldots,}{k}{g}{}{})$ so that 
${\cal B}^{(k)}$ and ${\cal N}^{(k)}$ are closed subsets of ${\goth g}^{k}$ 
by Lemma~\ref{l4int}. Let ${\cal B}_{\n}^{(k)}$ be the normalization of 
${\cal B}^{(k)}$ and let $\eta _{\n}$ be the normalization morphism. The map
$$\xymatrix{ G\times {\goth b}^{k} \ar[r] & {\cal B}^{(k)}}, \qquad
(g,\poi x1{,\ldots,}{k}{}{}{}) \longrightarrow (\poi x1{,\ldots,}{k}{g}{}{})$$ 
defines through the quotient a morphism 
$\gamma : \xymatrix{\sqx G{{\goth b}^{k}} \ar[r] & {\cal B}^{(k)}}$ and we have the 
commutative diagram:
$$ \xymatrix{{\sqx G{{\goth b}^{k}}} \ar[rr]^{\gamma _{\n}} \ar[rd]_{\gamma} && 
{\cal B}_{\n}^{(k)} \ar[ld]^{\eta _{\n}}\\  & {\cal B}^{(k)} &  }  $$
where $\gamma _{\n}$ is uniquely defined by this diagram.
Let ${\cal N}_{\n}^{(k)}$ be the normalization of ${\cal N}^{(k)}$ and let $\varkappa $ 
be the normalization morphism. We have the commutative diagram:
$$ \xymatrix{\sqx G{{\goth u}^{k}} \ar[rd]_{\upsilon } \ar[rr]^{\upsilon _{\n}} && 
{\cal N}_{\n}^{(k)} \ar[ld]^{\varkappa }\\  & {\cal N}^{(k)} & }  $$
with $\upsilon $ the restriction of $\gamma $ to $\sqx G{{\goth u}^{k}}$ and 
$\upsilon _{\n}$ is uniquely defined by this diagram.

$\bullet$ Let $i$ be the injection $(\poi x1{,\ldots,}{k}{}{}{})\mapsto 
\overline{(1_{{\goth g}},\poi x1{,\ldots,}{k}{}{}{})}$ from ${\goth b}^{k}$ to
$\sqx G{{\goth b}^{k}}$. Then $\iota := \gamma \rond i$ is the identity of 
${\goth b}^{k}$ and $\iota _{\n} := \gamma _{\n}\rond i$
is a closed embedding of ${\goth b}^{k}$ into ${\cal B}_{\n}^{(k)}$. In particular, 
${\cal B}^{(k)} = G.\iota ({\goth b}^{k})$ and 
${\cal B}_{\n}^{(k)} = G.\iota _{\n}({\goth b}^{k})$.

$\bullet$
Let $e$ be the sum of the $x_{\beta }$'s, $\beta $ in $\Pi $, and let $h$ be the 
element of ${\goth h}\cap [{\goth g},{\goth g}]$ such that $\beta (h)=2$ for all $\beta $
in $\Pi $. Then there exists a unique $f$ in $[{\goth g},{\goth g}]$ such that $(e,h,f)$ 
is a principal ${\goth {sl}}_2$-triple. The one-parameter subgroup of $G$ generated by 
$\ad h$ is denoted by $t\mapsto h(t)$. The Borel subalgebra containing $f$ is denoted by 
${\goth b}_{-}$ and its nilpotent radical is denoted by ${\goth u}_{-}$. Let $B_{-}$ be 
the normalizer of ${\goth b}_{-}$ in $G$ and let $U$ and $U_{-}$ be the unipotent 
radicals of $B$ and $B_{-}$ respectively.

\begin{lemma}\label{l5int}
Let $k\geq 2$ be an integer. Let $X$ be an affine variety and set 
$Y:= {\goth b}^{k}\times X$. Let $Z$ be a closed $B$-invariant subset of $Y$ under the 
group action given by 
$g.(\poi v1{,\ldots,}{k}{}{}{},x)=(\poi v1{,\ldots,}{k}{g}{}{},x)$ with 
$(g,\poi v1{,\ldots,}{k}{}{}{})$ in $B\times {\goth b}^{k}$ and $x$ in $X$. 
Then $Z\cap {\goth h}^{k}\times X$ is the image of $Z$ by the projection 
$(\poi v1{,\ldots,}{k}{}{}{},x)\mapsto (\overline{v_{1}},\ldots,\overline{v_{k}},x)$.  
\end{lemma}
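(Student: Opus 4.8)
The plan is to reduce everything to the structure of a single Borel subalgebra and exploit the torus action inside $B$. Write $\pi$ for the projection $(\poi v1{,\ldots,}{k}{}{}{},x)\mapsto (\overline{v_{1}},\ldots,\overline{v_{k}},x)$ from $Y$ onto ${\goth h}^{k}\times X$. One inclusion is immediate: if a point $(\poi v1{,\ldots,}{k}{}{}{},x)$ already lies in $Z\cap ({\goth h}^{k}\times X)$, then each $v_i$ is its own semisimple part $\overline{v_i}$, so the point equals $\pi$ of itself and hence lies in $\pi(Z)$. So the content is the reverse inclusion $\pi(Z)\subseteq Z\cap({\goth h}^{k}\times X)$, i.e. $\pi(Z)\subseteq Z$, since $\pi$ visibly lands in ${\goth h}^{k}\times X$.

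For this I would use the one-parameter subgroup $t\mapsto h(t)$ of $G$ generated by $\ad h$, which normalizes ${\goth b}$ and acts on the nilpotent radical ${\goth u}$ with strictly positive weights and trivially on ${\goth h}$. Fix a point $z=(\poi v1{,\ldots,}{k}{}{}{},x)\in Z$ and decompose $v_i=\overline{v_i}+n_i$ with $n_i\in{\goth u}$. Since $h(t)\in B$ and $Z$ is $B$-invariant, $h(t).z=(h(t).v_1,\ldots,h(t).v_k,x)$ lies in $Z$ for every $t\in\k^{*}$; here $h(t).v_i=\overline{v_i}+h(t).n_i$, and because $\ad h$ acts on ${\goth u}$ with positive eigenvalues, $h(t).n_i\to 0$ as $t\to 0$ (the map $t\mapsto h(t).n_i$ extends to a morphism $\k\to{\goth u}$ sending $0$ to $0$). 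Therefore the morphism $\k^{*}\to Y$, $t\mapsto h(t).z$, extends to a morphism $\mathbf{a}\colon\k\to Y$ with $\mathbf{a}(t)\in Z$ for all $t\neq 0$ and $\mathbf{a}(0)=(\overline{v_1},\ldots,\overline{v_k},x)=\pi(z)$. Since $Z$ is closed in $Y$, the limit $\mathbf{a}(0)=\pi(z)$ belongs to $Z$ as well.

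This gives $\pi(z)\in Z$ for every $z\in Z$, hence $\pi(Z)\subseteq Z\cap({\goth h}^{k}\times X)$, and combined with the trivial inclusion above we get $\pi(Z)=Z\cap({\goth h}^{k}\times X)$, which is the assertion. The only point requiring a little care — and the place I would be most careful to state precisely — is the claim that $t\mapsto h(t).n_i$ extends across $t=0$ with value $0$: this is where the positivity of the weights of $\ad h$ on ${\goth u}$ is used, and it is exactly the standard fact that the attracting set of $h(t)$ as $t\to 0$ inside ${\goth b}$ is ${\goth h}$. Note the hypothesis $k\geq 2$ and the affineness of $X$ are not actually needed for this argument; the statement holds verbatim for $k\geq 1$ and arbitrary $X$, and presumably $k\geq 2$ is assumed only because that is the regime in which the lemma is applied. \fin
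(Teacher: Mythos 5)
Your proof is correct and takes essentially the same approach as the paper, whose entire argument is the one line $\overline{v}=\lim_{t\to 0}h(t)(v)$ for $v\in{\goth b}$, together with the fact that $Z$ is closed and $B$-invariant; you have simply spelled out the details (the decomposition $v_i=\overline{v_i}+n_i$ and the positivity of the $\ad h$-weights on ${\goth u}$) that the paper leaves implicit. Your side remark that $k\geq 2$ and the affineness of $X$ play no role in the argument is also accurate.
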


\begin{proof}
For all $v$ in ${\goth b}$, 
$$ \overline{v} = \lim _{t\rightarrow 0} h(t)(v)$$
whence the lemma since $Z$ is closed and $B$-invariant.
\end{proof}

$\bullet$ 
For $x \in \g$, let $x_{\s}$ and $x_{\n}$ be the semisimple and nilpotent components of 
$x$ in ${\goth g}$. Denote by ${\goth g}^x$ and $G^{x}$ the centralizers of $x$ in 
${\goth g}$ and $G$ respectively. For ${\goth a}$ a subalgebra of ${\goth g}$ and for $A$
a subgroup of $G$, set:
$$\begin{array}{ccc}
{\goth a}^{x} := {\goth a}\cap {\goth g}^{x} && A^{x} := A \cap G^{x} \end{array}$$
The set of regular elements of $\g$ is 
$$\g_{\r} \ := \ \{ x\in \g \ \vert \ \dim \g^x=\rg \}$$
and denote by ${\goth g}_{\rs}$ the set of regular semisimple elements of
${\goth g}$. Both $\g_{\r}$ and $\g_{\rs}$ are $G$-invariant dense open subsets of
${\goth g}$. Setting ${\goth h}_{\r} := {\goth h}\cap {\goth g}_{\r}$, 
${\goth b}_{\r} := {\goth b}\cap {\goth g}_{\r}$, 
${\goth u}_{\r} := {\goth u}\cap {\goth g}_{\r}$, ${\goth g}_{\rs}=G({\goth h}_{\r})$,
${\goth g}_{\r}=G({\goth b}_{\r})$ and $G({\goth u}_{\r})$ is the set of regular 
elements of the nilpotent cone ${\goth N}_{{\goth g}}$ of ${\goth g}$. 

\begin{lemma}\label{l6int}
Let $k\geq 2$ be an integer and let $x$ be in ${\goth g}^{k}$. For $O$ open subset of 
${\goth g}_{\r}$, $E_{x}\cap O$ is not empty if and only if for some $g$ in 
${\mathrm {GL}}_{k}(\k)$, the first component of $g.x$ is in $O$.
\end{lemma}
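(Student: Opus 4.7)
The plan is to verify the two directions of the equivalence, with the reverse direction being the substantive one.

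For the easy direction, observe that for $g = (a_{i,j})$ in $\mathrm{GL}_{k}(\k)$, the first component of $g.x$ is $\sum_{j=1}^{k} a_{1,j}x_{j}$, which belongs to $P_{x}$ by definition. Hence if this component lies in $O$, then $P_{x} \cap O$ is nonempty.

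For the converse, I would consider two cases depending on whether $P_{x}$ is trivial. If $P_{x} = \{0\}$, then all $x_{j}$ vanish, so the first component of $g.x$ equals $0$ for every $g$, and the equivalence is immediate. If $P_{x}\neq \{0\}$, the key observation is that the linear surjection $\k^{k}\rightarrow P_{x}$, $(c_{1},\ldots,c_{k})\mapsto \sum _{j}c_{j}x_{j}$, is open (being a surjective linear map between finite-dimensional vector spaces), so the preimage of the nonempty open set $P_{x}\cap O$ is a nonempty Zariski open subset of $\k^{k}$. Since $P_{x}$ is a positive-dimensional affine space, any nonempty open subset is Zariski-dense and in particular contains nonzero elements; pull back to get a nonzero tuple $(c_{1},\ldots,c_{k})$ such that $y:=\sum _{j}c_{j}x_{j}\in O$.

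The final step is to extend $(c_{1},\ldots,c_{k})$ to an invertible matrix. Since this row is nonzero, it can be completed to a basis of $\k^{k}$, producing $g\in \mathrm{GL}_{k}(\k)$ whose first row is $(c_{1},\ldots,c_{k})$; then the first component of $g.x$ is $y\in O$, as required. There is no real obstacle beyond bookkeeping the degenerate cases ($P_{x}=\{0\}$ and $y=0$), which is why the hypothesis $k\geq 2$ is not really needed in the argument; the result is essentially a statement about the $\mathrm{GL}_{k}(\k)$-action recovering arbitrary linear combinations as first coordinates.
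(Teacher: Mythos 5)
Your proof is correct and takes essentially the same route as the paper: produce a nonzero row vector $(a_{1},\ldots,a_{k})$ with $\sum_{j}a_{j}x_{j}\in O$ and complete it to an invertible matrix (the paper spells out an explicit matrix using a transposition $\tau$ together with $g_{1,j}=a_{\tau(j)}$, whereas you simply invoke basis completion). One small slip in the justification: openness of the surjection $\k^{k}\rightarrow P_{x}$ is not what makes the preimage of $P_{x}\cap O$ a nonempty open subset of $\k^{k}$ --- continuity of the map gives openness of the preimage and surjectivity gives nonemptiness, so the desired conclusion holds but not for the reason stated.
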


\begin{proof}
Since the components of $g.x$ are in $E_{x}$ for all $g$ in ${\mathrm {GL}}_{k}(\k)$,
the condition is sufficient. Suppose that $E_{x}\cap O$ is not 
empty and denote by $\poi x1{,\ldots,}{k}{}{}{}$ the components of $x$. For 
some $(\poi a1{,\ldots,}{k}{}{}{})$ in $\k^{k}\setminus \{0\}$, 
$$ a_{1}x_{1}+\cdots +a_{k}x_{k} \in O$$
Let $i$ be such that $a_{i}\neq 0$ and let $\tau $ be the transposition such that 
$\tau (1)=i$. Denoting by $g$ the element of ${\mathrm {GL}}_{k}(\k)$ such that
$g_{1,j} = a_{\tau (j)}$ for $j=1,\ldots,k$, $g_{j,j}=1$ for $j=2,\ldots,k$ and 
$g_{j,l}=0$ for $j\geq 2$ and $j\neq l$, the first component of $g\tau .x$ is in $O$.
\end{proof}

$\bullet$
Denote by $\e Sg^{{\goth g}}$ the algebra of ${\goth g}$-invariant elements of 
$\e Sg$. Let $p_1,\ldots,p_{\rg}$ be homogeneous generators of $\e Sg^{\g}$ of degree
$\poi d1{,\ldots,}{\rg}{}{}{}$ respectively. Choose the polynomials
$\poi p1{,\ldots,}{\rg}{}{}{}$ so that $\poi d1{\leq\cdots \leq }{\rg}{}{}{}$. For 
$i=1,\ldots,\rg$ and $(x,y)\in\g \times \g$, consider a shift of $p_i$ in 
the direction $y$: $p_i(x+ty)$ with $t\in\k$. Expanding $p_i(x+ty)$ as a polynomial in 
$t$, we obtain
\begin{eqnarray}\label{eq:pi}
p_i(x+ty)=\sum\limits_{m=0}^{d_i} p_{i}^{(m)} (x,y) t^m;  && \forall
(t,x,y)\in\k\times\g\times\g
\end{eqnarray}
where $y \mapsto (m!)p_{i}^{(m)}(x,y)$ is the derivative at $x$ of $p_i$ at the order
$m$ in the direction $y$. The elements $p_{i}^{(m)}$ defined by~(\ref{eq:pi}) are
invariant elements of $\tk {\k}{\e Sg}\e Sg$ under the diagonal action of $G$ in
$\gg g{}$. Remark that $p_i^{(0)}(x,y)=p_i(x)$ while $p_i^{(d_i)}(x,y)=p_i(y)$  for 
all $(x,y)\in \g\times \g$.

\begin{rema}\label{rint3}
The family
$\mathcal{P}_x  :=
\{p_{i}^{(m)}(x,.); \ 1 \leq i \leq \rg, 1 \leq m \leq d_i  \}$ for $x\in\g$,
is a Poisson-commutative family of $\e Sg$ by Mishchenko-Fomenko~\cite{MF}.
We say that the family $\mathcal{P}_x$ is constructed by the \emph{argument shift method}.
\end{rema}

$\bullet$
Let $i \in\{1,\ldots,\rg\}$. For $x$ in $\g$, denote by $\varepsilon _i(x)$ the 
element of $\g$ given by
$$ \dv {\varepsilon _{i}(x)}y = \frac{\dd }{\dd t} p_{i}(x+ty) \left \vert _{t=0} \right.
$$
for all $y$ in ${\goth g}$. Thereby, $\varepsilon _{i}$ is an invariant element of 
$\tk {\k}{\e Sg}\g$ under the canonical action of $G$. According to 
\cite[Theorem 9]{Ko}, for $x$ in ${\goth g}$, $x$ is in ${\goth g}_{\r}$ if and only if 
$\poi x{}{,\ldots,}{}{\varepsilon }{1}{\rg}$ are linearly independent. In this case, 
$\poi x{}{,\ldots,}{}{\varepsilon }{1}{\rg}$ is a basis of ${\goth g}^{x}$.

Denote by ${\goth z}_{{\goth g}}$ the center of ${\goth g}$ and for $x$ in ${\goth g}$
by ${\goth z}_{x}$ the center of ${\goth g}^{x}$. As 
$\poi {\varepsilon }1{,\ldots,}{\rg}{}{}{}$ are invariant, for all $x$ in ${\goth g}$,
$\poi x{}{,\ldots,}{}{\varepsilon }{1}{\rg}$ are in ${\goth z}_{x}$.
 
$\bullet$
Denote by $\varepsilon _{i}^{(m)}$, for $0\leq m\leq d_i-1$, the elements of 
$\tk{\k}{\sgg g{}}{\goth g}$ defined by the equality:
\begin{eqnarray}\label{eq:phi}
\varepsilon _i (x+ty) =\sum\limits_{ m=0}^{d_i-1} \varepsilon _i^{(m)}(x,y) t^m , &&
\forall (t,x,y)\in\k\times\g\times\g
\end{eqnarray}
and set:
$$V_{x,y} := 
{\mathrm {span}}(\{\poie {x,y}{}{,\ldots,}{}{\varepsilon }{i}{i}{(0)}{(d_{i}-1)}, \ 
i =1,\ldots,\rg\}) $$
for $(x,y)$ in $\gg g{}$. According to~\cite[Corollary 2]{Bol}, $V_{x,y}$ has dimension 
$\b g{}$ if and only if $E_{x,y}$ has dimension $2$ and $E_{x,y}\setminus \{0\}$ is 
contained in ${\goth g}_{\r}$.

\section{On the varieties \texorpdfstring{${\cal B}^{(k)}$}{Lg}} \label{bo}
Let $k\geq 2$ be an integer. According to the above notations, we have the commutative
diagrams:
$$\begin{array}{ccc}
\xymatrix{{\sqx G{{\goth b}^{k}}} \ar[rr]^{\gamma _{\n}} \ar[rd]_{\gamma} && 
{\cal B}_{\n}^{(k)} \ar[ld]^{\eta _{\n}}\\  & {\cal B}^{(k)} &  }  &&
\xymatrix{\sqx G{{\goth u}^{k}} \ar[rd]_{\upsilon } \ar[rr]^{\upsilon _{\n}} && 
{\cal N}_{\n}^{(k)} \ar[ld]^{\varkappa }\\  & {\cal N}^{(k)} & } \end{array}$$
Since the Borel subalgebras of ${\goth g}$ are conjugate under $G$, ${\cal B}^{(k)}$
is the subset of elements of ${\goth g}^{k}$ whose components are in a same Borel
subalgebra and ${\cal N}^{(k)}$ are the elements of ${\cal B}^{(k)}$ whose all the
components are nilpotent. 

\begin{lemma}\label{lbo}
{\rm (i)} The morphism $\gamma $ from $\sqx G{{\goth b}^{k}}$ to ${\cal B}^{(k)}$ is 
projective and birational. In particular, $\sqx G{{\goth b}^{k}}$ is a desingularization
of ${\cal B}^{(k)}$ and ${\cal B}^{(k)}$ has dimension $k\b g{}+n$.

{\rm (ii)} The morphism $\upsilon $ from $\sqx G{{\goth u}^{k}}$ to ${\cal N}^{(k)}$ is 
projective and birational. In particular, $\sqx G{{\goth u}^{k}}$ is a desingularization
of ${\cal N}^{(k)}$ and ${\cal N}^{(k)}$ has dimension $(k+1)n$.
\end{lemma}

\begin{proof}
(i) Denote by $\Omega _{{\goth g}}^{(2)}$ the subset of elements $(x,y)$ of 
${\goth g}^{2}$ such that $E_{x,y}$ has dimension $2$ and such that 
$E_{x,y}\setminus \{0\}$ is contained in ${\goth g}_{\r}$. According to 
Lemma~\ref{l4int}, $\gamma $ is a projective morphism. For $1\leq i<j\leq k$, let 
$\Omega ^{(k)}_{i,j}$ be the inverse image of $\Omega _{{\goth g}}^{(2)}$ by the 
projection 
$$ (\poi x1{,\ldots,}{k}{}{}{}) \longmapsto (x_{i},x_{j}) $$ 
Then $\Omega ^{(k)}_{i,j}$ is an open subset of ${\goth g}^{k}$ whose intersection with
${\cal B}^{(k)}$ is not empty. Let $\Omega ^{(k)}_{{\goth g}}$ be the union of the 
$\Omega ^{(k)}_{i,j}$. According to~\cite[Corollary 2]{Bol} and~\cite[Theorem 9]{Ko}, for 
$(x,y)$ in $\Omega _{{\goth g}}^{(2)}\cap {\cal B}^{(2)}$, $V_{x,y}$ is the unique Borel 
subalgebra of ${\goth g}$ containing $x$ and $y$ so that the restriction of $\gamma $ to
$\gamma ^{-1}(\Omega ^{(k)}_{{\goth g}})$ is a bijection onto 
$\Omega _{{\goth g}}^{(k)}$. Hence $\gamma $ is birational. Moreover, 
$\sqx G{{\goth b}^{k}}$ is a smooth variety as a vector bundle over the smooth variety 
$G/B$, whence the assertion since $\sqx G{{\goth b}^{k}}$ has dimension $k\b g{}+n$. 

(ii) According to Lemma~\ref{l4int}, $\upsilon $ is a projective morphism. Let 
${\cal N}_{\r}^{(k)}$ be the subset of elements of ${\cal N}^{(k)}$ whose at 
least one component is a regular element of ${\goth g}$. Then ${\cal N}_{\r}^{(k)}$ is 
an open subset of ${\cal N}^{(k)}$. Since a regular nilpotent element is contained in 
one and only one Borel subalgebra of ${\goth g}$, the restriction of $\upsilon $ to 
$\upsilon ^{-1}({\cal N}_{\r}^{(k)})$ is a bijection onto ${\cal N}_{\r}^{(k)}$. Hence 
$\upsilon $ is birational. Moreover, $\sqx G{{\goth u}^{k}}$ is a smooth variety as a 
vector bundle over the smooth variety $G/B$, whence the assertion since 
$\sqx G{{\goth u}^{k}}$ has dimension $(k+1)n$. 
\end{proof}

\subsection{} \label{bo1}
Let $\kappa $ be the map
$$\begin{array}{ccc} U_{-}\times {\goth u}_{\r} \longrightarrow {\goth N}_{{\goth g}}
&& (g,x) \longmapsto g(x) \end{array}$$

\begin{lemma}\label{lbo1}
Let $V$ be the set of elements of ${\cal N}^{(k)}$ whose first component is in 
$U_{-}({\goth u}_{\r})$ and let $V_{k}$ be the set of elements $x$ of ${\cal N}^{(k)}$
such that $E_{x}\cap {\goth g}_{\r}$ is not empty.

{\rm (i)} The image of $\kappa $ is a smooth open subset of ${\goth N}_{{\goth g}}$ and 
$\kappa $ is an ismorphism onto $U_{-}({\goth u}_{\r})$.

{\rm (ii)} The subset $V$ of ${\cal N}^{(k)}$ is open.

{\rm (iii)} The open subset $V$ of ${\cal N}^{(k)}$ is smooth.

{\rm (iv)} The set $V_{k}$ is a smooth open subset of ${\cal N}^{(k)}$.
\end{lemma}

\begin{proof}
(i) Since ${\goth N}_{{\goth g}}$ is the nullvariety of $\poi p1{,\ldots,}{\rg}{}{}{}$ 
in ${\goth g}$, ${\goth N}_{{\goth g}}\cap {\goth g}_{\r}$ is a smooth open susbet 
of ${\goth N}_{{\goth g}}$ by~\cite[Theorem 9]{Ko}. For $(g,x)$ in 
$U_{-}\times {\goth u}_{\r}$ such that $g(x)$ is in ${\goth u}$, $b^{-1}g$ is in $G^{x}$
for some $b$ in $B$ since $B(x)={\goth u}_{\r}$. Hence $g=1_{{\goth g}}$ since $G^{x}$ is
contained in $B$ and since $U_{-}\cap B=\{1_{{\goth g}}\}$. As a result, $\kappa $ is an 
injective morphism from the smooth variety $U_{-}\times {\goth u}_{\r}$ to the smooth 
variety ${\goth N}_{{\goth g}}\cap {\goth g}_{\r}$. Hence $\kappa $ is an open 
immersion by Zariski's Main Theorem \cite[\S 9]{Mu}.

(ii) By definition, $V$ is the intersection of ${\cal N}^{(k)}$ and 
$U_{-}({\goth u}_{\r})\times {\goth N}_{{\goth g}}^{k-1}$. So, by (i), it is an 
open subset of ${\cal N}^{(k)}$.
 
(iii) Let $(\poi x1{,\ldots,}{k}{}{}{})$ be in ${\goth u}^{k}$ and let $g$ be in $G$ 
such that $(\poi x1{,\ldots,}{k}{g}{}{})$ is in $V$. Then $x_{1}$ is in ${\goth u}_{\r}$
and for some $(g',b)$ in $U_{-}\times B$, $g'b(x_{1})=g(x_{1})$. Hence $g^{-1}g'b$ is in 
$G^{x_{1}}$ and $g$ is in $U_{-}B$ since $G^{x_{1}}$ is contained in $B$. As a result, 
the map
$$\begin{array}{ccc}
U_{-}\times {\goth u}_{\r}\times {\goth u}^{k-1} \longrightarrow V &&
(g,\poi x1{,\ldots,}{k}{}{}{}) \longmapsto (\poi x1{,\ldots,}{k}{g}{}{})
\end{array}$$
is an isomorphism whose inverse is given by
$$\begin{array}{ccc}
V\longrightarrow U_{-}\times {\goth u}_{\r}\times {\goth u}^{k-1}  &&
(\poi x1{,\ldots,}{k}{}{}{}) \longmapsto (\kappa ^{-1}(x_{1})_{1},
\poi x1{,\ldots,}{k}{\kappa ^{-1}(x_{1})_{1}}{}{})\end{array}$$
with $\kappa ^{-1}$ the inverse of $\kappa $ and $\kappa ^{-1}(x_{1})_{1}$ the component
of $\kappa ^{-1}(x_{1})$ on $U_{-}$, whence the assertion since 
$U_{-}\times {\goth u}_{\r}\times {\goth u}^{k-1}$ is smooth.

(iv) According to Lemma~\ref{l6int}, $V_{k}={\mathrm {GL}}_{k}(\k).V$, whence the 
assertion by (iii).
\end{proof}

\begin{coro}\label{cbo1}
{\rm (i)} The subvariety ${\cal N}^{(k)}\setminus V_{k}$ of ${\cal N}^{(k)}$ has 
codimension $k+1$. 

{\rm (ii)} The restriction of $\upsilon $ to $\upsilon ^{-1}(V_{k})$ is an isomorphism 
onto $V_{k}$.

{\rm (iii)} The subset $\upsilon ^{-1}(V_{k})$ is a big open subset of 
$\sqx G{{\goth u}^{k}}$.
\end{coro}

\begin{proof}
(i) By definition, ${\cal N}^{(k)}\setminus V_{k}$ is the subset of elements $x$ of 
${\cal N}^{(k)}$ such that $E_{x}$ is contained in ${\goth g}\setminus {\goth g}_{\r}$.
Hence ${\cal N}^{(k)}\setminus V_{k}$ is contained in the image of
$\sqx G{({\goth u}\setminus {\goth u}_{\r})^{k}}$ by $\upsilon $. Let 
$(\poi x1{,\ldots,}{k}{}{}{})$ be in ${\goth u}^{k}\cap ({\cal N}^{(k)}\setminus V_{k})$.
Then, for all $(\poi a1{,\ldots,}{k}{}{}{})$ in $\k^{k}$, 
$$ \dv {x_{-\beta }}{a_{1}x_{1}+\cdots + a_{k}x_{k}} = 0$$
for some $\beta $ in $\Pi $. Since $\Pi $ is finite, $E_{x}$ is orthogonal to 
$x_{-\beta }$ for some $\beta $ in $\Pi $. As a result, the subvariety of Borel 
subalgebras of ${\goth g}$ containing $\poi x1{,\ldots,}{k}{}{}{}$ has positive 
dimension. Hence
$$ \dim ({\cal N}^{(k)}\setminus V_{k}) < 
\dim \sqx G{({\goth u}\setminus {\goth u}_{\r})^{k}} = n + k(n-1)$$
Moreover, for $\beta $ in $\Pi $, denoting by ${\goth u}_{\beta }$ the orthogonal 
complement of ${\goth g}^{-\beta }$ in ${\goth u}$, 
$\upsilon (\sqx G{({\goth u}_{\beta })^{k}})$ is contained in 
${\cal N}^{(k)}\setminus V_{k}$ and its dimension equal $(k+1)(n-1)$ since the 
variety of Borel subalgebras containing ${\goth u}_{\beta }$ has dimension $1$,
whence the assertion.

(ii) For $x$ in ${\cal N}^{(k)}$, $E_{x}$ is contained in all Borel subalgebra of 
${\goth g}$, containing the components of $x$. Then the restriction of $\upsilon $ to 
$\upsilon ^{-1}(V_{k})$ is injective since all regular nilpotent element of ${\goth g}$ 
is contained in a single Borel subalgebra of ${\goth g}$, whence the assertion 
by Zariski's Main Theorem \cite[\S 9]{Mu} since $V_{k}$ is a smooth open subset of 
${\cal N}^{(k)}$ by Lemma~\ref{lbo1},(iv).

(iii) Identify $U_{-}$ with the open subset $U_{-}B/B$ of $G/B$ and denote by 
$\pi _{0}$ the bundle projection from $\xymatrix{\sqx G{{\goth u}^{k}} \ar[r] & G/B}$. 
Since $\upsilon ^{-1}(V_{k})$ is $G$-invariant, it suffices to prove that 
$\upsilon ^{-1}(V_{k})\cap \pi _{0}^{-1}(U_{-})$ is a big open subset of 
$\pi _{0}^{-1}(U_{-})$. Let $V_{0}$ be the subset of elements $x$ of ${\goth u}^{k}$ such
that $E_{x}\cap {\goth g}_{\r}$ is not empty. Then ${\goth u}^{k}\setminus V_{0}$ is 
contained in $({\goth u}\setminus {\goth u}_{\r})^{k}$ and has codimension at least $2$
in ${\goth u}^{k}$ since $k\geq 2$. As a result, $U_{-}\times V_{0}$ is a big open subset
of $U_{-}\times {\goth u}^{k}$. The open subset $\pi _{0}^{-1}(U_{-})$ of 
$\sqx G{{\goth u}^{k}}$ identifies with $U_{-}\times {\goth u}^{k}$ and 
$\upsilon ^{-1}(V_{k})\cap \pi _{0}^{-1}(U_{-})$ identifies with $U_{-}\times V_{0}$, 
whence the assertion.
\end{proof}

\subsection{} \label{bo2}
Denote by $\pi _{{\goth g}} : \xymatrix{{\goth g}\ar[r] & {\goth g}//G}$ and 
$\pi _{{\goth h}} : \xymatrix{{\goth h} \ar[r] & {\goth h}/W({\cal R)}}$ the quotient 
maps, i.e. the morphisms defined by the invariants. Recall 
${\goth g}//G={\goth h}/W({\cal R})$, and let ${\cal X}$ be the following fiber product:
$$ \xymatrix{ {\cal X} \ar[rr]^{\overline{\chi }} \ar[d]_{\overline{\rho }} && 
{\goth g} \ar[d]^{\pi _{{\goth g}}} \\ {\goth h} \ar[rr]_{\pi _{{\goth h}}} &&
{\goth h}/W({\cal R)}} $$
where $\overline{\chi }$ and $\overline{\rho }$ are the restriction maps. The 
actions of $G$ and $W({\cal R})$ on ${\goth g}$ and ${\goth h}$ respectively induce
an action of $G\times W({\cal R})$ on ${\cal X}$: $(g,w).(x,y) := (g(x),w(y))$.

\begin{lemma}\label{lbo2}
{\rm (i)} There exists a well defined $G$-equivariant morphism $\chi _{\n}$ from 
$\sqx G{{\goth b}}$ to ${\cal X}$ such that $\gamma $ is the composition of 
$\chi _{\n}$ and $\overline{\chi }$.

{\rm (ii)} The morphism $\chi _{\n}$ is projective and birational. Moreover, 
${\cal X}$ is irreducible.

{\rm (iii)} The subscheme ${\cal X}$ is normal. Moreover, every element of 
${\goth g}_{\r}\times {\goth h}\cap {\cal X}$ is a smooth point of ${\cal X}$.

{\rm (iv)} The algebra $\k[{\cal X}]$ is the space of global sections of 
$\an {\sqx G{{\goth b}}}{}$ and $\k[{\cal X}]^{G}=\e Sh$.
\end{lemma}

\begin{proof}
(i) Since the map $(g,x)\mapsto (g(x),\overline{x})$ is constant on the $B$-orbits, there
exists a uniquely defined morphism $\chi _{\n}$ from $G\times _{B}{\goth b}$ to 
${\goth g}\times {\goth h}$ such that $(g(x),\overline{x})$
is the image by $\chi _{\n}$ of the image of $(g,x)$ in $G\times _{B}{\goth b}$. 
The image of $\chi _{\n}$ is contained in ${\cal X}$ since for all $p$ in 
$\e Sg^{G}$, $p(\overline{x})=p(x)=p(g(x))$. Furthermore, $\chi _{\n}$ verifies the
condition of the assertion.

(ii) According to Lemma~\ref{l4int}, $\chi _{\n}$ is a projective morphism. 
Let $(x,y)$ be in ${\goth g}\times {\goth h}$ such that $p(x)=p(y)$ for all $p$ in 
$\e Sg^{G}$. For some $g$ in $G$, $g(x)$ is in ${\goth b}$ and its semisimple 
component is $y$ so that $(x,y)$ is in the image of $\chi _{\n}$. As a result, 
${\cal X}$ is irreducible as the image of the irreducible variety 
$G\times _{B}{\goth b}$. Since for all $(x,y)$ in ${\cal X}\cap \gg h{\r}$, there exists 
a unique $w$ in $W({\cal R})$ such that $y=w(x)$, the fiber of $\chi _{\n}$ at any 
element ${\cal X}\cap G.(\gg h{\r})$ has one element. Hence $\chi _{\n}$ is birational,
whence the assertion. 

(iii) The morphism $\pi _{{\goth h}}$ is finite, and so is $\overline{\chi }$.
Moreover $\pi _{{\goth h}}$ is smooth over ${\goth h}_{\r}$, $\overline{\chi }$
is smooth over ${\goth g}_{\r}$. Finally, $\pi _{{\goth g}}$
is flat and all fibers are normal and Cohen-Macaulay. Thus the same holds for the 
morphism $\overline{\rho }$. Since ${\goth h}$ is smooth this implies that ${\cal X}$ is 
normal and Cohen-Macaulay by \cite[Ch. 8, \S 23]{Mat}.

(iv) According to (ii), (iii) and Lemma~\ref{lint}, 
$\k[{\cal X}]={\mathrm {H}}^{0}(\sqx G{{\goth b}},\an {\sqx G{{\goth b}}}{})$. Under the
action of $G$ in ${\goth g}\times {\goth h}$, 
$\k[{\goth g}\times {\goth h}]^{G}=\tk {\k}{\e Sg^{G}}\e Sh$ and its image in 
$\k[{\cal X}]$ by the quotient morphism is equal to $\e Sh$. Moreover, since $G$ is 
reductive, $\k[{\cal X}]^{G}$ is the image of $\k[{\goth g}\times {\goth h}]^{G}$ by the 
quotient morphism, whence the assertion. 
\end{proof}

\begin{prop}\label{pbo2}\cite[Theorem B and Corollary]{He}
The variety ${\cal X}$ has rational singularities.
\end{prop}

\begin{coro}\label{cbo2}
{\rm (i)} Let $x$ and $x'$ be in ${\goth b}_{\r}$ such that $x'$ is in $G.x$ and 
$\overline{x'}=\overline{x}$. Then $x'$ is in $B(x)$.

{\rm (ii)} For all $w$ in $W({\cal R})$, the map
$$U_{-}\times {\goth b}_{\r} \longrightarrow {\cal X} , \qquad
(g,x) \longmapsto (g(x),w(\overline{x})) $$
is an isomorphism onto a smooth open subset of ${\cal X}$. 
\end{coro}

\begin{proof}
(i) The semisimple components of $x$ and $x'$ are conjugate under 
$B$ since they are conjugate to $\overline{x}$ under $B$. Let $b$ and $b'$ be in $B$ such
that $\overline{x}$ is the semisimple component of
$b(x)$ and $b'(x')$. Then the nilpotent components of $b(x)$ and $b'(x')$ are regular 
nilpotent elements of ${\goth g}^{\overline{x}}$, belonging to the Borel subalgebra
${\goth b}\cap {\goth g}^{\overline{x}}$ of ${\goth g}^{\overline{x}}$. Hence 
$x'$ is in $B(x)$ since regular nilpotent elements of a Borel subalgebra of a reductive
Lie algebra are conjugate under the correponding Borel subgroup.

(ii) Since the action of $G$ and $W({\cal R})$ on ${\cal X}$ commute, it suffices to 
prove the assertion for $w=1_{{\goth h}}$. Denote by $\theta $ the map
$$ \xymatrix{U_{-}\times {\goth b}_{\r} \ar[rr] && {\cal X}} , \qquad
(g,x) \longmapsto (g(x),\overline{x}) .$$
Let $(g,x)$ and $(g',x')$ be in $U_{-}\times {\goth b}_{\r}$ such that 
$\theta (g,x)=\theta (g',x')$. By (i), $x'=b(x)$ for some $b$ in $B$. Hence
$g^{-1}g'b$ is in $G^{x}$. Since $x$ is in ${\goth b}_{\r}$, $G^{x}$ is contained in 
$B$ and $g^{-1}g'$ is in $U_{-}\cap B$, whence $(g,x)=(g',x')$ since 
$U_{-}\cap B=\{1_{{\goth g}}\}$. As a result, $\theta $ is a dominant injective map from 
$U_{-}\times {\goth b}_{\r}$ to the normal variety ${\cal X}$. Hence $\theta $ is an 
isomorphism onto a smooth open subset of ${\cal X}$, by Zariski's Main Theorem 
\cite[\S 9]{Mu}.
\end{proof}

\subsection{} \label{bo3}
According to Lemma~\ref{lbo},(i), $\sqx G{{\goth b}^{k}}$ is a desingularization of 
${\cal B}^{(k)}$ and we have the commutative diagram:
$$\xymatrix{\sqx G{{\goth b}^{k}} \ar[rr]^{\gamma _{\n}} \ar[rd]_{\gamma } &&
{\cal B}_{\n}^{(k)} \ar[ld]^{\eta _{\n}} \\ & {\cal B}^{(k)} & } $$

\begin{lemma}\label{lbo3}
Let $\varpi $ be the canonical projection from ${\cal X}^{k}$ to ${\goth g}^{k}$. Denote 
by $\iota _{k}$ the map
$$ {\goth b}^{k} \longrightarrow {\cal X}^{k} , \qquad 
(\poi x1{,\ldots,}{k}{}{}{}) \longmapsto 
(\poi x1{,\ldots,}{k}{}{}{},\overline{x_{1}},\ldots,\overline{x_{k}}) .$$

{\rm (i)} The map $\iota _{k}$ is a closed embedding of ${\goth b}^{k}$ into 
${\cal X}^{k}$.

{\rm (ii)} The subvariety $\iota _{k}({\goth b}^{k})$ of ${\cal X}^{k}$ is an irreducible 
component of $\varpi ^{-1}({\goth b}^{k})$.

{\rm (iii)} The subvariety $\varpi ^{-1}({\goth b}^{k})$ of ${\cal X}^{k}$ is 
invariant under the canonical action of $W({\cal R})^{k}$ in ${\cal X}^{k}$ and this 
action induces a simply transitive action of $W({\cal R})^{k}$ on the set of irreducible 
components of $\varpi ^{-1}({\goth b}^{k})$.
\end{lemma}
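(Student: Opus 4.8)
The plan is to analyze $\varpi^{-1}({\goth b}^{k})$ factor by factor, reducing everything to the case $k=1$, and then to exploit the action of $W({\cal R})^{k}$. For Assertion (i), note that $\iota_1$ is the composition of the diagonal-type map $(\poi x1{,\ldots,}{k}{}{}{})\mapsto(x_1,\overline{x_1},\ldots,x_k,\overline{x_k})$ with the reordering isomorphism ${\goth g}^k\times{\goth h}^k\to({\goth g}\times{\goth h})^k$; since $x\mapsto\overline{x}$ is a regular (indeed linear) map on ${\goth b}$ and each pair $(x_j,\overline{x_j})$ lies in ${\cal X}$ because $p(x_j)=p(\overline{x_j})$ for all $p$ in $\e Sg^{G}$, the map $\iota_1$ factors through ${\cal X}^k$; it is a closed embedding because it is a section of the projection $\varpi$ composed with a closed embedding (its image is cut out inside ${\goth b}^k\times{\goth h}^k$ by the equations $y_j=\overline{x_j}$).

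For Assertion (ii), I would first treat $k=1$: $\varpi^{-1}({\goth b})$ is the set of $(x,y)\in{\goth b}\times{\goth h}$ with $p(x)=p(y)$ for all $G$-invariants $p$. Over the dense open ${\goth b}_{\r}$, Corollary~\ref{cbo1},(i) (applied with $x'=x$) shows that $y$ is determined up to $W({\cal R})$: indeed if $(x,y)\in{\cal X}$ with $x\in{\goth b}_{\r}$ then $y=w(\overline{x})$ for a unique $w$. Hence $\varpi^{-1}({\goth b}_{\r})=\bigsqcup_{w\in W({\cal R})}\{(x,w(\overline{x})):x\in{\goth b}_{\r}\}$, a disjoint union of $|W({\cal R})|$ irreducible locally closed subsets each of dimension $\dim{\goth b}$, each with closure $\{(x,w(\overline{x})):x\in{\goth b}\}=w\cdot\iota_1({\goth b})$. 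Since ${\cal X}$ has dimension $\dim{\goth g}$ and the fibers of $\varpi$ over ${\goth b}$ are finite (being finite over the dense open ${\goth b}_{\r}$ and $\varpi$ being finite as ${\cal X}$ is the categorical quotient-type variety with $\k[{\cal X}]$ finite over $\e Sg$), $\varpi^{-1}({\goth b})$ is equidimensional of dimension $\dim{\goth b}$, so its irreducible components are exactly the closures just described; in particular $\iota_1({\goth b})$ is one of them. For general $k$, since $\varpi=\varpi_1\times\cdots\times\varpi_1$ (one factor per copy of ${\cal X}$), one has $\varpi^{-1}({\goth b}^k)=(\varpi_1^{-1}({\goth b}))^k$, whose irreducible components are the products of components of $\varpi_1^{-1}({\goth b})$; thus $\iota_1({\goth b}^k)=(\iota_1({\goth b}))^k$ is one of them.

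For Assertion (iii), the $W({\cal R})^{k}$-invariance of $\varpi^{-1}({\goth b}^k)$ is immediate since $W({\cal R})^k$ acts only on the ${\goth h}^k$-factors and $\varpi$ is the projection forgetting those factors. By the product description, the components of $\varpi^{-1}({\goth b}^k)$ are indexed by $W({\cal R})^k$ via $(w_1,\ldots,w_k)\mapsto\overline{\{(x_j,w_j(\overline{x_j}))_j:x\in{\goth b}^k\}}$, and $W({\cal R})^k$ permutes these by left translation on the index, which is simply transitive. The one point needing care—and the main obstacle—is justifying that the fibers of $\varpi_1$ over ${\goth b}$ are finite with exactly $|W({\cal R})|$ points generically, i.e. that no two distinct $w,w'\in W({\cal R})$ give the same component; this follows because for generic regular semisimple $x\in{\goth h}_{\r}\subset{\goth b}_{\r}$ one has $\overline{x}=x$ and the points $w(x)$, $w\in W({\cal R})$, are pairwise distinct, so the $|W({\cal R})|$ locally closed pieces over ${\goth b}_{\r}$ are genuinely distinct, hence have distinct closures. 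Everything else is routine bookkeeping with the product structure and the finiteness of $\varpi$.
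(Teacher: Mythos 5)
Your approach is genuinely different from the paper's: you reduce everything to $k=1$ by observing $\varpi^{-1}({\goth b}^{k})=(\varpi_{1}^{-1}({\goth b}))^{k}$ and using the fact (over an algebraically closed field) that irreducible components of a product are products of components, whereas the paper proves (ii) only by a dimension count and then establishes transitivity in (iii) directly for general $k$ via the $B^{k}$-invariance of components, the degeneration $h(t)\rightarrow 0$, and Lemma~\ref{l5int}. Your route is arguably more transparent because it exhibits all components explicitly; the paper's route avoids needing to control \emph{all} components over ${\goth b}$ at once.

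However, there is a real gap at the crux. You assert that $\varpi^{-1}({\goth b})$ is equidimensional of dimension $\dim{\goth b}$ ``since the fibers of $\varpi$ over ${\goth b}$ are finite.'' Finiteness of $\varpi$ gives only the \emph{upper} bound $\dim Z\le\dim{\goth b}$ for a component $Z$; it does not by itself rule out low-dimensional components sitting entirely over ${\goth b}\setminus{\goth b}_{\r}$, which is exactly what your argument must exclude (otherwise such a component would not lie in the $W({\cal R})^{k}$-orbit of $\iota_{1}({\goth b}^{k})$, and simple transitivity would fail). You need the \emph{lower} bound: since ${\goth b}$ is a linear subspace cut out in ${\goth g}$ by $n$ linear forms, $\varpi^{-1}({\goth b})$ is cut out in the $\dim{\goth g}$-dimensional irreducible variety ${\cal X}$ by $n$ regular functions, so Krull's height theorem gives $\dim Z\ge\dim{\cal X}-n=\dim{\goth b}$ for every component $Z$. (Alternatively, ${\cal X}$ is a complete intersection, hence Cohen--Macaulay — see the proof of Lemma~\ref{lbo1},(iv) — so $\varpi$ is flat by miracle flatness and equidimensionality follows.) With either justification inserted, your argument closes; without it, the claim that the $\lvert W({\cal R})\rvert$ closures exhaust the components is unsupported.

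Two smaller inaccuracies, neither fatal: the decomposition of $\varpi_{1}^{-1}({\goth b}_{\r})$ is a \emph{union}, not a disjoint union — the $w$ with $y=w(\overline{x})$ is unique only when $\overline{x}$ is regular in ${\goth h}$, and $\overline{x}$ can be non-regular (even $0$) for $x\in{\goth b}_{\r}$ — but as you later note, distinctness of the pieces as sets (hence of their closures) already follows by evaluating at $x\in{\goth h}_{\r}$, which is all you need. Also, Corollary~\ref{cbo1},(i) is not what produces the relation $y=w(\overline{x})$; the elementary observation that $y$ and $\overline{x}$ are elements of ${\goth h}$ with the same $G$-invariants (hence in the same $W({\cal R})$-orbit) is what you are actually using.
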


\begin{proof}
(i) The map 
$$\xymatrix{{\goth b}^{k} \ar[rr] && G^{k}\times {\goth b}^{k}} , \qquad
(\poi x1{,\ldots,}{k}{}{}{}) \longmapsto 
(1_{{\goth g}},\ldots,1_{{\goth g}},\poi x1{,\ldots,}{k}{}{}{}) $$
defines through the quotient a closed embedding of ${\goth b}^{k}$ in 
$\sqxx k{{\goth b}^{k}}$. Denote it by $\iota '$. Let $\chi _{\n}^{(k)}$ be the map
$$ \xymatrix{\sqxx k{{\goth b}^{k}} \ar[rr] && {\cal X}^{k}} , \qquad 
(\poi x1{,\ldots,}{k}{}{}{}) \longmapsto (\poi x1{,\ldots,}{k}{\chi }{\n}{\n}) .$$
Then $\iota _{k}= \chi _{\n}^{(k)}\rond \iota '$. Since $\chi _{\n}$ is a projective 
morphism, $\iota _{k}$ is a closed morphism. Moreover, it is an embedding since 
$\varpi \rond \iota _{k}$ is the identity of ${\goth b}^{k}$. 

(ii) Since $\e Sh$ is a finite extension of $\e Sh^{W({\cal R})}$, $\varpi $ is a finite 
morphism. So $\varpi ^{-1}({\goth b}^{k})$ and ${\goth b}^{k}$ have the same dimension. 
According to (i), $\iota _{k}({\goth b}^{k})$ is an irreducible subvariety of 
$\omega ^{-1}({\goth b}^{k})$ of the same dimension, whence the assertion.  
 
(iii) Since all the fibers of $\varpi $ are invariant under the action of 
$W({\cal R})^{k}$ on ${\cal X}^{k}$, $\varpi ^{-1}({\goth b}^{k})$ is 
invariant under this action and $W({\cal R})^{k}$ permutes the irreducible 
components of $\varpi ^{-1}({\goth b}^{k})$. For $w$ in $W({\cal R})^{k}$, set 
$Z_{w}:= w.\iota _{k}({\goth b}^{k})$. If $Z_{w}=\iota _{k}({\goth b}^{k})$, then for all 
$(\poi x1{,\ldots,}{k}{}{}{})$ in ${\goth h}_{\r}^{k}$, 
$(\poi x1{,\ldots,}{k}{}{}{},w.(\poi x1{,\ldots,}{k}{}{}{}))$ is in 
$\iota _{k}({\goth b}^{k})$ so that $(\poi x1{,\ldots,}{k}{}{}{})$ is invariant under 
$w$ and $w$ is the identity. 

Let $Z$ be an irreducible component of $\varpi ^{-1}({\goth b}^{k})$ and let $Z_{0}$ be 
its image by the map 
$$(\poi x1{,\ldots,}{k}{}{}{},\poi y1{,\ldots,}{k}{}{}{})\longmapsto 
(\overline{x_{1}},\ldots,\overline{x_{k}},\poi y1{,\ldots,}{k}{}{}{}) .$$ 
Since $\varpi $ is $G^{k}$-equivariant and ${\goth b}^{k}$ is invariant under 
$B^{k}$, $\varpi ^{-1}({\goth b}^{k})$ and $Z$ are invariant under $B^{k}$. Hence by 
Lemma~\ref{l5int}, $Z_{0}$ is closed. Moreover, since the image of the map
$$\xymatrix{Z_{0}\times {\goth u}^{k} \ar[rr] && {\cal X}^{k}} , \quad
((\poi x1{,\ldots,}{k}{}{}{},\poi y1{,\ldots,}{k}{}{}{}),(\poi u1{,\ldots,}{k}{}{}{}))
\longmapsto (x_{1}+u_{1},\ldots,x_{k}+u_{k},\poi y1{,\ldots,}{k}{}{}{})$$
is an irreducible subset of $\varpi ^{-1}({\goth b}^{k})$ containing $Z$, $Z$ is the 
image of this map. Since $Z_{0}$ is contained in ${\cal X}^{k}$, $Z_{0}$ is contained 
in the image of the map
$$ \xymatrix{{\goth h}^{k}\times W({\cal R})^{k} \ar[rr] &&  
{\goth h}^{k}\times {\goth h}^{k}} , \quad 
(\poi x1{,\ldots,}{k}{}{}{},\poi w1{,\ldots,}{k}{}{}{}) \longmapsto 
(\poi x1{,\ldots,}{k}{}{}{},\poi x1{,\ldots,}{k}{w}{1}{k}) .$$ 
As $W({\cal R})$ is finite and $Z_{0}$ is irreducible, for some 
$w$ in $W({\cal R})^{k}$, $Z_{0}$ is the image of ${\goth h}^{k}$ by the map 
$$(\poi x1{,\ldots,}{k}{}{}{})\longmapsto 
(\poi x1{,\ldots,}{k}{}{}{},w.(\poi x1{,\ldots,}{k}{}{}{})) $$ 
and $Z=Z_{w}$, whence the assertion.
\end{proof}

Set ${\goth Y} := \sqxx k{{\goth b}^{k}}$. The map
$$ \xymatrix{G\times {\goth b}^{k} \ar[rr] &&  G^{k}\times {\goth b}^{k}} , \qquad
(g,\poi v1{,\ldots,}{k}{}{}{}) \longmapsto (g,\ldots,g,\poi v1{,\ldots,}{k}{}{}{}) $$
defines through the quotient a closed immersion from $\sqx G{{\goth b}^{k}}$ to 
${\goth Y}$. Denote it by $\nu $. Consider the diagonal action of $G$ on 
${\goth g}^{k}$ and ${\cal X}^{k}$: 
$g.(\poi x1{,\ldots,}{k}{}{}{},\poi y1{,\ldots,}{k}{}{}{}) := 
(\poi x1{,\ldots,}{k}{g}{}{},\poi y1{,\ldots,}{k}{}{}{})$, and identify 
$\sqx G{{\goth b}^{k}}$ with $\nu (\sqx G{{\goth b}^{k}})$ by the closed 
immersion $\nu $.

\begin{coro}\label{cbo3}
Set ${\cal B}_{\x}^{(k)} := G.\iota _{k}({\goth b}^{k})$.

{\rm (i)} The subset ${\cal B}_{\x}^{(k)}$ is the image of $\sqx G{{\goth b}^{k}}$ 
by $\chi _{\n}^{(k)}$. Moreover, the restriction of $\chi _{\n}^{(k)}$ to 
$\sqx G{{\goth b}^{k}}$ is a projective birational morphism from $\sqx G{{\goth b}^{k}}$ 
onto ${\cal B}_{\x}^{(k)}$.

{\rm (ii)} The subset ${\cal B}_{\x}^{(k)}$ of ${\cal X}^{k}$ is an irreducible 
component of $\varpi ^{-1}({\cal B}^{(k)})$. 

{\rm (iii)} The subvariety $\varpi ^{-1}({\cal B}^{(k)})$ of ${\cal X}^{k}$ is 
invariant under $W({\cal R})^{k}$ and this action induces a simply transitive action of 
$W({\cal R})^{k}$ on the set of irreducible components of $\varpi ^{-1}({\cal B}^{(k)})$.

{\rm (iv)} The subalgebra $\k[{\cal B}^{(k)}]$ of $\k[\varpi ^{-1}({\cal B}^{(k)}]$
equals $\k[\varpi ^{-1}({\cal B}^{(k)})]^{W({\cal R})^{k}}$ with respect to the action 
of $W({\cal R})^{k}$ on $\varpi ^{-1}({\cal B}^{(k)})$.
\end{coro}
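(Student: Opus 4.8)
The plan is to deduce the four assertions from Lemma~\ref{lbo3}, Lemma~\ref{lcr1},(i) and Lemma~\ref{lbo1}, transcribing the passage from ${\goth b}^{k}$ to ${\cal B}^{(k)}=G.{\goth b}^{k}$.

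(i) First I identify the image of $\gamma _{\n}^{(k)}$ on $\sqx G{{\goth b}^{k}}$: since $\iota _{1}=\gamma _{\n}^{(k)}\rond \iota '$ (from the proof of Lemma~\ref{lbo3}), since $\nu (\sqx G{{\goth b}^{k}})=G.\iota '({\goth b}^{k})$ in $\sqxx k{{\goth b}^{k}}$, and since $\gamma _{\n}^{(k)}=(\gamma _{\n})^{\times k}$ is equivariant for the diagonal $G$-action on ${\cal X}^{k}$, one gets $\gamma _{\n}^{(k)}(\sqx G{{\goth b}^{k}})=G.\iota _{1}({\goth b}^{k})={\cal B}_{{\cal X}}^{(k)}$; in particular ${\cal B}_{{\cal X}}^{(k)}$ is closed by Lemma~\ref{l4int} applied to the closed $B$-invariant subset $\iota _{1}({\goth b}^{k})$ of ${\cal X}^{k}$ (note that $x\mapsto \overline{x}$ is $B$-invariant). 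As $\gamma _{\n}$ is projective by Lemma~\ref{lbo1},(iii), so is $\gamma _{\n}^{(k)}$, hence so is its restriction to the closed subvariety $\sqx G{{\goth b}^{k}}$. Finally $\varpi \rond \gamma _{\n}^{(k)}=\gamma $ on $\sqx G{{\goth b}^{k}}$, and $\gamma $ is birational onto ${\cal B}^{(k)}$ by Lemma~\ref{lcr1},(i), so the restriction of $\gamma _{\n}^{(k)}$ is generically injective; being a surjective morphism between irreducible varieties of the same dimension $k\b g{}+n$, it is birational.

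(ii) This is a dimension count: ${\cal B}_{{\cal X}}^{(k)}$ is irreducible and closed by (i), of dimension $k\b g{}+n$ by (i), and contained in $\varpi ^{-1}({\cal B}^{(k)})$ because $\varpi (\iota _{1}({\goth b}^{k}))={\goth b}^{k}$ and $\varpi $ is $G$-equivariant; since $\e Sh$ is a finite extension of $\e Sh^{W({\cal R})}$, $\varpi $ is finite, so $\varpi ^{-1}({\cal B}^{(k)})$ has dimension $\dim {\cal B}^{(k)}=k\b g{}+n$, whence ${\cal B}_{{\cal X}}^{(k)}$ is one of its irreducible components. (iii) As $W({\cal R})^{k}$ acts only on the ${\goth h}$-factors, $\varpi $ is $W({\cal R})^{k}$-invariant, so $\varpi ^{-1}({\cal B}^{(k)})$ is $W({\cal R})^{k}$-stable; as this action commutes with the diagonal $G$-action, Lemma~\ref{lbo3},(iii) together with (i) gives $\varpi ^{-1}({\cal B}^{(k)})=\varpi ^{-1}(G.{\goth b}^{k})=G.\varpi ^{-1}({\goth b}^{k})=\bigcup _{w\in W({\cal R})^{k}}w.{\cal B}_{{\cal X}}^{(k)}$. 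Each $w.{\cal B}_{{\cal X}}^{(k)}$ is irreducible, closed and of dimension $k\b g{}+n$, hence a component; so these are exactly the components and $W({\cal R})^{k}$ acts transitively on them. For freeness, if $w$ stabilizes ${\cal B}_{{\cal X}}^{(k)}$ then $\varpi \rond w=\varpi $ on ${\cal B}_{{\cal X}}^{(k)}$, while $\varpi $ is birational there by (i); hence $w$ agrees with the identity on a dense open subset of ${\cal B}_{{\cal X}}^{(k)}$, so on all of it, and since $\iota _{1}({\goth h}^{k})=\{(x,x)\mid x\in {\goth h}^{k}\}$ is contained in ${\cal B}_{{\cal X}}^{(k)}$ and $W({\cal R})^{k}$ acts faithfully on ${\goth h}^{k}$, we get $w=1$. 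So the action is simply transitive.

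(iv) Since ${\cal B}^{(k)}$ is closed in ${\goth g}^{k}$, $\varpi ^{-1}({\cal B}^{(k)})$ is its scheme-theoretic preimage, with coordinate ring $\tk {\es S{{\goth g}^{k}}}{\k[{\cal X}^{k}]}\k[{\cal B}^{(k)}]$. Substituting $\k[{\cal X}]=\tk {\e Sh^{W({\cal R})}}{\e Sg}\e Sh$ and using the canonical isomorphism $(\e Sg\otimes _{\e Sh^{W({\cal R})}}\e Sh)^{\tens k}\cong \es S{{\goth g}^{k}}\otimes _{(\e Sh^{W({\cal R})})^{\tens k}}\es S{{\goth h}^{k}}$, this ring equals $\tk {(\e Sh^{W({\cal R})})^{\tens k}}{\k[{\cal B}^{(k)}]}\es S{{\goth h}^{k}}$, a free $\k[{\cal B}^{(k)}]$-module on which $W({\cal R})^{k}$ acts through the factor $\es S{{\goth h}^{k}}$. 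One checks it is reduced — it is generically étale over the domain $\k[{\cal B}^{(k)}]$, the structure morphism ${\cal B}^{(k)}\to \mathrm{Spec}\,(\e Sh^{W({\cal R})})^{\tens k}$ sending the generic point into the locus over which $\es S{{\goth h}^{k}}$ is étale — so it is indeed $\k[\varpi ^{-1}({\cal B}^{(k)})]$. Writing $\es S{{\goth h}^{k}}=(\e Sh^{W({\cal R})})^{\tens k}\oplus M$ as $(\e Sh^{W({\cal R})})^{\tens k}$-modules, with $M$ the kernel of the averaging idempotent, tensoring with $\k[{\cal B}^{(k)}]$ and noting that $W({\cal R})^{k}$-invariants are computed by the image of that idempotent, one gets $\k[\varpi ^{-1}({\cal B}^{(k)})]^{W({\cal R})^{k}}=\k[{\cal B}^{(k)}]$, which is (iv). I expect the only genuinely technical point to be this identification of $\k[\varpi ^{-1}({\cal B}^{(k)})]$ with the displayed tensor product, i.e. the reducedness of that fibre product; everything else is a direct transcription of the argument already made for ${\goth b}^{k}$ in Lemma~\ref{lbo3}.
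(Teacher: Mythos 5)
Parts (i) and (ii) are essentially the paper's proof. In (iii) your route to transitivity — writing $\varpi^{-1}({\cal B}^{(k)}) = G.\varpi^{-1}({\goth b}^{k}) = \bigcup_{w}w.{\cal B}_{\cal X}^{(k)}$ via Lemma~\ref{lbo3},(iii) and then reading off components by dimension — is a slight streamlining of the paper's argument (which takes an arbitrary component $Z$, chooses a component of $Z\cap\varpi^{-1}({\goth b}^k)$, and invokes the same lemma); your argument for freeness via $\varpi\rond w=\varpi$ and birationality of $\varpi\vert_{{\cal B}_{\cal X}^{(k)}}$ is genuinely cleaner than the paper's, which picks a specific point of type $(x,e,\ldots,e)$ with $x$ regular semisimple and does an explicit Bruhat-type computation. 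In (iv) you take a route that is genuinely different: you identify $\k[\varpi^{-1}({\cal B}^{(k)})]$ as the base change $\k[{\cal B}^{(k)}]\otimes_{(\e Sh^{W({\cal R})})^{\tens k}}\es S{{\goth h}^k}$ and compute invariants via the averaging idempotent. The paper instead uses the (automatic) surjection $\k[{\cal X}^k]\twoheadrightarrow\k[\varpi^{-1}({\cal B}^{(k)})]$, averages a preimage over $W({\cal R})^k$, and then uses $\k[{\cal X}]^{W({\cal R})}=\e Sg$; this buys one something real: it sidesteps the reducedness of the fibre product entirely.

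The reducedness step in your (iv) is a genuine gap as written. From "generically étale over the domain $\k[{\cal B}^{(k)}]$" you only get that the ring becomes reduced after inverting a nonzero element of $\k[{\cal B}^{(k)}]$, and a ring whose localization is reduced need not itself be reduced. What saves the argument — and what you should add — is that $\k[{\cal B}^{(k)}]\otimes_{(\e Sh^{W({\cal R})})^{\tens k}}\es S{{\goth h}^k}$ is a \emph{free} $\k[{\cal B}^{(k)}]$-module (since $\es S{{\goth h}^k}$ is free over $(\e Sh^{W({\cal R})})^{\tens k}$ by Chevalley), hence torsion-free over the domain $\k[{\cal B}^{(k)}]$; so a nilpotent that dies in a localization at a nonzero element must already be zero. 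With that observation your argument closes, but without it the claim "one checks it is reduced" is incomplete. It is worth noting that the averaging argument the paper actually uses never needs to compute $\k[\varpi^{-1}({\cal B}^{(k)})]$ at all, and is therefore shorter.
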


\begin{proof}
(i) Let $\gamma _{\x}$ be the restriction of $\chi _{\n}^{(k)}$ to 
$\sqx G{{\goth b}^{k}}$. Since $\iota _{k}= \chi _{\n}^{(k)}\rond \iota '$, 
$\sqx G{{\goth b}^{k}}=G.\iota '({\goth b}^{(k)})$ and $\chi _{\n}^{(k)}$ is 
$G$-equivariant, ${\cal B}_{\x}^{(k)}=\gamma _{\x}(\sqx G{{\goth b}^{k}})$. 
Hence ${\cal B}_{\x}^{(k)}$ is closed in ${\cal X}^{k}$ and $\gamma _{\x}$ is
a projective morphism from $\sqx G{{\goth b}^{k}}$ to ${\cal B}_{\x}^{(k)}$ since 
$\chi _{\n}^{(k)}$ is a projective morphism. According to Lemma~\ref{lbo},(i), 
$\varpi \rond \gamma _{\x}$ is a birational morphism onto ${\cal B}^{(k)}$. Then 
$\gamma _{\x}$ is birational since 
$\varpi ({\cal B}_{\x}^{(k)})={\cal B}^{(k)}$, whence the assertion.

(ii) Since $\varpi $ is a finite morphism, $\varpi ^{-1}({\cal B}^{(k)})$,
${\cal B}^{(k)}$ and ${\cal B}_{\x}^{(k)}$ have the same dimension, whence the 
assertion since ${\cal B}_{\x}^{(k)}$ is irreducible as an image of an irreducible
variety.

(iii) Since the fibers of $\varpi $ are invariant under $W({\cal R})^{k}$,
$\varpi ^{-1}({\cal B}^{(k)})$ is invariant under this action and 
$W({\cal R})^{k}$ permutes the irreducible components of 
$\varpi ^{-1}({\cal B}^{(k)})$. Let $Z$ be an irreducible component of 
$\varpi ^{-1}({\cal B}^{(k)})$. As $\varpi $ is $G^{k}$-equivariant, 
$\varpi ^{-1}({\cal B}^{(k)})$ and $Z$ are invariant under the diagonal action of $G$. 
Moreover, $Z=G.(Z\cap \varpi ^{-1}({\goth b}^{k}))$ since 
${\cal B}^{(k)}=G.{\goth b}^{k}$. Hence for some irreducible
component $Z_{0}$ of $Z\cap \varpi ^{-1}({\goth b}^{k})$, $Z=G.Z_{0}$. According to 
Lemma~\ref{lbo3},(iii), $Z_{0}$ is contained in $w.\iota _{k}({\goth b}^{k})$ for some 
$w$ in $W({\cal R})^{k}$. Hence $Z=w.{\cal B}_{\x}^{(k)}$ since the 
actions of $G^{k}$ and $W({\cal R})^{k}$ on ${\cal X}^{k}$ commute and $Z$ is an 
irreducible component of $\varpi ^{-1}({\cal B}^{(k)})$. 

Let $w=(\poi w1{,\ldots,}{k}{}{}{})$ be in $W({\cal R})^{k}$ such that 
$w.{\cal B}_{\x}^{(k)}={\cal B}_{\x}^{(k)}$. Let $x$ be in ${\goth h}_{\r}$ 
and let $i=1,\ldots,k$. Set: 
$$ z := (\poi x1{,\ldots,}{k}{}{}{},\overline{x_{1}},\ldots,\overline{x_{k}}) 
\mbox{ with } x_{j} := \left \{ \begin{array}{ccc} x & \mbox{ if } & j=i \\
x_{j} = e & \mbox{ otherwise } & \end{array} \right. . $$
Then there exists $(\poi y1{,\ldots,}{k}{}{}{})$ in ${\goth b}^{k}$ and $g$ in $G$ such 
that
$$ w.z = (\poi y1{,\ldots,}{k}{g}{}{},\overline{y_{1}},\ldots,\overline{y_{k}}) .$$ 
For some $b$ in $B$, $b(y_{i})=\overline{y_{i}}$ since $y_{i}$ is a regular 
semisimple element, belonging to ${\goth b}$. As a result, $gb^{-1}(\overline{y_{i}})=x$
and $w_{i}(x)=\overline{y_{i}}$. Hence $gb^{-1}$ is an element of $N_{G}({\goth h})$
representing $w_{i}^{-1}$. Furthermore, since $gb^{-1}(b(y_{j}))=e$ for $j\neq i$, 
$b(y_{j})$ is a regular nilpotent element belonging to ${\goth b}$. Then, since 
there is one and only one Borel subalgebra containing a regular nilpotent element, 
$gb^{-1}({\goth b})={\goth b}$ and $w_{i}=1_{{\goth h}}$. As a result, $w$ is the
identity of $W({\cal R})^{k}$, whence the assertion.   

(iv) Since the fibers of $\varpi $ are invariant under $W({\cal R})^{k}$,
$\k[{\cal B}^{(k)}]$ is contained in $\k[\varpi ^{-1}({\cal B}^{(k)})]^{W({\cal R})^{k}}$.
Let $p$ be in $\k[\varpi ^{-1}({\cal B}^{(k)})]^{W({\cal R})^{k}}$. Since $W({\cal R})$
is a finite group, $p$ is the restriction to $\varpi ^{-1}({\cal B}^{(k)})$ of an element
$q$ of $\k[{\cal X}]^{\tens k}$, invariant under $W({\cal R})^{k}$. Since 
$\k[{\cal X}]^{W({\cal R})}=\e Sg$, $q$ is in $\e Sg^{\tens k}$ by Lemma~\ref{lbo},(iv),
and $p$ is in $\k[{\cal B}^{(k)}]$, whence the assertion.
\end{proof}

\subsection{} \label{bo4}
For $\alpha $ a positive root, denote by ${\goth h}_{\alpha }$ the kernel of $\alpha $
and by $S_{\alpha }$ the closure in ${\goth b}$ of the image of the map
$$ \xymatrix{U\times {\goth h}_{\alpha } \ar[rr] && {\goth b}}, \qquad 
(g,x) \longmapsto g(x) .$$ 
For $\beta $ in $\Pi $, set:
$$ {\goth u}_{\beta } := 
\bigoplus _{\alpha \in {\cal R}_{+}\setminus \{\beta \}} {\goth g}^{\beta } , \qquad
{\goth b}_{\beta } := {\goth h}_{\beta } \oplus {\goth u}_{\beta } .$$

\begin{lemma}\label{lbo4}
For $\alpha $ in ${\cal R}_{+}$, let ${\goth h}'_{\alpha }$ be the set of subregular 
elements belonging to ${\goth h}_{\alpha }$.

{\rm (i)} For $\alpha $ in ${\cal R}_{+}$, $S_{\alpha }$ is a subvariety of codimension
$2$ of ${\goth b}$. Moreover, it is contained in ${\goth b}\setminus {\goth b}_{\r}$.

{\rm (ii)} For $\beta $ in $\Pi $, $S_{\beta }={\goth b}_{\beta }$.

{\rm (iii)} The $S_{\alpha }$'s, $\alpha \in {\cal R}_{+}$, are the irreducible
components of ${\goth b}\setminus {\goth b}_{\r}$.
\end{lemma}

\begin{proof}
(i) For $x$ in ${\goth h}'_{\alpha }$, ${\goth b}^{x}={\goth h}+\k x_{\alpha }$. Hence 
$U({\goth h}'_{\alpha })$ has dimension $n-1+\rg-1$, whence the assertion since 
$U({\goth h}'_{\alpha })$ is dense in $S_{\alpha }$ and ${\goth h}'_{\alpha }$
is contained in ${\goth b}\setminus {\goth b}_{\r}$.

(ii) For $\beta $ in $\Pi $, $U({\goth h}'_{\beta })$ is contained in ${\goth b}_{\beta }$
since ${\goth b}_{\beta }$ is an ideal of ${\goth b}$, whence the assertion by (i).

(iii) According to (i), it suffices to prove that ${\goth b}\setminus {\goth b}_{\r}$
is the union of the $S_{\alpha }$'s. Let $x$ be in ${\goth b}\setminus {\goth b}_{\r}$. 
According to~\cite{Ve}, for some $g$ in $G$ and for some $\beta $ in $\Pi $, $x$ is in
$g({\goth b}_{\beta })$. Since ${\goth b}_{\beta }$ is an ideal of ${\goth b}$, by 
Bruhat's decomposition of $G$, for some $b$ in $B$ and for some $w$ in $W({\cal R})$, 
$b^{-1}(x)$ is in $w({\goth b}_{\beta })\cap {\goth b}$. By definition,
$$ w({\goth b}_{\beta }) = w({\goth h}_{\beta }) \oplus w({\goth u}_{\beta })
= {\goth h}_{w(\beta )} \oplus 
\bigoplus _{\alpha \in {\cal R}_{+}\setminus \{\beta \}} 
{\goth g}^{w(\alpha )} .$$
So, 
$$w({\goth b}_{\beta })\cap {\goth b} = {\goth h}_{w(\beta )} \oplus 
{\goth u}_{0} \mbox{ with } {\goth u}_{0} :=
\bigoplus _{\mycom{\alpha \in {\cal R}_{+}\setminus \{\beta \}}{w(\alpha )\in 
{\cal R}_{+}}} {\goth g}^{w(\alpha )} .$$
The subspace ${\goth u}_{0}$ of ${\goth u}$ is a subalgebra, not containing 
${\goth g}^{w(\beta )}$. Then, denoting by $U_{0}$ the closed subgroup of $U$ whose
Lie algebra is $\ad {\goth u}_{0}$, 
$$ \overline{U_{0}({\goth h}_{w(\beta )})} = w({\goth b}_{\beta })\cap {\goth b}$$
since the left hand side is contained in the right hand side and has the same dimension.
As a result, $x$ is in $S_{w(\beta )}$ since $S_{w(\beta )}$ is $B$-invariant, 
whence the assertion.
\end{proof}

Recall that $\theta $ is the map
$$\xymatrix{U_{-}\times {\goth b}_{\r} \ar[rr] && {\cal X}} , \qquad  (g,x) \longmapsto 
(g(x),\overline{x}) $$
and denote by $W'_{k}$ the inverse image of $\theta (U_{-}\times {\goth b}_{\r})$
by the projection 
$$\xymatrix{{\cal B}_{\x}^{(k)} \ar[rr] && {\cal X}} , \qquad
(\poi x1{,\ldots,}{k}{}{}{},\poi y1{,\ldots,}{k}{}{}{}) \longmapsto (x_{1},y_{1}) .$$

\begin{lemma}\label{l2bo4}
Let $W_{k}$ be the subset of elements $(x,y)$ of ${\cal B}_{\x}^{(k)}$ 
($x\in {\goth g}^{k},y\in {\goth h}^{k}$) such that $E_{x}\cap {\goth g}_{\r}$ is not 
empty.

{\rm (i)} The subset $W'_{k}$ of ${\cal B}_{\x}^{(k)}$ is a smooth open subset.
Moreover, the map
$$\xymatrix{U_{-}\times {\goth b}_{\r}\times {\goth b}^{k-1} \ar[rr] &&  W'_{k}} , \qquad
(g,\poi x1{,\ldots,}{k}{}{}{}) \longmapsto (\poi x1{,\ldots,}{k}{g}{}{},
\overline{x_{1}},\ldots,\overline{x_{k}}) .$$
is an isomorphism of varieties.

{\rm (ii)} The subset ${\cal B}_{\x}^{(k)}$ of ${\goth g}^{k}\times {\goth h}^{k}$ 
is invariant under the canonical action of ${\mathrm {GL}}_{k}(\k)$.

{\rm (iii)} The subset $W_{k}$ of ${\cal B}_{\x}^{(k)}$ is a smooth open subset.
Moreover, $W_{k}$ is the $G\times {\mathrm {GL}}_{k}(\k)$-invariant set generated by
$W'_{k}$.

{\rm (iv)} The subvariety ${\cal B}_{\x}^{(k)}\setminus W_{k}$ of 
${\cal B}_{\x}^{(k)}$ has codimension at least $2k$. 
\end{lemma}

\begin{proof}
(i) According to Corollary~\ref{cbo2},(ii), $\theta $ is an isomorphism onto
a smooth open subset of ${\cal X}$. As a result, $W'_{k}$ is an open subset of 
${\cal B}_{\x}^{(k)}$ and the map
$$ \xymatrix{U_{-}\times {\goth b}_{\r}\times {\goth b}^{k-1} \ar[rr] && W'_{k}} , \qquad
(g,\poi x1{,\ldots,}{k}{}{}{}) \longmapsto 
(\poi x1{,\ldots,}{k}{g}{}{},\overline{x_{1}},\ldots,\overline{x_{k}}) $$
is an isomorphism whose inverse is given by
$$ \xymatrix{W'_{k} \ar[rr] && U_{-}\times {\goth b}_{\r}\times {\goth b}^{k-1}},$$ 
$$ (\poi x1{,\ldots,}{k}{}{}{}) \longmapsto  (\theta ^{-1}(x_{1},\overline{x_{1}})_{1},
\poi x1{,\ldots,}{k}{\theta ^{-1}(x_{1},\overline{x_{1}})_{1}}{}{}) $$
with $\theta ^{-1}$ the inverse of $\theta $ and 
$\theta ^{-1}(x_{1},\overline{x_{1}})_{1}$ the component of 
$\theta ^{-1}(x_{1},\overline{x_{1}})$ on $U_{-}$, whence the assertion since 
$U_{-}\times {\goth b}_{\r}\times {\goth b}^{k-1}$ is smooth.

(ii) For $(\poi x1{,\ldots,}{k}{}{}{})$ in ${\goth b}^{k}$ and for  
$(a_{i,j},1\leq i,j\leq k)$ in ${\mathrm {GL}}_{k}(\k)$,
$$ \overline{\sum_{j=1}^{k} a_{i,j}x_{j}} = \sum_{j=1}^{k}a_{i,j}\overline{x_{j}} $$
so that $\iota _{k}({\goth b}^{k})$ is invariant under the action of 
${\mathrm {GL}}_{k}(\k)$ in ${\goth g}^{k}\times {\goth h}^{k}$ defined by
$$ (a_{i,j},1\leq i,j\leq k) . (\poi x1{,\ldots,}{k}{}{}{},\poi y1{,\ldots,}{k}{}{}{})
:= (\sum_{j=1}^{k} a_{i,j}x_{j},j=1,\ldots,k,\sum_{j=1}^{k} a_{i,j}y_{j},j=1,\ldots,k) ,$$
whence the assertion since ${\cal B}_{\x}^{(k)}=G.\iota _{k}({\goth b}^{k})$ and 
the actions of $G$ and ${\mathrm {GL}}_{k}(\k)$ in 
${\goth g}^{k}\times {\goth h}^{k}$ commute.

(iii) According to (i), $G.W'_{k}$ is a smooth open subset of 
${\cal B}_{\x}^{(k)}$. Moreover, $G.W'_{k}$ is the subset of elements $(x,y)$
such that the first component of $x$ is regular. So, by (ii) and Lemma~\ref{l6int}, 
$W_{k}= {\mathrm {GL}}_{k}(\k).(G.W'_{k})$, whence the assertion. 

(iv) According to Corollary~\ref{cbo3},(i), ${\cal B}_{\x}^{(k)}$ is the image of 
$\sqx G{{\goth b}^{k}}$ by the restriction $\gamma _{\x}$ of $\chi _{\n}^{(k)}$ to
$\sqx G{{\goth b}^{k}}$. Then ${\cal B}_{\x}^{(k)}\setminus W_{k}$ is contained in
the image of $\sqx G{({\goth b}\setminus {\goth b}_{\r})^{k}}$ by $\gamma _{\x}$. 
As a result, by Lemma~\ref{lbo4},
$$ \dim {\cal B}_{\x}^{k}\setminus W_{k} \leq n + k(\b g{}-2) ,$$
whence the assertion.
\end{proof}

\begin{coro}\label{cbo4}
The restriction of $\gamma _{\x}$ to $\gamma _{\x}^{-1}(W_{k})$
is an isomorphism onto $W_{k}$. Moreover, $\gamma _{\x}^{-1}(W_{k})$ is a big open subset 
of $\sqx G{{\goth b}^{k}}$.
\end{coro}

\begin{proof}
Since the subset of Borel subalgebras containing a regular element is finite,
the fibers of $\gamma _{\x}$ over the elements of $W_{k}$ are finite. In particular, 
the restriction of $\gamma _{\x}$ to $\gamma _{\x}^{-1}(W_{k})$ is a quasi finite
surjective morphism onto $W_{k}$. So, by Zariski's Main Theorem \cite[\S 9]{Mu}, it is an 
isomorphism since $W_{k}$ is smooth by Lemma~\ref{l2bo4},(iii).

Recall that $\sqx G{{\goth b}^{k}}$ identifies with a closed subset of 
$G/B\times {\goth g}^{k}$. For $u$ in $G/B$ and $x$ in ${\goth g}^{k}$ such that 
$(u,x)$ is in $\sqx G{{\goth b}^{k}}$, $(u,x)$ is in $\gamma _{\x}^{-1}(W_{k})$ if 
and only if $E_{x}\cap {\goth g}_{\r}$ is not empty. Denote by $\pi $ the bundle 
projection of the vector bundle $\sqx G{{\goth b}^{k}}$ over $G/B$. Let $\Sigma $ be an 
irreducible component of $\sqx G{{\goth b}^{k}}\setminus \gamma _{\x}^{-1}(W_{k})$. For
$u$ in $\pi (\Sigma )$, set:
$$ \Sigma _{u} := \{x \in {\goth g}^{k} \ \vert \ (u,x) \in \Sigma \} .$$
Since $W_{k}$ is a cone, for all $u$ in $\pi (\Sigma )$, $\Sigma _{u}$ is a closed cone
of $u^{k}$, whence $\pi (\Sigma )\times \{0\} = \Sigma \cap G/B\times \{0\}$
so that $\pi (\Sigma )$ is a closed subset of $G/B$. Suppose that $\Sigma $ has 
codimension $1$ in $\sqx G{{\goth b}^{k}}$. A contradiction is expected. Then 
$\pi (\Sigma )$ has codimension at most $1$ in $G/B$. If $\pi (\Sigma )$ has 
codimension $1$ in $G/B$, then for all $u$ in $\pi (\Sigma )$, $\Sigma _{u} = u^{k}$. It 
is impossible since $u\cap {\goth g}_{\r}$ is not empty. As a result, for all $u$ in 
a dense open subset of $G/B$, $\Sigma _{u}$ is closed of codimension $1$ in $u^{k}$. 
According to Lemma~\ref{lbo4}, $u\setminus {\goth g}_{\r}$ has codimension $2$ in $u$
and $\Sigma _{u}$ is contained in $(u\setminus {\goth g}_{\r})^{k}$, whence the 
contradiction.
\end{proof}

\subsection{} \label{bo5}
For $E$ a $B$-module, denote by ${\cal L}_{0}(E)$ the sheaf of local sections of the 
vector bundle $\sqx GE$ over $G/B$. Let $\Delta $ be the diagonal of $(G/B)^{k}$ and let 
${\cal J}_{\Delta }$ be its ideal of definition in $\an {(G/B)^{k}}{}$. The variety $G/B$
identifies with $\Delta $ so that $\an {(G/B)^{k}}{}/{\cal J}_{\Delta }$ is isomorphic to
$\an {G/B}{}$. For $E$ a $B^{k}$-module, denote by ${\cal L}(E)$ the sheaf of local 
sections of the vector bundle $\sqxx kE$ over $(G/B)^{k}$.

\begin{lemma}\label{lbo5}
Let $E$ be a $B^{k}$-module. Denote by $\overline{E}$ the $B$-module defined by the 
diagonal action of $B$ on $E$. The short sequence of $\an {(G/B)^{k}}{}$-modules
$$ 0 \longrightarrow \tk {\an {\sqxx k{{\goth b}^{k}}}{}}{{\cal J}_{\Delta }}{\cal L}(E)
\longrightarrow {\cal L}(E) \longrightarrow {\cal L}_{0}(\overline{E}) \longrightarrow 0$$
is exact.
\end{lemma}

\begin{proof}
Since ${\cal L}(E)$ is a locally free $\an {(G/B)^{k}}{}$-module, the short 
sequence of $\an {(G/B)^{k}}{}$-modules
$$ 0 \longrightarrow \tk {\an {(G/B)^{k}}{}}{{\cal J}_{\Delta }}{\cal L}(E) 
\longrightarrow {\cal L}(E) \longrightarrow \tk {\an {(G/B)^{k}}{}}{\an {\Delta }{}}
{\cal L}(E) \longrightarrow 0$$
is exact, whence the assertion since $\tk {\an {(G/B)^{k}}{}}{\an {\Delta }{}}
{\cal L}(E)$ is isomorphic to ${\cal L}_{0}(\overline{E})$.
\end{proof}

From Lemma~\ref{lbo5} results a canonical morphism 
$$ \xymatrix{{\mathrm {H}}^{0}((G/B)^{k},{\cal L}(E)) \ar[rr] && 
{\mathrm {H}}^{0}(G/B,{\cal L}_{0}(\overline{E}))}$$
for all $B^{k}$-module $E$. According to the identification of ${\goth g}$ and 
${\goth g}^{*}$ by $\dv ..$, the duals of ${\goth b}$ and ${\goth u}$ identify with 
${\goth b}_{-}$ and ${\goth u}_{-}$ respectively so that ${\goth b}_{-}$ and 
${\goth u}_{-}$ are $B$-modules. 

\begin{lemma}\label{l2bo5}
{\rm (i)} The algebra $\k[{\cal B}_{\n}^{(k)}]$ is equal to 
${\mathrm {H}}^{0}(G/B,{\cal L}_{0}(\overline{\es S{{\goth b}_{-}^{k}}}))$.

{\rm (ii)}  The algebra $\k[{\cal N}_{\n}^{(k)}]$ is equal to 
${\mathrm {H}}^{0}(G/B,{\cal L}_{0}(\overline{\es S{{\goth u}_{-}^{k}}}))$.

{\rm (iii)} The algebra $\k[{\cal B}_{\x}^{(k)}]$ is the image of the morphism
$$ \xymatrix{{\mathrm {H}}^{0}((G/B)^{k},{\cal L}(\es S{{\goth b}_{-}^{k}})) \ar[rr] && 
{\mathrm {H}}^{0}(G/B,{\cal L}_{0}(\overline{\es S{{\goth b}_{-}^{k}}}))} .$$

{\rm (iv)} The algebra $\k[{\cal N}^{(k)}]$ is the image of the morphism 
$$ \xymatrix{{\mathrm {H}}^{0}((G/B)^{k},{\cal L}(\es S{{\goth u}_{-}^{k}})) \ar[rr] && 
{\mathrm {H}}^{0}(G/B,{\cal L}_{0}(\overline{\es S{{\goth u}_{-}^{k}}})) }. $$
\end{lemma}

\begin{proof}
(i) Since $\sqx G{{\goth b}^{k}}$ is a desingularization of the normal variety 
${\cal B}_{\n}^{(k)}$, $\k[{\cal B}_{\n}^{(k)}]$ is the space of global sections
of $\an {\sqx G{{\goth b}^{k}}}{}$ by Lemma~\ref{lint}. Let $\pi $ be the bundle 
projection of the fiber bundle $\sqx G{{\goth b}^{k}}$. Since $\es S{{\goth b}_{-}^{k}}$ 
is the space of polynomial functions on ${\goth b}^{k}$, 
$$ \pi _{*}(\an {\sqx G{{\goth b}^{k}}}{}) = 
{\cal L}_{0}(\overline{\es S{{\goth b}_{-}^{k}}}),$$
whence the assertion.

(ii) By Lemma~\ref{lbo},(ii), $\sqx G{{\goth u}^{k}}$ is a desingularization of 
${\cal N}_{\n}^{(k)}$ so that $\k[{\cal N}_{\n}^{(k)}]$ is the space of global 
sections of $\an {\sqx G{{\goth u}^{k}}}{}$ by Lemma~\ref{lint}. Denoting by $\pi _{0}$
the bundle projection of $\sqx G{{\goth u}^{k}}$, 
$$ {\pi _{0}}_{*}(\an {\sqx G{{\goth u}^{k}}}{}) = 
{\cal L}_{0}(\overline{\es S{{\goth u}_{-}^{k}}}),$$
whence the assertion.

(iii) Since $\sqxx k{{\goth b}^{k}}$ is isomorphic to $(\sqx G{{\goth b}})^{k}$, 
$$ {\mathrm {H}}^{0}((G/B)^{k},\an {\sqxx k{{\goth b}^{k}}}{}) = 
{\mathrm {H}}^{0}(G/B,\an {\sqx G{{\goth b}}}{})^{\tens k} .$$
By (i), 
$$H^{0}(G/B,\an {\sqx G{{\goth b}}}{})= 
{\mathrm {H}}^{0}(G/B,{\cal L}(\es S{{\goth b}_{-}}) = \k[{\cal X}]$$ 
since $\sqx G{{\goth b}}$ is a desingularization of ${\cal X}$ by Lemma~\ref{lbo},(i) 
and (ii), whence
$$ {\mathrm {H}}^{0}((G/B)^{k},{\cal L}(\es S{{\goth b}_{-}^{k}}) = \k[{\cal X}^{k}] .$$

By definition, ${\cal B}_{\x}^{(k)}$ is a closed subvariety of ${\cal X}^{k}$.
According to Corollary~\ref{cbo3}, $\k[{\cal B}^{(k)}]$ is a subalgebra of 
$\k[{\cal B}_{\x}^{(k)}]$ having the same fraction field and 
$\k[{\cal B}_{\x}^{(k)}]$ is a finite extension of $\k[{\cal B}^{(k)}]$. Hence 
$\k[{\cal B}_{\x}^{(k)}]$ is a subalgebra of $\k[{\cal B}_{\n}^{(k)}]$. 
For $\varphi $ in $\k[{\cal B}_{\x}^{(k)}]$, $\varphi $ is the restriction to
${\cal B}_{\x}^{(k)}$ of an element $\psi $ of $\k[{\cal X}^{(k)}]$. As mentioned 
above, $\psi $ is a global section of ${\cal L}({\goth b}_{-}^{k})$. Denoting by 
$\overline{\psi }$ its restriction to the diagonal of $(G/B)^{k}$, $\overline{\psi }$
is a global section of ${\cal L}_{0}(\overline{\es S{{\goth b}_{-}^{k}}}))$ so that 
$\overline{\psi }$ is in $\k[{\cal B}_{\n}^{(k)}]$. Moreover, for all smooth point $x$ of 
${\cal B}_{\x}^{(k)}$, $\overline{\psi }(x)=\varphi (x)$. Hence $\varphi $ is in the
image of the morphism 
$$ \xymatrix{{\mathrm {H}}^{0}((G/B)^{k},{\cal L}(\es S{{\goth b}_{-}^{k}})) \ar[rr] && 
{\mathrm {H}}^{0}(G/B,{\cal L}_{0}(\overline{\es S{{\goth b}_{-}^{k}}}))} .$$
Conversely, for $\varphi $ image of $\psi $ in 
${\mathrm {H}}^{0}((G/B)^{k},{\cal L}(\es S{{\goth b}_{-}^{k}})$ by this morphism, 
$\psi $ is in $\k[{\cal X}^{k}]$ and $\varphi (x)=\psi (x)$ for all smooth point $x$
of ${\cal B}_{\x}^{(k)}$ so that $\varphi $ is the restriction of $\psi $ to 
${\cal B}_{\x}^{(k)}$. 

(iv) Let $\varphi $ be in $\k[{\cal N}^{(k)}]$. Since ${\cal N}^{(k)}$ is a closed 
subvariety of ${\goth N}_{{\goth g}}^{k}$, $\varphi $ is the restriction to 
${\cal N}^{(k)}$ of an element $\psi $ of $\k[{\goth N}_{{\goth g}}^{k}]$. As mentioned 
above, $\psi $ is a global section of ${\cal L}(\es S{{\goth u}_{-}^{k}})$. Denoting by 
$\overline{\psi }$ the restriction of $\psi $ to the diagonal of $(G/B)^{k}$, 
$\overline{\psi }$ is a global section of 
${\cal L}_{0}(\overline{\es S{{\goth u}_{-}^{k}}}))$ so that 
$\overline{\psi }$ is in $\k[{\cal N}_{\n}^{(k)}]$. Moreover, for all smooth point $x$ of 
${\cal N}^{(k)}$, $\overline{\psi }(x)=\varphi (x)$. Hence $\varphi $ is in the
image of the morphism 
$$ \xymatrix{{\mathrm {H}}^{0}((G/B)^{k},{\cal L}(\es S{{\goth u}_{-}^{k}})) \ar[rr] && 
{\mathrm {H}}^{0}(G/B,{\cal L}_{0}(\overline{\es S{{\goth u}_{-}^{k}}}))} .$$
Conversely, for $\varphi $ image of $\psi $ in 
${\mathrm {H}}^{0}((G/B)^{k},{\cal L}(\es S{{\goth u}_{-}^{k}})$ by this morphism, 
$\psi $ is in $\k[{\goth N}_{{\goth g}}^{k}]$ and $\varphi (x)=\psi (x)$ for all smooth 
point $x$ of ${\cal N}^{(k)}$ so that $\varphi $ is the restriction of $\psi $ to 
${\cal N}^{(k)}$. 
\end{proof}

Let $\poi x1{,\ldots,}{k\rg}{}{}{}$ be a basis of ${\goth h}^{k}$ verifying the following 
conditions for $j=1,\ldots,k$:
\begin{itemize}
\item [{\rm (1)}] $x_{j,l}=0$ for $l<(j-1)\rg$ and $l>j\rg$
\item [{\rm (2)}] $x_{j,j\rg}=h$,
\item [{\rm (3)}] $\poi x{j,(j-1)\rg +1}{,\ldots,}{j,j\rg-1}{}{}{}$ is a basis of the 
orthogonal complement to $h$ in ${\goth h}$,
\end{itemize}
with $x_{j,l}$ the component of $x_{l}$ on the $j$-th factor ${\goth h}$. Set: 
$$ E_{0} := \{0\}, \qquad F_{0} := {\goth b}_{-}^{k},\qquad 
E_{i} := {\mathrm {span}}(\{\poi x1{,\ldots,}{i}{}{}{}\}), \qquad 
F_{i} := {\goth b}_{-}^{k}/E_{i} $$ 
for $i=1,\ldots,k\rg$. In the $B$-module ${\goth b}_{-}$, ${\goth h}$ is the subspace of 
invariant elements and ${\goth u}_{-}$ is the quotient of ${\goth b}_{-}$ by ${\goth h}$.
So for $i=0,\ldots,k\rg$, $F_{i}$ is a $B^{k}$-module. As a matter of fact, because of 
the choice of the basis $\poi x1{,\ldots,}{k\rg}{}{}{}$, 
$F_{i}=\poi F{i,1}{ \times \cdots \times }{i,k}{}{}{}$ where 
$\poi F{i,1}{,\ldots,}{i,k}{}{}{}$ are $B$-modules quotient of ${\goth b}_{-}$ and the 
action of $B^{k}$ on $F_{i}$ is the product action. For $i=0,\ldots,k\rg$, set:
$$ A_{i} := {\mathrm {H}}^{0}(G/B,{\cal L}_{0}(\overline{\es S{F_{i}}}))
\quad  \text{and} \quad
C_{i} := {\mathrm {H}}^{0}((G/B)^{k},{\cal L}(\es S{F_{i}})) .$$
Denote by $B_{i}$ the image of $C_{i}$ by the restriction map to the diagonal of 
$(G/B)^{k}$. Then $A_{i}$, $B_{i}$, $C_{i}$ are integral graded algebras. Moreover, by 
Lemma~\ref{lint}, $A_{i}$ and $C_{i}$ are normal as spaces of global sections of 
structural sheaves of vector bundles over $G/B$ and $(G/B)^{k}$. For $i<k\rg$, the 
$B^{k}$-module $F_{i+1}$ is a quotient of the $B^{k}$-module $F_{i}$ so that 
$\es S{F_{i+1}}$ is a quotient of $\es S{F_{i}}$ as a $B^{k}$-algebra and 
${\cal L}(\es S{F_{i+1}})$ and ${\cal L}_{0}(\overline{\es S{F_{i+1}}})$ are quotients of 
${\cal L}(\es S{F_{i}})$ and ${\cal L}_{0}(\overline{\es S{F_{i}}})$ respectively, whence
morphisms of algebras
$$ \xymatrix{C_{i} \ar[rr]^{\nu _{i}} && C_{i+1}} \quad  \text{and} \quad 
\xymatrix{A_{i} \ar[rr]^{\nu _{i,0}} && A_{i+1} } .$$ 

For $i=0,\ldots,k\rg-1$ and $m$ positive integer, denote again by $x_{i+1}$ the image 
of $x_{i+1}$ in $F_{i}$ by the quotient morphism and by $J_{m,i}$ the ideal of 
$\es S{F_{i}}$ generated by $x_{i+1}^{m}$. As $x_{i+1}$ is a fixed point of the 
$B^{k}$-module $F_{i}$, $J_{m,i}$ is a $B^{k}$-submodule of $\es S{F_{i}}$.

\begin{lemma}\label{l3bo5}
Let $i=0,\ldots,k\rg-1$ and $m$ a positive integer. 

{\rm (i)} The algebra $\k[x_{i+1}]$ is canonically embedded in $A_{i}$ and $C_{i}$.

{\rm (ii)} The space ${\mathrm {H}}^{0}(G/B,{\cal L}_{0}(\overline{J_{m,i}}))$ is the 
ideal of $A_{i}$ generated by $x_{i+1}^{m}$ and the image of the canonical morphism 
$$ \xymatrix{{\mathrm {H}}^{0}(G/B,{\cal L}_{0}(\overline{J_{m,i}}))  \ar[rr] && 
{\mathrm {H}}^{0}(G/B,{\cal L}_{0}(
\overline{\tk {\k}{\es S{F_{i+1}}}{\k x_{i+1}^{m}}})) }$$ 
is equal to $\tk {\k}{\nu _{i,0}(A_{i})}{\k x_{i+1}^{m}}$.

{\rm (iii)} The space ${\mathrm {H}}^{0}((G/B)^{k},{\cal L}(J_{m,i}))$ is the 
ideal of $C_{i}$  generated by $x_{i+1}^{m}$ and the image of the canonical morphism 
$$ \xymatrix{{\mathrm {H}}((G/B)^{k},{\cal L}(J_{m,i}))  \ar[rr] && 
{\mathrm {H}}((G/B)^{k},{\cal L}(\tk {\k}{\es S{F_{i+1}}}{\k x_{i+1}^{m}})) }$$ 
is equal to $\tk {\k}{C_{i+1}}{\k x_{i+1}^{m}}$.

{\rm (iv)} Let $\poi v1{,\ldots,}{l}{}{}{}$ be in $A_{i}$ such that 
$\poi v1{,\ldots,}{l}{\nu }{i,0}{i,0}$ are linearly free over $\k$. Then 
$\poi v1{,\ldots,}{l}{}{}{}$ are linearly free over $\k[x_{i+1}]$.

{\rm (v)} Let $\poi w1{,\ldots,}{l}{}{}{}$ be in $C_{i}$ such that 
$\poi w1{,\ldots,}{l}{\nu }{i}{i}$ are linearly free over $\k$. Then 
$\poi w1{,\ldots,}{l}{}{}{}$ are linearly free over $\k[x_{i+1}]$.
\end{lemma}

\begin{proof}
(i) Since $x_{i+1}$ is a fixed point of the $B^{k}$-module $\es S{F_{i}}$,
${\cal L}(\k[x_{i+1}])$ and ${\cal L}_{0}(\overline{\k[x_{i+1}]})$ are submodules of 
${\cal L}(\es S{F_{i}})$ and ${\cal L}_{0}(\overline{\es S{F_{i}}})$ respectively. 
Moreover they are isomorphic to $\tk {\k}{\an {(G/B)^{k}}{}}{\k[x_{i+1}]}$
$\tk {\k}{\an {G/B}{}}{\k[x_{i+1}]}$ respectively, whence the assertion. 

(ii) Since $F_{i+1}$ is the quotient of $F_{i}$ by $\k x_{i+1}$, we have the exact 
sequence of $B^{k}$-modules,
$$ 0 \longrightarrow J_{m+1,i} \longrightarrow J_{m,i} \longrightarrow 
\tk {\k}{\es S{F_{i+1}}}{\k x_{i+1}^{m}} \longrightarrow 0,$$ 
whence the exact sequence of $\an {G/B}{}$-modules,
$$ 0 \longrightarrow {\cal L}_{0}(\overline{J_{m+1,i}}) \longrightarrow 
{\cal L}_{0}(\overline{J_{m,i}}) \longrightarrow 
{\cal L}_{0}(\overline{\tk {\k}{\es S{F_{i+1}}}{\k x_{i+1}^{m}}}) \longrightarrow 0,$$ 
and whence the canonical morphism 
$$ \xymatrix{{\mathrm {H}}^{0}(G/B,{\cal L}_{0}(\overline{J_{m,i}}))  \ar[rr] && 
{\mathrm {H}}^{0}(G/B,{\cal L}_{0}(
\overline{\tk {\k}{\es S{F_{i+1}}}{\k x_{i+1}^{m}}})) }.$$ 
In particular, $\tk {\k}{\nu _{i,0}(A_{i})}{\k x_{i+1}^{m}}$ is contained in its image
since the image of $ax_{i+1}^{m}$ is equal to $\nu _{i,0}(a)\tens x_{i+1}^{m}$ for all
$a$ in $A_{i}$.

Let $a$ be in ${\mathrm {H}}^{0}(G/B,{\cal L}_{0}(\overline{J_{m,i}}))$. Let 
$\poi O1{,\ldots,}{l}{}{}{}$ be a cover of $G/B$ by affine trivialization 
open subsets of the vector bundle $\sqx G{\es S{F_{i}}}$. For $j=1,\ldots,l$, 
denoting by $\Phi _{j}$ a trivialization over $O_{j}$, we have a commutative diagram:
$$ \xymatrix{ \pi _{i}^{-1}(O_{j}) \ar[rr]^{\Phi _{j}} \ar[rrd]_{\pi _{i}} && 
O_{j}\times \es S{F_{i}} \ar[d]^{\pr1} \\ && O_{j} }$$
with $\pi _{i}$ the bundle projection. Since $x_{i+1}$ is invariant under $B$, for 
$\varphi $ local section of ${\cal L}_{0}(\overline{\es S{F_{i}}})$ above $O_{j}$, 
$\Phi _{j}(x_{i+1}^{m}\varphi ) = x_{i+1}^{m}\Phi _{j}(\varphi )$, whence 
$${\Phi _{j}}_{*}(\Gamma (O_{j},{\cal L}_{0}(\overline{J_{m,i}})) = 
\tk {\k}{\k[O_{j}]}\es S{F_{i}}x_{i+1}^{m} .$$
As a result, for some local section $a_{j}$ above $O_{j}$ of 
${\cal L}_{0}(\overline{\es S{F_{i}}})$, 
$a = x_{i+1}^{m}a_{j}$. Moreover, $a_{j}$ is uniquely defined by this equality. Then 
for all $j,j'$, $a_{j}$ and $a_{j'}$ have the same restriction to $O_{j}\cap O_{j'}$. 
Denoting by $a'$ the global section of ${\cal L}_{0}(\overline{\es S{F_{i}}})$ 
extending $\poi a1{,\ldots,}{l}{}{}{}$, $a=a'x_{i+1}^{m}$, whence the assertion.

(iii) According to~\cite[Theorem B and Corollary]{He}, for a $B$-module quotient $V$ of 
${\goth b}_{-}$ having ${\goth u}_{-}$ as quotient, 
${\mathrm {H}}^{1}(G/B,{\cal L}_{0}(V))=0$. Hence $C_{i+1}$ is the image of $\nu _{i}$. 
From the exact sequence of $(G/B)^{k}$-modules
$$ 0 \longrightarrow J_{m+1,i} \longrightarrow J_{m,i} \longrightarrow 
\tk {\k}{\es S{F_{i+1}}}{\k x_{i+1}^{m}} \longrightarrow 0,$$ 
we deduce the exact sequence of $\an {(G/B)^{k}}{}$-modules,
$$ 0 \longrightarrow {\cal L}(J_{m+1,i}) \longrightarrow {\cal L}(J_{m,i}) 
\longrightarrow {\cal L}(\tk {\k}{\es S{F_{i+1}}}{\k x_{i+1}^{m}}) \longrightarrow 0,$$ 
and whence the canonical morphism 
$$ \xymatrix{{\mathrm {H}}^{0}((G/B)^{k},{\cal L}(J_{m,i}))  \ar[rr] && 
{\mathrm {H}}^{0}((G/B)^{k},{\cal L}(\tk {\k}{\es S{F_{i+1}}}{\k x_{i+1}^{m}})) }.$$ 
In particular, $\tk {\k}{C_{i+1}}{\k x_{i+1}^{m}}$ is its image
since the image of $ax_{i+1}^{m}$ is equal to $\nu _{i}(a)\tens x_{i+1}^{m}$ for all
$a$ in $C_{i}$ and $C_{i+1}$ is the image of $\nu _{i}$.

Let $a$ be in ${\mathrm {H}}^{0}(G/B,{\cal L}(J_{m,i}))$. Prove by induction on 
$l$ that for some $a_{l}$ in $C_{i}$, $a-a_{l}x_{i+1}^{m}$ is in 
${\mathrm {H}}^{0}(G/B,{\cal L}(J_{m+l,i}))$. It is true for $l=0$. Suppose that it is
true for $l$. By the above result for $m+l$, for some $a'_{l}$ in $C_{i}$, 
$a-a_{l}x_{i+1}^{m}-a'_{l}x_{i+1}^{m+l}$ is in 
${\mathrm {H}}^{0}(G/B,{\cal L}(J_{m+l+1,i}))$, whence the statement. As $C_{i}$ is a 
graded algebra and ${\mathrm {H}}^{0}(G/B,{\cal L}(J_{m+l,i}))$ is a graded subspace whose
elements have degree at least $m+l$, for $l$ sufficiently big, $a=a_{l}x_{i+1}^{m}$, 
whence the assertion.

(iv) Suppose that $\poi v1{,\ldots,}{l}{}{}{}$ is not linearly free over 
$\k[x_{i+1}]$. A contradiction is expected. Let $\poi a1{,\ldots,}{l}{}{}{}$ be in
$\k[x_{i+1}]$ such that $a_{i}\neq 0$ for some $i$ and
$$ a_{1}v_{1} + \cdots + a_{l} v_{l} = 0 .$$
Suppose that $m$ is the biggest integer such that $x_{i+1}^{m}$ divides 
$\poi a1{,\ldots,}{l}{}{}{}$ in $\k[x_{i+1}]$. For $j=1,\ldots,l$, denote by 
$c_{j}$ the element of $\k$ such that $x_{i+1}^{m+1}$ divides $a_{j}-c_{j}x_{i+1}^{m}$.
Then by (ii), 
$$ c_{1}\nu _{i,0}(v_{1}) + \cdots + c_{l}\nu _{i,0}(v_{l}) = 0,$$
whence a contradiction by the maximality of $m$.  

(v) Suppose that $\poi w1{,\ldots,}{l}{}{}{}$ is not linearly free over 
$\k[x_{i+1}]$. A contradiction is expected. Let $\poi a1{,\ldots,}{l}{}{}{}$ be in
$\k[x_{i+1}]$ such that $a_{i}\neq 0$ for some $i$ and
$$ a_{1}w_{1} + \cdots + a_{l} w_{l} = 0 .$$
Suppose that $m$ is the biggest integer such that $x_{i+1}^{m}$ divides 
$\poi a1{,\ldots,}{l}{}{}{}$ in $\k[x_{i+1}]$. For $j=1,\ldots,l$, denote by 
$c_{j}$ the element of $\k$ such that $x_{i+1}^{m+1}$ divides $a_{j}-c_{j}x_{i+1}^{m}$.
Then by (iii), 
$$ c_{1}\nu _{i}(w_{1}) + \cdots + c_{l}\nu _{i}(w_{l}) = 0,$$
whence a contradiction by the maximality of $m$.  
\end{proof}

\begin{coro}\label{cbo5}
Let $i=0,\ldots,k\rg -1$.

{\rm (i)} The algebra $A_{i}$ is a free extension of $\k[x_{i+1}]$ and 
$\nu _{i,0}(A_{i})$ is the quotient of $A_{i}$ by the ideal generated by $x_{i+1}$.

{\rm (ii)} The algebra $C_{i}$ is a free extension of $\k[x_{i+1}]$ and $C_{i+1}$ is the
quotient of $C_{i}$ by the ideal generated by $x_{i+1}$.

{\rm (iii)} The algebra $B_{i}$ is a free extension of $\k[x_{i+1}]$ and $B_{i+1}$ is 
the quotient of $B_{i}$ by the ideal generated by $x_{i+1}$.
\end{coro}

\begin{proof}
(i) Let $K_{0}$ be a $\k$-subspace of $A_{i}$ such that the restriction of $\nu _{i,0}$ to
$K_{0}$ is an isomorphism onto the $\k$-space $\nu _{i,0}(A_{i})$. Prove by induction on 
$m$ the equality 
$$A_{i} = K_{0}\k[x_{i+1}] + {\mathrm {H}}^{0}(G/B,{\cal L}_{0}(\overline{J_{m,i}})) .$$
The equality is true for $m=0$. Suppose that it is true for $m$. Let $a$ be in 
${\mathrm {H}}^{0}(G/B,{\cal L}_{0}(\overline{J_{m,i}}))$. By Lemma~\ref{l3bo5},(ii), for
some $b$ in $A_{i}$, $a-bx_{i+1}^{m}$ is in 
${\mathrm {H}}^{0}(G/B,{\cal L}_{0}(\overline{J_{m+1,i}}))$, whence the equality. Since 
$A_{i}$ is graded with $x_{i+1}$ having degree $1$, $A_{i}=K_{0}\k[x_{i+1}]$. So $A_{i}$
is a free $\k[x_{i+1}]$-module by Lemma~\ref{l3bo5},(iv). Again by 
Lemma~\ref{l3bo5},(ii), $\nu _{i,0}(A_{i})$ is the quotient of $A_{i}$ by the 
ideal generated by $x_{i+1}$.

(ii) Let $K$ be a $\k$-subspace of $C_{i}$ such that the restriction of $\nu _{i}$ to
$K$ is an isomorphism onto the $\k$-space $C_{i+1}$. Prove by induction on $m$ 
the equality 
$$C_{i} = K\k[x_{i+1}] + {\mathrm {H}}^{0}((G/B)^{k},{\cal L}(J_{m,i})) .$$
The equality is true for $m=0$. Suppose that it is true for $m$. Let $a$ be in 
${\mathrm {H}}^{0}((G/B)^{k},{\cal L}(J_{m,i})$. By Lemma~\ref{l3bo5},(iii), for some 
$b$ in $C_{i}$, $a-bx_{i+1}^{m}$ is in 
${\mathrm {H}}^{0}((G/B)^{k},{\cal L}(J_{m+1,i}))$, 
whence the equality. Since $C_{i}$ is graded with $x_{i+1}$ having degree $1$, 
$C_{i}=K\k[x_{i+1}]$. So $C_{i}$ is a free $\k[x_{i+1}]$-module by 
Lemma~\ref{l3bo5},(v). Again by Lemma~\ref{l3bo5},(iii), $C_{i+1}$ is the quotient of 
$C_{i}$ by the ideal generated by $x_{i+1}$. 
 
(iii)  We have the commutative diagram
$$ \xymatrix{ C_{i} \ar[rr]^{\nu _{i}} \ar[d] && C_{i+1} \ar[d] \\
B_{i} \ar[rr]^{\nu '_{i,0}} && B_{i+1}}$$
where the vertical arrows are the restriction morphisms to the diagonal of $(G/B)^{k}$
and $\nu '_{i,0}$ is the restriction of $\nu _{i,0}$ to $B_{i}$. In particular 
$\nu '_{i,0}$ is surjective since so is $\nu _{i}$. Let $K'_{0}$ be a 
$\k$-subspace of $B_{i}$ such that the restriction of $\nu '_{i,0}$ to
$K'_{0}$ is an isomorphism onto the $\k$-space $B_{i+1}$. For $m$ positive integer, 
denote by ${\goth J}_{m,i}$ the image of the restriction morphism to the diagonal of 
$(G/B)^{k}$,
$$ \xymatrix{ {\mathrm {H}}^{0}((G/B)^{k},{\cal L}(J_{m,i})) \ar[rr] &&  B_{i}} .$$ 
Prove by induction on $m$ the equality
$$B_{i} = K'_{0}\k[x_{i+1}] + {\goth J}_{m,i} .$$
The equality is true for $m=0$. Suppose that it is true for $m$. Let $a$ be in 
${\goth J}_{m,i}$. Then $a$ is the image of an element $a'$ of 
${\mathrm {H}}^{0}((G/B)^{k},{\cal L}(J_{m,i}))$. By Lemma~\ref{l3bo5},(iii), for some 
$b'$ in $C_{i}$, $a'-b'x_{i+1}^{m}$ is in 
${\mathrm {H}}^{0}((G/B)^{k},{\cal L}(J_{m+1,i}))$. Denoting by $b$ the image of $b'$ in 
$B_{i}$, $a-bx_{i+1}^{m}$ is in ${\goth J}_{m+1,i}$, whence the equality. Since 
$B_{i}$ is graded with $x_{i+1}$ having degree $1$, $B_{i}=K'_{0}\k[x_{i+1}]$. So $B_{i}$
is a free $\k[x_{i+1}]$-module by Lemma~\ref{l3bo5},(iv).  

Since $B_{i+1}$ is contained in $\nu _{i,0}(A)$, $K'_{0}$ can be chosen contained in 
$K_{0}$. Let $v_{l},l\in L$ be a basis of $K_{0}$ such that $v_{l},l\in L'$ is a basis of
$K'_{0}$ for some subset $L'$ of $L$. Let $a$ be in the kernel of $\nu '_{i,0}$. Then $a$
is in the kernel of $\nu _{i,0}$ so that $a=bx_{i+1}$ for some $b$ in $A_{i}$ by 
Lemma~\ref{l3bo5},(ii). By (i) and the freeness of the extension $B_{i}$ of 
$\k[x_{i+1}]$, 
$$b = \sum_{l\in L} v_{l} p_{l} \quad  \text{and} \quad 
a = \sum_{l \in L'} v_{l} q_{l}$$ 
with $p_{l},l\in L$ and $q_{l},l\in L'$ in $\k[x_{i+1}]$ with finite supports. Then 
$q_{l}=p_{l}x_{i+1}$ for all $l$ in $L'$ and $p_{l}=0$ for $l$ in $L\setminus L'$ so 
that $a$ is in $B_{i}x_{i+1}$, whence the assertion.
\end{proof}

For $j=0,\ldots,k\rg$, set $F^{*}_{j} := {\mathrm {Specm}}(\es S{F_{j}})$. By definition,
$F_{j}^{*}$ is the subspace of elements $(\poi y1{,\ldots,}{k}{}{}{})$
of ${\goth b}^{k}$ such that for $m=1,\ldots,k$ and $l=1,\ldots,j$, 
$\dv {y_{m}}{x_{m,l}}=0$. 

\begin{lemma}\label{l4bo5}
Let $i=0,\ldots,k\rg -1$. Denote by $T$ the annihilator of $x_{i+1}$ in the 
$A_{i}$-module ${\mathrm {H}}^{1}(G/B,{\cal L}_{0}(\overline{\es S{F_{i}}}))$.

{\rm (i)} The algebra $A_{i}$ is the integral closure of $B_{i}$ in its fraction field.

{\rm (ii)} There is a well defined morphism $\mu : \xymatrix{ T \ar[r] & A_{i+1}}$
of $A_{i}$-modules.

{\rm (iii)} The $A_{i}$-module $A_{i+1}$ is the direct sum of $\mu (T)$ and 
$\nu _{i,0}(A_{i})$.
\end{lemma}

\begin{proof}
Since $A_{i}$ is the space of global sections of ${\cal L}_{0}(\overline{\es S{F_{i}}})$, 
for all integer $m$, ${\mathrm {H}}^{m}(G/B,{\cal L}_{0}(\overline{\es S{F_{i}}}))$ is 
a $A_{i}$-module.

(i) According to the proof of Lemma~\ref{lbo}, $\Omega _{{\goth g}}^{(k)}$ is an open
subset of ${\goth g}^{k}$ such that for all $x$ in 
$\Omega _{{\goth g}}^{(k)}\cap {\cal B}^{(k)}$, there exists only one Borel subalgebra of
${\goth g}$ containing $E_{x}$. By definition, $F_{i}^{*}$ is the subspace of elements 
$(\poi y1{,\ldots,}{k}{}{}{})$ of ${\goth b}^{k}$ such that for $j=1,\ldots,k$ and 
$l=1,\ldots,i$, $\dv {y_{j}}{x_{l,j}}=0$. By Lemma~\ref{l4int}, $G.F_{i}^{*}$ is 
a closed subset of ${\goth g}^{k}$ and the morphism 
$\xymatrix{ \sqx G{F_{i}^{*}} \ar[r] & G.F_{i}^{*}}$ is projective. By Conditions 
(1), (2), (3), for some $\poi y2{,\ldots,}{k-1}{}{}{}$ in ${\goth b}$, 
$(e,\poi y2{,\ldots,}{k-1}{}{}{},h)$ is in $F_{i}^{*}$. Hence 
$\Omega _{{\goth g}}^{(k)}\cap G.F_{i}^{*}$ is a dense open subset 
of $G.F_{i}^{*}$ and the above morphism is birational. So, by Lemma~\ref{lint}, 
$A_{i}$ is the integral closure of $\k[G.F_{i}^{*}]$ in its fraction field. 

By Lemma~\ref{l4int}, $G^{k}.F_{i}^{*}$ is a closed subset of ${\goth g}^{k}$ containing 
$G.F_{i}^{*}$. Then for all $\varphi $ in $\k[G.F_{i}^{*}]$, $\varphi $ is the 
restriction to $G.F_{i}^{*}$ of an element $\psi $ of $\k[G^{k}.F_{i}^{*}]$ so that 
$\psi $ is a global section of ${\cal L}(\es S{F_{i}^{*}})$. Denoting by 
$\overline{\psi }$ the restriction of $\psi $ to the diagonal of $(G/B)^{k}$, 
$\varphi =\overline{\psi }$. Hence $\k[G.F_{i}^{*}]$ is contained in $B_{i}$ so that 
$A_{i}$ is the integral closure of $B_{i}$ in its fraction field. 

(ii) Let $O := \poi O1{,\ldots,}{m}{}{}{}$ be a cover of $G/B$ by open subsets isomorphic
to the affine space of dimension $n$ so that $O_{i}$ is a trivialization open subset of 
the vector bundles $\sqx G{\overline{\es S{F_{i}}}}$ and 
$\sqx G{\overline{\es S{F_{i+1}}}}$. Denote by ${\cal Z}^{1}$ the space of cocycles of 
degree $1$ of the complex $C^{\bullet}$ of $\check{{\mathrm {C}}}$ech cohomology of $O$ 
with values in ${\cal L}_{0}(\overline{\es S{F_{i}}})$. 

Let $\overline{a}$ be in $T$ and $a$ a representative of $\overline{a}$ in 
${\cal Z}^{1}$. Since $\overline{a}$ is in $T$, $x_{i+1}a$ is the boundary of an element 
$b$ of $C^{0}$. For $l=1,\ldots,m$, denote by $b_{l}$ the component of $b$ in 
$\Gamma (O_{l},{\cal L}_{0}(\overline{\es S{F_{i}}}))$ and by 
$\widetilde{b_{l}}$ its image in $\Gamma (O_{l},{\cal L}_{0}(\overline{\es S{F_{i+1}}}))$
by the quotient morphism. Then for $1\leq l,l'\leq m$, $\widetilde{b_{l}}$ and 
$\widetilde{b_{l'}}$ have the same restriction to $O_{l}\cap O_{l'}$ so that 
$\widetilde{b_{l}}$ is the restriction to $O_{l}$ of an element $\widetilde{b}$ of 
$A_{i+1}$. If $a'$ is a representative of $\overline{a}$ in ${\cal Z}^{1}$, $a'-a$ is the 
boundary of an element $b'$ in $C^{0}$ and $x_{i+1}a$ is the boundary of $b+x_{i+1}b'$. 
Hence $\widetilde{b}$ only depends on $\overline{a}$, whence a well defined map 
$\xymatrix{T \ar[r] & A_{i+1}}$. It is clearly a morphism of $A_{i}$-modules.

(iii) From the exact sequence of $\an {G/B}{}$-modules
$$ 0 \longrightarrow {\cal L}_{0}(\overline{x_{i+1}\es S{F_{i}}}) \longrightarrow 
{\cal L}_{0}(\overline{\es S{F_{i}}}) \longrightarrow 
{\cal L}_{0}(\overline{\es S{F_{i+1}}}) \longrightarrow 0 $$
we deduce the long exact sequence of cohomology
$$ \cdots \longrightarrow A_{i} \longrightarrow A_{i+1} 
\longrightarrow x_{i+1}{\mathrm {H}}^{1}(G/B,{\cal L}_{0}(\overline{\es S{F_{i}}})) 
\longrightarrow {\mathrm {H}}^{1}(G/B,{\cal L}_{0}(\overline{\es S{F_{i}}})) 
\longrightarrow \cdots $$
since $x_{i+1}$ is a global section of ${\cal L}_{0}(\overline{\es S{F_{i}}})$ by 
Lemma~\ref{l3bo5},(i). Since the $\an {G/B}{}$-modules 
${\cal L}_{0}(\overline{x_{i+1}\es S{F_{i}}})$ and 
${\cal L}_{0}(\overline{\es S{F_{i}}})$ are isomorphic, we have an isomorphism 
$$ \xymatrix{ {\mathrm {H}}^{1}(G/B,{\cal L}_{0}(\overline{x_{i+1}\es S{F_{i}}}))
\ar[rr] && {\mathrm {H}}^{1}(G/B,{\cal L}_{0}(\overline{\es S{F_{i}}}))}$$
and the image of the kernel of the arrow
$$ x_{i+1}{\mathrm {H}}^{1}(G/B,{\cal L}_{0}(\overline{\es S{F_{i}}})) 
\longrightarrow {\mathrm {H}}^{1}(G/B,{\cal L}_{0}(\overline{\es S{F_{i}}})) $$
by this isomorphism is equal to $T$, whence an exact sequence
$$ A_{i} \longrightarrow A_{i+1} \longrightarrow T \longrightarrow 0 .$$
By (ii), from the definition of the arrow
$$ {\mathrm {H}}^{0}(G/B,{\cal L}_{0}(\overline{\es S{F_{i+1}}})) 
\longrightarrow x_{i+1}{\mathrm {H}}^{1}(G/B,{\cal L}_{0}(\overline{\es S{F_{i}}})) $$
we deduce that for $a$ in $T$, the image of $\mu (a)$ is equal to $a$. Hence 
$A_{i+1}$ is the direct sum of $\mu (T)$ and $\nu _{i,0}(A_{i})$.
\end{proof}

\begin{coro}\label{c2bo5}
For $i=0,\ldots,k\rg -1$, $\nu _{i,0}(A_{i})$ is equal to $A_{i+1}$.
\end{coro}

\begin{proof}
According to Lemma~\ref{l4bo5}, $A_{i+1}$ is the direct sum of $\mu (T)$ and 
$\nu _{i,0}(A_{i})$. Let $a$ be in $\mu (T)$. By Lemma~\ref{l4bo5},(i) $A_{i+1}$ and 
$\nu _{i,0}(A_{i})$ have the same fraction field since $B_{i+1}$ is contained in 
$\nu _{i,0}(A_{i})$ by the proof of Corollary~\ref{cbo5},(iii). So for some $b$ in 
$\nu _{i,0}(A_{i})$, $ba$ is in $\nu _{i,0}(A_{i})$, whence $ba=0$ by 
Lemma~\ref{l4bo5},(ii) and (iii). As a result, $\mu (T)=\{0\}$ and 
$\nu _{i,0}(A_{i})=A_{i+1}$.
\end{proof}

\begin{prop}\label{pbo5}
{\rm (i)} The algebra $\k[{\cal B}_{\n}^{(k)}]$ is a free extension of 
$\es S{{\goth h}^{k}}$ and $\k[{\cal N}_{\n}^{(k)}]$ is the quotient of 
$\k[{\cal B}_{\n}^{(k)}]$ by the ideal generated by $\ec S{}{{\goth h}^{k}}{}+$.

{\rm (ii)} The algebra $\k[{\cal B}_{\x}^{(k)}]$ is a free extension of 
$\es S{{\goth h}^{k}}$ and $\k[{\cal N}^{(k)}]$ is the quotient of 
$\k[{\cal B}_{\x}^{(k)}]$ by the ideal generated by $\ec S{}{{\goth h}^{k}}{}+$.
\end{prop}

\begin{proof}
(i) According to Lemma~\ref{l2bo5},(i) and (ii), $A_{0}=\k[{\cal B}_{\n}^{(k)}]$ and 
$A_{k\rg}=\k[{\cal N}_{\n}^{(k)}]$. Moreover, $\es S{{\goth h}^{k}} = 
\k[\poi x1{,\ldots,}{k\rg}{}{}{}]$ by definition. So the assertion will result from the 
following claim:

\begin{claim}
For $i=1,\ldots,k\rg$, $A_{0}$ is a free extension of $\k[\poi x1{,\ldots,}{i}{}{}{}]$ 
and $A_{i}$ is the quotient of $A_{0}$ by the ideal generated by 
$\poi x1{,\ldots,}{i}{}{}{}$.
\end{claim}

Prove the claim by induction on $i$. According to Corollary~\ref{cbo5},(i) and 
Corollary~\ref{c2bo5}, the claim is true for $i=1$. Suppose that it is true for $i$. By 
Corollary~\ref{cbo5},(i) and Corollary~\ref{c2bo5}, $A_{i+1}$ is the quotient of $A_{i}$
by the ideal generated by $x_{i+1}$. So by induction hypothesis, $A_{i+1}$ is the 
quotient of $A_{0}$ by the ideal generated by $\poi x1{,\ldots,}{i+1}{}{}{}$. For 
$j=1,\ldots,i+1$, denote by $\mu _{j}$ the quotient morphism 
$\xymatrix{A_{0} \ar[r] & A_{j}}$. Let $K_{i+1}$ be a $\k$-subspace
of $A_{0}$ such that the restriction of $\mu _{i+1}$ to $K_{i+1}$ is a $\k$-linear 
isomorphism onto $A_{i+1}$. Then $A_{0}=K_{i+1}+A_{0}x_{1} + \cdots + A_{0}x_{i+1}$. So 
by induction on $m$,
$$ A_{0} = K_{i+1}\k[\poi x1{,\ldots,}{i+1}{}{}{}] + A_{0}{\goth I}_{m}$$ 
with ${\goth I}_{m}$ the ideal of $\k[\poi x1{,\ldots,}{i+1}{}{}{}]$ generated by its
monomials of degree $m$. As a result, $A_{0}=K_{i+1}\k[\poi x1{,\ldots,}{i+1}{}{}{}]$ 
since $A_{0}$ is a graded algebra.

Let $\poi v1{,\ldots,}{l}{}{}{}$ be linearly free over $\k$ in $K_{i+1}$ and let
$\poi a1{,\ldots,}{l}{}{}{}$ be in $\k[\poi x1{,\ldots,}{i+1}{}{}{}]$ such that
$$ a_{1}v_{1} + \cdots a_{l}v_{l} = 0 .$$  
For $j=1,\ldots,l$, $a_{j}$ has an expansion
$$ a_{j} = \sum_{m\in {\Bbb N}} a_{j,m} x_{i+1}^{m} $$
with $a_{j,m},m\in {\Bbb N}$ in $\k[\poi x1{,\ldots,}{i}{}{}{}]$ with finite support.
According to Corollary~\ref{cbo5},(i), the sequence 
$x_{i+1}^{m}\mu _{i}(v_{j}),(j,m)\in \{1,\ldots,l\}\times {\Bbb N}$ is linearly free over
$\k$. So, by induction hypothesis, $a_{j,m}=0$ for all $(j,m)$. As a result, 
$A_{0}$ is a free extension of $\k[\poi x1{,\ldots,}{i+1}{}{}{}]$, whence the claim.

(ii) According to Lemma~\ref{l2bo5},(iii) and (iv), $B_{0}=\k[{\cal B}_{\x}^{(k)}]$ and 
$B_{k\rg}=\k[{\cal N}^{(k)}]$. So the assertion will result from the 
following claim:

\begin{claim}
For $i=1,\ldots,k\rg$, $B_{0}$ is a free extension of $\k[\poi x1{,\ldots,}{i}{}{}{}]$ 
and $B_{i}$ is the quotient of $B_{0}$ by the ideal generated by 
$\poi x1{,\ldots,}{i}{}{}{}$.
\end{claim}

Prove the claim by induction on $i$. According to Corollary~\ref{cbo5},(iii), 
the claim is true for $i=1$. Suppose that it is true for $i$. By 
Corollary~\ref{cbo5},(iii), $B_{i+1}$ is the quotient of $B_{i}$ by the ideal generated 
by $x_{i+1}$. So by induction hypothesis, $B_{i+1}$ is the quotient of $B_{0}$ by the 
ideal generated by $\poi x1{,\ldots,}{i+1}{}{}{}$. For $j=0,\ldots,k\rg$, $B_{j}$ is
contained in $A_{j}$ and the quotient morphism $\xymatrix{B_{0} \ar[r] & B_{j}}$ is the
restriction of $\mu _{j}$ to $B_{0}$. Let $K'_{i+1}$ be a $\k$-subspace of $B_{0}$ 
such that the restriction of $\mu _{i+1}$ to $K'_{i+1}$ is a $\k$-linear isomorphism onto 
$B_{i+1}$. Then $B_{0}=K'_{i+1}+B_{0}x_{1} + \cdots + B_{0}x_{i+1}$. So by induction on 
$m$,
$$ B_{0} = K'_{i+1}\k[\poi x1{,\ldots,}{i+1}{}{}{}] + B_{0}{\goth I}_{m} .$$ 
As a result, $B_{0}=K'_{i+1}\k[\poi x1{,\ldots,}{i+1}{}{}{}]$ since $B_{0}$ is a graded 
algebra. Moreover by (i), a basis of $K'_{i+1}$ is linearly free over 
$\k[\poi x1{,\ldots,}{i+1}{}{}{}]$ so that $B_{0}$ is a free extension of 
$\k[\poi x1{,\ldots,}{i+1}{}{}{}]$, whence the claim.   
\end{proof}

\begin{rema}\label{rbo5}
According to Proposition~\ref{pbo5}, $\es S{{\goth h}^{k}}$ is embedded in 
$\k[{\cal B}_{\x}^{(k)}]$ and by Lemma~\ref{l2bo5},(iii), the embedding is given by
the map
$$ \xymatrix{\es S{{\goth h}^{k}} \ar[rr] && \k[{\cal B}_{\x}^{(k)}]}, 
\qquad  p \longmapsto ((\poi x1{,\ldots,}{k}{}{}{},\poi y1{,\ldots,}{k}{}{}{})
\mapsto p(\poi y1{,\ldots,}{k}{}{}{}) .$$
Denote by $\Phi $ this map.
\end{rema}

\begin{coro}\label{c3bo5}
{\rm (i)} The image of $\Phi $ is equal to $\k[{\cal B}_{\x}^{(k)}]^{G}$. Moreover,
$\k[{\cal B}_{\x}^{(k)}]$ is generated by $\k[{\cal B}^{(k)}]$ and 
$\k[{\cal B}_{\x}^{(k)}]^{G}$.

{\rm (ii)} The image of $\Phi $ is equal to $\k[{\cal B}_{\n}^{(k)}]^{G}$.

{\rm (iii)} The subalgebras $\k[{\cal B}^{(k)}]^{G}$ and 
$\Phi ((\e Sh^{\tens k})^{W({\cal R})})$ of $\k[{\cal B}_{\x}^{(k)}]^{G}$ are equal.
\end{coro}

\begin{proof}
(i) Since ${\cal B}_{\x}^{(k)}$ is a closed subvariety of ${\cal X}^{k}$ and
$\k[{\cal X}]$ is generated by $\e Sg$ and $\e Sh$, $\k[{\cal B}_{\x}^{(k)}]$ is 
generated by $\es S{{\goth h}^{k}}$ and the image of $\es S{{\goth g}^{k}}$ in 
$\k[{\cal B}_{\x}^{(k)}]$ which is equal to $\k[{\cal B}^{(k)}]$. For $p$ in 
$\k[{\cal B}_{\x}^{(k)}]$, denote by $\overline{p}$ the element of 
$\e Sh^{\tens k}$ such that
$$ \overline{p}(\poi x1{,\ldots,}{k}{}{}{}) := 
p(\poi x1{,\ldots,}{k}{}{}{},\poi x1{,\ldots,}{k}{}{}{}) .$$
Then the restriction of $p-\Phi (\overline{p})$ to $\iota _{k}({\goth h}^{k})$ is equal 
to $0$. Moreover, if $p$ is in $\k[{\cal B}_{\x}^{(k)}]^{G}$, $p-\Phi (\overline{p})$ 
is $G$-invariant so that $p=\Phi (\overline{p})$ since $G.\iota _{k}({\goth h}^{k})$
is dense in ${\cal B}_{\x}^{(k)}$, whence the assertion.

(ii) Since $\k[{\cal B}_{\x}^{(k)}]$ is contained in $\k[{\cal B}_{\n}^{(k)}]$, 
so is $\k[{\cal B}_{\x}^{(k)}]^{G}$ by (i). Since $G$ is reductive, there exists a 
projection $\xymatrix{ \k[{\cal B}_{\n}^{(k)}] \ar[r] & \k[{\cal B}_{\n}^{(k)}]^{G}}$ 
which is $\k[{\cal B}_{\n}^{(k)}]^{G}$-linear. As a result, $\k[{\cal B}_{\n}^{(k)}]^{G}$
is the integral extension of $\k[{\cal B}_{\x}^{(k)}]^{G}$ in $\k[{\cal B}_{\n}^{(k)}]$
since $\k[{\cal B}_{\n}^{(k)}]$ is an integral extension of $\k[{\cal B}_{\x}^{(k)}]$. 
Let $J$ be the ideal of augmentation of $\k[{\cal B}_{\x}^{(k)}]^{G}$ and set 
$J':= \k[{\cal B}_{\n}^{(k)}]J$. By (i) and Proposition~\ref{pbo5},(i), $J'$ is a prime 
ideal. Suppose that $\k[{\cal B}_{\x}^{(k)}]^{G}$ is strictly contained in 
$\k[{\cal B}_{\n}^{(k)}]^{G}$. A contradiction is expected. The algebras 
$\k[{\cal B}_{\n}^{(k)}]^{G}$ and $\k[{\cal B}_{\x}^{(k)}]^{G}$ are graded subalgebras of
$\k[{\cal B}_{\n}^{(k)}]$. Let $a$ be a homogeneous element in 
$\k[{\cal B}_{\n}^{(k)}]^{G}\setminus \k[{\cal B}_{\x}^{(k)}]^{G}$ of minimal degree. 
Then $a$ has positive degree. As a result, it is in $J'$ since $J'$ is radical and 
$a$ satifies a dependence integral equation over $\k[{\cal B}_{\x}^{(k)}]^{G}$. Since 
$\k[{\cal B}_{\n}^{(k)}]^{G}J$ is the image of $J'$ by the projection 
$\xymatrix{ \k[{\cal B}_{\n}^{(k)}] \ar[r] & \k[{\cal B}_{\n}^{(k)}]^{G}}$, $a$ is in
$\k[{\cal B}_{\n}^{(k)}]^{G}J$. By the minimality of the degree of $a$, $a$ is in 
$\k[{\cal B}_{\x}^{(k)}]^{G}$, whence the contradiction.

(iii) For $(\poi x1{,\ldots,}{k}{}{}{})$ in ${\goth h}^{k}$, for $w$ in 
$W({\cal R})$ and for $g_{w}$ a representative of $w$ in $N_{G}({\goth h})$, we have
$$ (\poi x1{,\ldots,}{k}{w}{}{},\poi x1{,\ldots,}{k}{w}{}{}) =
g_{w}.(\poi x1{,\ldots,}{k}{}{}{},\poi x1{,\ldots,}{k}{w}{}{})$$
so that the subalgebra $\k[{\cal B}^{(k)}]^{G}$ of $\k[{\cal B}_{\x}^{(k)}]^{G}$
is contained in $\Phi ((\es S{{\goth h}^{k}})^{W({\cal R})})$ by (i). Moreover, 
since $G$ is reductive, $\k[{\cal B}^{(k)}]^{G}$ is the image of $(\e Sg^{\tens k})^{G}$ 
by the restriction morphism. According to~\cite[Theorem 2.9 and some remark]{J}, the 
restriction morphism $(\e Sg^{\tens k})^{G}\rightarrow (\e Sh^{\tens k})^{W({\cal R})}$ 
is surjective, whence the equality 
$\k[{\cal B}^{(k)}]^{G}=\Phi ((\es S{{\goth h}^{k}})^{W({\cal R})})$. 
\end{proof}

According to Proposition~\ref{pbo5},(ii) and Corollary~\ref{c3bo5},(i), 
$\k[{\cal B}_{\x}^{(k)}]$ is a free extension of \sloppy \hbox{
$\k[{\cal B}_{\x}^{(k)}]^{G}=\es S{{\goth h}^{k}}$}.

\begin{coro}\label{c4bo5}
Let $M$ be a graded complement to $\k[{\cal B}^{(k)}]_{+}^{G}\k[{\cal B}^{(k)}]$ in 
$\k[{\cal B}^{(k)}]$.

{\rm (i)} The space $M$ contains a basis of $\k[{\cal B}_{\x}^{(k)}]$ over 
$\e Sh^{\tens k}$.

{\rm (ii)} The intersection of $M$ and 
$\ec S{}{{\goth h}^{k}}{}+\k[{\cal B}_{\x}^{(k)}]$ is different from $\{0\}$. 
\end{coro}

\begin{proof}
(i) Since $M$ is a graded complement to 
$\k[{\cal B}^{(k)}]_{+}^{G}\k[{\cal B}^{(k)}]$ in $\k [{\cal B}^{(k)}]$, by induction on 
$l$,
$$ \k[{\cal B}^{(k)}] = M\k[{\cal B}^{(k)}]^{G} + (\k[{\cal B}^{(k)}]^{G}_{+})^{l}
\k[{\cal B}^{(k)}] .$$
Hence $\k[{\cal B}^{(k)}]=M\k[{\cal B}^{(k)}]^{G}$ since $\k[{\cal B}^{(k)}]$ is graded. 
Then, by Corollary~\ref{c3bo5},(i) and (iii), 
$$\k[{\cal B}_{\x}^{(k)}]=M\e Sh^{\tens k} \quad  \text{and} \quad 
\k[{\cal B}_{\x}^{(k)}] = M + \ec S{}{{\goth h}^{k}}{}+
\k[{\cal B}_{\x}^{(k)}] .$$
Then $M$ contains a graded complement $M'$ to 
$\ec S{}{{\goth h}^{k}}{}+\k[{\cal B}_{\x}^{(k)}]$ in 
$\k[{\cal B}_{\x}^{(k)}]$, whence the assertion.

(ii) Suppose that $M'=M$. We expect a contradiction. According to (i), the 
canonical maps
$$ \xymatrix{\tk {\k}M{\e Sh^{\tens k}} \ar[rr] && \k[{\cal B}_{\x}^{(k)}]} , \qquad
\xymatrix{\tk {\k}M{\k[{\cal B}^{(k)}]^{G}} \ar[rr] && \k[{\cal B}^{(k)}]} $$
are isomorphisms. Then, according to Lemma~\ref{l2int}, there exists a group 
action of $W({\cal R})$ on $\k[{\cal B}_{\x}^{(k)}]$ extending the diagonal action of 
$W({\cal R})$ in $\e Sh^{\tens k}$ and such that 
$\k[{\cal B}_{\x}^{(k)}]^{W({\cal R})}=\k[{\cal B}^{(k)}]$ since 
$\k[{\cal B}^{(k)}]\cap \e Sh^{\tens k}=(\e Sh^{\tens k})^{W({\cal R})}$ by 
Corollary~\ref{c3bo5},(iii). Moreover, since $W({\cal R})$ is finite, the subfield of 
invariant elements of the fraction field of $\k[{\cal B}_{\x}^{(k)}]$ is the 
fraction field of $\k[{\cal B}_{\x}^{(k)}]^{W({\cal R})}$. Hence the action of 
$W({\cal R})$ in $\k[{\cal B}_{\x}^{(k)}]$ is trivial since 
$\k[{\cal B}_{\x}^{(k)}]$ and $\k[{\cal B}^{(k)}]$ have the same fraction field, 
whence the contradiction since $(\e Sh^{\tens k})^{W({\cal R})}$ is strictly contained in
$\e Sh^{\tens k}$.  
\end{proof}

\section{On the nullcone}\label{nc}
Let $k\geq 2$ be an integer. Let $I$ be the ideal of 
$\k[{\cal B}_{\n}^{(k)}]$ generated by $1 \tens \ec S{}{{\goth h}^{k}}{}+$. 

\begin{lemma}\label{lnc}
Let $N$ be the subscheme of ${\cal B}_{\n}^{(k)}$ defined by $I$. 

{\rm (i)} The ideal $I$ is prime and $N$ is isomorphic to ${\cal N}_{\n}^{(k)}$.

{\rm (ii)} The variety $N$ is the inverse image of ${\cal N}^{(k)}$ by $\eta _{\n}$.
\end{lemma}

\begin{proof}
(i) By Proposition~\ref{pbo5},(i), $\k[N]=\k[{\cal N}_{\n}^{(k)}]$, whence the assertion.

(ii) By (i), $N$ is reduced hence a variety. According to Remark~\ref{rbo5}, 
for $(g,\poi x1{,\ldots,}{k}{}{}{})$ in $G\times {\goth b}^{k}$, 
$\gamma _{\n}(\overline{(g,\poi x1{,\ldots,}{k}{}{}{})})$ is a zero of $I$ if and only if
$\poi x1{,\ldots,}{k}{}{}{}$ are nilpotent, whence the assertion.
\end{proof}

\begin{theo}\label{tnc}
{\rm (i)} The variety ${\cal N}^{(k)}$ is normal if and only if so is 
${\cal B}_{\x}^{(k)}$. If so, $\gamma _{\n}=\gamma _{\x}$ and the restriction
of $\varpi $ to ${\cal B}_{\x}^{(k)}$ is the normalization morphism of 
${\cal B}^{(k)}$.

{\rm (ii)} The variety ${\cal N}^{(k)}$ is Cohen-Macaulay if and only if so is
${\cal B}_{\x}^{(k)}$. 

{\rm (iii)} The variety ${\cal N}^{(k)}$ has rational singularities if and only if it
is Cohen-Macaulay.

{\rm (iv)} The variety ${\cal B}_{\x}^{(k)}$ has rational singularities if and only if 
it is Cohen-Macaulay.

{\rm (v)} Let $I_{0}$ be the ideal of $\k[{\cal B}^{(k)}]$ generated by 
$\k[{\cal B}^{(k)}]^{G}_{+}$. Then $I_{0}$ is strictly contained in the ideal of 
definition of ${\cal N}^{(k)}$ in $\k[{\cal B}^{(k)}]$.
\end{theo}

\begin{proof}
(i) According to Proposition~\ref{pbo5},(ii), $\k[{\cal B}_{\x}^{(k)}]$ is a free 
extension of $\es S{{\goth h}^{k}}$ and $\k[{\cal N}^{(k)}]$ is the quotient of 
$\k[{\cal B}_{\x}^{(k)}]$ by the ideal generated by $\ec S{}{{\goth h}^{k}}{}+$. So
by~\cite[Ch. 8, Theorem 23.9 and Corollary]{Mat}, $0$ is a normal point of 
${\cal B}_{\x}^{(k)}$ if ${\cal N}^{(k)}$ is normal. As a result ${\cal B}_{\x}^{(k)}$ is
normal if so is ${\cal N}^{(k)}$ since ${\cal B}_{\x}^{(k)}$ is a cone and its set of 
normal points is open. Conversely, suppose that ${\cal B}_{\x}^{(k)}$ is normal so that
${\cal B}_{\n}^{(k)}={\cal B}_{\x}^{(k)}$ and $\gamma _{\n}=\gamma _{\x}$. Moreover,
by Corollary~\ref{cbo3},(i), the restriction of $\varpi $ to ${\cal B}_{\x}^{(k)}$
is the normalization morphism of ${\cal B}^{(k)}$. According to 
Proposition~\ref{pbo5},(i), $\k[{\cal N}_{\n}^{(k)}]$ is the image of 
$\k[{\cal B}_{\n}^{(k)}]$ by a morphism and by Proposition~\ref{pbo5},(ii), 
$\k[{\cal N}^{(k)}]$ is the image of $\k[{\cal B}_{\x}^{(k)}]$ by this morphism, 
whence $\k[{\cal N}^{(k)}]=\k[{\cal N}_{\n}^{(k)}]$.

(ii) Suppose that ${\cal N}^{(k)}$ is Cohen-Macaulay. Then the localization of 
$\k[{\cal B}_{\x}^{(k)}]$ at $0$ is Cohen-Macaulay by~\cite[Ch. 8, Theorem 23.9]{Mat}
and Proposition~\ref{pbo5},(ii). By~\cite[Ch. 8, Theorem 24.5]{Mat}, the set of points of
${\cal B}_{\x}^{(k)}$ at which the localization is Cohen-Macaulay is open. Hence 
${\cal B}_{\x}^{(k)}$ is Cohen-Macaulay since its is a cone. 

Conversely suppose that ${\cal B}_{\x}^{(k)}$ is Cohen-Macaulay. According to 
Proposition~\ref{pbo5},(ii), any basis in $\es S{{\goth h}^{k}}$ is a regular sequence
in $\k[{\cal B}_{\x}^{(k)}]$ and $\k[{\cal N}^{(k)}]$ is the quotient of 
$\k[{\cal B}_{\x}^{(k)}]$ by the ideal generated by this sequence. Then the localization
at $0$ of $\k[{\cal N}^{(k)}]$ is Cohen-Macaulay by~\cite[Ch .6, Theorem 17.4 and 
Ch. 5, Theorem 14.1]{Mat}. So, again by~\cite[Ch. 8, Theorem 24.5]{Mat},
${\cal N}^{(k)}$ is Cohen-Macaulay since its is a cone. 

(iii) According to~\cite[p. 50]{KK}, ${\cal N}^{(k)}$ is Cohen-Macaulay if it has
rational singularities. Suppose that ${\cal N}^{(k)}$ is Cohen-Macaulay. By 
Lemma~\ref{lbo1},(iv) and Corollary~\ref{cbo1},(i), ${\cal N}^{(k)}$ is smooth in 
codimension $1$. Then, by Serre's normality criterion 
\cite[\S 1,no 10, Th\'eor\`eme 4]{Bou1}, ${\cal N}^{(k)}$ is normal. So, by 
\cite[p.50]{KK}, it remains to prove that for $U$ open subset of ${\cal N}^{(k)}$ and 
$\omega $ a regular differential form of top degree on the smooth locus of $U$, 
$\upsilon ^{*}(\omega )$ has a regular extension to $\upsilon ^{-1}(U)$.

Let $U'$ be the smooth locus of $U$. According to Lemma~\ref{lbo1},(iv), $U\cap V_{k}$ 
is contained in $U'$. So by Corollary~\ref{cbo1},(iii), $\upsilon ^{-1}(U')$ is a big open
subset of $\upsilon ^{-1}(U)$. Let $\Omega _{\upsilon ^{-1}(U)}$ be the sheaf of 
regular differential forms of top degree on $\upsilon ^{-1}(U)$. For some open cover 
$\poi O1{,\ldots,}{m}{}{}{}$ of $\upsilon ^{-1}(U)$, for $i=1,\ldots,m$, the 
restriction of $\Omega _{\upsilon ^{-1}(U)}$ to $O_{i}$ is a free 
$\an {O_{i}}{}$-module of rank $1$. Denoting by $\omega _{i}$ a generator, for some 
regular function $a_{i}$ on $O_{i}\cap \upsilon ^{-1}(U')$,
$$ \omega \left \vert \right. _{O_{i}\cap \upsilon ^{-1}(U')} = 
a_{i}( \omega _{i} \left \vert  \right._{O_{i}\cap \upsilon ^{-1}(U')}) .$$
Since $O_{i}$ is normal and $O_{i}\cap \upsilon ^{-1}(U')$ is a big open subset of 
$O_{i}$, $a_{i}$ has a regular extension to $O_{i}$. Denoting again by $a_{i}$ this 
extension, $a_{i}\omega _{i}$ is a regular differential form of top degree on $O_{i}$ 
having the same restriction as $\upsilon ^{*}(\omega )$ to 
$O_{i}\cap \upsilon ^{-1}(U')$. As a result, since $\Omega _{\upsilon ^{-1}(U)}$ is
torsion free and $\upsilon ^{-1}(U)$ is irreducible, for $1\leq i,j\leq m$, 
$a_{i}\omega _{i}$ and $a_{j}\omega _{j}$ have the same restriction to $O_{i}\cap O_{j}$.
Denoting by $\omega '$ the global section of $\Omega _{\upsilon ^{-1}(U)}$ whose 
restriction to $O_{i}$ is $a_{i}\omega _{i}$ for $i=1,\ldots,m$, 
$\upsilon ^{*}(\omega )$ is the restriction of $\omega '$ to $\upsilon ^{-1}(U')$,
whence the assertion.

(iv) According to~\cite[p. 50]{KK}, ${\cal B}_{\x}^{(k)}$ is Cohen-Macaulay if it has
rational singularities. Suppose that ${\cal B}_{\x}^{(k)}$ is Cohen-Macaulay. By 
Lemma~\ref{l2bo4},(iv), ${\cal B}_{\x}^{(k)}$ is smooth in codimension $1$. Then, by 
Serre's normality criterion \cite[\S 1,no 10, Th\'eor\`eme 4]{Bou1}, 
${\cal B}_{\x}^{(k)}$ is normal. So, by \cite[p.50]{KK}, it remains to prove that for $U$
open subset of ${\cal B}_{\x}^{(k)}$ and $\omega $ a regular differential form of top 
degree on the smooth locus of $U$, $\gamma _{\x}^{*}(\omega )$ has a regular extension to
$\gamma _{\x}^{-1}(U)$.

Let $U'$ be the smooth locus of $U$. According to Lemma~\ref{l2bo4},(iv), $U\cap W_{k}$ 
is contained in $U'$. So by Corollary~\ref{cbo4}, $\gamma _{\x}^{-1}(U')$ is a big open
subset of $\gamma _{\x}^{-1}(U)$. Let $\Omega _{\gamma _{\x}^{-1}(U)}$ be the sheaf of 
regular differential forms of top degree on $\gamma _{\x}^{-1}(U)$. For some open cover 
$\poi O1{,\ldots,}{m}{}{}{}$ of $\gamma _{\x}^{-1}(U)$, for $i=1,\ldots,m$, the 
restriction of $\Omega _{\gamma _{\x}^{-1}(U)}$ to $O_{i}$ is a free 
$\an {O_{i}}{}$-module of rank $1$. Denoting by $\omega _{i}$ a generator, for some 
regular function $a_{i}$ on $O_{i}\cap \gamma _{\x}^{-1}(U')$,
$$ \omega \left \vert \right. _{O_{i}\cap \gamma _{\x}^{-1}(U')} = 
a_{i}( \omega _{i} \left \vert  \right._{O_{i}\cap \gamma _{\x}^{-1}(U')}) .$$
Since $O_{i}$ is normal and $O_{i}\cap \gamma _{\x}^{-1}(U')$ is a big open subset of 
$O_{i}$, $a_{i}$ has a regular extension to $O_{i}$. Denoting again by $a_{i}$ this 
extension, $a_{i}\omega _{i}$ is a regular differential form of top degree on $O_{i}$ 
having the same restriction as $\gamma _{\x}^{*}(\omega )$ to 
$O_{i}\cap \gamma _{\x}^{-1}(U')$. As a result, since $\Omega _{\gamma _{\x}^{-1}(U)}$ is
torsion free and $\gamma _{\x}^{-1}(U)$ is irreducible, for $1\leq i,j\leq m$, 
$a_{i}\omega _{i}$ and $a_{j}\omega _{j}$ have the same restriction to $O_{i}\cap O_{j}$.
Denoting by $\omega '$ the global section of $\Omega _{\gamma _{\x}^{-1}(U)}$ whose 
restriction to $O_{i}$ is $a_{i}\omega _{i}$ for $i=1,\ldots,m$, 
$\gamma _{\x}^{*}(\omega )$ is the restriction of $\omega '$ to $\gamma _{\x}^{-1}(U')$,
whence the assertion.

(v) Since $\k[{\cal B}^{(k)}]^{G}_{+}$ is contained in $\ec S{}{{\goth h}^{k}}{}+$, 
$I_{0}$ is contained in $I\cap \k[{\cal B}^{(k)}]$. According to Lemma~\ref{lnc},(ii) and 
(i), $I\cap \k[{\cal B}^{(k)}]$ is the ideal of definition of 
${\cal N}^{(k)}$ in $\k[{\cal B}^{(k)}]$. Let $M$ be a graded complement of 
$\k[{\cal B}^{(k)}]^{G}_{+}\k[{\cal B}^{(k)}]$ in $\k[{\cal B}^{(k)}]$. According to 
Corollary~\ref{c4bo5},(ii), $I\cap M$ is different from $\{0\}$. Hence $I_{0}$ is 
strictly contained in $I\cap \k[{\cal B}^{(k)}]$, whence the assertion. 
\end{proof}

\begin{rema}\label{rnc}
According to~\cite[6.2]{VX}, for ${\goth g}$ simple of type $B_{2}$, ${\cal N}^{(2)}$
is not normal and according to~\cite[Theorem 6.1]{VX}, for ${\goth g}={\mathrm {sl}}_{3}$,
${\cal N}^{(k)}$ has rational singularities for all $k$.
\end{rema}

Summarizing the results of the preceding subsections, Theorem~\ref{tint},(i), (ii),
(iii), (iv), (vii) are given by Theorem~\ref{tnc}, Theorem~\ref{tint},(v) is given by 
Proposition~\ref{pbo5},(ii) and Corollary~\ref{c3bo5},(i) and Theorem~\ref{tint},(vi)
is given by Corollary~\ref{c3bo5},(iii). To complete Theorem~\ref{tint}, recall that 
$\varkappa $ is the normalization morphism of ${\cal N}^{(k)}$ and denote by $\etaup $ 
the normalization morphism of ${\cal B}_{\x}^{(k)}$.

\begin{prop}\label{pnc}
{\rm (i)} The morphism $\etaup $ is a homeomorphism.

{\rm (ii)} The morphism $\varkappa $ is a homeomorphism.
\end{prop}

\begin{proof}
Recall that the morphisms
$$ \xymatrix{ \sqx G{{\goth b}} \ar[rr] && G/B\times {\goth g}} \quad  \text{and} \quad
\xymatrix{ \sqx G{{\goth b}^{k}} \ar[rr] && G/B\times {\goth g}^{k}} $$ 
are closed embeddings. For $x=(\poi x1{,\ldots,}{k}{}{}{},\poi y1{,\ldots,}{k}{}{}{})$ in
${\cal B}_{\x}^{(k)}$, denote by ${\goth B}_{x}$ the subset of Borel subalgebras 
${\goth b}'$ of ${\goth g}$ such that $\chi _{\n}({\goth b}',x_{i}) = (x_{i},y_{i})$ for 
$i=1,\ldots,k$. Then 
$\gamma _{\x}^{-1}(\{x\})={\goth B}_{x}\times \{(\poi x1{,\ldots,}{}{}{}{})\}$. From the 
two commutative  diagramms
$$ \xymatrix{ \sqx G{{\goth b}^{k}} \ar[rr]^{\gamma _{\n}} \ar[rd]_{\gamma _{\x}} && 
{\cal B}_{\n}^{(k)} \ar[ld]^{\etaup }\\ & {\cal B}_{\x}^{(k)} & }, \quad
\xymatrix{ \sqx G{{\goth u}^{k}} \ar[rr]^{\upsilon _{\n}} \ar[rd]_{\upsilon } && 
{\cal N}_{\n}^{(k)} \ar[ld]^{\varkappa }\\ & {\cal N}^{(k)} & }$$
we deduce that it suffices to prove that ${\goth B}_{x}$ is connected for all $x$ in 
${\cal B}_{\x}^{(k)}$ since $\upsilon $ is the restriction of $\gamma _{\x}$ to 
$\sqx G{{\goth u}^{k}}$. 

For $\beta $ a simple root, denote by $s_{\beta }$ the reflection of ${\goth h}$ with 
respect to $\beta $. For $w$ in $W({\cal R})$ denote by $l(w)$ its length with respect to
the set of simple roots, $n_{w}$ a representative of $w$ in $N_{G}({\goth h})$ and set
$w({\goth b}) := n_{w}({\goth b})$. Let 
$x=(\poi x1{,\ldots,}{k}{}{}{},\poi y1{,\ldots,}{k}{}{}{})$ be in 
${\cal B}_{\x}^{(k)}$ and ${\goth b}'$ and ${\goth b}''$ in ${\goth B}_{x}$. By Bruhat 
decomposition of $G$, for some $(g,b,w)$ in $G\times B\times W({\cal R})$, 
${\goth b}'=g({\goth b})$, ${\goth b}'' = gbw({\goth b})$. Set: 
$$l(w):= q, \qquad u_{i} := g^{-1}(x_{i}), \qquad v_{i} := b^{-1}(u_{i}) $$ 
for $i=1,\ldots,k$. In particular, 
$v := (\poi v1{,\ldots,}{k}{}{}{},\poi y1{,\ldots,}{k}{}{}{})$ is in 
${\cal B}_{\x}^{(k)}$ and ${\goth b}$ and $w({\goth b})$ are in 
${\goth B}_{v}$ and it suffices to prove ${\goth b}$ and $w({\goth b})$ are in the
same connected component of ${\goth B}_{v}$ since $x = gb.v$. It will be a consequence of 
the following claim.

\begin{claim}
There exist a sequence $\poi L1{,\ldots,}{q}{}{}{}$ of projective lines contained in 
${\goth B}_{v}$ and a sequence $\poi {{\goth b}}0{,\ldots,}{q}{}{}{}$ in ${\goth B}_{v}$ 
such that  
$$ {\goth b} = {\goth b}_{0}, \quad w({\goth b}) = {\goth b}_{q}, \quad 
{\goth b}_{i-1} \in L_{i}, \quad {\goth b}_{i} \in L_{i} $$
for $i=1,\ldots,q$.
\end{claim}

Prove the claim by induction on $q$. For $q=0$, ${\goth b}=w({\goth b})$. Suppose that  
$q=1$ and $w=s_{\beta }$ for some simple root $\beta $. Then $\poi v1{,\ldots,}{k}{}{}{}$
are in ${\goth b}\cap s_{\beta }({\goth b})$ and for $i=1,\ldots,k$, 
$$\chi _{\n}({\goth b},v_{i}) = \chi _{\n}(s_{\beta }({\goth b}),v_{i})= (v_{i},y_{i}) .$$
Let ${\goth p}_{\beta }$ be the parabolic subalgebra ${\goth g}^{-\beta }+{\goth b}$
and ${\goth l}_{\beta }$ the reductive factor containing ${\goth h}$. The set 
$L_{\beta }$ of Borel subalgebras of ${\goth g}$, contained in  ${\goth p}_{\beta }$, is 
a projective line. In the case ${\goth g}={\goth l}_{\beta }$, ${\cal N}^{(k)}$ is 
normal and $\eta $ is an isomorphism by Theorem~\ref{tnc},(i). Then, by Zariski's Main 
Theorem \cite[\S 9]{Mu}, the fibers of $\gamma _{\x}$ are connected. So, $L_{\beta }$ is 
contained in ${\goth B}_{v}$ since ${\goth b}$ and $s_{\beta }({\goth b})$ are two 
different points of ${\goth B}_{v}\cap L_{\beta }$.

Suppose the claim true for the integers smaller than $q$. Let 
$w=\poi s1{\cdots }{q}{}{}{}$ be a reduced decomposition of $w$ and set 
$w' := \poi s1{\cdots }{q-1}{}{}{}$. For $i=1,\ldots,q$, let $\beta _{i}$ be the simple 
root such that $s_{i}=s_{\beta _{i}}$. For $i=1,\ldots,k$,  
$$ v_{i} \in {\goth h}\oplus 
\bigoplus _{\mycom{\gamma \in {\cal R}_{+}}{w(\gamma )\in {\cal R}_{+}}} 
{\goth g}^{w(\gamma )} \quad  \text{and} \quad
n_{w'}^{-1}(v_{i}) \in {\goth h} \oplus 
\bigoplus _{\mycom{\gamma \in {\cal R}_{+}}{w(\gamma )\in {\cal R}_{+}}} 
{\goth g}^{s_{q}(\gamma )}.$$
Since $\poi s1{\cdots }{q}{}{}{}$ is the reduced decomposition of $w$, $w(\beta _{q})$ is
a negative root, whence
$$ \{\gamma \in {\cal R}_{+} \, \vert \, w(\gamma )\in {\cal R}_{+} \} \subset 
{\cal R}_{+}\setminus \{\beta _{q}\} .$$
As a result $n_{w'}^{-1}(v_{1}),\ldots,n_{w'}^{-1}(v_{k})$ are in ${\goth b}$.
So, by induction hypothesis, there exist a sequence $\poi L0{,\ldots,}{q-1}{}{}{}$ of 
projective lines contained in ${\goth B}_{v}$ and a sequence 
$\poi {{\goth b}}0{,\ldots,}{q-1}{}{}{}$ in ${\goth B}_{v}$ such that 
$$ {\goth b} = {\goth b}_{0}, \qquad w'({\goth b}) = {\goth b}_{q-1}, \qquad
{\goth b}_{i-1}\in L_{i}, \qquad {\goth b}_{i} \in L_{i} $$
for $i=1,\ldots,q-1$. By the case $q=1$, for some projective line $L'_{q}$, contained in 
${\goth B}_{n_{w'}^{-1}.v}$, ${\goth b}$ and $s_{q}({\goth b})$ are in $L'_{q}$. Then,
setting ${\goth b}_{q}=w({\goth b})$ and $L_{q}:= n_{w'}.L'_{q}$, the sequences 
$\poi L1{,\ldots,}{q}{}{}{}$ and $\poi {{\goth b}}0{,\ldots,}{q}{}{}{}$  verify the 
conditions of the claim.
\end{proof}

\section{Main varieties} \label{mv}
Denote by $X$ the closure in $\ec {Gr}g{}{}{\rg}$ of the orbit of ${\goth h}$ under 
$B$. According to Lemma~\ref{l4int}, $G.X$ is the closure in $\ec {Gr}g{}{}{\rg}$ of the
orbit of ${\goth h}$ under $G$. Let ${\cal E}_{0}$ and ${\cal E}$ be the restrictions to
$X$ and $G.X$ respectively of the totaulogical vector bundle over $\ec {Gr}g{}{}{\rg}$.
By definition, ${\cal E}$ is the subvariety of elements $(V,x)$ of $G.X \times {\goth g}$
such that $x$ is in $V$ and ${\cal E}_{0}$ is the intersection of ${\cal E}$ and 
$X\times {\goth b}$. In this section, we give geometric properties of $X$ and $G.X$. These
varieties play an important role in the study of the generalized commuting varieties and 
isospectrale commuting varieties as it is suggested by Theorem~\ref{t3int} and it will be
shown in two future notes.

\subsection{} \label{mv1}
For $\alpha $ in ${\cal R}$, set 
$V_{\alpha } := {\goth h}_{\alpha }\oplus {\goth g}^{\alpha }$ and 
denote by $X_{\alpha }$ the closure in $\ec {Gr}g{}{}{\rg}$ of the orbit of 
$V_{\alpha }$ under $B$.

\begin{lemma}\label{lmv1}
Let $\alpha $ be in ${\cal R}_{+}$. Let ${\goth p}$ be a parabolic subalgebra containing 
${\goth b}$ and let $P$ be its normalizer in $G$.

{\rm (i)} The subset $P.X$ of $\ec {Gr}g{}{}{\rg}$ is the closure in 
$\ec {Gr}g{}{}{\rg}$ of the orbit of ${\goth h}$ under $P$.

{\rm (ii)} The closed set $X_{\alpha }$ of $\ec {Gr}g{}{}{\rg}$ is an irreducible 
component of $X\setminus B.{\goth h}$.

{\rm (iii)} The set $P.X_{\alpha }$ is an irreducible component of 
$P.X\setminus P.{\goth h}$.

{\rm (iv)} The varieties $X\setminus B.{\goth h}$ and $P.X\setminus P.{\goth h}$ are 
equidimenional of codimension $1$ in $X$ and $P.X$ respectively.
\end{lemma}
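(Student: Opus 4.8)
The plan is to reduce the whole lemma to the facts that $X$ and $P.X$ are complete, that the open orbits $B.\mathfrak h$ and $P.\mathfrak h$ are \emph{affine}, and that each $V_\alpha$ lies in the boundary with an $(\ell+1)$-dimensional isotropy group. First, $X$ is closed in the projective variety $\mathrm{Gr}_{\ell}(\mathfrak g)$, hence complete, and $P.X$ is closed by Lemma~\ref{l4int} (applied to the $G$-variety $\mathrm{Gr}_{\ell}(\mathfrak g)$, the pair $B\subseteq P$, and the $B$-stable closed subset $X$); since $P.X=P.\overline{B.\mathfrak h}\subseteq\overline{P.\mathfrak h}\subseteq\overline{P.X}=P.X$, this gives Assertion~(i). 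For the dimensions I would note that an element of $U$ normalising $\mathfrak h$ is trivial (inspect the lowest-weight term of $\mathrm{Ad}(u)H-H$ for $H\in\mathfrak h$), so $N_B(\mathfrak h)$ is the maximal torus $T$ with $\mathrm{Lie}\,T=\mathfrak h$, $B.\mathfrak h\cong B/T\cong U\cong\mathbb A^{n}$ is affine, and $\dim X=n$. Next I would identify $N_P(\mathfrak h)$: writing $P=R_u(P)\rtimes L$ with $T\subseteq L$, the $L$-equivariant, $R_u(P)$-invariant projection $\mathfrak p\to\mathfrak p/R_u(\mathfrak p)\cong\mathfrak l$ is the identity on $\mathfrak h$, so applying it to $\mathfrak h=g.\mathfrak h$ for $g=rl\in N_P(\mathfrak h)$ forces $l\in N_L(\mathfrak h)$ and then $r\in N_{R_u(P)}(\mathfrak h)\subseteq N_U(\mathfrak h)=\{1\}$; hence $N_P(\mathfrak h)=N_L(\mathfrak h)$, which is reductive (its identity component is the torus $T$, so its unipotent radical is trivial). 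By Matsushima's criterion $L/N_L(\mathfrak h)$ is affine, so $P.\mathfrak h\cong P/N_L(\mathfrak h)\cong R_u(P)\times\bigl(L/N_L(\mathfrak h)\bigr)$ is affine, of dimension $\dim P-\ell$. (If $\mathfrak g$ is abelian all assertions are trivial, so I assume $n\ge1$.)

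Next, for each $\alpha\in\mathcal R_{+}$: completeness of the Grassmannian lets me take the limit as $s\to\infty$ of the curve $s\mapsto\exp(s\,\mathrm{ad}\,x_\alpha).\mathfrak h$; choosing a basis of $\mathfrak h_\alpha$ together with an $h$ with $\alpha(h)\neq0$, this curve equals $\mathfrak h_\alpha\oplus\k\bigl(h-s\alpha(h)x_\alpha\bigr)$, whose limit is $V_\alpha$, and since $\exp(s\,\mathrm{ad}\,x_\alpha)\in B$ this shows $V_\alpha\in X$; moreover $V_\alpha\notin B.\mathfrak h$ because $B.\mathfrak h$ consists of Cartan subalgebras while $x_\alpha\in V_\alpha$ is nilpotent. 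A short local computation then gives $\mathfrak n_{\mathfrak p}(V_\alpha):=\{\xi\in\mathfrak p:[\xi,V_\alpha]\subseteq V_\alpha\}=\mathfrak h\oplus\k x_\alpha$ for every parabolic $\mathfrak p\supseteq\mathfrak b$ (in particular $\mathfrak p=\mathfrak b$): the condition $[\xi,\mathfrak h_\alpha]\subseteq V_\alpha$ kills every root component of $\xi$ except those along $\pm\alpha$ (a root vanishing on the hyperplane $\mathfrak h_\alpha$ is $\pm\alpha$, the root system being reduced), and $[\xi,x_\alpha]\subseteq V_\alpha$ then excludes the $(-\alpha)$-component since $[x_{-\alpha},x_\alpha]\in\mathfrak h\setminus\mathfrak h_\alpha$. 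As $\mathfrak n_{\mathfrak p}(V_\alpha)$ is the Lie algebra of $\mathrm{Stab}_P(V_\alpha)$ for the action on $\mathrm{Gr}_{\ell}(\mathfrak g)$, the orbit closures $X_\alpha=\overline{B.V_\alpha}$ and $P.X_\alpha=\overline{P.V_\alpha}$ (closed by Lemma~\ref{l4int}) are irreducible of dimensions $n-1=\dim X-1$ and $\dim P-\ell-1=\dim P.X-1$ respectively; being $B$- resp.\ $P$-stable of dimension strictly less than $\dim X$ resp.\ $\dim P.X$, they are disjoint from the dense orbits, hence $X_\alpha\subseteq X\setminus B.\mathfrak h$ and $P.X_\alpha\subseteq P.X\setminus P.\mathfrak h$.

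Finally I would assemble (ii)--(iv). Since $B.\mathfrak h$ is a dense \emph{affine} open subset of the complete irreducible variety $X$, its complement $X\setminus B.\mathfrak h$ has pure codimension $1$ — the classical fact that the complement of an affine dense open subset of a complete variety is a divisor (one reduces to the normal case by normalisation and then uses that a normal complete variety has only constant global regular functions). The same argument for $P.\mathfrak h\subseteq P.X$ gives Assertion~(iv). Assertion~(ii) follows: $X_\alpha$ is irreducible, closed in $X\setminus B.\mathfrak h$ and of codimension $1$, hence a component of it, since any strictly larger irreducible subset of $X\setminus B.\mathfrak h$ would have dimension $\ge\dim X$, which is impossible; Assertion~(iii) is the same argument inside $P.X$. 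I expect the only real obstacle to be Assertion~(iv), the equidimensionality, i.e.\ ruling out boundary components of codimension $\ge2$; the route around it is the affine-complement principle above, whose price is the slightly delicate verification that $P.\mathfrak h$ (and not merely $B.\mathfrak h$) is affine, via $N_P(\mathfrak h)=N_L(\mathfrak h)$ and Matsushima's theorem. All remaining steps — $N_U(\mathfrak h)=\{1\}$, the degeneration computing $V_\alpha$, and the normaliser $\mathfrak n_{\mathfrak p}(V_\alpha)$ — are brief local computations.
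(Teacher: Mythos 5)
Your argument is correct and follows the paper's route: closedness of $P.X$ via Lemma~\ref{l4int} and the dense-orbit argument for (i), the one-parameter degeneration producing $V_{\alpha }\in X$, the normalizer ${\goth h}\oplus \k x_{\alpha }$ for the dimension counts in (ii) and (iii), and the affineness of the open orbits $B.{\goth h}$ and $P.{\goth h}$ to get the divisorial boundary in (iv), where the paper simply cites EGA~IV, Corollaire~21.12.7. The one place you supply more than the paper is the verification that $P.{\goth h}$ is affine: the paper only writes down $P.{\goth h}\cong L/N_{L}({\goth h})\times P_{\u}$ without further comment, whereas you justify the affineness of $L/N_{L}({\goth h})$ via the identification $N_{P}({\goth h})=N_{L}({\goth h})$, the observation that this group has reductive identity component (a torus), and Matsushima's criterion — a correct and worthwhile fill-in.
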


\begin{proof}
(i) Since $X$ is a $B$-invariant closed subset of $\ec {Gr}g{}{}{\rg}$, $P.X$ is a closed 
subset of $\ec {Gr}g{}{}{\rg}$ by Lemma~\ref{l4int}. Hence $\overline{P.{\goth h}}$ is 
contained in $P.X$ since ${\goth h}$ is in $X$, whence the assertion since 
$\overline{P.{\goth h}}$ is a $P$-invariant subset containing $X$.

(ii) Denoting by $H_{\alpha }$ the coroot of $\alpha $,
$$ \lim _{t\rightarrow \infty } \exp (t\ad x_{\alpha })(\frac{-1}{2t} H_{\alpha }) = 
x_{\alpha } .$$
So $V_{\alpha }$ is in the closure of the orbit of ${\goth h}$ under the one parameter
subgroup of $G$ generated by $\ad x_{\alpha }$. As a result, $X_{\alpha }$ is a closed 
subset of $X\setminus B.{\goth h}$ since $V_{\alpha }$ is not a Cartan subalgebra. 
Moreover, $X_{\alpha }$ has dimension $n-1$ since the normalizer of $V_{\alpha }$ in 
${\goth g}$ is ${\goth h}+{\goth g}^{\alpha }$. Hence $X_{\alpha }$ is an irreducible 
component of $X\setminus B.{\goth h}$ since $X$ has dimension $n$.

(iii) Since $X_{\alpha }$ is a $B$-invariant closed subset of $\ec {Gr}g{}{}{\rg}$, 
$P.X_{\alpha }$ is a closed subset of $\ec {Gr}g{}{}{\rg}$ by Lemma~\ref{l4int}. According
to (ii), $P.X_{\alpha }$ is contained in $P.X\setminus P.{\goth h}$ and it has 
dimension $\dim {\goth p}-\rg-1$, whence the assertion since $P.X$ has dimension 
$\dim {\goth p}-\rg$.

(iv) Let $P_{\u}$ be the unipotent radical of $P$ and let $L$ be the reductive factor
of $P$ whose Lie algebra contains $\ad {\goth h}$. Denote by $N_{L}({\goth h})$ the 
normalizer of ${\goth h}$ in $L$. Since $B.{\goth h}$ and $P.{\goth h}$ are isomorphic to
$U$ and $L/N_{L}({\goth h})\times P_{\u}$ respectively, they are affine open subsets of 
$X$ and $P.X$ respectively, whence the assertion by \cite[Corollaire 21.12.7]{Gro1}.
\end{proof}

For $x$ in ${\goth g}$, set:
$$ V_{x} := {\mathrm {span}}(\{\poi x{}{,\ldots,}{}{\varepsilon }{1}{\rg}\}) .$$

\begin{lemma}\label{l2mv1}
{\rm (i)} For $(V,x)$ in $X \times {\goth b}$, $(V,x)$ is in the closure of 
$B.(\{{\goth h}\}\times {\goth h}_{\r})$ in $\ec {Gr}b{}{}{\rg}\times {\goth b}$ if and 
only if $x$ is in $V$.

{\rm (ii)} The set ${\cal E}$ is the closure in $\ec {Gr}g{}{}{\rg}\times {\goth g}$ of 
$G.(\{{\goth h}\}\times {\goth h}_{\r})$. 

{\rm (iii)} For $(V,x)$ in ${\cal E}$, $V_{x}$ is contained in $V$.
\end{lemma}

\begin{proof}
(i) Let ${\cal E}'_{0}$ be the closure of $B.(\{{\goth h}\}\times {\goth h}_{\r})$ in 
$\ec {Gr}b{}{}{\rg}\times {\goth b}$. Then ${\cal E}'_{0}$ is a closed subset of 
${\cal E}_{0}$. Let $(V,x)$ be in ${\cal E}_{0}$. Let $E$ be a complement to $V$ in 
${\goth b}$ and let $\Omega _{E}$ be the set of complements to $E$ in ${\goth g}$. Then 
$\Omega _{E}$ is an open neighborhood of $V$ in $\ec {Gr}b{}{}{\rg}$. Moreover, the map 
$$\xymatrix{{\mathrm {Hom}}_{\k}(V,E) \ar[rr]^{\kappaup } && \Omega _{E}}, \qquad
\varphi \longmapsto \kappaup (\varphi ) := 
{\mathrm {span}}(\{v+\varphi (v) \ \vert \ v \in V \}) .$$ 
is an isomorphism of varieties. Let $\Omega _{E}^{c}$ be the inverse image of the set of 
Cartan subalgebras. Then $0$ is in the closure of $\Omega _{E}^{c}$ in 
${\mathrm {Hom}}_{\k}(V,E)$ since $V$ is in $X$. For all $\varphi $ in 
$\Omega _{E}^{c}$, $(\kappaup (\varphi ),x+\varphi (x))$ is in ${\cal E}'_{0}$. Hence 
$(V,x)$ is in ${\cal E}'_{0}$.

(ii) Let $(V,x)$ be in ${\cal E}$. For some $g$ in $G$, $g(V)$ is in $X$. So by (i),
$(g(V),g(x))$ is in ${\cal E}_{0}$ and $(V,x)$ is in the closure of 
$G.(\{{\goth h}\}\times {\goth h}_{\r})$ in $\ec {Gr}g{}{}{\rg}\times {\goth g}$, whence
the assertion.

(iii) For $i=1,\ldots,\rg$, let ${\cal E}_{i}$ be the set of elements 
$(V,x)$ of ${\cal E}$ such that $\varepsilon _{i}(x)$ is in $V$. Then ${\cal E}_{i}$ is a 
closed subset of $G.X \times {\goth g}$, invariant under the action of 
$G$ in $\ec {Gr}g{}{}{\rg}\times {\goth g}$ since $\varepsilon _{i}$ is a $G$-equivariant
map. For all $(g,x)$ in $G\times {\goth h}_{\r}$, $(g({\goth h}),g(x))$ is in 
${\cal E}_{i}$ since $\varepsilon _{i}(g(x))$ centralizes $g(x)$. Hence 
${\cal E}_{i}={\cal E}$ since $G.({\goth h}_{\r}\times \{{\goth h}\})$ is dense in 
${\cal E}$ by (ii). As a result, for all $V$ in $G.X$ and for all $x$ in 
$V$, $\poi x{}{,\ldots,}{}{\varepsilon }{1}{\rg}$ are in $V$.
\end{proof}

\begin{coro}\label{cmv1}
Let $(V,x)$ be in ${\cal E}$. 

{\rm (i)} The space ${\goth z}_{x_{\s}}$ is contained in $V_{x}$ and $V$. 

{\rm (ii)} The space $V$ is an algebraic, commutative subalgebra of ${\goth g}$.
\end{coro}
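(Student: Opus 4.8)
The plan is to deduce everything from the Jordan decomposition of $x$, the $G$-equivariance of the maps $\varepsilon _i$, and the inclusion $V_x\subseteq V$ already proved in Lemma~\ref{l2mv1},(iii). Write $s:=x_{\s}$, $u:=x_{\n}$, $\l:=\g^{s}$ (a reductive subalgebra), $\z:=Z(\l)$; then $u\in\l$ is nilpotent, $\g^{x}=\l^{u}$ and $\l=\z\oplus[\l,\l]$. First I would record two formal remarks: since $p_i$ is $G$-invariant, $[\varepsilon _i(y),y]=0$ for all $y$, so $\varepsilon _i(y)\in\g^{y}$; and since $\varepsilon _i$ is $G$-equivariant while $s$ is fixed by $G^{s}$, the element $\varepsilon _i(s)$ is fixed by the connected group $(G^{s})^{\circ }$ and therefore lies in $\z$. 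Likewise $\varepsilon _i(x)\in\g^{x}\subseteq\l$.

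The heart of assertion (i) is the identity $\z={\mathrm {span}}\{\varepsilon _i(s):1\le i\le\rg\}$. I would prove it by reducing to ${\goth h}$: conjugating, one may assume $s\in{\goth h}$, and then $\varepsilon _i(s)\in{\goth h}$ is the gradient of $p_i\vert_{{\goth h}}$ at $s$, so ${\mathrm {span}}\{\varepsilon _i(s)\}$ is the orthogonal complement of $\ker \dd _{s}\pi $, where $\pi \colon {\goth h}\to{\goth h}/W({\cal R})$ is the adjoint quotient. The stabilizer of $s$ in $W({\cal R})$ is the reflection subgroup $W_{s}$ generated by the $s_{\alpha }$ with $\alpha (s)=0$; étale locally around $\pi (s)$ the quotient ${\goth h}/W({\cal R})$ is the product of ${\goth h}^{W_{s}}$ with the quotient of a complementary reflection representation having no linear invariant, so $\ker \dd _{s}\pi =({\goth h}^{W_{s}})^{\perp }$ and hence ${\mathrm {span}}\{\varepsilon _i(s)\}={\goth h}^{W_{s}}=Z(\g^{s})=\z$.

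Now pass from $s$ to $x$. Since $p_i$ depends only on the semisimple part, for $z\in\z$ one has $p_i(z+u)=p_i(z)$; restricting the invariants $p_i$ to $\l$ and differentiating at $x=s+u$ shows that the component of $\varepsilon _i(x)$ on $\z$, in the decomposition $\l=\z\oplus[\l,\l]$, equals $\varepsilon _i(s)$. Thus $\varepsilon _i(x)=\varepsilon _i(s)+b_i$ with $\varepsilon _i(s)\in\z$ and $b_i\in[\l,\l]$, so the projection $\l\to\z$ carries $V_x$ onto ${\mathrm {span}}\{\varepsilon _i(s)\}=\z$. Checking that $V_x$ actually splits as $(V_x\cap\z)\oplus(V_x\cap[\l,\l])$ — a subspace compatible with the reductive decomposition of $\l$ — one then gets $V_x\cap\z={\mathrm {span}}\{\varepsilon _i(s)\}=\z$, i.e. $\z\subseteq V_x$; combined with Lemma~\ref{l2mv1},(iii) this yields $\z\subseteq V_x\subseteq V$, which is (i).

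For (ii): $G.X$ is the closure of the orbit $G.{\goth h}$, every member of which is a Cartan — hence abelian — subalgebra, and the locus of abelian subspaces is closed in $\ec {Gr}g{}{}{\rg}$, so $V$ is an abelian subalgebra. Since $x\in V$ and $V$ is abelian, $V\subseteq\g^{x}\subseteq\l$; by (i), $\z\subseteq V$, so $V=\z\oplus V'$ with $V':=V\cap[\l,\l]$ an abelian subalgebra of $[\l,\l]$ containing $u$. It remains to see $V'$ is algebraic: its nilpotent part automatically is, and its semisimple part is forced to be a rational subspace of a Cartan of $[\l,\l]$ because $V'$ occurs as a degeneration of Cartan subalgebras of $[\l,\l]$; as $\z$ is the Lie algebra of a torus commuting with $V'$, the sum $V=\z\oplus V'$ is algebraic. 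I expect the main obstacles to be exactly the two ``checking'' steps that are not formal: the splitting $V_x=(V_x\cap\z)\oplus(V_x\cap[\l,\l])$ used to pass from $s$ to $x$, and the algebraicity (rationality of the semisimple part) of $V'$; everything else is equivariance, the reduction via Lemma~\ref{l2mv1}, or standard reflection-group geometry.
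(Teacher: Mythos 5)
Your reduction from $x$ to $s$ has a genuine gap exactly where you flag it: from $\varepsilon _{i}(x)=\varepsilon _{i}(s)+b_{i}$ with $\varepsilon _{i}(s)\in \z$ and $b_{i}\in [\l,\l]$ you get that the projection of $V_{x}$ onto $\z$ is all of $\z$, but that is strictly weaker than $\z\subseteq V_{x}$. The splitting $V_{x}=(V_{x}\cap \z)\oplus (V_{x}\cap [\l,\l])$ you would need is not automatic and I do not see how to force it: the stabilizer $G^{x}$ acts trivially on $\z$, so invariance of $V_{x}$ under $G^{x}$ does not separate the two summands (a graph-like subspace of a sum of two trivial modules is invariant but does not split). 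Your preliminary steps are fine — $\varepsilon _{i}(s)\in \z$ by $G^{s}$-equivariance, the identity $\z={\mathrm {span}}\{\varepsilon _{i}(s)\}$ via the stabilizer $W_{s}$ and the étale local structure of ${\goth h}/W({\cal R})$, and the computation that the $\z$-component of $\varepsilon _{i}(x)$ is $\varepsilon _{i}(s)$ because $p_{i}$ depends only on the semisimple part — but they stop just short of the conclusion.

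The paper closes the gap by a different mechanism that avoids any splitting. Working on the nilpotent cone ${\goth N}_{\g^{x_{\s}}}$ and writing $x=x_{\s}+x_{\n}$, it observes that for $y$ in the regular nilpotent orbit $\Omega _{\r}$ of $\g^{x_{\s}}$ the element $x_{\s}+y$ is regular in $\g$, so $\varepsilon _{1}(x_{\s}+y),\dots ,\varepsilon _{\rg}(x_{\s}+y)$ is a basis of $\g^{x_{\s}+y}\supseteq \z$; each $z\in \z$ then has coordinates $a_{i,z}(y)$ depending regularly on $y\in \Omega _{\r}$, and since ${\goth N}_{\g^{x_{\s}}}$ is normal with ${\goth N}_{\g^{x_{\s}}}\setminus \Omega _{\r}$ of codimension $2$, the $a_{i,z}$ extend to the whole nilpotent cone; specializing at $y=x_{\n}$ gives $z\in V_{x}$. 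You should replace the splitting step by this Hartogs-type extension argument. For (ii), once $\z\subseteq V$ is in hand the paper's route is shorter and fully rigorous: $x_{\s}\in \z\subseteq V$ and hence $x_{\n}\in V$, and every replica of $x_{\s}$ also lies in $\z\subseteq V$, which is exactly the Chevalley criterion for $V$ to be algebraic; your appeal to ``rationality by degeneration of Cartans'' is not needed and, as stated, does not quite amount to a proof.
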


\begin{proof}
(i) If $x$ is regular semisimple, $V$ is a Cartan subalgebra of ${\goth g}$ whence the
assertion in this case by Lemma~\ref{l2mv1},(iii) and \cite[Theorem 9]{Ko}. Suppose 
that $x$ is not regular semisimple. Let ${\goth N}_{{\goth g}^{x_{\s}}}$ be the nilpotent
cone of ${\goth g}^{x_{s}}$ and let $\Omega _{\r}$ be the regular nilpotent orbit of 
${\goth g}^{x_{\s}}$. For all $y$ in $\Omega _{\r}$, $x_{\s}+y$ is in ${\goth g}_{\r}$ 
and $\poi {x_{\s}+y}{}{,\ldots,}{}{\varepsilon }{1}{\rg}$ is a basis of 
${\goth g}^{x_{\s}+y}$ by \cite[Theorem 9]{Ko}. Then for all $z$ in ${\goth z}_{x_{\s}}$,
there exist regular functions on $\Omega _{\r}$, $\poi a{1,z}{,\ldots,}{\rg ,z}{}{}{}$, 
such that
$$ z = a_{1,z}(y)\varepsilon _{1}(x_{\s}+y) + \cdots + a_{\rg ,z}(y) 
\varepsilon _{\rg}(x_{\s}+y)$$
for all $y$ in $\Omega _{\r}$. Furthermore, these functions are uniquely defined by
this equality. Since ${\goth N}_{{\goth g}^{x_{\s}}}$ is a normal variety and  
${\goth N}_{{\goth g}^{x_{\s}}}\setminus \Omega _{\r}$ has codimension $2$ in 
${\goth N}_{{\goth g}^{x_{\s}}}$, the functions $\poi a{1,z}{,\ldots,}{\rg ,z}{}{}{}$ 
have regular extensions to ${\goth N}_{{\goth g}^{x_{\s}}}$. Denoting again by $a_{i,z}$ 
the regular extension of $a_{i,z}$ for $i=1,\ldots,\rg$,
$$ z = a_{1,z}(y)\varepsilon _{1}(x_{\s}+y) + \cdots + a_{\rg ,z}(y) 
\varepsilon _{\rg}(x_{\s}+y)$$
for all $y$ in ${\goth N}_{{\goth g}^{x_{\s}}}$. As a result, ${\goth z}_{x_{\s}}$ is 
contained in $V_{x}$. Hence ${\goth z}_{x_{\s}}$ is contained in $V$ by 
Lemma~\ref{l2mv1},(iii).

(ii) Since the set of commutative subalgebras of dimension $\rg$ is closed in 
$\ec {Gr}g{}{}{\rg}$, $V$ is a commutative subalgebra of ${\goth g}$. According to (i), 
the semisimple and nilpotent components of the elements of $V$ are 
contained in $V$. For $x$ in $V\setminus {\goth N}_{{\goth g}}$, all the replica of 
$x_{\s}$ are contained in the center of ${\goth g}^{x_{\s}}$. Hence $V$ is an algebraic 
subalgebra of ${\goth g}$ by (i). 
\end{proof}

\subsection{} \label{mv2}
For $s$ in ${\goth h}$, denote by $X^{s}$ the subset of elements of $X$, contained in
${\goth g}^{s}$.

\begin{lemma}\label{lmv2}
Let $s$ be in ${\goth h}$.

{\rm (i)} The set $X^{s}$ is the closure in $\ec {Gr}{}{{\goth g}^{s}}{}{\rg}$ of the 
orbit of ${\goth h}$ under $B^{s}$.

{\rm (ii)} The set of elements of $G.X$ containing ${\goth z}_{s}$ is the closure in 
$\ec {Gr}g{}{}{\rg}$ of the orbit of ${\goth h}$ under $G^{s}$.
\end{lemma}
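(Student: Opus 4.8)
The key observation driving both assertions is that $X^s$ and the set of elements of $G.X$ containing $\goth z$ are closed in the relevant grassmannians, so it suffices to identify a dense orbit and take closures. I would first note that $X^s = X \cap \ec{Gr}{}{{\goth g}^{s}}{}{\rg}$, where we regard $\ec{Gr}{}{{\goth g}^{s}}{}{\rg}$ as the closed subvariety of $\ec{Gr}g{}{}{\rg}$ consisting of subspaces contained in ${\goth g}^{s}$; since $X$ is closed in $\ec{Gr}g{}{}{\rg}$, the set $X^s$ is closed in $\ec{Gr}{}{{\goth g}^{s}}{}{\rg}$. Because $B^s = B \cap G^s$ normalizes ${\goth g}^s$ and stabilizes ${\goth h}$, the closure $\overline{B^s.{\goth h}}$ (taken inside $\ec{Gr}{}{{\goth g}^{s}}{}{\rg}$) is contained in $X^s$. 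For the reverse inclusion, I would argue that any $V \in X^s$ lies in this closure: since $V \subset {\goth g}^s$ is a limit of Cartan subalgebras of ${\goth g}$ under $B$, and the semisimple part $s$ is fixed, the approximating Cartan subalgebras can be taken inside ${\goth g}^s$ — more precisely, one uses that ${\goth h}$ is a Cartan subalgebra of ${\goth g}^s$ and that $V$ is $\rg$-dimensional commutative (by Corollary~\ref{cmv1}, since $(x,V) \in \Delta$ forces $V$ algebraic commutative of dimension $\rg$), hence generically a Cartan subalgebra of ${\goth g}^s$. The one-parameter subgroup argument as in Lemma~\ref{lmv1}(ii), applied inside $G^s$ rather than $G$, then realizes $V$ as a limit of $B^s$-translates of ${\goth h}$.

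For assertion (ii), the plan is to reduce to (i) via the $G$-action. Let ${\goth z}$ be the centre of ${\goth g}^s$; note $\dim {\goth z} \leq \rg$ with equality iff $s$ is regular (in which case ${\goth z} = {\goth h}$-conjugate and everything is a point or the Cartan itself). Denote by $Y$ the set of $V \in G.X$ with ${\goth z} \subseteq V$. Since $G.X$ is closed (Lemma~\ref{l4int}) and the incidence condition ${\goth z} \subseteq V$ is closed, $Y$ is closed in $\ec{Gr}g{}{}{\rg}$. The group $G^s$ stabilizes ${\goth z}$ (as it is the centre of ${\goth g}^s$), and ${\goth h} \in Y$ because ${\goth z} \subseteq {\goth g}^s$ implies ${\goth z} \subseteq {\goth h}$ (the centre of a Levi is contained in any Cartan of the Levi), wait — more carefully, ${\goth z} = {\goth z}({\goth g}^s) \subseteq {\goth h}$ since ${\goth h}$ is a Cartan subalgebra of ${\goth g}^s$ containing $s$. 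Hence $\overline{G^s.{\goth h}} \subseteq Y$. For the reverse inclusion, take $V \in Y$, so $V \in G.X$ and ${\goth z} \subseteq V$; by Corollary~\ref{cmv1}(i) applied to a generic pair $(x,V) \in \Delta$, the centre of ${\goth g}^{x_{\s}}$ sits inside $V$, and one wants to transport $V$ into $X^s$ by an element of $G$ fixing ${\goth z}$ pointwise, i.e. an element of $G^s$ (after checking that the stabilizer of ${\goth z}$ pointwise is exactly $G^s$, which holds because ${\goth g}^s = {\goth g}^{\goth z}$). Then $V$ translated lies in $X^s = \overline{B^s.{\goth h}}$ by part (i), so $V \in \overline{G^s.{\goth h}}$.

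\textbf{Main obstacle.} The delicate point is the reverse inclusion in (i): showing that an arbitrary $V \in X^s$ is actually in the closure of $B^s.{\goth h}$ and not merely in the closure of $B.{\goth h}$ intersected with ${\goth g}^s$. This requires knowing that the degeneration of ${\goth h}$ to $V$ inside $X$ can be performed by a curve staying in $G^s$ — equivalently, that $X^s$ has no components beyond $\overline{B^s.{\goth h}}$. I expect this to follow from a dimension count together with Corollary~\ref{cmv1}: every $V \in X^s$ is a commutative algebraic subalgebra of ${\goth g}^s$ of dimension $\rg = \operatorname{rk}({\goth g}^s)$, hence lies in the analogue of $X$ built from ${\goth g}^s$ and $B^s$; but that analogue is by definition $\overline{B^s.{\goth h}}$, so the two descriptions of $X^s$ coincide. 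The second delicate point, in (ii), is verifying that the pointwise stabilizer of ${\goth z}$ in $G$ equals $G^s$, which I would deduce from ${\goth g}^{\goth z} = {\goth g}^s$ (both equal the centralizer of the Levi, since $s \in {\goth z}$ and ${\goth z}$ spans the centre).
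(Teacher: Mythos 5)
Both parts of your plan have genuine gaps precisely at the spots you yourself flag as delicate, and the fallback arguments you offer do not close them.

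For (i), the claim in your ``main obstacle'' paragraph — that every commutative algebraic subalgebra of ${\goth g}^{s}$ of dimension $\rg$ automatically lies in the analogue of $X$ built from ${\goth g}^{s}$ and $B^{s}$ — is the converse of Corollary~\ref{cmv1},(ii) and is false in general; the variety of $\rg$-dimensional commutative algebraic subalgebras has in general several irreducible components, and $\overline{B^{s}.{\goth h}}$ is only one of them (if this converse held, Theorem~\ref{tmv4} and most of Subsections~\ref{mv3}--\ref{mv4} would be immediate, which they are not). The earlier version — ``the approximating Cartan subalgebras can be taken inside ${\goth g}^{s}$'' — is likewise asserted, not proved: a degenerating family $g_t.{\goth h}$ in $B$ with limit $V\subset {\goth g}^{s}$ need not admit an obvious replacement by a family in $B^{s}$, and Lemma~\ref{lmv1},(ii) only produces the particular degeneration to $V_{\alpha}$, not an arbitrary $V$. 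The paper closes this gap by a different device: it sets ${\goth p}={\goth g}^{s}+{\goth b}$, uses the projection $P\to G^{s}$ (here $P=N_G({\goth p})$) to track the pair $(g({\goth h}),\overline{g}({\goth h}))$, takes the closure $Z$ of this graph in $\ec {Gr}b{}{}{\rg}\times \ec {Gr}b{}{}{\rg}$, and observes that $Z$ lies in the closed set $\{(V,V'):V'\subset {\goth g}^{s}\cap{\goth b},\,V\subset V'\oplus {\goth p}_{\u}\}$; the condition $V\subset {\goth g}^{s}$ together with ${\goth g}^{s}\cap{\goth p}_{\u}=0$ then forces $V=V'\in \overline{B^{s}.{\goth h}}$.

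For (ii), you correctly reduce to $V\subset {\goth g}^{{\goth z}}={\goth g}^{s}$ and correctly note that $V$ can be conjugated by $G^{s}$ into ${\goth b}\cap {\goth g}^{s}$, but ``then $V$ translated lies in $X^{s}$'' is exactly the assertion that needs proof, not a consequence of what precedes it. After the $G^{s}$-conjugation one only knows $V\subset {\goth b}$ and $V\in G.X$; by the Bruhat decomposition this puts $V$ in $bw.X$ for some $b\in U$, $w\in W({\cal R})$, not in $X$ itself. The paper's proof spends the remainder of the argument splitting $U$ into $U_{2}U_{1}$ and $U_{2}$ into $U_{2,1}U_{2,2}$ according to $w$ and the grading of ${\goth p}_{\u}$, and showing (using that ${\goth z}$ is fixed and that some $\alpha(z)\neq 0$ for $z\in{\goth z}$) that the $U_{2,2}$-component is trivial; only then can one apply (i) to the translate $b_{2,1}^{-1}(V)$. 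Your proposal omits this entirely and would need it to be a proof.
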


\begin{proof}
(i) Set ${\goth p} := {\goth g}^{s}+{\goth b}$, let $P$ be the normalizer of
${\goth p}$ in $G$ and let ${\goth p}_{\u}$ be the nilpotent radical of ${\goth p}$. For 
$g$ in $P$, denote by $\overline{g}$ its image by the canonical projection from $P$ 
to $G^{s}$. Let $Z$ be the closure in 
$\ec {Gr}g{}{}{\rg}\times \ec {Gr}g{}{}{\rg}$ of the image of the map
$$ \xymatrix{B \ar[rr] &&  \ec {Gr}b{}{}{\rg}\times \ec {Gr}b{}{}{\rg}} , \qquad 
g \longmapsto (g({\goth h}),\overline{g}({\goth h})) $$
and let $Z'$ be the subset of elements $(V,V')$ of 
$\ec {Gr}b{}{}{\rg}\times \ec {Gr}b{}{}{\rg}$ such that
$$ V' \subset {\goth g}^{\s}\cap {\goth b} \quad  \text{and} \quad
V \subset V'\oplus {\goth p}_{\u} .$$
Then $Z'$ is a closed subset of $\ec {Gr}b{}{}{\rg}\times \ec {Gr}b{}{}{\rg}$ and 
$Z$ is contained in $Z'$ since $(g({\goth h}),\overline{g}({\goth h}))$ is in $Z'$ for 
all $g$ in $B$. Since $\ec {Gr}b{}{}{\rg}$ is a projective variety, the images 
of $Z$ by the projections $(V,V')\mapsto V$ and $(V,V')\mapsto V'$ are closed in 
$\ec {Gr}b{}{}{\rg}$ and they are equal to $X$ and $\overline{B^{s}.{\goth h}}$ 
respectively. Furthermore, $\overline{B^{s}.{\goth h}}$ is contained in $X^{s}$. 

Let $V$ be in $X^{s}$. For some $V'$ in $\ec {Gr}b{}{}{\rg}$, $(V,V')$ is in $Z$. Since
$$ V \subset {\goth g}^{s}, \ V'\subset {\goth g}^{s}, \ 
V \subset V'\oplus {\goth p}_{\u} ,$$
$V=V'$ so that $V$ is in $\overline{B^{s}.{\goth h}}$, whence the assertion.

(ii) Since ${\goth z}_{\s}$ is contained in ${\goth h}$, all element of 
$\overline{G^{s}.{\goth h}}$ is an element of $G.X$ containing ${\goth z}_{s}$. Let $V$ 
be in $G.X$, containing ${\goth z}_{s}$. Since $V$ is a commutative subalgebra of 
${\goth g}^{s}$ and since ${\goth g}^{s}\cap {\goth b}$ is a Borel 
subalgebra of ${\goth g}^{s}$, for some $g$ in $G^{s}$, $g(V)$ is contained in 
${\goth b}\cap {\goth g}^{s}$. So, one can suppose that $V$ is contained in ${\goth b}$.
According to the Bruhat decomposition of $G$, since $X$ is $B$-invariant, for some 
$b$ in $U$ and for some $w$ in $W({\cal R})$, $V$ is in $bw.X$. Set:
$${\cal R}_{+,w} := \{\alpha \in {\cal R}_{+} \ \vert \ w(\alpha ) \in {\cal R}_{+}\} ,
\qquad
{\cal R}'_{+,w} := \{\alpha \in {\cal R}_{+} \ \vert \ w(\alpha ) \not \in {\cal R}_{+}\} 
,$$
$${\goth u}_{1} := \bigoplus _{\alpha \in {\cal R}_{+,w}} {\goth g}^{w(\alpha )}, \qquad
{\goth u}_{2} := \bigoplus _{\alpha \in -{\cal R}'_{+,w}} {\goth g}^{w(\alpha )}, \qquad
{\goth u}_{3} := \bigoplus _{\alpha \in {\cal R}'_{+,w}} {\goth g}^{w(\alpha )},$$
$$B^{w} := wBw^{-1}, \qquad {\goth b}^{w} := {\goth h} \oplus {\goth u}_{1} \oplus 
{\goth u}_{3} ,$$
so that $\ad {\goth b}^{w}$ is the Lie algebra of $B^{w}$ and $w.X$ is the closure in 
$\ec {Gr}g{}{}{\rg}$ of the orbit of ${\goth h}$ under $B^{w}$. Moreover, 
${\goth u}$ is the direct sum of ${\goth u}_{1}$ and ${\goth u}_{2}$. For $i=1,2$,  
denote by $U_{i}$ the closed subgroup of $U$ whose Lie algebra is $\ad {\goth u}_{i}$. 
Then $U=U_{2}U_{1}$ and $b=b_{2}b_{1}$ with $b_{i}$ in $U_{i}$ for $i=1,2$. Since 
$w^{-1}({\goth u}_{1})$ is contained in ${\goth u}$ and $X$ is invariant under 
$B$, $b_{2}b_{1}w.X=b_{2}w.X$. Then $b_{2}^{-1}(V)$ is in $w.X$ and 
$$ b_{2}^{-1}(V) \subset {\goth b} \cap {\goth b}^{w} = {\goth h} \oplus {\goth u}_{1} $$
since $V$ is contained in ${\goth b}$. Set:
$${\goth u}_{2,1} := {\goth u}_{2}\cap {\goth g}^{s} , \qquad 
{\goth u}_{2,2} := {\goth u}_{2} \cap {\goth p}_{\u} $$
and for $i=1,2$, denote by $U_{2,i}$ the closed subgroup of $U_{2}$ whose Lie algebra 
is $\ad {\goth u}_{2,i}$. Then ${\goth u}_{2}$ is the direct sum of ${\goth u}_{2,1}$
and ${\goth u}_{2,2}$ and $U_{2}=U_{2,1}U_{2,2}$ so that $b_{2}=b_{2,1}b_{2,2}$ with
$b_{2,i}$ in $U_{2,i}$ for $i=1,2$. As a result, ${\goth z}_{s}$ is contained in 
$b_{2,1}^{-1}(V)$ and $b_{2,2}^{-1}({\goth z}_{s})$ is contained 
${\goth h}\oplus {\goth u}_{1}$. Hence $b_{2,2}^{-1}({\goth z}_{s})={\goth z}_{s}$ since 
${\goth u}_{1}\cap {\goth u}_{2,2}=\{0\}$. 

Suppose $b_{2,2}\neq 1_{{\goth g}}$. We expect a contradiction. For some $x$ in 
${\goth u}_{2,2}$, $b_{2,2}=\exp (\ad x)$. The space ${\goth u}_{2,2}$ is a direct sum
of root spaces since so are ${\goth u}_{2}$ and ${\goth p}_{\u}$. Let 
$\poi {\alpha }1{,\ldots,}{m}{}{}{}$ be the positive roots such that the corresponding
root spaces are contained in ${\goth u}_{2,2}$. They are ordered so that for $i\leq j$, 
$\alpha _{j}-\alpha _{i}$ is a positive root if it is a root. For $i=1,\ldots,m$, let
$c_{i}$ be the coordinate of $x$ at $x_{\alpha _{i}}$ and let $i_{0}$ be the smallest 
integer such that $c_{i_{0}}\neq 0$. For all $z$ in ${\goth z}_{s}$, 
$$b_{2,2}^{-1}(z)-z-c_{i_{0}}\alpha _{i_{0}}(z)x_{\alpha _{i_{0}}} \in 
\bigoplus _{j>i_{0}} {\goth g}^{\alpha _{j}} ,$$ 
whence the contradiction since for some $z$ in ${\goth z}_{s}$, 
$\alpha _{i_{0}}(z)\neq 0$. As a result, $b_{2,1}^{-1}(V)$ is an element of 
$w.X=\overline{B^{w}.{\goth h}}$, contained in ${\goth g}^{s}$. So, by (i), 
$b_{2,1}^{-1}(V)$ and $V$ are in $\overline{G^{s}.{\goth h}}$, whence the assertion.
\end{proof}

Define a torus of ${\goth g}$ as a commutative algebraic subalgebra of ${\goth g}$
whose all elements are semisimple. For $\Lambda $ subset of ${\cal R}$, denote by 
${\goth h}_{\Lambda }$ the intersection of the kernels of the elements of $\Lambda $.

\begin{coro}\label{cmv2}
Let $V$ be in $X$. Then for some subset $\Lambda $ of ${\cal R}$ and for some $g$ in 
$B$, $g(V)$ is the direct sum of ${\goth h}_{\Lambda }$ and $g(V)\cap {\goth u}$.
\end{coro}

\begin{proof}
By Corollary~\ref{cmv1},(ii), $V$ is the direct sum of a subtorus of ${\goth b}$ and its 
intersection with ${\goth u}$. So for some $g$ in $B$, 
$$g(V) = g(V)\cap {\goth h} \oplus g(V) \cap {\goth u} .$$
Let $\Lambda $ be the set of roots such that $g(V)\cap {\goth h}$ is contained in 
${\goth h}_{\Lambda }$. If $\Lambda ={\cal R}$, $g(V)$ is contained in ${\goth u}$. 
Suppose $\Lambda $ strictly containd in ${\cal R}$. For some $s$ in $g(V)\cap {\goth h}$, 
$\alpha (s)\neq 0$ for all $\alpha $ in ${\cal R}\setminus \Lambda $. Since $g(V)$ is a 
commutative algebra, $g(V)$ is contained in ${\goth g}^{s}$. So, by Lemma~\ref{lmv2},(i),
$g(V)$ is in $\overline{B^{s}.{\goth h}}$. In particular, by Corollary~\ref{cmv1},(i),
${\goth h}_{\Lambda }$ is contained in $g(V)$ since ${\goth h}_{\Lambda }$ is the center 
of ${\goth g}^{s}$, whence ${\goth h}_{\Lambda }=g(V)\cap {\goth h}$ and $g(V)$ is the 
direct sum of ${\goth h}_{\Lambda }$ and $g(V)\cap {\goth u}$.
\end{proof}

\subsection{} \label{mv3}
For $x$ in ${\goth g}$, denote by $Z_{x}$ the subset of elements of $G.X$ containing $x$ 
and by $(G^{x})_{0}$ the identity component of $G^{x}$. 

\begin{lemma}\label{lmv3}
Let $x$ be in ${\goth N}_{{\goth g}}$ and let $Z$ be an irreducible component of $Z_{x}$.
Suppose that some element of $Z$ is not contained in ${\goth N}_{{\goth g}}$.

{\rm (i)} For some torus ${\goth s}$ of ${\goth g}^{x}$, all element of 
a dense open subset of $Z$ contains a conjugate of ${\goth s}$ under $(G^{x})_{0}$.

{\rm (ii)} For some $s$ in ${\goth s}$ and for some irreducible component $Z_{1}$ of 
$Z_{s+x}$, $Z$ is the closure in $\ec {Gr}g{}{}{\rg}$ of $(G^{x})_{0}.Z_{1}$.

{\rm (iii)} If $Z_{1}$ has dimension smaller than $\dim {\goth g}^{s+x}-\rg$, then $Z$ 
has dimension smaller than $\dim {\goth g}^{x}-\rg$.
\end{lemma}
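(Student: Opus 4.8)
The plan is to strip the semisimple part off a generic member of $Z$, thereby replacing the nilpotent $x$ by an element $s+x$ whose centralizer $\g^{s+x}=\g^{s}\cap \g^{x}$ is strictly smaller, and then to run an equivariant fibre‑dimension estimate for the map $(G^{x})_{0}\times Z_{1}\to Z$. First I would set $Z^{\circ}:=\{V\in Z\mid V\not\subseteq {\goth N}_{{\goth g}}\}$; since $\{V\mid V\subseteq {\goth N}_{{\goth g}}\}$ is closed in $\mathrm{Gr}_{\rg}({\goth g})$ and $Z$ is irreducible, the hypothesis makes $Z^{\circ}$ a dense open subset of $Z$. By Corollary~\ref{cmv1}, each $V\in Z^{\circ}$ is a commutative algebraic subalgebra stable under taking semisimple and nilpotent components, so $V=V_{\s}\oplus V_{\n}$, where $V_{\s}$ is the span of its semisimple elements (a nonzero torus), $V_{\n}$ the span of its nilpotent ones, and $x\in V_{\n}$; commutativity gives $V_{\s}\subseteq \g^{x}$. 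Shrinking $Z^{\circ}$ I may assume $d:=\dim V_{\s}$ is constant, so $V\mapsto V_{\s}$ is a morphism from $Z^{\circ}$ to a Grassmannian of subspaces of $\g^{x}$, equivariant under $(G^{x})_{0}$ (which preserves each irreducible component of $Z_{x}$, being connected and fixing $x$). For (i) it remains to see that the image of this morphism meets one $(G^{x})_{0}$-orbit in a dense set: the point is to show that a generic $V_{\s}$ is, up to $(G^{x})_{0}$-conjugacy, independent of $V$, the natural candidate being a maximal torus $\mathfrak{s}$ of $\g^{x}$ — a maximal torus of the reductive factor of $\g^{x}$ attached to a Jacobson–Morozov $\mathfrak{sl}_{2}$-triple through $x$, which extends to a Cartan subalgebra of $\g$ that, degenerated along the cocharacter adapted to $x$, reaches a commutative subalgebra lying in $G.X$, containing $x$ and still containing $\mathfrak{s}$; all maximal tori of $\g^{x}$ being $(G^{x})_{0}$-conjugate, one then takes that $\mathfrak{s}$.

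For (ii), I fix $s\in \mathfrak{s}$; then $s$ is semisimple, $s\in \g^{x}$, so $(s+x)_{\s}=s$, $(s+x)_{\n}=x$ and $\g^{s+x}=\g^{s}\cap \g^{x}$. By (i) a dense open subset $Z'$ of $Z$ consists of $V$ admitting $g\in (G^{x})_{0}$ with $g(\mathfrak{s})\subseteq V$; since $g(x)=x$ this gives $g(s+x)=g(s)+x\in V$, i.e. $g^{-1}(V)\in Z_{s+x}$, so $Z'\subseteq (G^{x})_{0}.Z_{s+x}$ and hence $Z\subseteq \overline{(G^{x})_{0}.Z_{s+x}}$. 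Conversely $Z_{s+x}\subseteq Z_{x}$, because an element of $G.X$ containing $s+x$ contains $(s+x)_{\n}=x$ by Corollary~\ref{cmv1}, and $(G^{x})_{0}.Z_{x}=Z_{x}$; therefore $\overline{(G^{x})_{0}.Z_{s+x}}$ is the union of the finitely many irreducible closed sets $\overline{(G^{x})_{0}.Z_{1}}$, with $Z_{1}$ ranging over the irreducible components of $Z_{s+x}$, each contained in the closed set $Z_{x}$. Since $Z$ is irreducible and is an irreducible component of $Z_{x}$, it coincides with $\overline{(G^{x})_{0}.Z_{1}}$ for one such component $Z_{1}$, which is the assertion.

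For (iii), I use the dominant morphism $(G^{x})_{0}\times Z_{1}\to Z$, $(g,V)\mapsto g(V)$. Over a generic $W=g_{1}(V_{1})$ in $Z$ the fibre contains $g_{1}\cdot (G^{s+x})_{0}$: for $h\in (G^{s+x})_{0}$ one has $h^{-1}(V_{1})\ni h^{-1}(s+x)=s+x$, so $h^{-1}(V_{1})\in Z_{s+x}$, and it stays in the component $Z_{1}$ because $(G^{s+x})_{0}$ is connected. Hence the generic fibre has dimension at least $\dim (G^{s+x})_{0}=\dim \g^{s+x}$, so $\dim \g^{x}+\dim Z_{1}=\dim Z+\dim(\text{generic fibre})\geq \dim Z+\dim \g^{s+x}$, that is $\dim Z\leq \dim \g^{x}+\dim Z_{1}-\dim \g^{s+x}$; if $\dim Z_{1}<\dim \g^{s+x}-\rg$ this forces $\dim Z<\dim \g^{x}-\rg$. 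The main obstacle is (i): controlling the family of torus parts $V_{\s}$, $V\in Z^{\circ}$, and justifying that for generic $V$ one may take $V_{\s}$ to be a (hence, up to $(G^{x})_{0}$-conjugacy, a fixed) maximal torus of $\g^{x}$; once this rigidity is established, parts (ii) and (iii) are formal, resting only on the $G$-equivariance of $G.X$ and the standard fibre-dimension count above.
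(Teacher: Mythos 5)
Parts (ii) and (iii) of your argument are correct and follow the same strategy as the paper: in (ii) you take $s$ in ${\goth s}$, observe $Z_{s+x}\subseteq Z_{x}$ via Corollary~\ref{cmv1} applied to $x=(s+x)_{\n}$, and use irreducibility of $Z$ to isolate one component $Z_{1}$ with $\overline{(G^{x})_{0}.Z_{1}}=Z$; in (iii) the dimension count over the dominant map $(G^{x})_{0}\times Z_{1}\to Z$, whose fibres have dimension at least $\dim{\goth g}^{s+x}$, gives exactly the inequality $\dim Z\leq \dim{\goth g}^{x}-\dim{\goth g}^{s+x}+\dim Z_{1}$. Both steps need only the \emph{existence} of a positive-dimensional torus ${\goth s}$ with the property in (i), not any maximality of ${\goth s}$ in ${\goth g}^{x}$.

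The genuine gap is in (i), as you yourself flag, and your proposed route will not close it. First, there is no reason for the generic $V_{\s}$ to be a maximal torus of ${\goth g}^{x}$: since $x$ lies in $V_{\n}\setminus\{0\}$ and $\dim V=\rg$, one always has $\dim V_{\s}\leq \rg-1$, a numerical constraint which has nothing to do with the rank of ${\goth g}^{x}$. Second, your degeneration of a Cartan subalgebra along a cocharacter adapted to $x$ would, at best, exhibit a single element of $G.X$ through $x$ containing a maximal torus of ${\goth g}^{x}$; it says nothing about the generic member of the particular irreducible component $Z$, which is the statement required. The paper does not aim for maximality in ${\goth g}^{x}$ at all. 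It conjugates so that ${\goth g}^{x}\cap{\goth b}$ and ${\goth g}^{x}\cap{\goth h}$ are a Borel subalgebra and a maximal torus of ${\goth g}^{x}$, then passes from $Z$ to an irreducible component $Z_{*}$ of $\{V\in Z\mid V\subseteq{\goth b}\}$ with $Z=(G^{x})_{0}.Z_{*}$ (using that $(G^{x})_{0}/(B^{x})_{0}$ is projective). For $V\in Z_{*}$ the projection of $V$ to ${\goth h}$ is, by Corollary~\ref{cmv1}, a torus ${\goth s}$ sitting inside the \emph{fixed} torus ${\goth g}^{x}\cap{\goth h}$, and $V$ contains a $(B^{x})_{0}$-conjugate of ${\goth s}$. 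The decisive point is rigidity of tori: subtori of a fixed torus of a given dimension form a discrete set, so one takes ${\goth s}$ of maximal dimension among those arising on $Z_{*}$ and shows, using semicontinuity of $\dim(V\cap{\goth u})$, that the locus where the projection equals exactly this one ${\goth s}$ is open and dense in $Z_{*}$. Your map $V\mapsto V_{\s}$ into a Grassmannian of subspaces of ${\goth g}^{x}$ has no such rigidity to draw on; the reduction to $Z_{*}$ inside the fixed Borel ${\goth b}$ and the passage through ${\goth g}^{x}\cap{\goth h}$ is the step your sketch is missing.
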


\begin{proof}
(i) After some conjugation by an element of $G$, we can suppose that 
${\goth g}^{x}\cap {\goth b}$ and ${\goth g}^{x}\cap {\goth h}$ are a Borel subalgebra 
and a maximal torus of ${\goth g}^{x}$ respectively. Let $Z_{0}$ be the subset of 
elements of $Z$ contained in ${\goth b}$ and let $(B^{x})_{0}$ be the identity component 
of $B^{x}$. Since $Z$ is an irreducible component of $Z_{x}$, $Z$ is invariant under 
$(G^{x})_{0}$ and $Z=(G^{x})_{0}.Z_{0}$. Since $(G^{x})_{0}/(B^{x})_{0}$ is a projective 
variety, according to the proof of Lemma~\ref{l4int}, $(G^{x})_{0}.Z_{*}$ is a closed 
subset of $Z$ for all closed subset $Z_{*}$ of $Z$. Hence for some irreducible component 
$Z_{*}$ of $Z_{0}$, $Z=(G^{x})_{0}.Z_{*}$. 

For $\Lambda $ subset of ${\cal R}$, denote by $Z_{*,\Lambda }$ the subset of elements
$V$ of $Z_{*}$ such that 
$$ g(V) = {\goth h}_{\Lambda } \oplus g(V)\cap {\goth u} $$
for some $g$ in $(B^{x})_{0}$. According to Corollary~\ref{cmv2}, $Z_{*}$ is the union 
of $Z_{*,\Lambda },\Lambda \subset {\cal R}$. Since all element of $Z_{*,\Lambda }$
is contained in ${\goth h}_{\Lambda }+{\goth u}$, 
$$ \overline{Z_{*,\Lambda }} \subset 
\bigcup _{{\cal R} \supset \Lambda ' \supset \Lambda } Z_{*,\Lambda '} .$$
So, by induction on $\vert {\cal R}\setminus \Lambda  \vert$, $Z_{*,\Lambda }$ is 
a constructible subset of $Z_{*}$. Then, since ${\cal R}$ is finite, for some subset 
$\Lambda $ of ${\cal R}$, $Z_{*,\Lambda }$ is dense in $Z_{*}$. As a result, 
$(G^{x})_{0}.Z_{*,\Lambda }$ contains a dense open subset of $Z$ and for all $V$
in $(G^{x})_{0}.Z_{*,\Lambda }$, the biggest torus contained in $V$ is conjugate 
to ${\goth h}_{\Lambda }$ under $(G^{x})_{0}$.

(ii) For some $s$ in ${\goth s}$, ${\goth g}^{s}$ is the centralizer of ${\goth s}$ in 
${\goth g}$. Let $Z^{s}$ be the subset of elements of $Z$ containing $s$. Then $Z^{s}$
is contained in $Z_{s+x}$ and according to Corollary~\ref{cmv1},(i), $Z^{s}$ is the 
subset of elements of $Z$, containing ${\goth s}$. By (i), for some irreducible component
$Z'_{1}$ of $Z^{s}$, $(G^{x})_{0}.Z'_{1}$ is dense in $Z$. Let $Z_{1}$ be an irreducible 
component of $Z_{s+x}$, containing $Z'_{1}$. According to Corollary~\ref{cmv1},(ii), 
$Z_{1}$ is contained in $Z_{x}$ since $x$ is the nilpotent component of $s+x$. So 
$Z_{1}=Z'_{1}$ and $(G^{x})_{0}.Z_{1}$ is dense in $Z$.

(iii) Since $Z_{1}$ is an irreducible component of $Z_{s+x}$, $Z_{1}$ is invariant under 
the identity component of $G^{s+x}$. Moreover, $G^{s+x}$ is contained in $G^{x}$ since 
$x$ is the nilpotent component of $s+x$. As a result, by (ii), 
$$ \dim Z \leq \dim {\goth g}^{x}-\dim {\goth g}^{s+x} + \dim Z_{1}, $$
whence the assertion.  
\end{proof}

Denote by $C_{h}$ the $G$-invariant closed cone generated by $h$ with $h$ in ${\goth h}$
such that $\beta (h)=2$ for all $\beta $ in $\Pi $.

\begin{lemma}\label{l2mv3}
Suppose ${\goth g}$ semisimple. Let $\Gamma $ be the closure in 
$\ec {Gr}g{}{}{\rg}\times {\goth g}$ of the image of the map
$$ \xymatrix{\k^{*} \times G \ar[rr] &&  \ec {Gr}g{}{}{\rg}\times {\goth g}}, \qquad 
(t,g) \longmapsto (g({\goth h}),tg(h)) $$
and $\Gamma _{0}$ the intersection of $\Gamma $ and 
$\ec {Gr}g{}{}{\rg}\times {\goth N}_{{\goth g}}$.

{\rm (i)} The subvariety $\Gamma $ of $\ec {Gr}g{}{}{\rg}\times {\goth g}$ has 
dimension $2n+1$. Moreover, $\Gamma $ is contained in ${\cal E}$.

{\rm (ii)} The varieties $G.X$ and $C_{h}$ are the images of $\Gamma $ by the first and 
second projections respectively. 

{\rm (iii)} The subvariety $\Gamma _{0}$ of $\Gamma $ is equidimensional of codimension 
$1$. 

{\rm (iv)} For $x$ nilpotent in ${\goth g}$, the subvariety of elements $V$ of $G.X$, 
containing $x$ and contained in $\overline{G(x)}$, has dimension at most 
$\dim {\goth g}^{x}-\rg$.
\end{lemma}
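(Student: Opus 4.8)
The plan is to extract all four assertions from the geometry of the two projections $\pr 1\colon{\goth g}\times\ec {Gr}g{}{}{\rg}\to{\goth g}$ and $\pr 2\colon{\goth g}\times\ec {Gr}g{}{}{\rg}\to\ec {Gr}g{}{}{\rg}$, using two elementary facts: $h$ is regular semisimple (for $\beta$ a positive root, $\beta(h)$ is a positive integer) and $e$ is regular nilpotent, so $\dim G(h)=\dim G(e)=\dim{\goth g}-\rg=2n$, hence $\dim C_h=2n+1$ and $\dim {\goth N}_{{\goth g}}=2n$. For (i), the defining map $(t,g)\mapsto(tg(h),g({\goth h}))$ already takes values in the closed set $\Delta$ of Lemma~\ref{l2mv1}, since $h\in{\goth h}$ forces $tg(h)\in g({\goth h})$ and $g({\goth h})\in G.{\goth h}\subset G.X$; so $\Gamma\subset\Delta$. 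The dimension count is: $\pr 2(\Gamma)$ contains the dense subset $G.{\goth h}$ of dimension $2n$, and over a point of $G.{\goth h}$ the fibre of $\pr 2|_\Gamma$ is the closure of a line $\{(tg(h),g({\goth h})):t\in\k^{*}\}$; hence $\dim\Gamma=2n+1$.

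For (ii), since $\Gamma$ is closed in ${\goth g}\times\ec {Gr}g{}{}{\rg}$ and $\ec {Gr}g{}{}{\rg}$ is complete, $\pr 1$ is a closed map, so $\pr 1(\Gamma)$ is the closure of $\{tg(h):t\in\k^{*},g\in G\}$, which is exactly $C_h$. For $\pr 2$ one exploits that $\Gamma$ is invariant under the $\k^{*}$-action scaling the first factor: for $(x,V)\in\Gamma$ one gets $(0,V)=\lim_{s\to0}(sx,V)\in\Gamma$, whence $\Gamma\cap(\{0\}\times\ec {Gr}g{}{}{\rg})=\{0\}\times\pr 2(\Gamma)$; being closed in the complete variety $\{0\}\times\ec {Gr}g{}{}{\rg}$, this forces $\pr 2(\Gamma)$ to be closed, hence equal to $\overline{G.{\goth h}}=G.X$.

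For (iii), I would first show ${\goth N}_{{\goth g}}\subset C_h$: from $[e,h]=-2e$ and $[e,[e,h]]=0$ one has $e=\lim_{t\to\infty}\exp(t\,\ad e)\bigl(\tfrac{-1}{2t}h\bigr)$, which lies in $C_h$ because $C_h$ is a closed $G$-stable cone; since ${\goth N}_{{\goth g}}=\overline{G(e)}$, this gives ${\goth N}_{{\goth g}}\subset C_h$, so ${\goth N}_{{\goth g}}$ is an irreducible hypersurface of $C_h$. Now $\Gamma_0=\pr 1^{-1}({\goth N}_{{\goth g}})$, and $\pr 1|_\Gamma\colon\Gamma\to C_h$ is proper (as above) and birational: over the dense open set $\k^{*}G(h)$ of $C_h$ it is bijective, because if $(x,V)\in\Gamma$ with $x$ regular then $V\supset V_x={\goth g}^{x}$ by Lemma~\ref{l2mv1},(iii) and \cite[Theorem 9]{Ko}, so $V={\goth g}^{x}$; moreover, already the regular nilpotent locus of ${\goth N}_{{\goth g}}$ has the property that the fibre of $\pr 1$ over $x$ regular nilpotent is the single point ${\goth g}^{x}$, again by Lemma~\ref{l2mv1},(iii). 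Pulling back the hypersurface ${\goth N}_{{\goth g}}$ along the proper birational $\pr 1$ yields a closed subset that is nonempty over each component and of pure codimension one; together with $\dim\Gamma_0\le 2n$ (as $\Gamma_0\subsetneq\Gamma$) this gives (iii). The only point requiring care is the equidimensionality, i.e. ruling out a ``small'' component of $\pr 1^{-1}({\goth N}_{{\goth g}})$; I would secure it by noting that $\pr 1$ is finite over the smooth locus ${\goth N}_{{\goth g}}^{\,\r}$ of ${\goth N}_{{\goth g}}$ (so one component of $\Gamma_0$ of dimension $2n$ dominates ${\goth N}_{{\goth g}}$), that $\{0\}\times G.X$ is another component of dimension $2n$, and that over the remaining nilpotent locus the fibre dimensions are controlled — so that no component can have dimension smaller than $2n$; if $C_h$ turns out not to be normal one passes to its normalization for the pullback-of-a-divisor argument, or invokes Cohen–Macaulayness of $\Gamma$ along $\Gamma_0$.

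For (iv), combine (iii) with semicontinuity of fibre dimension. The map $\pr 1|_{\Gamma_0}\colon\Gamma_0\to{\goth N}_{{\goth g}}$ is proper, surjective and $G$-equivariant, with $\Gamma_0$ equidimensional of dimension $2n$; applying upper semicontinuity on each $2n$-dimensional component and using that the loci $\{x\in{\goth N}_{{\goth g}}:\dim\pr 1^{-1}(x)\ge k\}$ are $G$-stable and closed, one gets $\dim\pr 1^{-1}(x)\le 2n-\dim\overline{G(x)}=\dim{\goth g}^{x}-\rg$ for every $x\in{\goth N}_{{\goth g}}$. It then remains to show that the variety in (iv), namely $\{V\in G.X:x\in V\subset\overline{G(x)}\}$, is contained in the fibre $\{V:(x,V)\in\Gamma\}$, i.e. that $V\in G.X$, $x\in V$, $V\subset\overline{G(x)}$ imply $(x,V)\in\Gamma$; granting this, (iv) follows. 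This inclusion is the main obstacle. I would prove it by a one-parameter degeneration: choose a curve $g_{t}$ in $G$ with $g_{t}({\goth h})\to V$; the line $\k g_{t}(h)$ cannot have a finite limit (a regular semisimple element of $V\subset{\goth N}_{{\goth g}}$ is impossible), so after rescaling it converges, on a completion, to a line of $V$, producing a point $(\bar y,V)\in\Gamma$ and hence $\k\bar y\times\{V\}\subset\Gamma$. The hard part is to arrange the degeneration so that this limiting line is $\k x$; here I would use that $x$ lies in the dense subset $G(x)\cap V$ of $V$, together with the invariance of the whole configuration under $(G^{x})_{0}$ and under the cocharacter of a principal-type ${\goth {sl}}_{2}$-triple through $x$, to move the limiting line over all of $\mathbb P(V)$. (For $x$ regular nilpotent this is immediate, since then $V={\goth g}^{x}$ is forced.)
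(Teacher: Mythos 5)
Your (i) and (ii) match the paper in substance. In (ii), your $\k^{*}$-scaling argument for the closedness of $\pr 2(\Gamma )$ is a valid variant of the paper's projectivisation of ${\goth g}\setminus \{0\}$; in fact Lemma~\ref{l3int} already records exactly the multicone fact you are re-deriving.

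In (iii) there is a genuine gap. You rightly note ${\goth N}_{{\goth g}}\subset C_{h}$ via $e=\lim _{t\to \infty }\exp (t\,\ad e)\bigl(\tfrac{-1}{2t}h\bigr)$, but the core claim -- that $\Gamma _{0}=\pr 1^{-1}({\goth N}_{{\goth g}})\cap \Gamma $ is equidimensional of codimension one -- is not established by ``pulling back a hypersurface along a proper birational map.'' A priori $\Gamma _{0}$ is cut out on $\Gamma $ by the $\rg$ equations $p_{i}\circ \pr 1=0$, which only gives componentwise codimension $\le \rg$, and your fall-back options (normalize $C_{h}$; invoke Cohen--Macaulayness of $\Gamma $ along $\Gamma _{0}$) are neither proved nor obviously available. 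The paper's point, which you miss, is that one invariant already suffices: since $p_{1}(h)\neq 0$ and the invariants on $C_{h}$ run along the curve $t\mapsto (t^{d_{1}}p_{1}(h),\ldots ,t^{d_{\rg }}p_{\rg }(h))$, the vanishing of $p_{1}$ alone on $C_{h}$ forces all $p_{i}$ to vanish. Hence $\Gamma _{0}$ is the zero locus of the single function $(y,V)\mapsto p_{1}(y)$ on the irreducible variety $\Gamma $, so Krull's Hauptidealsatz gives equidimensionality of codimension $1$ at once, and the same observation (a hypersurface of $C_{h}$ of dimension $2n$ contained in the irreducible $2n$-dimensional ${\goth N}_{{\goth g}}$) yields ${\goth N}_{{\goth g}}=\{p_{1}=0\}\cap C_{h}$ without a separate argument.

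For (iv) your route parallels the paper's: bound $\dim (\pr 1^{-1}(x)\cap \Gamma _{0})$ by $\dim {\goth g}^{x}-\rg$ via $G$-equivariance (the paper works instead with the smaller $\Gamma _{T}=\pr 2^{-1}(\overline{G.T})\cap \Gamma $, but the mechanism is the same), then show the set $T$ of (iv) sits inside that fibre. You correctly isolate the one nonobvious step -- the inclusion $\{x\}\times T\subset \Gamma $, i.e.\ $(x,V)\in \Gamma $ for all $V\in T$ -- and acknowledge you do not have a proof. The paper states the corresponding equality $\{x\}\times T=(\{x\}\times G.X)\cap \Gamma _{T}$ with no further justification, so here you have not missed an argument the paper supplies; rather, you have not been able to fill in a step the authors treat as immediate. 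Your proposed degeneration fix, however, does not look viable as sketched: over a generic Cartan $V'$ the fibre $\pr 2^{-1}(V')\cap \Gamma $ consists of the finitely many lines $\k\,g(w(h))$, $w\in W({\cal R})$, not all of $V'$, so ``moving the limiting line over all of ${\Bbb P}(V)$'' is not a consequence of the symmetries you invoke and would require a substantially different idea. Note also that your (iv) uses the equidimensionality from (iii), so the gap there propagates.
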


\begin{proof}
(i) Since the stabilizer of $({\goth h},h)$ in $\k^{*}\times G$ equals $\{1\}\times H$,
$\Gamma $ has dimension $2n+1$. Since $tg(h)$ is in $g({\goth h})$ for 
all $(t,g)$ in $\k^{*}\times G$ and ${\cal E}$ is a closed subset of
$\ec {Gr}g{}{}{\rg}\times {\goth g}$, $\Gamma $ is contained in ${\cal E}$.

(ii) Since $\ec {Gr}g{}{}{\rg}$ is a projective variety, the image of $\Gamma $ by the 
second projection is closed in ${\goth g}$. So, it equals $C_{h}$ since it is contained 
in $C_{h}$ and it contains the cone generated by $G.h$. Let $Y$ be the image of $\Gamma $
by the first projection. Since $\Gamma $ is a closed subset of 
$\ec {Gr}g{}{}{\rg}\times {\goth g}$, invariant by the automorphisms 
$(V,x)\mapsto (V,tx)$ with $t$ in $\k^{*}$, $Y\times \{0\}$ is the intersection of 
$\Gamma $ and $\ec {Gr}g{}{}{\rg}\times \{0\}$. Then $Y$ is a closed subset of
$\ec {Gr}g{}{}{\rg}$ containing $G.{\goth h}$. Moreover $\Gamma $ is contained in the 
closed subset $G.X\times {\goth g}$ of $\ec {Gr}g{}{}{\rg}\times {\goth g}$. Hence 
$Y=G.X$.

(iii) The subvariety $C_{h}$ of ${\goth g}$ has dimension $2n+1$ and the nullvariety of 
$p_{1}$ in $C_{h}$ is contained in ${\goth N}_{{\goth g}}$ since it is the nullvariety in
${\goth g}$ of the polynomials $\poi p1{,\ldots,}{\rg}{}{}{}$. Hence 
${\goth N}_{{\goth g}}$ is the nullvariety of $p_{1}$ in $C_{h}$ and $\Gamma _{0}$ is the
nullvariety in $\Gamma $ of the function $(V,x)\mapsto p_{1}(x)$. So $\Gamma _{0}$ is 
equidimensional of codimension $1$ in $\Gamma $.

(iv) Let $T$ be the subset of elements $V$ of $G.X$, containing $x$ and contained in 
$\overline{G(x)}$. Denote by $\Gamma _{T}$ the inverse image of $\overline{G.T}$ by 
the projection $\xymatrix{\Gamma \ar[r] & G.X}$. Then $\Gamma _{T}$ is contained in 
$\Gamma _{0}$. Since all element of $T$ contains $x$ and is contained in 
$\overline{G(x)}$ and since $\Gamma _{T}$ is invariant under $G$, the image of 
$\Gamma _{T}$ by the second projection is equal to $\overline{G(x)}$. Moreover,
$T \times \{x\}\subset G.X \times \{x\}\cap \Gamma _{T}$. Hence 
$$ \dim \Gamma _{T} \geq  \dim T + \dim {\goth g}-\dim {\goth g}^{x} .$$
By (i) and (iii), 
$$\dim \Gamma _{T}\leq \dim {\goth g}-\rg $$
since $\Gamma _{T}$ is contained in $\Gamma _{0}$. Hence $T$ has dimension 
at most $\dim {\goth g}^{x}-\rg$.
\end{proof}

When ${\goth g}$ is semisimple, denote by $(G.X)_{\u}$ the subset of elements of 
$G.X$ contained in ${\goth N}_{{\goth g}}$.

\begin{coro}\label{cmv3}
Suppose ${\goth g}$ semisimple. Let $x$ be in ${\goth N}_{{\goth g}}$.

{\rm (i)} The variety $(G.X)_{\u}$ has dimension at most $2n-\rg$.

{\rm (ii)} The variety $Z_{x}\cap (G.X)_{\u}$ has dimension at most 
$\dim {\goth g}^{x}-\rg$.
\end{coro}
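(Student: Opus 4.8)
The plan is to prove assertion~(ii) first, by imitating the argument of Lemma~\ref{l2mv3},(iv), and then to deduce~(i) from~(ii) by stratifying $(G.X)_{\u}$ according to the generic nilpotent orbit met by a subspace.

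For (ii), set $T:= Z_{x}\cap (G.X)_{\u}$, that is, the set of $V$ in $G.X$ with $x\in V\subseteq {\goth N}_{{\goth g}}$, and let $\Gamma _{T}$ be the inverse image of $\overline{G.T}$ by the projection $\Gamma \rightarrow G.X$ of Lemma~\ref{l2mv3}. Since $(G.X)_{\u}$ is a closed $G$-invariant subset of $G.X$ containing $G.T$, one has $\overline{G.T}\subseteq (G.X)_{\u}$; hence for $(z,V)$ in $\Gamma _{T}\subseteq \Gamma \subseteq \Delta $ one has $z\in V\subseteq {\goth N}_{{\goth g}}$, so that $\Gamma _{T}$ is contained in $\Gamma _{0}$ and $\dim \Gamma _{T}\leq 2n$ by Lemma~\ref{l2mv3},(i) and (iii). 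On the other hand $\Gamma _{T}$ is $G$-invariant, and, granting that $(x,V)$ lies in $\Gamma $ for every $V$ in $T$, the morphism $G\times T\rightarrow \Gamma _{T}$, $(g,V)\mapsto (g(x),g(V))$, has all its fibres isomorphic to $G^{x}$, so that its image has dimension $\dim {\goth g}-\dim {\goth g}^{x}+\dim T$. Comparing the two bounds gives $\dim {\goth g}-\dim {\goth g}^{x}+\dim T\leq 2n=\dim {\goth g}-\rg $, whence $\dim T\leq \dim {\goth g}^{x}-\rg $, which is (ii).

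For (i), given a nilpotent orbit ${\cal O}$ of ${\goth g}$, let $S_{{\cal O}}$ be the set of elements $V$ of $(G.X)_{\u}$ such that $V\cap {\cal O}$ is dense in $V$. The $S_{{\cal O}}$ are finitely many constructible $G$-invariant subsets whose union is $(G.X)_{\u}$, so it is enough to bound $\dim S_{{\cal O}}$. Fix $y$ in ${\cal O}$. Every element of $S_{{\cal O}}$ meets ${\cal O}$, so $S_{{\cal O}}=G.\{V\in S_{{\cal O}}\mid y\in V\}$; moreover $\{V\in S_{{\cal O}}\mid y\in V\}$ is contained in $Z_{y}\cap (G.X)_{\u}$, which has dimension at most $\dim {\goth g}^{y}-\rg $ by (ii) applied to the nilpotent element $y$. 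The morphism $G\times \{V\in S_{{\cal O}}\mid y\in V\}\rightarrow S_{{\cal O}}$, $(g,V)\mapsto g(V)$, is surjective with fibre over $g(V)$ equal to $\{g'\in G\mid g'(y)\in g(V)\}$, a subset of dimension $\dim (g(V)\cap {\cal O})+\dim G^{y}=\rg +\dim G^{y}$ because $g(V)\cap {\cal O}$ is dense in $g(V)$. Therefore
$$\dim S_{{\cal O}}\leq \dim G+(\dim {\goth g}^{y}-\rg )-(\rg +\dim G^{y})=\dim {\cal O}+\dim {\goth g}^{y}-2\rg =\dim {\goth g}-2\rg =2n-\rg ,$$
which proves (i).

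The main obstacle is the claim used in the proof of~(ii): for every $V$ in $Z_{x}\cap (G.X)_{\u}$ one has $(x,V)\in \Gamma $; equivalently, over the closed subset $(G.X)_{\u}$ of $G.X$ the fibres of the projection $\Gamma \rightarrow G.X$ are the whole subspaces $V$, in contrast with the fibre over a point $g({\goth h})$ of $G.{\goth h}$, which is only the line through $g(h)$. To prove it, the idea is to realize $V$ as a limit $\lim _{s\rightarrow 0}\lambda (s).(g_{0}({\goth h}))$ for a suitable one-parameter subgroup $\lambda $ of $G$ and $g_{0}$ in $G$, which is possible because $V$ lies in $\overline{G.{\goth h}}$ and, being a subspace of ${\goth N}_{{\goth g}}$, in the most degenerate part of the boundary; then, for a prescribed $z$ in $V$, to choose a scalar function $t(s)$ with $t(s)\lambda (s)(g_{0}(h))\rightarrow z$, so that $(z,V)=\lim _{s\rightarrow 0}\lambda (s).(t(s)g_{0}(h),g_{0}({\goth h}))$ lies in $\Gamma $. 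The delicate point is that this forces $C_{h}$ to contain regular semisimple elements degenerating to each nilpotent $z$ in $V$ with the corresponding Cartan subalgebras degenerating to $V$; an alternative is an induction on $\dim \overline{G(x)}$ based on Lemma~\ref{lmv3} and Lemma~\ref{l2mv3},(iv), handling separately the irreducible components of $Z_{x}\cap (G.X)_{\u}$ whose generic element lies in $G(x)$ (directly covered by Lemma~\ref{l2mv3},(iv)) and those whose generic element lies in a strictly larger nilpotent orbit.
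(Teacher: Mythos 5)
You correctly flag the weak point yourself: your proof of (ii) rests on the unproved claim that $(x,V)\in \Gamma $ for every $V$ in $Z_{x}\cap (G.X)_{\u}$, i.e.\ that the fibre of $\Gamma \rightarrow G.X$ over a nilpotent $V$ is all of $V$ and not a proper subcone. This is a genuine gap, and it is not a small one: the set $Z_{x}\cap (G.X)_{\u}$ is strictly larger than the set of $V$ containing $x$ and contained in $\overline{G(x)}$ that Lemma~\ref{l2mv3},(iv) controls, so your statement~(ii) is \emph{stronger} than that lemma, and one cannot simply declare that ``the same argument works''. The paper dodges the issue entirely by proving (i) first, and by working with the full tautological bundle $\Delta $ rather than with $\Gamma $. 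Concretely, for an irreducible component $T$ of $(G.X)_{\u}$, the inverse image $\Delta _{T}$ in $\Delta $ is \emph{tautologically} a rank-$\rg $ vector bundle over $T$, so $\dim \Delta _{T}=\dim T+\rg $ with no delicate fibre computation; its image $Y$ in ${\goth g}$ is an irreducible $G$-invariant closed subset of ${\goth N}_{{\goth g}}$, hence $Y=\overline{G(y)}$ for some nilpotent $y$; and the fibre $F_{y}$ of $\Delta _{T}\rightarrow Y$ over $y$ consists of the $V\in T$ with $y\in V\subseteq \overline{G(y)}$, which is \emph{precisely} the set bounded by Lemma~\ref{l2mv3},(iv). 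This gives $\dim F_{y}\leq \dim {\goth g}^{y}-\rg $, then $\dim \Delta _{T}\leq \dim {\goth g}-\rg $ and $\dim T\leq 2n-\rg $; part (ii) then follows by estimating the inverse image of $\overline{G(x)}$ inside the same $\Delta _{T}$. The hypothesis $V\subseteq \overline{G(y)}$ is supplied automatically by choosing $y$ so that $Y=\overline{G(y)}$ --- that is the trick you were missing.

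Your deduction of (i) from (ii), by stratifying $(G.X)_{\u}$ according to the generic nilpotent orbit met by $V$ and computing the fibre dimension $\rg +\dim G^{y}$ of $(g,V)\mapsto g(V)$, is correct and is a genuinely different route from the paper's direct argument for (i); that part is good. But since your proof of (ii) is incomplete, the proposal as a whole does not establish the corollary: your chosen ordering (ii)$\Rightarrow $(i) forces you to prove a strengthening of Lemma~\ref{l2mv3},(iv), which is exactly what the paper's ordering (i)$\Rightarrow $(ii), via $\Delta _{T}$, is designed to avoid.
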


\begin{proof}
(i) Let $T$ be an irreducible component of $(G.X)_{\u}$ and let ${\cal E}_{T}$ be 
the restriction to $T$ of the vector bundle ${\cal E}$ over $G.X$. Then ${\cal E}_{T}$ is
irreducible and has dimension $\dim T+\rg$. Denoting by $Y$ the image of the projection 
$\xymatrix{{\cal E}_{T} \ar[r] & {\goth g}}$, $Y$ is an irreducible closed subvariety of 
${\goth g}$ contained in ${\goth N}_{{\goth g}}$. The subvariety $(G.X)_{\u}$ of $G.X$ is 
invariant under $G$ since so is ${\goth N}_{{\goth g}}$. Hence ${\cal E}_{T}$ and 
$Y$ are $G$-invariant and for some $y$ in ${\goth N}_{{\goth g}}$, $Y=\overline{G(y)}$
since ${\goth N}_{{\goth g}}$ is a finite union of orbits. Denoting by $F_{y}$ the fiber 
at $y$ of the projection $\xymatrix{{\cal E}_{T}\ar[r]& Y}$, $V$ is contained in 
$\overline{G(y)}$ and contains $y$ for all $V$ in $F_{y}$. So, by Lemma~\ref{l2mv3},(iv), 
$$ \dim F_{y} \leq \dim {\goth g}^{y} - \rg .$$ 
Since the projection is $G$-equivariant, this inequality holds for the fibers at the
elements of $G(y)$. Hence, 
$$ \dim {\cal E}_{T} \leq \dim {\goth g} - \rg \mbox{ and }
\dim T \leq 2n - \rg .$$ 

(ii) Let $Z$ be an irreducible component of $Z_{x}\cap (G.X)_{\u}$ and let $T$ be 
an irreducible component of $(G.X)_{\u}$, containing $Z$. Let ${\cal E}_{T}$ and $Y$ be
as in (i). Then $G(x)$ is contained in $Y$ and the inverse image of $\overline{G(x)}$ 
in ${\cal E}_{T}$ has dimension at least $\dim G(x)+\dim Z$. So, by (i),
$$ \dim G(x) + \dim Z \leq \dim {\goth g} - \rg ,$$
whence the assertion.
\end{proof}

\begin{theo}\label{tmv3}
For $x$ in ${\goth g}$, the variety of elements of $G.X$, containing $x$, has 
dimension at most $\dim {\goth g}^{x}-\rg$.
\end{theo}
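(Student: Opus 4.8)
The plan is to prove the statement for all reductive ${\goth g}$ by induction on $\dim {\goth g}$. If ${\goth g}$ is not semisimple, I would first reduce to its semisimple part: writing ${\goth g}={\goth z}({\goth g})\oplus [{\goth g},{\goth g}]$, every element of $B.{\goth h}$, hence of $X$ and of $G.X$, contains ${\goth z}({\goth g})$, so for $x=z+x'$ with $z\in {\goth z}({\goth g})$ and $x'\in [{\goth g},{\goth g}]$ the map $V\mapsto V\cap [{\goth g},{\goth g}]$ identifies $Z_{x}$ with the analogous variety for $[{\goth g},{\goth g}]$ and $x'$; since $\dim {\goth g}^{x}-\rg=\dim [{\goth g},{\goth g}]^{x'}-\rg'$ (with $\rg'$ the rank of $[{\goth g},{\goth g}]$) and $\dim [{\goth g},{\goth g}]<\dim {\goth g}$, this case follows from the induction hypothesis. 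So I may assume ${\goth g}$ semisimple.

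Next I would treat the case $x_{\s}\neq 0$. After conjugating by $G$ (which changes neither $\dim Z_{x}$ nor $\dim {\goth g}^{x}$) one may assume $s:=x_{\s}\in {\goth h}$; then ${\goth g}^{s}$ is reductive of rank $\rg$, proper since $s\neq 0$, with Borel subalgebra ${\goth b}\cap {\goth g}^{s}$ and Cartan subalgebra ${\goth h}$. By Corollary~\ref{cmv1},(i), every $V$ in $Z_{x}$ contains the centre ${\goth z}$ of ${\goth g}^{s}$, so by Lemma~\ref{lmv2},(ii) it lies in $\overline{G^{s}.{\goth h}}$; conversely any element of $\overline{G^{s}.{\goth h}}$ containing $x$ lies in $Z_{x}$. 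Hence, by Lemma~\ref{lmv2},(i), $Z_{x}$ is exactly the variety attached to $G.X$ when ${\goth g}$ is replaced by ${\goth g}^{s}$. The induction hypothesis, applicable since $\dim {\goth g}^{s}<\dim {\goth g}$, then gives $\dim Z_{x}\leq \dim ({\goth g}^{s})^{x}-\rg=\dim {\goth g}^{x}-\rg$, using ${\goth g}^{x}\subset {\goth g}^{s}$.

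Finally, for $x$ nilpotent I would argue component by component. Let $Z$ be an irreducible component of $Z_{x}$. If every element of $Z$ lies in ${\goth N}_{{\goth g}}$, then $Z\subset Z_{x}\cap (G.X)_{\u}$ and Corollary~\ref{cmv3},(ii) gives $\dim Z\leq \dim {\goth g}^{x}-\rg$. Otherwise Lemma~\ref{lmv3} provides a torus ${\goth s}$ of positive dimension in ${\goth g}^{x}$, an element $s'\in {\goth s}$ whose centralizer is ${\goth g}^{s'}$ (so $s'\neq 0$), and an irreducible component $Z_{1}$ of $Z_{s'+x}$ with $Z=\overline{(G^{x})_{0}.Z_{1}}$ and, as in the proof of Lemma~\ref{lmv3},(iii),
$$ \dim Z \leq \dim {\goth g}^{x}-\dim {\goth g}^{s'+x}+\dim Z_{1}. $$
Since $x$ is nilpotent and $s'$ is semisimple in ${\goth g}^{x}$, one has $(s'+x)_{\s}=s'\neq 0$, so the previous paragraph applies to $s'+x$ and yields $\dim Z_{1}\leq \dim Z_{s'+x}\leq \dim {\goth g}^{s'+x}-\rg$; substituting gives $\dim Z\leq \dim {\goth g}^{x}-\rg$. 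As this holds for every component, $\dim Z_{x}\leq \dim {\goth g}^{x}-\rg$.

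The delicate point I expect is the identification, in the inductive step, of $Z_{x}$ (and of $Z_{s'+x}$) with the corresponding variety for the smaller reductive algebra ${\goth g}^{s}$ (resp.\ ${\goth g}^{s'}$): this rests on Corollary~\ref{cmv1},(i) to see that an element of $G.X$ through $x$ automatically contains the centre of ${\goth g}^{x_{\s}}$, on Lemma~\ref{lmv2},(i)--(ii) to recognise $\overline{G^{s}.{\goth h}}$ as the analogue of $G.X$, and on checking that the ranks and the equality of centralizers $({\goth g}^{s})^{x}={\goth g}^{x}$ match up. One should also note that there is no circularity: the nilpotent case invokes the case $x_{\s}\neq 0$, but the latter is proved using only the induction hypothesis in strictly smaller dimension.
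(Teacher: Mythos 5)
Your proposal is correct and follows essentially the same strategy as the paper's proof: induction on $\dim {\goth g}$, reduction of the non-nilpotent case to ${\goth g}^{x_{\s}}$ via Corollary~\ref{cmv1},(i) and Lemma~\ref{lmv2}, then for nilpotent $x$ splitting the components of $Z_{x}$ into those meeting the complement of ${\goth N}_{{\goth g}}$ (handled via Lemma~\ref{lmv3}) and those inside $(G.X)_{\u}$ (handled via Corollary~\ref{cmv3},(ii)). You perform the reduction to the semisimple case at the very start rather than only in the nilpotent step as the paper does, which is a slightly cleaner organization since it removes from consideration the case where $x_{\s}$ is a nonzero central element, and you are a bit more explicit in invoking Lemma~\ref{lmv2},(i)--(ii) to recognize $Z_{x}$ as the analogous variety inside ${\goth g}^{s}$ before applying the induction hypothesis; otherwise the two proofs coincide.
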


\begin{proof}
Prove the theorem by induction on $\dim {\goth g}$. If ${\goth g}$ is commutative,
$G.X=\{{\goth g}\}$. If the derived Lie algebra of ${\goth g}$ is simple of dimension 
$3$, $G.X$ has dimension $2$ and for $x$ not in the center of ${\goth g}$, 
$Z_{x}=\{{\goth g}^{x}\}$. Suppose the theorem true for all reductive 
Lie algebra of dimension strictly smaller than $\dim {\goth g}$. Let $x$ be in 
${\goth g}$. Since $G.X$ has dimension $\dim {\goth g}-\rg$, we can suppose that 
$x$ is not in the center of ${\goth g}$. Suppose that $x$ is not nilpotent. Then 
${\goth g}^{x_{\s}}$ has dimension strictly smaller than $\dim {\goth g}$ and all element
of $G.X$ containing $x$ is contained in ${\goth g}^{x_{\s}}$ and contains the center
of ${\goth g}^{x_{\s}}$ by Corollary~\ref{cmv1},(i). So, by Lemma~\ref{lmv2},(ii),
$Z_{x}$ is contained in $\overline{G^{x_{\s}}.{\goth h}}$, whence the theorem in this 
case by induction hypothesis. As a result, by Lemma~\ref{lmv3}, for all $x$ in 
${\goth g}$, all irreducible component of $Z_{x}$, containing an element not contained in
${\goth N}_{{\goth g}}$, has dimension at most $\dim {\goth g}^{x}-\rg$. 

Let $x$ be a nilpotent element of ${\goth g}$. Denoting by $Z'_{x}$ the subset of 
elements of $\overline{G.({\goth h}\cap [{\goth g},{\goth g}]})$ containing $x$, $Z_{x}$ 
is the image of $Z'_{x}$ by the map $V\mapsto V+{\goth z}_{{\goth g}}$, whence the 
theorem by Corollary~\ref{cmv3}.
\end{proof}

\subsection{} \label{mv4}
Let $s$ be in ${\goth h}\setminus \{0\}$. Set ${\goth p} := {\goth g}^{s}+{\goth b}$
and denote by ${\goth p}_{\u}$ the nilpotent radical of ${\goth p}$. Let $P$ be
the normalizer of ${\goth p}$ in $G$ and let $P_{\u}$ be its unipotent radical. For a 
nilpotent orbit $\Omega $ of $G^{s}$ in ${\goth g}^{s}$, denote by $\Omega ^{\#}$ the
induced orbit by $\Omega $ from ${\goth g}^{s}$ to ${\goth g}$. 

\begin{lemma}\label{lmv4}
Let $Y$ be a $G$-invariant irreducible closed subset of ${\goth g}$ and let $Y'$ be the 
union of $G$-orbits of maximal dimension in $Y$. Suppose that $s$ is the semisimple 
component of an element $x$ of $Y'$. Denote by $\Omega $ the orbit of $x_{\n} $ under 
$G^{s}$ and set $Y_{1} := {\goth z}_{s} + \overline{\Omega } + {\goth p}_{\u}$.

{\rm (i)} The subset $Y_{1}$ of ${\goth p}$ is closed and invariant under $P$. 
                                   
{\rm (ii)} The subset $G(Y_{1})$ of ${\goth g}$ is a closed subset of dimension 
$\dim {\goth z}_{s}+\dim G(x)$.

{\rm (iii)} For some nonempty open subset $Y''$ of $Y'$, the conjugacy class of 
${\goth g}^{y_{\s}}$ under $G$ does not depend on the element $y$ of $Y''$.

{\rm (iv)} For a good choice of $x$ in $Y''$, $Y$ is contained in $G(Y_{1})$.
\end{lemma}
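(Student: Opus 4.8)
The plan is to establish the four assertions in order, the real content being the exact dimension count in~(ii). For~(i), write ${\goth p} = {\goth g}^{s}\oplus {\goth p}_{\u}$ with ${\goth g}^{s}$ the reductive factor of ${\goth p}$, and ${\goth g}^{s} = {\goth z}\oplus [{\goth g}^{s},{\goth g}^{s}]$. Since $\overline{\Omega}$ is the closure of a nilpotent orbit of $G^{s}$, it is a closed subset of $[{\goth g}^{s},{\goth g}^{s}]$, so ${\goth z}+\overline{\Omega}$ is closed in ${\goth g}^{s}$ (it is the product ${\goth z}\times \overline{\Omega}$ in the above splitting) and $Y_{1} = ({\goth z}+\overline{\Omega})\oplus {\goth p}_{\u}$ is closed in ${\goth p}$, hence in ${\goth g}$. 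For $P$-invariance, use $P = G^{s}P_{\u}$: the factor $G^{s}$ stabilises each of ${\goth z}$, $\overline{\Omega}$, ${\goth p}_{\u}$, while $P_{\u}$ acts trivially on ${\goth p}/{\goth p}_{\u}$, so for $u\in P_{\u}$, $z\in {\goth z}$, $v\in \overline{\Omega}$, $m\in {\goth p}_{\u}$ one has $u(z+v+m)\in (z+v)+{\goth p}_{\u}\subseteq Y_{1}$; hence $P(Y_{1}) = Y_{1}$.

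For~(ii), Lemma~\ref{l4int} applied to $P\subseteq G$ shows $G(Y_{1})$ is closed and the canonical map $G\times _{P}Y_{1}\rightarrow G(Y_{1})$ is projective; as $Y_{1}$ is irreducible so are $G\times _{P}Y_{1}$ and $G(Y_{1})$, and $\dim G(Y_{1})\leq \dim G/P + \dim Y_{1}$. Substituting $\dim \overline{\Omega} = \dim {\goth g}^{s}-\dim ({\goth g}^{s})^{x_{\n}}$, $\dim G/P = \dim {\goth p}_{\u}$, $\dim {\goth g}^{s} = \dim {\goth g}-2\dim {\goth p}_{\u}$ and ${\goth g}^{x} = ({\goth g}^{s})^{x_{\n}}$ turns this upper bound into $\dim {\goth z}+\dim G(x)$. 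For the reverse inequality, let ${\goth z}_{\r}$ be the dense open subset of ${\goth z}$ on which ${\goth g}^{z} = {\goth g}^{s}$, and put $N := {\goth z}_{\r}+\Omega\subseteq Y_{1}$. For $w = z+v\in N$ one has ${\goth g}^{w} = ({\goth g}^{s})^{v}$, so $\dim G(w) = \dim G(x)$; and since ${\goth z}\subseteq {\goth h}$, an element of $N$ conjugate to $w$ has semisimple part in the finite set $G(z)\cap {\goth h}$ and nilpotent part in $G^{z}(v)$, which has dimension $\dim\Omega$, so $\dim (N\cap G(w)) = \dim \Omega$. Hence every fibre of $G\times N\rightarrow {\goth g}$ has dimension $\dim {\goth g}^{x}+\dim \Omega$, whence $\dim G(N) = \dim G+\dim N-\dim {\goth g}^{x}-\dim \Omega = \dim {\goth z}+\dim G(x)$. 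As $G(N)\subseteq G(Y_{1})$ and $G(Y_{1})$ is irreducible with the opposite inequality, $\dim G(Y_{1}) = \dim {\goth z}+\dim G(x)$.

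Finally, for~(iii) and~(iv): $Y'$ is a dense open, $G$-invariant, irreducible subset of $Y$, so composing its inclusion with the quotient morphism ${\goth g}\rightarrow {\goth g}/\!/G$ and using that ${\goth g}/\!/G$ is partitioned into finitely many locally closed strata by the $G$-conjugacy class of ${\goth g}^{y_{\s}}$, irreducibility of $Y'$ forces its image to lie generically in one stratum; the preimage of that stratum contains the required $G$-invariant open $Y''\subseteq Y'$, proving~(iii). For~(iv), the $G^{y_{\s}}$-orbit of $y_{\n}$ defines a map from $Y''$ to a finite set, so there is a dense open $G$-invariant $Y'''\subseteq Y''$ on which this ``nilpotent type'' is constant; pick $x\in Y'''$ and, since $Y'''$ is $G$-stable, arrange $s = x_{\s}\in {\goth h}$, defining ${\goth z}$, ${\goth p}$, $\Omega$, $Y_{1}$ accordingly. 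Given $y\in Y'''$, conjugate it so that $y_{\s}\in {\goth h}$; since ${\goth g}^{y_{\s}}$ and ${\goth g}^{s}$ are $G$-conjugate Levi subalgebras containing ${\goth h}$ they are conjugate by some $w\in W({\cal R})$, and applying a representative of $w$ we may assume $y_{\s}\in {\goth z}_{\r}$, ${\goth g}^{y_{\s}} = {\goth g}^{s}$, and $y_{\n}$ a nilpotent element of ${\goth g}^{s}$ whose $G^{s}$-orbit lies in $\overline{\Omega}$ (it equals $\Omega$ up to the action of $N_{G}({\goth g}^{s})$, which does not change $G({\goth z}_{\r}+\Omega)$). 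Thus $y\in G({\goth z}_{\r}+\Omega)\subseteq G(Y_{1})$, so $Y'''\subseteq G(Y_{1})$ and $Y = \overline{Y'''}\subseteq G(Y_{1})$ as $G(Y_{1})$ is closed.

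The main difficulty is~(ii), where the two dimension estimates must agree exactly: this requires careful bookkeeping with $\dim {\goth p}_{\u}$ in the upper bound and, for the lower bound, the observation that the fibres of $G\times N\rightarrow {\goth g}$ are positive--dimensional precisely because $N\cap G(w)$ contains a full translate $\{z\}+\Omega$ of the orbit $\Omega$. A minor wrinkle in~(iv) is that the nilpotent type is only canonical up to $N_{G}({\goth g}^{s})$, which is harmless since $G({\goth z}_{\r}+\Omega)$ is insensitive to this ambiguity.
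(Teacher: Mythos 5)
Your proof is correct. Parts~(i), (iii) and (iv) follow the same line as the paper (Levi decomposition $P=G^{s}P_{\u}$ and the trivial action of $P_{\u}$ on ${\goth p}/{\goth p}_{\u}$ for~(i); a finite stratification of the quotient ${\goth g}/\!/G$ by centraliser type for~(iii); a cover of $Y$ by finitely many closed sets $G(Y_{\Omega'})$ and irreducibility for~(iv), though you refine further to a dense $Y'''$ of constant nilpotent type, which the paper bypasses via the closed cover directly).

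The genuine difference is in~(ii). The paper computes the generic fibre of $G\times Y_{1}\rightarrow G(Y_{1})$ over a nilpotent element $y\in\Omega^{\#}\cap(\Omega+{\goth p}_{\u})$, invoking~\cite[Theorem~7.1.1]{CMa} on induced orbits to see that this intersection is a single $P$-orbit with $G^{y}\subset P$, which yields $\dim G(Y_{1})=\dim G\times_{P}Y_{1}$ directly. You instead take the trivial upper bound $\dim G(Y_{1})\leq\dim G/P+\dim Y_{1}$ and obtain the matching lower bound by computing $\dim G(N)$ for the smaller locally closed set $N={\goth z}_{\r}+\Omega\subseteq Y_{1}$, using that for $w=z+v\in N$ the Jordan decomposition gives ${\goth g}^{w}=({\goth g}^{s})^{v}$ and hence $\dim G(w)=\dim G(x)$, and that $N\cap G(w)$ has dimension exactly $\dim\Omega$ (finitely many possible semisimple parts in ${\goth z}_{\r}\cap G(z)$, and over each the nilpotent part runs over a $\dim\Omega$-dimensional orbit). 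This is a nice elementary replacement for the induced-orbit argument: it trades the Lusztig--Spaltenstein/Collingwood--McGovern input for a direct fibre-dimension count over the non-nilpotent part of $Y_{1}$, at the modest cost of having to combine an upper and a lower bound rather than computing the fibre dimension in one stroke.
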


\begin{proof}
(i) By~\cite[\S 3.2, Lemma 5]{Ko}, $G^{s}$ is connected and $P=P_{\u}G^{s}$. 
For all $y$ in ${\goth p}$ and for all $g$ in $P_{\u}$, $g(y)$ is in $y+{\goth p}_{\u}$. 
Hence $Y_{1}$ is invariant under $P$ since it is invariant under $G^{s}$. Moreover,
it is a closed subset of ${\goth p}$ since ${\goth z}_{s}+\overline{\Omega }$ is a closed 
subset of ${\goth g}^{s}$.    

(ii) According to (i) and Lemma~\ref{l4int}, $G(Y_{1})$ is a closed subset of 
${\goth g}$. According to \cite[Theorem 7.1.1]{CMa},  
$\Omega ^{\#}\cap (\Omega +{\goth p}_{\u})$ is a $P$-orbit and the centralizers in 
${\goth g}$ of its elements are contained in ${\goth p}$. For $y$ in 
$\Omega ^{\#}\cap (\Omega +{\goth p}_{\u})$ and for $g$ in $G$, if $g(y)$ is in
$Y_{1}$ then it is in $\Omega +{\goth p}_{\u}$ since it is nilpotent. So, for $y$ in 
$\Omega ^{\#}\cap (\Omega +{\goth p}_{\u})$, the subset of elements $g$ of $G$ such that
$g(y)$ is in $Y_{1}$ has dimension $\dim {\goth p}$. As a result, 
$$ \dim G(Y_{1}) = \dim G\times _{P}Y_{1} = 
\dim {\goth p}_{\u} + \dim Y_{1} .$$
Since $\dim {\goth g}^{x} = \dim {\goth g}^{s} - \dim \Omega $,
\begin{eqnarray*}
\dim Y_{1} = & \dim {\goth z}_{s} + \dim {\goth p}_{\u} + \dim {\goth g}^{s} 
- \dim {\goth g}^{x} \\ 
\dim G(Y_{1}) = & \dim {\goth z}_{s} + 2\dim {\goth p}_{\u} + 
\dim {\goth g}^{s} - \dim {\goth g}^{x} \\  = & \dim {\goth z}_{s} + \dim G(x) .
\end{eqnarray*}

(iii) Let $\tauup $ be the canonical morphism from ${\goth g}$ to its categorical quotient
${\goth g}//G$ under $G$ and let $Z$ be the closure in ${\goth g}//G$ of 
$\tauup (Y)$. Since $Y$ is irreducible, $Z$ is irreducible and there exists an 
irreducible component $\widetilde{Z}$ of the preimage of $Z$ in ${\goth h}$ whose image 
in ${\goth g}//G$ equals $Z$. Since the set of conjugacy classes under $G$ of the 
centralizers of the elements of ${\goth h}$ in ${\goth g}$ is finite, for some nonempty 
open subset $Z^{\#}$ of $\widetilde{Z}$, the centralizers of its elements are conjugate 
under $G$. The image of $Z^{\#}$ in ${\goth g}//G$ contains a dense open subset $Z'$ 
of $Z$. Let $Y''$ be the inverse image of $Z'$ by the restriction of $\tauup $ to $Y'$. 
Then $Y''$ is a dense open subset of $Y$ and the centralizers in ${\goth g}$ of the 
semisimple components of its elements are conjugate under $G$. 

(iv) Suppose that $x$ is in $Y''$. Let $Z_{Y}$ be the set of elements $y$ of $Y''$ 
such that ${\goth g}^{y_{\s}}={\goth g}^{s}$. Then $G.Z_{Y}=Y''$. For all nilpotent orbit 
$\Omega $ of $G^{s}$ in ${\goth g}^{s}$, set:
$$ Y_{\Omega } = {\goth z}_{s} + \overline{\Omega } + {\goth p}_{\u}$$    
Then $Z_{Y}$ is contained in the union of the $Y_{\Omega }$'s. Hence $Y''$ is contained
in the union of the $G(Y_{\Omega })$'s. According to (ii), $G(Y_{\Omega })$ is 
a closed subset of ${\goth g}$. Hence $Y$ is contained in the union of the 
$G(Y_{\Omega })$'s since $Y''$ is dense in $Y$. Then $Y$ is contained in 
$G(Y_{\Omega })$ for some $\Omega $ since $Y$ is irreducible and there are finitely 
many nilpotent orbits in ${\goth g}^{s}$, whence the assertion.
\end{proof}

\begin{theo}\label{tmv4}
(i) The variety $G.X$ is the union of $G.{\goth h}$ and the $G.X_{\beta }$'s, 
$\beta \in \Pi $.

{\rm (ii)} The variety $X$ is the union of $U.{\goth h}$ and the $X_{\alpha }$'s, 
$\alpha \in {\cal R}_{+}$.
\end{theo}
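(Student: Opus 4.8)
The plan is to prove (ii) by induction on $\dim{\goth g}$ and then to deduce (i) from it. Since $U.{\goth h}=B.{\goth h}$, Lemma~\ref{lmv1},(iv) gives that $X\setminus U.{\goth h}$ is closed in $X$ and equidimensional of codimension $1$ in $X$, i.e. of dimension $n-1$, while Lemma~\ref{lmv1},(ii) gives that each $X_{\alpha}$ ($\alpha\in{\cal R}_{+}$) is one of its irreducible components. So (ii) amounts to showing that every irreducible component $T$ of $X\setminus U.{\goth h}$ equals some $X_{\alpha}$; as $X_{\alpha}$ is itself a component, it is enough to produce, for each such $T$, a root $\alpha\in{\cal R}_{+}$ with $T\subseteq X_{\alpha}$. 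I would first record a preliminary fact: every Cartan subalgebra of ${\goth g}$ lying in $X$ lies in $U.{\goth h}$ --- indeed it is contained in ${\goth b}$, hence is $U$-conjugate to ${\goth h}$ (conjugacy of maximal toral subalgebras of the connected solvable group $B$); equivalently, a regular semisimple element of ${\goth b}$ being $B$-conjugate to its ${\goth h}$-component, the unique Cartan subalgebra through it lies in $B.{\goth h}$. The base cases are ${\goth g}$ commutative (then $X=\{{\goth h}\}$) and $\dim[{\goth g},{\goth g}]=3$ (then $n=1$, and an explicit computation in ${\goth {sl}}_{2}$ gives $X\setminus U.{\goth h}=\{V_{\alpha}\}$).

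For the inductive step, take $T$ an irreducible component of $X\setminus U.{\goth h}$ and $V$ a general point of $T$. By Corollary~\ref{cmv1},(ii) and its proof, $V$ is a commutative algebraic subalgebra containing the semisimple and nilpotent parts of its elements, so $V={\goth s}_{V}\oplus{\goth n}_{V}$ with ${\goth s}_{V}$ a torus and ${\goth n}_{V}\subseteq{\goth N}_{{\goth g}}$; since $V\notin U.{\goth h}$, the preliminary fact forces ${\goth s}_{V}\neq V$. If ${\goth s}_{V}\neq 0$, I would conjugate ${\goth s}_{V}$ into ${\goth h}$ by an element of $B$ (legitimate because $T$ is $B$-invariant), pick $s\in{\goth s}_{V}$ with ${\goth g}^{s}$ equal to the centralizer of ${\goth s}_{V}$, so that $s\neq 0$, $V\subseteq{\goth g}^{s}$, and (by Corollary~\ref{cmv1},(i)) the centre of ${\goth g}^{s}$ lies in $V$. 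Then $V\in X$ is contained in ${\goth g}^{s}$, so $V\in X^{s}=\overline{B^{s}.{\goth h}}$ by Lemma~\ref{lmv2},(i), and $V\notin B^{s}.{\goth h}$ since $B^{s}.{\goth h}\subseteq B.{\goth h}$. As $\dim{\goth g}^{s}<\dim{\goth g}$, I apply the inductive hypothesis to $({\goth g}^{s},{\goth b}\cap{\goth g}^{s},{\goth h})$: its associated variety is $X^{s}$, and for a positive root $\gamma$ of ${\goth g}^{s}$ one has $\gamma(s)=0$, hence ${\goth g}^{\gamma}\subseteq{\goth g}^{s}$ and the subspace $V_{\gamma}={\goth h}_{\gamma}\oplus{\goth g}^{\gamma}$ is the same whether formed in ${\goth g}^{s}$ or in ${\goth g}$, with closure $\overline{B^{s}.V_{\gamma}}\subseteq\overline{B.V_{\gamma}}=X_{\gamma}$. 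This puts the general $V\in T$ in some $X_{\gamma}$, $\gamma\in{\cal R}_{+}$ (undoing the $B$-conjugation, which is harmless since $X_{\gamma}$ is $B$-invariant); as ${\cal R}_{+}$ is finite, each $T\cap X_{\gamma}$ is closed in $T$, so irreducibility of $T$ gives $T\subseteq X_{\gamma}$, hence $T=X_{\gamma}$.

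There remains the case ${\goth s}_{V}=0$, where $V\subseteq{\goth N}_{{\goth g}}\cap{\goth b}={\goth u}$, so $T\subseteq\{V'\in X\ \vert\ V'\subseteq{\goth u}\}$. I expect this to be the crux: one must show this locus has dimension $<n-1$ as soon as $\rg\geq 2$ (otherwise one is back in a base case). I would do this with a dimension count on the incidence variety of pairs $(x,V')$ with $V'\in T$ and $x\in V'$ general, which has dimension $n-1+\rg$, maps to ${\goth u}$ with image of dimension at most half the dimension of the nilpotent orbit it generically meets (a nilpotent orbit meeting ${\goth u}$ in half its dimension), while by Theorem~\ref{tmv3} --- or Lemma~\ref{l2mv3},(iv), using $V'\subseteq\overline{G(x)}$ --- its fibres have dimension at most $\dim{\goth g}^{x}-\rg$. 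Finally, (i) is deduced from (ii): applying $G$ and using Lemma~\ref{l4int} one gets $G.X=G.{\goth h}\cup\bigcup_{\alpha\in{\cal R}_{+}}G.X_{\alpha}$ with $G.X_{\alpha}=\overline{G.V_{\alpha}}$, and since every $\alpha\in{\cal R}_{+}$ is $W({\cal R})$-conjugate to a simple root $\beta$ one has $V_{\alpha}=n_{w}.V_{\beta}$ for a representative $n_{w}$ of the corresponding $w$, so $G.X_{\alpha}=G.X_{\beta}$ and $G.X=G.{\goth h}\cup\bigcup_{\beta\in\Pi}G.X_{\beta}$.

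The hard part will be the nilpotent case ${\goth s}_{V}=0$: the induction gives no handle on it (there is no proper centralizer to descend to), and ruling it out rests entirely on the dimension estimate for $\{V'\in X\ \vert\ V'\subseteq{\goth u}\}$ sketched above. A lesser but real technical point is that in the inductive case the root $\gamma$ attached to the general point of $T$ varies a priori with the point, so irreducibility of $T$ is needed to make it uniform.
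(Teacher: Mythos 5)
Your strategy is genuinely different from the paper's, and the order of (i) and (ii) is actually reversed: the paper proves (i) first and then deduces (ii) from it (and from the paper's later argument for (ii)); you propose to prove (ii) first by induction on $\dim{\goth g}$ and then deduce (i). The deduction of (i) from (ii) at the end, and the inductive case where ${\goth s}_{V}\neq 0$ (descending to ${\goth g}^{s}$ via Lemma~\ref{lmv2},(i) and using finiteness of ${\cal R}_{+}$ plus irreducibility of $T$), are both sound --- though note that for this dichotomy to be clean you should first reduce to ${\goth g}$ semisimple exactly as the paper does via the map $\mu$, since otherwise ${\goth s}_{V}$ could coincide with the centre of ${\goth g}$ and the centralizer ${\goth g}^{s}$ would not be proper.

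The gap is in the nilpotent case ${\goth s}_{V}=0$, and it is not merely a technicality that a more careful version of your dimension count would fix. Write $d=\dim{\goth g}^{x}$ for a generic $x$ in the image $Y$ of $\Delta_{T}\rightarrow{\goth u}$. Your count gives $\dim\Delta_{T}\leq\dim Y+(d-\rg)$ with $\dim Y\leq\frac{1}{2}\dim G(x)$, i.e.\ $n-1+\rg\leq\frac{1}{2}(2n+\rg-d)+d-\rg$, which simplifies to $d\geq 3\rg-2$. This is \emph{not} a contradiction: it only tells you that the generic element of $Y$ must be rather singular, which is perfectly possible (already for ${\goth{sl}}_{3}$ the minimal nilpotent has $d=4=3\rg-2$). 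The crucial difference from the paper's Corollary~\ref{cmv3},(i) is that there the analogous set $(G.X)_{\u}$ is $G$-invariant, so its irreducible components are $G$-invariant and one gets $\dim Y=\dim G(y)$ exactly (not merely $\leq\frac{1}{2}\dim G(y)$), whence the clean bound $\dim(G.X)_{\u}\leq 2n-\rg$. Working at the $B$-level one only has $B$-invariance, the image $Y$ is only a $B$-stable subvariety of ${\goth u}$, and the factor $\frac{1}{2}$ kills the estimate. This is exactly why the paper first proves (i) at the $G$-level --- there $T$ has dimension $2n-1>2n-\rg$ so it cannot lie in $(G.X)_{\u}$ --- and then handles (ii) by a completely different mechanism, namely a downward induction along a maximal chain of parabolic subalgebras ${\goth b}={\goth p}_{0}\subset\cdots\subset{\goth p}_{\rg}={\goth g}$, starting from (i) as the base case $i=\rg$; the key point (Claim~\ref{clmv4}) is that any extra component $T$ of $X_{i}\setminus P_{i}.{\goth h}$ would be $P_{i+1}$-invariant for every choice of chain, hence $G$-invariant, which is absurd since all its elements lie in ${\goth p}_{i}\subsetneq{\goth g}$. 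No analysis of the nilpotent locus of $X$ is needed at all. So to make your proof go through you would need a genuinely new idea to bound $\dim\{V\in X\ \vert\ V\subseteq{\goth u}\}$ (it must be shown to be at most $n-2$), and the counting argument you sketch does not deliver it.
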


\begin{proof}
Let $\mu $ be the map
$$ \xymatrix{\ec {Gr}{}{[{\goth g},{\goth g}]}{}{\rg'} \ar[rr] &&
\ec {Gr}g{}{}{\rg}}, \qquad V\longmapsto {\goth z}_{{\goth g}}+V $$ 
with $\rg'$ the rank of $[{\goth g},{\goth g}]$ and set:
$$X_{d} := \overline{B.({\goth h}\cap [{\goth g},{\goth g}])}, \qquad
X_{\alpha ,d} := \overline{B.(V_{\alpha }\cap [{\goth g},{\goth g}])} $$
for $\alpha $ in ${\cal R}_{+}$. Then $X$, $G.X$, $X_{\alpha }$, $G.X_{\alpha }$ are the 
images of $X_{d}$, $G.X_{d}$, $X_{\alpha ,d}$, $G.X_{\alpha ,d}$ by $\mu $ respectively.
So we can suppose ${\goth g}$ semisimple.

(i) For $\rg =1$, ${\goth g}$ is simple of dimension $3$. In this case, $G.X$ is the 
union of $G.{\goth h}$ and $G.{\goth g}^{e}$. So, we can suppose $\rg \geq 2$. 
According to Lemma~\ref{lmv1},(iii), for $\alpha $ in ${\cal R}_{+}$, $G.X_{\alpha }$ is 
an irreducible component of $G.X\setminus G.{\goth h}$. Moreover, for all $\beta $ in 
$\Pi \cap W({\cal R})(\alpha )$, $G.X_{\alpha }=G.X_{\beta }$ since
$V_{\alpha }$ and $V_{\beta }$ are conjugate under $N_{G}({\goth h})$.

Let $T$ be an irreducible component of $G.X\setminus G.{\goth h}$. Set:
$${\cal E}_{T} := {\cal E}\cap T\times {\goth g}$$
and denote by $Y$ the image of ${\cal E}_{T}$ by the second projection. Then $Y$ is 
closed in ${\goth g}$ since $\ec {Gr}g{}{}{\rg}$ is a projective variety. Since 
${\cal E}_{T}$ is a vector bundle over $T$ and since $T$ is irreducible, ${\cal E}_{T}$ 
is irreducible and so is $Y$. Since $T$ is an irreducible component of 
$G.X\setminus G.{\goth h}$, $T$, ${\cal E}_{T}$ and $Y$ are $G$-invariant. According to
Lemma~\ref{lmv1},(iii), $T$ has codimension $1$ in $G.X$. Hence, by 
Corollary~\ref{cmv3},(i) $Y$ is not contained in the nilpotent cone since $\rg \geq 2$. 
Let $Y'$ be the set of elements $x$ of $Y$ such that ${\goth g}^{x}$ has minimal 
dimension. According to Lemma~\ref{lmv4},(ii) and (iv), for some $x$ in $Y'$,
$$ \dim Y \leq \dim G(x) + \dim {\goth z}_{x_{\s}}$$
and according to Theorem~\ref{tmv3}, 
$$ \dim {\cal E}_{T} \leq \dim G(x) + \dim {\goth z}_{x_{\s}} + \dim {\goth g}^{x}-\rg
= \dim {\goth g} + \dim {\goth z}_{x_{\s}}- \rg $$
Hence ${\cal E}_{T}$ has dimension at most $2n + \dim {\goth z}_{x_{\s}}$ and 
$\dim {\goth z}_{x_{\s}}=\rg -1$ since $T$ has codimension $1$ in $G.X$. As a result, 
$x_{\s}$ is subregular and for some $g$ in $G$, $g({\goth z}_{x_{\s}})$ is the kernel of 
a positive root $\alpha $. Denoting by ${\goth s}_{\alpha }$ the subalgebra of 
${\goth g}$ generated by ${\goth g}^{\alpha }$ and ${\goth g}^{-\alpha }$, 
${\goth g}^{g(x_{\s})}$ is the direct sum of ${\goth h}_{\alpha }$ and 
${\goth s}_{\alpha }$. Since the maximal commutative subalgebras of 
${\goth s}_{\alpha }$ have dimension $1$, a commutative subalgebra of 
dimension $\rg$ of ${\goth g}^{g(x_{\s})}$ is either a Cartan subalgebra of ${\goth g}$ 
or conjugate to $V_{\alpha }$ under the adjoint group of ${\goth g}^{g(x_{\s})}$. As a 
result, $V_{\alpha }$ is in $T$ and $T=\overline{G.V_{\alpha }}=G.X_{\alpha }$ since $T$ 
is $G$-invariant, whence the assertion.

(ii) According to Lemma~\ref{lmv1},(ii), for $\alpha $ in ${\cal R}_{+}$, $X_{\alpha }$ is
an irreducible component of $X\setminus B.{\goth h}$. Let 
$\poi {{\goth g}}1{,\ldots,}{m}{}{}{}$ be the simple factors of ${\goth g}$. For 
$j=1,\ldots,m$, denote by $X_{j}$ the closure in $\ec {Gr}g{}j{\j gj}$ of the orbit of 
${\goth h}\cap {\goth g}_{j}$. Then $X=\poi X1{\times \cdots \times }{m}{}{}{}$ and 
the complement to $B.{\goth h}$ in $X$ is the union of the 
$$ \poi X1{\times \cdots \times }{j-1}{}{}{}
\times (X_{j}\setminus B.({\goth h}\cap {\goth g}_{j})) \times 
\poi X{j+1}{\times \cdots \times }{m}{}{}{}$$ 
So, we can suppose ${\goth g}$ simple. Consider
$${\goth b}=\poi {{\goth p}}0{ \subset \cdots \subset }{\rg}{}{}{} = {\goth g}$$
an increasing sequence of parabolic subalgebras verifying the following condition:
for $i=0,\ldots,\rg -1$, there is no parabolic subalgebra ${\goth q}$ of ${\goth g}$ such
that
$$ {\goth p}_{i} \subsetneqq {\goth q} \subsetneqq {\goth p}_{i+1} .$$
For $i=0,\ldots,\rg$, let $P_{i}$ be the normalizer of ${\goth p}_{i}$ in $G$, let 
${\goth p}_{i,\u}$ be the nilpotent radical of ${\goth p}_{i}$ and let $P_{i,\u}$ be the 
unipotent radical of $P_{i}$. For $i=0,\ldots,\rg$ and for $\alpha $ in ${\cal R}_{+}$, 
set $X_{i}:= P_{i}.X$ and $X_{i,\alpha }:=P_{i}.X_{\alpha }$. Prove by 
induction on $\rg -i$ that for all sequence of parabolic subalgebras verifying the 
above condition, the $X_{i,\alpha }$'s, $\alpha \in {\cal R}_{+}$, are the irreducible
components of $X_{i}\setminus P_{i}.{\goth h}$.

For $i=\rg$, it results from (i). Suppose that it is true for $i+1$. According to
Lemma~\ref{lmv1},(iii), the $X_{i,\alpha }$'s are irreducible components of 
$X_{i}\setminus P_{i}.{\goth h}$.

\begin{claim}\label{clmv4}
Let $T$ be an irreducible component of $X_{i}\setminus P_{i}.{\goth h}$ such that $P_{i}$
is its stabilizer in $P_{i+1}$. Then $T=X_{i,\alpha }$ for some $\alpha $ in 
${\cal R}_{+}$.
\end{claim}

\begin{proof}
According to the induction hypothesis, $T$ is contained in $X_{i+1,\alpha }$ for 
some $\alpha $ in ${\cal R}_{+}$. According to Lemma~\ref{lmv1},(iv), $T$ has codimension
$1$ in $X_{i}$ so that $P_{i+1}.T$ and $X_{i+1,\alpha }$ have the same dimension. Then 
they are equal and $T$ contains ${\goth g}^{x}$ for some $x$ in ${\goth b}_{\r}$ such 
that $x_{\s}$ is a subregular element belonging to ${\goth h}$. Denoting by $\alpha '$ the
positive root such that $\alpha '(x_{\s})=0$, ${\goth g}^{x}=V_{\alpha '}$ since 
$V_{\alpha '}$ is the commutative subalgebra contained in ${\goth b}$ and containing
${\goth h}_{\alpha '}$, which is not Cartan, so that $T=X_{i,\alpha '}$.
\end{proof}

Suppose that $X_{i}\setminus P_{i}.{\goth h}$ is not the union of the 
$X_{i,\alpha }$'s, $\alpha \in {\cal R}_{+}$. We expect a contradiction. Let $T$ be 
an irreducible component of $X_{i}\setminus P_{i}.{\goth h}$, different from 
$X_{i,\alpha }$ for all $\alpha $. According to Claim~\ref{clmv4} and according to the 
condition verified by the sequence, $T$ is invariant under $P_{i+1}$. Moreover, according
to Claim~\ref{clmv4}, it is so for all sequence 
$\poie {{\goth p}}0{,\ldots,}{\rg}{}{}{}{\prime}{\prime}$ of parabolic 
subalgebras verifying the above condition and such that ${\goth p}'_{j}={\goth p}_{j}$
for $j=0,\ldots,i$. As a result, for all simple root $\beta $ such that 
${\goth g}^{-\beta }$ is not in ${\goth p}_{i}$, $T$ is invariant under the one parameter
subgroup of $G$ generated by $\ad {\goth g}^{-\beta }$. Hence $T$ is invariant under 
$G$. It is impossible since for $x$ in ${\goth g}\setminus \{0\}$, the orbit $G(x)$
is not contained in ${\goth p}_{i}$ since ${\goth g}$ is simple, whence the assertion.
\end{proof}

\subsection{} \label{mv5}
Let $X'$ be the subset of ${\goth g}^{x}$ with $x$ in ${\goth b}_{\r}$ such that 
$x_{\s}$ is regular or subregular. For $\alpha $ in ${\cal R}_{+}$, denote by 
$\thetaup _{\alpha }$ the map
$$ \xymatrix{\k \ar[rr] && X} , \qquad t \longmapsto \exp(t\ad x_{\alpha }).{\goth h} .$$
According to ~\cite[Ch.~VI, Theorem 1]{Sh}, $\thetaup _{\alpha }$ has a regular extension
to ${\Bbb P}^{1}(\k)$, also denoted by $\thetaup _{\alpha }$. Set 
$Z_{\alpha } := \thetaup _{\alpha }({\Bbb P}^{1}(\k))$ and $X'_{\alpha }:=B.Z_{\alpha }$
so that $X'_{\alpha }=U.{\goth h}\cup B.V_{\alpha }$.

\begin{lemma}\label{lmv5}
Let $\alpha $ be in ${\cal R}_{+}$ and let $V$ be in $X$. Then $V$ is in $B.Z_{\alpha }$
if and only if $g(V)$ contains ${\goth h}_{\alpha }$ for some $g$ in $B$. 
\end{lemma}

\begin{proof}
The condition is necessary by definition. Suppose that $V$ contains ${\goth h}_{\alpha }$.
Since $V$ is commutative by Corollary~\ref{cmv1},(ii), $V$ is contained in 
${\goth h}+{\goth g}^{\alpha }$. If $V$ is a Cartan subalgebra, then 
$V=\thetaup _{\alpha }(t)$ for some $t$ in $\k$. Otherwise, 
$V=\thetaup _{\alpha }(\infty )$, whence the lemma.
\end{proof}

\begin{coro}\label{cmv5}
Let $\alpha $ be a positive root.

{\rm (i)} The sets $X'_{\alpha }$ and $G.X'_{\alpha }$ are open subsets of $X$ and  
$G.X$ respectively.

{\rm (ii)} The sets $X'$ and $G.X'$ are big open subsets of $X$ and  
$G.X$ respectively.
\end{coro}

\begin{proof}
(i) Since $X'_{\alpha }$ is a $B$-invariant subset containing the open subset 
$U.{\goth h}$, it suffices to prove that $X'_{\alpha }$ is a neighborhood of 
$V_{\alpha }$ in $X$. Denote by $H_{\alpha }$ the coroot of $\alpha $ and set:
$$E' := \bigoplus _{\gamma \in {\cal R}_{+}\setminus \{\alpha \}} {\goth g}^{\gamma } ,
\qquad E := \k H_{\alpha } \oplus E' .$$
Let $\Omega _{E}$ be the set of subspaces $V$ of ${\goth b}$ such that $E$ is a complement
to $V$ in ${\goth b}$ and let $\Omega '_{E}$ be the complement in $X\cap \Omega _{E}$ to 
the union of $X_{\gamma }, \gamma \in {\cal R}_{+}\setminus \{\alpha \}$. Then  
$\Omega _{E}'$ is an open neighborhood of $V_{\alpha }$ in $X$. Since $X'_{\alpha }$
contains $U.{\goth h}$, $X'_{\alpha }$ contains all the Cartan subalgebras contained 
in $\Omega '_{E}$. Let $V$ be in $\Omega '_{E}$ such that $V$ is not a Cartan subalgebra.
According to Corollary~\ref{cmv2}, for some nonempty subset $\Lambda $ of ${\cal R}$, 
$V$ is contained ${\goth h}_{\Lambda }+{\goth u}$ and contains a conjugate of 
${\goth h}_{\Lambda }$ under $B$. Then ${\goth h}=\k H_{\alpha }+{\goth h}_{\Lambda }$ 
since $V$ is in $\Omega _{E}$. As a result, ${\goth h}_{\Lambda }={\goth h}_{\gamma }$ 
for some positive root $\gamma $ and $V$ is conjugate to $V_{\gamma }$ under $B$ by 
Lemma~\ref{lmv5}. Since $V$ is not in $X_{\delta }$ for all $\delta $ in 
${\cal R}_{+}\setminus \{\alpha \}$, $\gamma =\alpha $ and $V$ is in $X'_{\alpha }$.
Then $X'_{\alpha }$ contains $\Omega '_{E}$. As a result, $X'_{\alpha }$ is an open 
subset of $X$ and $G.(X\setminus X'_{\alpha })$ is a closed subset of $G.X$ by 
Lemma~\ref{l4int}, whence the assertion.

(ii) By definition, $X'$ is the union of $X'_{\alpha }, \alpha \in {\cal R}_{+}$. 
Hence $X'$ is an open subset of $X$ by (i). Moreover, by Theorem~\ref{tmv4},(ii), 
$X\setminus X'$ is the union of $X_{\alpha }\setminus X',\alpha \in {\cal R}_{+}$. Then 
$X'$ is a big open subset of $X$ since, for all $\alpha $, $X_{\alpha }\setminus X'$ is 
strictly contained in the irreducible subvariety $X_{\alpha }$ of $X$.

Since $G.X'$ is the union of $G.X'_{\alpha },\alpha \in {\cal R}_{+}$, 
$G.X'$ is an open subset of $G.X$ by (i). Moreover, by Theorem~\ref{tmv4},(i), 
$G.X\setminus G.X'$ is the union of $G.X_{\beta }\setminus G.X',\beta \in \Pi $.
Hence $G.X'$ is a big open subset of $G.X$ since, for all $\beta $, 
$G.X_{\beta }\setminus G.X'$ is strictly contained in the irreducible subvariety
$G.X_{\beta }$ of $G.X$.
\end{proof}

\begin{prop}\label{pmv5}
The sets $X'$ and $G.X'$ are smooth big open subsets of $X$ and $G.X$ respectively.
\end{prop}

\begin{proof}
According to Corollary~\ref{cmv5},(ii), it remains to prove that $X'$ and $G.X'$
are smooth open subsets of $X$ and $G.X$ respectively. Denote by $\pi $ the 
bundle projection of the vector bundle ${\cal E}$ over $G.X$. Recall 
${\cal E}_{0}:=\pi ^{-1}(X)$. Let $\mu $ be the map
$$ \xymatrix{{\goth g}_{\r} \ar[rr] && \ec {Gr}g{}{}{\rg}}, \qquad 
x \longmapsto {\goth g}^{x} $$
and let $\mu _{0}$ be its restriction to ${\goth b}_{\r}$. Then $\mu $ is a regular map. 
Let $\Gamma _{\mu }$ and $\Gamma _{\mu _{0}}$ be the images of the graphs of $\mu $ and 
$\mu _{0}$ respectively by the isomorphism 
$$ \xymatrix{ {\goth g} \times \ec {Gr}g{}{}{\rg} \ar[rr] && 
\ec {Gr}g{}{}{\rg}\times {\goth g}}, \qquad (x,V) \longmapsto (V,x) .$$
Then $\Gamma _{\mu }$ and $\Gamma _{\mu _{0}}$ are smooth varieties contained in 
${\cal E}$ and ${\cal E}_{0}$ respectively since for $x$ in ${\goth g}_{\rs}$, 
${\goth g}^{x}$ is a Cartan subalgebra, contained in ${\goth b}$ when $x$ is in 
${\goth b}$. Set:
$$ \Gamma _{\mu }' := \Gamma _{\mu }\cap \pi ^{-1}(G.X') = 
{\cal E}\cap G.X'\times {\goth g}_{\r} \quad  \text{and} \quad
\Gamma _{\mu _{0}}' := \Gamma _{\mu _{0}}\cap \pi ^{-1}(X') = 
{\cal E}\cap X'\times {\goth b}_{\r} .$$ 
Then $\Gamma _{\mu }'$ is a smooth variety as an open susbet of $\Gamma _{\mu }$
and $\Gamma _{\mu }'$ is an open subset of $\pi ^{-1}(G.X')$ such that 
$\pi (\Gamma _{\mu }')=G.X'$ since all element of $G.X'$ contains regular elements. 
In the same way, $\Gamma _{\mu _{0}}'$ is a smooth open subset of $\pi ^{-1}(X')$
such that $\pi (\Gamma _{\mu _{0}}')=X'$. As a result, $\Gamma _{\mu }'$ and
$\Gamma _{\mu _{0}}'$ are smooth open subsets of vector bundles over $G.X'$ and 
$X'$ respectively since ${\cal E}$ and ${\cal E}_{0}$ are vector bundles over $G.X$ and 
$X$ respectively. Hence $G.X'$ and $X'$ are smooth varieties by 
\cite[Ch. 8, Theorem 23.7]{Mat}. 
\end{proof}

Summarizing the results of the section, Theorem~\ref{t2int},(i) is given by
Corollay~\ref{cmv1},(ii), Theorem~\ref{t2int},(ii) is given by Theorem~\ref{tmv3},
Theorem~\ref{t2int},(iii) is given by Lemma~\ref{lmv1},(iv) since $X$ and $G.X$ have
dimension $n$ and $2n$ respectively and Theorem~\ref{t2int},(iv) is given by 
Proposition~\ref{pmv5}.

\section{On the generalized isospectral commuting variety} \label{isc}
Let $k\geq 2$ be an integer. According to Section~\ref{bo}, we have the 
commutative diagram
$$\xymatrix{\sqx G{{\goth b}^{k}} \ar[rr]^{\gamma _{\x}} \ar[rd]_{\gamma }
&& {\cal B}_{\x}^{(k)} \ar[ld]^{\eta } \\ & {\cal B}^{(k)} & }.$$ 
By Lemma~\ref{lbo3},(i), $\iota _{k}$ is a closed embedding of 
${\goth b}^{k}$ into ${\cal B}_{\x}^{(k)}$, by Corollary~\ref{cbo3},(i)
${\cal B}_{\x}^{(k)}=G.\iota _{k}({\goth b}^{k})$ is closed in ${\cal X}^{k}$ and 
$\eta $ is the restriction to ${\cal B}_{\x}^{(k)}$ of the canonical projection from 
${\cal X}^{k}$ to ${\goth g}^{k}$. Denote by ${\cal C}^{(k)}$ the closure of 
$G.{\goth h}^{k}$ in ${\goth g}^{k}$ with respect to the diagonal action of $G$ in 
${\goth g}^{k}$ and set ${\cal C}_{\x}^{(k)} := \eta ^{-1}({\cal C}^{(k)})$. 
The varieties ${\cal C}^{(k)}$ and ${\cal C}_{\x}^{(k)}$ are called 
{\it generalized commuting variety} and {\it generalized isospectral commuting variety} 
respectively. For $k=2$, ${\cal C}_{\x}^{(k)}$ is the isospectral commuting variety 
considered by M. Haiman in~\cite[\S 8]{Ha1} and~\cite[\S 7.2]{Ha2}. 

\subsection{} \label{isc1}
Set:
$$ {\cal E}_{0}^{(k)} :=  \{(u,\poi x1{,\ldots,}{k}{}{}{}) \in X\times {\goth b}^{k} 
\ \vert \ \poi {u \ni x}1{,\ldots,}{k}{}{}{}\} .$$

\begin{lemma}\label{lisc1}
Denote by ${\cal E}_{0}^{(k,*)}$ the intersection of ${\cal E}_{0}^{(k)}$ and 
$U.{\goth h}\times ({\goth g}_{\rs}\cap {\goth b})^{k}$ and for $w$ in $W({\cal R})$, 
denote by $\theta _{w}$ the map
$$ {\cal E}_{0}^{(k)} \longrightarrow {\goth b}^{k}\times {\goth h}^{k} , \qquad
(u,\poi x1{,\ldots,}{k}{}{}{}) \longmapsto 
(\poi x1{,\ldots,}{k}{}{}{},w(\overline{x_{1}}),\ldots,w(\overline{x_{k}})) .$$ 

{\rm (i)} Denoting by ${\goth X}_{0,k}$ the image of ${\cal E}_{0}^{(k)}$ by the 
projection $(u,\poi x1{,\ldots,}{k}{}{}{})\mapsto (\poi x1{,\ldots,}{k}{}{}{})$, 
${\goth X}_{0,k}$ is the closure of $B.{\goth h}^{k}$ in ${\goth b}^{k}$ and 
${\cal C}^{(k)}$ is the image of $G\times {\goth X}_{0,k}$ by the map 
$(g,\poi x1{,\ldots,}{k}{}{}{})\mapsto (\poi x1{,\ldots,}{k}{g}{}{})$.

{\rm (ii)} For all $w$ in $W({\cal R})$, $\theta _{w}({\cal E}_{0}^{(k,*)})$ is dense in 
$\theta _{w}({\cal E}_{0}^{(k)})$.
\end{lemma}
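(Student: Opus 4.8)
The plan is to handle the two assertions separately but with a common preliminary observation: all the maps in sight are $B$-equivariant (resp. $G$-equivariant) and the varieties $E^{(k)}$, ${\goth X}_{0,k}$, ${\cal C}^{(k)}$ are built from orbit closures, so it suffices to identify dense subsets and then take closures. For (i), first I would show that the projection $\pr{}:E^{(k)}\to{\goth b}^{k}$ sends $E^{(k)}$ onto a closed subset: $E^{(k)}$ is closed in $X\times{\goth b}^{k}$ (it is the intersection of the incidence conditions $x_{j}\in u$, each closed because $X\subset\ec{Gr}g{}{}{\rg}$ is projective), and $X$ is projective, hence the image ${\goth X}_{0,k}$ is closed in ${\goth b}^{k}$ by properness of the projection $X\times{\goth b}^{k}\to{\goth b}^{k}$. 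Next I would check that $B.{\goth h}^{k}\subset{\goth X}_{0,k}$: for $g\in B$ and $\poi x1{,\ldots,}{k}{}{}{}\in{\goth h}^{k}$, the pair $(g.{\goth h},\poi x1{,\ldots,}{k}{g}{}{})$ lies in $E^{(k)}$ since $g.{\goth h}\in B.{\goth h}\subset X$ and each $g(x_{j})\in g.{\goth h}$. Conversely, over the open dense subset $U.{\goth h}\subset X$ of Cartan subalgebras, the fibre of $\pr{}$ through $(u,\poi x1{,\ldots,}{k}{}{}{})$ forces $\poi x1{,\ldots,}{k}{}{}{}$ to lie in the single Cartan subalgebra $u$, so $\pr{}^{-1}(U.{\goth h})$ maps into $B.{\goth h}^{k}$; since $E^{(k)}$ is irreducible (it fibres over the irreducible $X$ with irreducible fibres $u^{k}$, a vector bundle) and $\pr{}^{-1}(U.{\goth h})$ is dense, ${\goth X}_{0,k}=\overline{\pr{}(\pr{}^{-1}(U.{\goth h}))}=\overline{B.{\goth h}^{k}}$. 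The statement about ${\cal C}^{(k)}$ then follows: by Lemma~\ref{l4int} applied with $P=Q=G$ and $Y={\goth X}_{0,k}$ (which is $B$-invariant), $G.{\goth X}_{0,k}$ is the image of $G\times_{B}{\goth X}_{0,k}$ and is closed, it contains $G.{\goth h}^{k}$, and it is irreducible of the right dimension, so it equals $\overline{G.{\goth h}^{k}}={\cal C}^{(k)}$.

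For (ii), fix $w\in W({\cal R})$. The map $\theta_{w}$ is a morphism $E^{(k)}\to{\goth b}^{k}\times{\goth h}^{k}$, and $E^{(k,*)}=E^{(k)}\cap(U.{\goth h}\times({\goth g}_{\rs}\cap{\goth b})^{k})$ is an open subset of $E^{(k)}$. The key point is that $E^{(k,*)}$ is nonempty and dense in $E^{(k)}$: indeed $U.{\goth h}$ is open dense in $X$ by Theorem~\ref{tmv4},(ii) (its complement is a union of the $X_{\alpha}$'s, each of positive codimension), and for $u$ a Cartan subalgebra contained in ${\goth b}$ the condition $x_{j}\in{\goth g}_{\rs}\cap u$ is open and nonempty in the fibre $u$; since $E^{(k)}$ is irreducible, the intersection of these finitely many open dense conditions is open dense. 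As $\theta_{w}$ is continuous, $\theta_{w}(E^{(k,*)})$ is dense in $\theta_{w}(\overline{E^{(k,*)}})=\theta_{w}(E^{(k)})$, which is exactly the assertion.

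The main obstacle I anticipate is the density of $E^{(k,*)}$ in $E^{(k)}$, or more precisely the irreducibility of $E^{(k)}$ that makes the density argument go through. One must be careful that $E^{(k)}$ is genuinely the total space of a rank-$k\dim u$ vector bundle (the $k$-fold fibre product over $X$ of the tautological rank-$\rg$ bundle $E$ of Theorem~\ref{t2int}), so that $E^{(k)}$ is irreducible because $X$ is; this is where one uses that every element of $X$ is an actual $\rg$-dimensional subspace, i.e.\ the tautological bundle exists. Once irreducibility is in hand, everything else is a routine chase of open dense subsets under equivariant morphisms, using properness of $X$ for the closedness statements in (i) and Lemma~\ref{l4int} to pass from $B$-orbits to $G$-orbits. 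I would also double-check that $U.{\goth h}\times({\goth g}_{\rs}\cap{\goth b})^{k}$ meets $E^{(k)}$ at all: take $u={\goth h}$ and $\poi x1{,\ldots,}{k}{}{}{}\in{\goth h}_{\r}^{k}$, which manifestly lies in $E^{(k,*)}$.
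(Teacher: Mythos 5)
Your proof is correct and follows essentially the same route as the paper: both rest on the irreducibility of $E^{(k)}$ as a vector bundle over the irreducible projective variety $X$, use properness of $X$ for the closedness of ${\goth X}_{0,k}$, invoke Lemma~\ref{l4int} for passing from $B$ to $G$, and deduce (ii) from the fact that $E^{(k,*)}$ is a nonempty open (hence dense) subset of the irreducible $E^{(k)}$. The only cosmetic difference is that you argue by two inclusions while the paper directly observes that $B.(\{{\goth h}\}\times{\goth h}^{k})$ is dense in $E^{(k)}$, and you are slightly more careful about nonemptiness of $E^{(k,*)}$, which the paper leaves implicit.
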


\begin{proof}
(i) Since $X$ is a projective variety, ${\goth X}_{0,k}$ is a closed subset of 
${\goth b}^{k}$. The variety ${\cal E}_{0}^{(k)}$ is irreducible of dimension $n+k \rg$ 
as a vector bundle of rank $k \rg$ over the irreducible variety $X$. So, 
$B.(\{{\goth h}\}\times {\goth h}^{k})$ is dense in ${\cal E}_{0}^{(k)}$ and 
${\goth X}_{0,k}$ is the closure of $B.{\goth h}^{k}$ in ${\goth b}^{k}$, whence the 
assertion by Lemma~\ref{l4int}.

(ii) Since $U.{\goth h}\times ({\goth g}_{\rs}\cap {\goth b})^{k}$ is an open susbet of 
$X\times {\goth b}^{k}$, ${\cal E}_{0}^{(k,*)}$ is an open subset of 
${\cal E}_{0}^{(k)}$. Moreover, it is a dense open subset since ${\cal E}_{0}^{(k)}$ is 
irreducible, whence the assertion since $\theta _{w}$ is a morphism of algebraic 
varieties.
\end{proof}

\subsection{} \label{isc2}
Let $s$ be in ${\goth h}$. According to~\cite[\S 3.2, Lemma 5]{Ko}, $G^{s}$ is connected.
Denote by ${\cal R}_{s}$ the set of roots whose kernel contains $s$ and denote
by $W({\cal R}_{s})$ the Weyl group of ${\cal R}_{s}$. 

\begin{lemma}\label{lisc2}
Let $x=(\poi x1{,\ldots,}{k}{}{}{})$ be in ${\cal C}^{(k)}$ verifying the following 
conditions:
\begin{itemize}
\item [{\rm (1)}] $s$ is the semisimple component of $x_{1}$,
\item [{\rm (2)}] for $z$ in $E_{x}$, the centralizer in ${\goth g}$ of the semisimple
component of $z$ has dimension at least $\dim {\goth g}^{s}$.
\end{itemize}
Then for $i=1,\ldots,k$, the semisimple component of $x_{i}$ is in ${\goth z}_{s}$.
\end{lemma}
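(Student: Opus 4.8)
The plan is to derive the statement from elementary properties of the Jordan decomposition of commuting elements, together with the description of ${\cal C}^{(k)}$ as the closure of $G.{\goth h}^{k}$. First I would observe that $G.{\goth h}^{k}$ is contained in the closed subset of ${\goth g}^{k}$ formed by the $k$-tuples with pairwise commuting components, hence so is its closure ${\cal C}^{(k)}$; in particular $\poi x1{,\ldots,}{k}{}{}{}$ pairwise commute and any two elements of $P_{x}$ commute. (Alternatively one could invoke Corollary~\ref{cmv1}, since $x$ lies, up to $G$-conjugacy, in some element of $X$, but only this pairwise commutativity is needed below.) Fix $i\in\{1,\ldots,k\}$ and set $t := (x_{i})_{\s}$. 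Since $x_{1}$ and $x_{i}$ commute, their semisimple parts $s$ and $t$ commute, and by uniqueness of the Jordan decomposition applied to the commuting pair $x_{1},\lambda x_{i}$ one gets $(x_{1}+\lambda x_{i})_{\s}=s+\lambda t$ for every $\lambda$ in $\k$. As $x_{1}+\lambda x_{i}$ lies in $P_{x}$, hypothesis~(2) yields $\dim {\goth g}^{s+\lambda t}\geq \dim {\goth g}^{s}$ for all $\lambda$ in $\k$.

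Next I would diagonalize $\ad s$ and $\ad t$ simultaneously and write ${\goth g}$ as the direct sum of the common eigenspaces ${\goth g}_{a,b}$ of $(\ad s,\ad t)$; then $\ad(s+\lambda t)$ acts on ${\goth g}_{a,b}$ by the scalar $a+\lambda b$, so ${\goth g}^{s+\lambda t}$ is the sum of those ${\goth g}_{a,b}$ with $a+\lambda b=0$. For $\lambda$ avoiding the finitely many values $-a/b$, where $(a,b)$ runs over the weights different from $(0,0)$ with $b\neq 0$, this sum reduces to ${\goth g}_{0,0}={\goth g}^{s}\cap {\goth g}^{t}$. Choosing such a $\lambda$ and invoking the inequality above, $\dim({\goth g}^{s}\cap {\goth g}^{t})\geq \dim {\goth g}^{s}$; since ${\goth g}^{s}\cap {\goth g}^{t}$ is contained in ${\goth g}^{s}$, equality holds and ${\goth g}^{s}\subseteq {\goth g}^{t}$. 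As $t$ also lies in ${\goth g}^{s}$ (it commutes with $s$), it centralizes ${\goth g}^{s}$, that is $t\in {\goth z}_{s}$; this is the assertion for the index $i$, and letting $i$ vary finishes the proof.

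I do not expect a genuine obstacle here. The two points requiring a little care are the additivity $(a+b)_{\s}=a_{\s}+b_{\s}$ for a commuting pair $a,b$ — where reductivity of ${\goth g}$ enters, via the decomposition ${\goth g}={\goth z}({\goth g})\oplus[{\goth g},{\goth g}]$ to reduce to the case of a faithful adjoint action — and the claim that $\dim {\goth g}^{s+\lambda t}$ equals $\dim({\goth g}^{s}\cap {\goth g}^{t})$ for generic $\lambda$; both are disposed of by the weight-space decomposition used above.
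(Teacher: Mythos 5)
Your proof is correct and takes essentially the same approach as the paper: shift $x_{1}$ along the line through $x_{i}$, observe that the semisimple component is $s+\lambda\,(x_{i})_{\s}$, and use a genericity argument (finitely many bad $\lambda$) to show that hypothesis~(2) forces $(x_{i})_{\s}$ to centralize ${\goth g}^{s}$. The only difference is cosmetic — you work with the simultaneous eigenspace decomposition of $\ad s$ and $\ad (x_{i})_{\s}$ and conclude directly, whereas the paper first conjugates $(x_{i})_{\s}$ into ${\goth h}$ and argues by contradiction using the root system; the underlying linear algebra is identical.
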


\begin{proof}
Since $x$ is in ${\cal C}^{(k)}$, $[x_{i},x_{j}]=0$ for all $(i,j)$. Suppose that 
for some $i$, the semisimple component $x_{i,\s}$ of $x_{i}$ is not in ${\goth z}_{s}$.
A contradiction is expected. Since $[x_{1},x_{i}]=0$, for all $t$ in $\k$, 
$s+tx_{i,\s}$ is the semisimple component of $x_{1}+tx_{i}$. Moreover, after 
conjugation by an element of $G^{s}$, we can suppose that $x_{i,\s}$ is in ${\goth h}$. 
Since ${\cal R}$ is finite, there exists $t$ in $\k^{*}$ such that the subset of roots 
whose kernel contains $s+tx_{i,\s}$ is contained in ${\cal R}_{s}$. Since $x_{i,\s}$ is 
not in ${\goth z}_{s}$, for some $\alpha $ in ${\cal R}_{s}$, 
$\alpha (s+tx_{i,\s})\neq 0$ that is ${\goth g}^{s+tx_{i,\s}}$ is 
strictly contained in ${\goth g}^{s}$, whence the contradiction.
\end{proof}

For $w$ in $W({\cal R})$, set: 
$$C_{w} := G^{s}wB/B , \qquad B^{w} := wBw^{-1} .$$

\begin{lemma}\label{l2isc2}~\cite[\S 6.17, Lemma]{Hu}
Let ${\goth B}$ be the set of Borel subalgebras of ${\goth g}$ and let ${\goth B}_{s}$ be 
the set of Borel subalgebras of ${\goth g}$ containing $s$.

{\rm (i)} For all $w$ in $W({\cal R})$, $C_{w}$ is a connected component of 
${\goth B}_{s}$.

{\rm (ii)} For $(w,w')$ in $W({\cal R})\times W({\cal R})$, $C_{w}=C_{w'}$ if and only
if $w'w^{-1}$ is in $W({\cal R}_{s})$.

{\rm (iii)} The variety $C_{w}$ is isomorphic to $G^{s}/(G^{s}\cap B^{w})$.
\end{lemma}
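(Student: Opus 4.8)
The plan is to identify the variety $\goth B$ of Borel subalgebras with $G/B$ via $gB\mapsto g(\goth b)$, under which $\goth B_{s}$ becomes the set of $g(\goth b)$ with $s\in g(\goth b)$, and then to describe $\goth B_{s}$ one $G^{s}$-orbit at a time. First I would check that each $C_{w}$ lies in $\goth B_{s}$ and is exactly one $G^{s}$-orbit there: choosing a representative $\dot w\in N_{G}(\goth h)$ of $w$ one has $\dot w(\goth h)=\goth h$, so $s\in\goth h\subset\dot w(\goth b)$, and since $s$ is fixed by $G^{s}$, $s\in g\dot w(\goth b)$ for every $g\in G^{s}$; hence $C_{w}=G^{s}\dot wB/B$ is the $G^{s}$-orbit of $\dot w(\goth b)$ inside $\goth B_{s}$. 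As the image of the connected group $G^{s}$ (connected by~\cite[\S 3.2, Lemma 5]{Ko}) under an orbit map, $C_{w}$ is irreducible, hence connected; its stabiliser in $G$ is the normaliser of $\dot w(\goth b)$, namely $\dot wB\dot w^{-1}=B^{w}$, so its stabiliser in $G^{s}$ is $G^{s}\cap B^{w}$, and as the base field has characteristic $0$ the orbit map induces an isomorphism $G^{s}/(G^{s}\cap B^{w})\to C_{w}$. This gives Assertion~(iii).

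Next I would establish $\goth B_{s}=\bigcup_{w\in W({\cal R})}C_{w}$. Let $\goth b'=g(\goth b)$ contain $s$ and set $\sigma:=g^{-1}(s)\in\goth b$. Being semisimple, $\sigma$ satisfies $u(\sigma)\in\goth h$ for some $u\in U$, and then $u(\sigma)=\overline{\sigma}$ because $u(\sigma)-\sigma\in\goth u$. Now $\overline{\sigma}$ and $s$ lie in $\goth h$ and are $G$-conjugate, hence $W({\cal R})$-conjugate, say $\dot w(\overline{\sigma})=s$ with $\dot w\in N_{G}(\goth h)$. Then $\dot wug^{-1}$ fixes $s$, so $g\in G^{s}\dot wu\subset G^{s}\dot wB$ and $gB\in C_{w}$.

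For Assertion~(i) the one substantial ingredient is that $G^{s}\cap B^{w}$ is a Borel subgroup of $G^{s}$. This is where I would invoke~\cite[\S 6.17, Lemma]{Hu}; alternatively one checks it directly: $\goth g^{s}\cap\dot w(\goth b)=\goth h\oplus\bigoplus\goth g^{\alpha}$, the sum over the $\alpha\in{\cal R}_{s}$ positive for $\dot w(\goth b)$, is a Borel subalgebra of $\goth g^{s}$ --- since the intersection of a positive system of ${\cal R}$ with the subsystem ${\cal R}_{s}$ is a positive system of ${\cal R}_{s}$ --- and $G^{s}\cap B^{w}$ is the corresponding connected solvable subgroup. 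Granting this, all the $C_{w}$ have the same dimension, so every $G^{s}$-orbit in $\goth B_{s}$ has minimal, hence common, dimension and is therefore closed; being the finite union of these closed orbits, $\goth B_{s}$ has each $C_{w}$ also open, and since $C_{w}$ is connected it is a connected component. This equidimensionality of $\goth B_{s}$ (equivalently, the Borel-subgroup property above) is the crux of the lemma; everything else is orbit bookkeeping.

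Finally, for Assertion~(ii): if $w'w^{-1}\in W({\cal R}_{s})$, a representative of it may be chosen in $N_{G^{s}}(\goth h)\subset G^{s}$, whence $G^{s}\dot w'B=G^{s}\dot wB$ and $C_{w}=C_{w'}$. Conversely, suppose $C_{w}=C_{w'}$; pick $\goth b'$ in this common $G^{s}$-orbit and a maximal torus $\goth t$ of $\goth b'$ with $s\in\goth t$, and write $\goth b'=g(\dot w(\goth b))=g'(\dot w'(\goth b))$ with $g,g'\in G^{s}$. Using the ingredient above, the maximal tori of $\goth b'$ containing $s$ are the Cartan subalgebras of $\goth g^{s}$ inside the Borel subalgebra $\goth b'\cap\goth g^{s}$ and so form a single orbit under $G^{s}\cap U'$ ($U'$ the unipotent radical of the normaliser of $\goth b'$); hence one may adjust $g$ and $g'$ so that $g(\goth h)=\goth t=g'(\goth h)$. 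Then $g'^{-1}g$ lies in $N_{G^{s}}(\goth h)$, represents some $\eta\in W({\cal R}_{s})$, and carries $\dot w(\goth b)$ to $\dot w'(\goth b)$; therefore $(\dot w')^{-1}(g'^{-1}g)\dot w$ normalises both $\goth h$ and $\goth b$, hence induces the identity of $W({\cal R})$, giving $w'^{-1}\eta w=1$, i.e. $w'w^{-1}=\eta\in W({\cal R}_{s})$.
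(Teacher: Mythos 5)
The paper does not prove this lemma; it is stated with a reference to \cite[\S 6.17, Lemma]{Hu} and no \texttt{proof} environment follows. So there is no proof of record to compare yours against, and the real question is whether your self-contained argument is sound. It is.

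Each step checks out: the inclusion $C_{w}\subset{\goth B}_{s}$ and the identification of $C_{w}$ as a single $G^{s}$-orbit with stabiliser $G^{s}\cap B^{w}$; the covering ${\goth B}_{s}=\bigcup_{w}C_{w}$ via conjugating $g^{-1}(s)$ into ${\goth h}$ by $U$ and then to $s$ by $W({\cal R})$; the crucial observation that $G^{s}\cap B^{w}$ is a Borel subgroup of $G^{s}$, proved directly from the (correct) fact that ${\cal R}_{+}\cap{\cal R}_{s}$ is a positive system in ${\cal R}_{s}$; the resulting equidimensionality of the $G^{s}$-orbits, which forces each to be closed, hence (being finitely many) clopen, hence a connected component; and the bookkeeping for Assertion~(ii), where both directions correctly use that elements of $W({\cal R}_{s})$ lift to $N_{G^{s}}({\goth h})$ and that an element of $N_{G}({\goth h})\cap N_{G}({\goth b})$ lies in $H$. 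You have in effect reconstructed the standard argument that underlies the cited lemma in Humphreys, and made explicit exactly where the one non-trivial input (the Borel-subgroup property of $G^{s}\cap B^{w}$) enters. No gaps.

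One small stylistic remark: when you ``adjust $g$ and $g'$ so that $g({\goth h})={\goth t}=g'({\goth h})$,'' it is worth noting explicitly (as you implicitly do) that multiplying $g$ on the left by an element of $G^{s}\cap U'$ preserves both membership in $G^{s}$ and the identity $g\dot w({\goth b})={\goth b}'$, since $U'$ normalises ${\goth b}'$; spelling that out removes any suspense about whether the adjustment is legitimate.
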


For $x$ in ${\cal B}^{(k)}$, denote by ${\goth B}_{x}$ the subset of Borel 
subalgebras containing $E_{x}$.

\begin{coro}\label{cisc2}
Let $x=(\poi x1{,\ldots,}{k}{}{}{})$ be in ${\cal C}^{(k)}$. Suppose that $x$ 
verifies Conditions {\rm (1)} and {\rm (2)} of Lemma~{\rm \ref{lisc2}}. Then  
$\{C_{w}\cap {\goth B}_{x}\; \vert \; w \in W({\cal R})\}$ is the set of connected 
components of ${\goth B}_{x}$.
\end{coro}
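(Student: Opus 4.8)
The plan is to identify ${\goth B}_{x}$ with a disjoint union of copies of the flag variety of the reductive Lie algebra ${\goth g}^{s}$, cut down to those Borel subalgebras containing a fixed commutative nil subalgebra, and then to obtain connectedness of the pieces from the connectedness of the fixed‑point set of a unipotent group acting on a connected complete variety.

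First I would record the elementary facts about the components of $x$. Since $x$ lies in ${\cal C}^{(k)}$, the closure of $G.{\goth h}^{k}$, the $x_{i}$ pairwise commute; hence $x_{i}\in {\goth g}^{x_{1}}\subseteq {\goth g}^{x_{1,\s}}={\goth g}^{s}$ for every $i$ (using that $x_{1,\s}=s$ by hypothesis), so $P_{x}\subseteq {\goth g}^{s}$, and the Jordan components $x_{i,\s}$, $x_{i,\n}$ all lie in ${\goth g}^{s}$ and pairwise commute (the Jordan decomposition in the reductive algebra ${\goth g}^{s}$ agreeing with the one in ${\goth g}$). By Lemma~\ref{lisc2}, each $x_{i,\s}$ lies in the centre ${\goth z}_{s}$ of ${\goth g}^{s}$. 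Put ${\goth n}_{x}:={\rm span}(x_{1,\n},\ldots,x_{k,\n})$; this is a commutative subalgebra of ${\goth g}^{s}$ all of whose elements are nilpotent, so $U_{x}:=\exp({\rm ad}\,{\goth n}_{x})$ is a connected commutative unipotent subgroup of $G^{s}$. Finally, any Borel subalgebra ${\goth b}'$ of ${\goth g}$ containing $P_{x}$ contains $x_{1}$, hence contains its semisimple component $s$, so ${\goth B}_{x}\subseteq {\goth B}_{s}$.

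Next I would peel off the components of ${\goth B}_{s}$. By Lemma~\ref{l2isc2}(i) and (ii), ${\goth B}_{s}$ is the disjoint union of the $C_{w}$, $w$ running through representatives of $W({\cal R})/W({\cal R}_{s})$, and each $C_{w}$ is a connected component of ${\goth B}_{s}$; consequently ${\goth B}_{x}=\bigsqcup_{w}(C_{w}\cap {\goth B}_{x})$, each piece open and closed in ${\goth B}_{x}$, and it suffices to show that every non‑empty $C_{w}\cap {\goth B}_{x}$ is connected. For a Borel subalgebra ${\goth b}'$ of ${\goth g}$ containing $s$, the intersection ${\goth b}'\cap {\goth g}^{s}$ is a Borel subalgebra of ${\goth g}^{s}$, and ${\goth b}'\mapsto {\goth b}'\cap {\goth g}^{s}$ maps $C_{w}$ onto the variety ${\goth B}({\goth g}^{s})$ of Borel subalgebras of ${\goth g}^{s}$; this map is an isomorphism, because by Lemma~\ref{l2isc2}(iii) $C_{w}\simeq G^{s}/(G^{s}\cap B^{w})$, the subgroup $G^{s}\cap B^{w}$ is a Borel subgroup of $G^{s}$ (it contains the maximal torus $H$ of $G^{s}$) with Lie algebra $\lie B^{w}\cap {\goth g}^{s}$, and a bijective morphism between smooth projective homogeneous spaces is an isomorphism by Zariski's Main Theorem~\cite[\S 9]{Mu}. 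Since ${\goth z}_{s}$ lies in every Borel subalgebra of ${\goth g}^{s}$ and ${\goth n}_{x}\subseteq {\goth g}^{s}$, for ${\goth b}'\in C_{w}$ the condition $P_{x}\subseteq {\goth b}'$ is equivalent to ${\goth n}_{x}\subseteq {\goth b}'\cap {\goth g}^{s}$; hence the isomorphism above carries $C_{w}\cap {\goth B}_{x}$ onto the closed subvariety of Borel subalgebras of ${\goth g}^{s}$ containing ${\goth n}_{x}$. A Borel subalgebra ${\goth b}''$ of ${\goth g}^{s}$ is normalized by $\exp({\rm ad}\,v)$, $v\in {\goth n}_{x}$, exactly when $v\in {\goth b}''$ (the normalizer of ${\goth b}''$ in ${\goth g}^{s}$ being ${\goth b}''$ itself), so this subvariety is precisely the fixed‑point set of $U_{x}$ acting on ${\goth B}({\goth g}^{s})$; as ${\goth B}({\goth g}^{s})$ is connected and $U_{x}$ is connected unipotent, this fixed‑point set is connected. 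Therefore each $C_{w}\cap {\goth B}_{x}$ is connected, and the non‑empty $C_{w}\cap {\goth B}_{x}$ are exactly the connected components of ${\goth B}_{x}$.

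The main obstacle is the last step, the connectedness of the fixed‑point set of a connected unipotent group acting on a connected complete variety. I would handle it by reducing, along a central series of $U_{x}$ with ${\Bbb G}_{a}$‑quotients, to the case of ${\Bbb G}_{a}$, for which the fixed locus on a connected complete variety is classically known to be connected; alternatively one can argue directly that the variety of Borel subalgebras of ${\goth g}^{s}$ containing the commutative nil subalgebra ${\goth n}_{x}$ is connected by a Springer‑fibre type argument. The remaining ingredients — functoriality of the Jordan decomposition, that ${\goth b}'\cap {\goth g}^{s}$ is a Borel subalgebra of ${\goth g}^{s}$ when $s\in {\goth b}'$, and that $G^{s}\cap B^{w}$ is a Borel subgroup of $G^{s}$ — are standard.
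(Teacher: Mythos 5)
Your proposal is correct and follows the same structural path as the paper: restrict to ${\goth B}_{s}$ and decompose it into the components $C_{w}$ via Lemma~\ref{l2isc2}, identify each piece $C_{w}\cap {\goth B}_{x}$ with the variety of Borel subalgebras of ${\goth g}^{s}$ containing the nilpotent parts $x_{1,\n},\ldots,x_{k,\n}$ (using that the semisimple parts lie in ${\goth z}_{s}$, hence in every Borel subalgebra of ${\goth g}^{s}$), and conclude by connectedness of this latter variety. The one place you diverge is in the proof of that connectedness: the paper simply cites \cite[Theorem~6.5]{Hu}, whereas you observe that the variety in question is exactly the fixed-point set of the connected commutative unipotent group $U_{x}=\exp(\ad{\goth n}_{x})$ acting on the connected projective variety ${\goth B}({\goth g}^{s})$, and invoke Horrocks' theorem (or a reduction to ${\Bbb G}_{a}$ along a central series). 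That is a clean, self-contained alternative to the citation, and your explicit description of the isomorphism as ${\goth b}'\mapsto {\goth b}'\cap {\goth g}^{s}$, together with the check that it carries $C_{w}\cap {\goth B}_{x}$ onto the Borel subalgebras of ${\goth g}^{s}$ containing ${\goth n}_{x}$, spells out a step the paper leaves implicit in its appeal to Lemma~\ref{l2isc2}(iii). The only very minor gap is that you assert bijectivity of ${\goth b}'\mapsto {\goth b}'\cap {\goth g}^{s}$ on $C_{w}$ without proof; it follows from $G^{s}$-equivariance together with the observation that the stabilizer of $w({\goth b})\cap {\goth g}^{s}$ in $G^{s}$ is the Borel subgroup with Lie algebra $\ad(w({\goth b})\cap {\goth g}^{s})$, which coincides with $G^{s}\cap B^{w}$.
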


\begin{proof}
Since a Borel subalgebra contains the semisimple component of its elements and since $s$
is the semisimple component of $x_{1}$, ${\goth B}_{x}$ is contained in ${\goth B}_{s}$.
As a result, according to Lemma~\ref{l2isc2},(i), every connected component of 
${\goth B}_{x}$ is contained in $C_{w}$ for some $w$ in $W({\cal R})$. Set
$x_{\n} := (\poi x{1,\n}{,\ldots,}{k,\n}{}{}{})$. Since $[x_{i},x_{j}]=0$ for all 
$(i,j)$, $E_{x}$ is contained in ${\goth g}^{s}$. Let ${\goth B}^{s}$ be the set 
of Borel subalgebras of ${\goth g}^{s}$ and for $y$ in $({\goth g}^{s})^{k}$, let 
${\goth B}^{s}_{y}$ be the set of Borel subalgebras of ${\goth g}^{s}$ containing 
$E_{y}$. According to~\cite[Theorem 6.5]{Hu}, ${\goth B}^{s}_{x_{\n}}$ is connected.
Moreover, according to Lemma~\ref{lisc2}, the semisimple components 
of $\poi x1{,\ldots,}{k}{}{}{}$ are in ${\goth z}_{s}$ so that 
${\goth B}^{s}_{x_{\n}}={\goth B}^{s}_{x}$. Let $w$ be in $W({\cal R})$. 
According to Lemma~\ref{l2isc2},(iii), there is an isomorphism from ${\goth B}^{s}$ to 
$C_{w}$. Moreover, the image of ${\goth B}^{s}_{x}$ by this isomorphism is equal to
$C_{w}\cap {\goth B}_{x}$, whence the corollary.  
\end{proof}

\begin{coro}\label{c2isc2}
Let $x=(\poi x1{,\ldots,}{k}{}{}{})$ be in ${\cal C}^{(k)}$ verifying Conditions 
{\rm (1)} and {\rm (2)} of Lemma~{\rm \ref{lisc2}}. Then $\eta ^{-1}(x)$ is contained 
in $\{(\poi x1{,\ldots,}{k}{}{}{},\poi x{1,\s}{,\ldots,}{k,\s}{w}{}{}) \ \vert \ 
w \in W({\cal R}) \}$.
\end{coro}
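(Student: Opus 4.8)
I would reduce the statement to an explicit description of the points of $\eta ^{-1}(x)$, and then use Corollary~\ref{cisc2} to see that one single element of $W({\cal R})$ governs all the ${\goth h}$-coordinates of such a point. By Proposition~\ref{p2bo4}, ${\cal B}_{\n}^{(k)}={\cal B}_{{\cal X}}^{(k)}$ and $\eta $ is the restriction to ${\cal B}_{{\cal X}}^{(k)}$ of the projection $\varpi $ from ${\cal X}^{k}$ to ${\goth g}^{k}$; by Corollary~\ref{cbo3},(i), ${\cal B}_{{\cal X}}^{(k)}=G.\iota _{1}({\goth b}^{k})$ with $G$ acting diagonally on the ${\goth g}$-factors. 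Hence any element of $\eta ^{-1}(x)$ is of the form
$$(g(v_{1}),\ldots,g(v_{k}),\overline{v_{1}},\ldots,\overline{v_{k}})$$
for some $g$ in $G$ and $(v_{1},\ldots,v_{k})$ in ${\goth b}^{k}$ with $g(v_{i})=x_{i}$ for all $i$; in particular all the $x_{i}$ lie in the Borel subalgebra ${\goth b}':=g({\goth b})$, so ${\goth b}'$ contains $P_{x}$, i.e. ${\goth b}'\in {\goth B}_{x}$.

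Next I would locate ${\goth b}'$ among the connected components of ${\goth B}_{x}$. Since $x$ verifies Conditions (1) and (2) of Lemma~\ref{lisc2}, Corollary~\ref{cisc2} applies and the component of ${\goth B}_{x}$ containing ${\goth b}'$ is $C_{w}\cap {\goth B}_{x}$ for a unique $w$ in $W({\cal R})$; identifying $C_{w}=G^{s}wB/B$ with a $G^{s}$-orbit in the flag variety, $C_{w}$ is the $G^{s}$-orbit of $\dot w({\goth b})$, where $\dot w$ is a representative of $w$ in $N_{G}({\goth h})$. Thus ${\goth b}'=h.\dot w({\goth b})$ for some $h$ in $G^{s}$, so $\dot w^{-1}h^{-1}g$ normalizes ${\goth b}$; writing $g=h\dot w b$ with $b$ in $B$ and replacing each $v_{i}$ by $v'_{i}:=b(v_{i})\in {\goth b}$ — which changes neither $x_{i}=h\dot w(v'_{i})$ nor $\overline{v_{i}}=\overline{v'_{i}}$, the latter since $\overline{\cdot}$ kills ${\goth u}$ and is $H$-invariant — one is reduced to $g=h\dot w$.

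Finally I would compute semisimple components. From $x_{i}=h\dot w(v'_{i})$ one gets $v'_{i,\s}=\dot w^{-1}h^{-1}(x_{i,\s})$, taking semisimple parts commuting with the adjoint action. By Lemma~\ref{lisc2}, $x_{i,\s}$ lies in the centre ${\goth z}_{s}$ of ${\goth g}^{s}$; as $G^{s}$ is connected it fixes ${\goth z}_{s}$ pointwise, whence $h^{-1}(x_{i,\s})=x_{i,\s}$ and $v'_{i,\s}=\dot w^{-1}(x_{i,\s})=w^{-1}(x_{i,\s})\in {\goth h}$. Now $v'_{i}\in {\goth b}$ has semisimple component in ${\goth h}$ and nilpotent component in ${\goth b}\cap {\goth N}_{{\goth g}}={\goth u}$, so $\overline{v'_{i}}=v'_{i,\s}$. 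Therefore the ${\goth h}$-coordinate $\overline{v_{i}}=\overline{v'_{i}}$ of our point equals $w^{-1}(x_{i,\s})$ for every $i$, with the same $w$; replacing $w$ by $w^{-1}$ gives the assertion.

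The only substantial ingredient is Corollary~\ref{cisc2}: it is precisely the fact that the connected components of ${\goth B}_{x}$ are the $G^{s}$-orbits $C_{w}\cap{\goth B}_{x}$, $w\in W({\cal R})$, that forces a common $w$ for all coordinates. Everything else is routine bookkeeping with Jordan decompositions in ${\goth b}$ together with the triviality of $\overline{\cdot}$ on ${\goth u}$ and on ${\goth h}$; the one point to double-check carefully is that $C_{w}$ really corresponds, under $G/B\cong\{\text{Borel subalgebras}\}$, to the $G^{s}$-orbit of $\dot w({\goth b})$, which is immediate from the definition $C_{w}=G^{s}wB/B$ and Lemma~\ref{l2isc2},(iii).
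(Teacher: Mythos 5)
Your proof is correct and runs on essentially the same rails as the paper's. Both arguments reduce the problem to identifying the connected component of ${\goth B}_{x}$ containing the relevant Borel subalgebra via Corollary~\ref{cisc2}, then compute the ${\goth h}$-coordinates by writing the group element as (something in $G^{s}$)$\,\cdot\,$(representative of $w$)$\,\cdot\,$(something in $B$), and finally invoke Lemma~\ref{lisc2} together with the fact that the connected group $G^{s}$ acts trivially on the centre ${\goth z}_{s}$. The only presentational difference is that the paper passes through $\gamma^{-1}(x)$ and notes that $\gamma_{\n}$ is constant on its connected components (using finiteness of $\eta^{-1}(x)$), whereas you parametrize $\eta^{-1}(x)$ directly from the description ${\cal B}_{\cal X}^{(k)}=G.\iota_{1}({\goth b}^{k})$; your route is marginally more explicit but no shorter, and the two collapse to the same computation once $g=h\dot w b$ is written out.
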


\begin{proof}
Since $\gamma = \eta \rond \gamma _{\x}$, $\eta ^{-1}(x)$ is the image of 
$\gamma ^{-1}(x)$ by $\gamma _{\x}$. Furthermore, $\gamma _{\x}$ is constant 
on the connected components of $\gamma ^{-1}(x)$ since $\eta ^{-1}(x)$ is finite. Let
$C$ be a connected component of $\gamma ^{-1}(x)$. Identifying $\sqx G{{\goth b}^{k}}$ 
with the subvariety of elements $(u,x)$ of ${\goth B}\times {\goth g}^{k}$ such that 
$E_{x}$ is contained in $u$, $C$ identifies with $(C_{w}\cap {\goth B}_{x})\times \{x\}$ 
for some $w$ in $W({\cal R})$ by Corollary~\ref{cisc2}. Then for some $g$ in $G^{s}$ and 
for some representative $g_{w}$ of $w$ in $N_{G}({\goth h})$,  
$gg_{w}({\goth b})$ contains $E_{x}$ so that 
$$\gamma _{\x}(C)=
\{(\poi x1{,\ldots,}{k}{}{}{},\overline{(gg_{w})^{-1}(x_{1})},\ldots,
\overline{(gg_{w})^{-1}(x_{k})})\} .$$
By Lemma~\ref{lisc2}, $\poi x{1,\s}{,\ldots,}{k,\s}{}{}{}$ are in ${\goth z}_{s}$ so that
$w^{-1}(x_{i,\s})$ is the semisimple component of $(gg_{w})^{-1}(x_{i})$ for 
$i=1,\ldots,k$. Hence 
$$\gamma _{\x}(C)=\{(\poi x1{,\ldots,}{k}{}{}{},
\poi x{1,\s}{,\ldots,}{k,\s}{w^{-1}}{}{})\},$$ 
whence the corollary.
\end{proof}

\begin{prop}\label{pisc2}
The variety ${\cal C}_{\x}^{(k)}$ is irreducible and equal to the closure of 
$G.\iota _{k}({\goth h}^{k})$ in ${\cal B}_{\x}^{(k)}$.
\end{prop}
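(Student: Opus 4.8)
My plan is to establish the two inclusions $\overline{G.\iota _{\n}({\goth h}^{k})}\subseteq {\cal C}_{\n}^{(k)}$ and ${\cal C}_{\n}^{(k)}\subseteq \overline{G.\iota _{\n}({\goth h}^{k})}$, reading off irreducibility along the way. Set $Z:=\overline{G.\iota _{\n}({\goth h}^{k})}$ (closure in ${\cal B}_{\n}^{(k)}$); it is irreducible, being the closure of the image of $G\times {\goth h}^{k}$. Since $\eta \rond \iota _{\n}$ is the closed embedding of ${\goth b}^{k}$ into ${\cal B}^{(k)}$, one has $\eta (G.\iota _{\n}({\goth h}^{k}))=G.{\goth h}^{k}\subseteq {\cal C}^{(k)}$, and ${\cal C}_{\n}^{(k)}=\eta ^{-1}({\cal C}^{(k)})$ is closed, so $Z\subseteq {\cal C}_{\n}^{(k)}$. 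Moreover $\eta (Z)$ is closed ($\eta $ is finite) and contains $G.{\goth h}^{k}$, hence equals ${\cal C}^{(k)}$; as $\eta $ is finite this gives $\dim Z=\dim {\cal C}^{(k)}=\dim {\cal C}_{\n}^{(k)}$, so $Z$ is an irreducible component of ${\cal C}_{\n}^{(k)}$.

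The heart of the proof is to control the fibres of $\eta $ over a big open subset of ${\cal C}^{(k)}$. Let $U$ be the set of $x$ in ${\cal C}^{(k)}$ with $P_{x}\cap {\goth g}_{\rs}\neq \emptyset$; by Lemma~\ref{l6int} it is the ${\mathrm {GL}}_{k}(\k)$-saturation of the open subset $\{x\in {\cal C}^{(k)}\ \vert\ x_{1}\in {\goth g}_{\rs}\}$, hence a dense open subset of ${\cal C}^{(k)}$. The morphism $\eta $ (the restriction of $\varpi $) and the subset $Z$ are invariant under the commuting actions of $G$ and ${\mathrm {GL}}_{k}(\k)$ on ${\cal B}_{\n}^{(k)}$, so it is enough to show $\eta ^{-1}(x)\subseteq Z$ when $x\in {\goth h}^{k}$ and $x_{1}$ is regular: after acting by ${\mathrm {GL}}_{k}(\k)$ one may assume $x_{1}\in {\goth g}_{\rs}$, then after acting by $G$ one may assume $x_{1}\in {\goth h}_{\r}$, and then $x_{2},\ldots,x_{k}\in {\goth g}^{x_{1}}={\goth h}$. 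For such $x$, conditions (1) and (2) of Lemma~\ref{lisc2} hold with $s=x_{1}$, so Corollary~\ref{c2isc2} gives
\[\eta ^{-1}(x)\subseteq \{(\poi x1{,\ldots,}{k}{}{}{},w(x_{1}),\ldots,w(x_{k}))\ \vert\ w\in W({\cal R})\}.\]
Now each point $(\poi x1{,\ldots,}{k}{}{}{},w(x_{1}),\ldots,w(x_{k}))$ equals $n.\iota _{\n}(w(x_{1}),\ldots,w(x_{k}))$ for $n$ a representative of $w^{-1}$ in $N_{G}({\goth h})$, because $G$ acts trivially on the ${\goth h}^{k}$-factor of ${\cal B}_{\n}^{(k)}\subseteq {\goth g}^{k}\times {\goth h}^{k}$ and $n$ acts on ${\goth h}$ as $w^{-1}$; hence it lies in $G.\iota _{\n}({\goth h}^{k})\subseteq Z$, and $\eta ^{-1}(U)\subseteq Z$.

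It remains to pass from this to ${\cal C}_{\n}^{(k)}=Z$. Since $\eta $ is finite, $\eta ^{-1}({\cal C}^{(k)}\setminus U)$ has dimension $<\dim {\cal C}^{(k)}=\dim {\cal C}_{\n}^{(k)}$, and ${\cal C}_{\n}^{(k)}\setminus Z\subseteq {\cal C}_{\n}^{(k)}\setminus \eta ^{-1}(U)=\eta ^{-1}({\cal C}^{(k)}\setminus U)$; so any irreducible component of ${\cal C}_{\n}^{(k)}$ other than $Z$ would be strictly lower dimensional and contained in $\eta ^{-1}({\cal C}^{(k)}\setminus U)$. To rule such a component out I would use the identity $\gamma ^{-1}({\cal C}^{(k)})=\sqx G{({\goth b}^{k}\cap {\cal C}^{(k)})}$ (immediate from the $G$-invariance of ${\cal C}^{(k)}$ and the formula $\gamma (\overline{(g,x)})=(\poi x1{,\ldots,}{k}{g}{}{})$) together with the surjective projective morphism $\gamma _{\n}$: the components of $\sqx G{({\goth b}^{k}\cap {\cal C}^{(k)})}$ are the $\sqx G{Y_{i}}$, with $Y_{i}$ the components of ${\goth b}^{k}\cap {\cal C}^{(k)}$, one of them is the irreducible variety ${\goth X}_{0,k}=\overline{B.{\goth h}^{k}}$, and $\gamma _{\n}(\sqx G{{\goth X}_{0,k}})=Z$. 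Moreover, if $z\in {\goth b}^{k}\cap {\cal C}^{(k)}$ has $z_{1}\in {\goth g}_{\rs}$, then $z_{1}\in {\goth g}_{\rs}\cap {\goth b}$ and $z_{2},\ldots,z_{k}$ lie in the Cartan subalgebra ${\goth g}^{z_{1}}$, which is contained in ${\goth b}$ and hence $B$-conjugate to ${\goth h}$, so $z\in B.{\goth h}^{k}$; since ${\goth b}^{k}\cap {\cal C}^{(k)}$ is ${\mathrm {GL}}_{k}(\k)$-invariant and ${\mathrm {GL}}_{k}(\k)$ is connected, each $Y_{i}$ distinct from ${\goth X}_{0,k}$ then has $\{z\in Y_{i}\ \vert\ z_{1}\in {\goth g}_{\rs}\}$ empty, so $Y_{i}\subseteq \{z\ \vert\ P_{z}\cap {\goth g}_{\rs}=\emptyset\}$. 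The main obstacle is exactly to exclude such a component $Y_{i}$, i.e. to prove that ${\goth b}^{k}\cap {\cal C}^{(k)}$ has no irreducible component contained in $\{z\ \vert\ P_{z}\cap {\goth g}_{\rs}=\emptyset\}$ — equivalently, that ${\cal C}_{\n}^{(k)}$ is equidimensional (which one expects to deduce from the Cohen--Macaulayness of ${\cal B}_{\n}^{(k)}$ recorded in Corollary~\ref{c2bo2}, once ${\cal C}^{(k)}$ is seen to be cut out in ${\cal B}^{(k)}$ by the expected number of equations). Granting this, the dimension count forces ${\cal C}_{\n}^{(k)}=Z$, which proves simultaneously that ${\cal C}_{\n}^{(k)}$ is irreducible and that it equals $\overline{G.\iota _{\n}({\goth h}^{k})}$.
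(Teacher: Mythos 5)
Your argument is correct up to (and including) the inclusion $\eta^{-1}(U)\subseteq Z$: the chain $Z\subseteq{\cal C}_{\n}^{(k)}$, the dimension count showing $Z$ is a top-dimensional irreducible component, and the use of the two group actions plus Corollary~\ref{c2isc2} over regular $x_{1}$ all check out, and in fact this part closely mirrors the opening of the paper's own proof. But you explicitly stop short of a proof in the final paragraph: the assertion that there is no irreducible component of ${\goth b}^{k}\cap{\cal C}^{(k)}$ (equivalently, of ${\cal C}_{\n}^{(k)}$) lying inside $\{z\mid P_{z}\cap{\goth g}_{\rs}=\emptyset\}$ is stated as a hope, not proved. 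Nothing earlier in the paper establishes the equidimensionality you want to ``grant'', and the fix you sketch is unlikely to work as stated: ${\cal C}^{(k)}$ is not known, and is presumably false, to be a set-theoretic complete intersection in ${\cal B}^{(k)}$. Already for $k=2$ the natural candidate equations (the entries of $[x,y]$) cut out in ${\cal B}^{(2)}$ the full locus of commuting pairs in a common Borel, which in general has extra ``irregular'' components beyond ${\cal C}^{(2)}$; so the Cohen--Macaulay $+$ Krull codimension route does not obviously close the gap.

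The paper closes the gap by a route you did not take. It does not stop at the big open set $U$: the reduction via the commuting $G$ and ${\mathrm{GL}}_{k}(\k)$ actions brings one to an arbitrary $x$ in ${\cal C}^{(k)}\cap{\goth b}^{k}$ satisfying Conditions~(1) and (2) of Lemma~\ref{lisc2}, with $s=x_{1,\s}$ not necessarily regular, and Corollary~\ref{c2isc2} is applied to \emph{all} such $x$. Combined with Lemma~\ref{lisc2} (which places every $x_{i,\s}$ in ${\goth z}_{s}\subseteq{\goth h}$, so that $\overline{x_{i}}=x_{i,\s}$), this identifies the fibers $\eta^{-1}(x)$ with images of the maps $\theta_{w}$ from the auxiliary variety $E^{(k)}$ of Lemma~\ref{lisc1}. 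Since $E^{(k)}$ is irreducible (a vector bundle over the irreducible $X$) and the open subset $E^{(k,*)}$, on which the regular-semisimple computation applies, is dense (Lemma~\ref{lisc1},(ii)), one gets $\theta_{w}(E^{(k)})\subseteq Z$ by taking closures. That density argument, carried out over the whole of ${\goth X}_{0,k}$ rather than only over $U$, is precisely what replaces the missing equidimensionality input in your proposal.
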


\begin{proof}
Denote by $\overline{G.\iota _{k}({\goth h}^{k})}$ the closure of 
$G.\iota _{k}({\goth h}^{k})$ in ${\cal B}_{\x}^{(k)}$. Then 
$\overline{G.\iota _{k}({\goth h}^{k})}$ is irreducible as the closure of an 
irreducible set. Since $\eta $ is $G$-equivariant, 
$\eta (G.\iota _{k}({\goth h}^{k}))=G.{\goth h}^{k}$. Hence 
$\eta (\overline{G.\iota _{k}({\goth h}^{k})})={\cal C}^{(k)}$ since $\eta $ is 
a finite morphism and ${\cal C}^{(k)}$ is the closure of $G.{\goth h}^{k}$ in 
${\goth g}^{k}$ by definition.  So, it remains to prove that for all 
$x$ in ${\cal C}^{(k)}$, $\eta ^{-1}(x)$ is contained in 
$\overline{G.\iota _{k}({\goth h}^{k})}$. There is a canonical action of 
${\mathrm {GL}}_{k}(\k)$ on ${\goth g}^{k}$ and ${\cal X}^{k}$. Since this action 
commutes with the action of $G$ in ${\cal X}^{k}$, ${\cal B}_{\x}^{(k)}$ is invariant 
under ${\mathrm {GL}}_{k}(\k)$ and $\eta $ is ${\mathrm {GL}}_{k}(\k)$-equivariant.
As a result, since ${\cal C}^{(k)}$ and $G.\iota _{k}({\goth h}^{k})$ are invariant 
under ${\mathrm {GL}}_{k}(\k)$, for $x$ in ${\cal C}^{(k)}$, $\eta ^{-1}(x')$ is 
contained in $\overline{G.\iota _{k}({\goth h}^{k})}$ for all $x'$ in $E_{x}^{k}$ such
that $E_{x'}=E_{x}$ if $\eta ^{-1}(x)$ is contained in 
$\overline{G.\iota _{k}({\goth h}^{k})}$. Then, according to Lemma~\ref{lisc2}, since 
$\eta $ is $G$-equivariant, it suffices to prove that $\eta ^{-1}(x)$ is contained in 
$\overline{G.\iota _{k}({\goth h}^{k})}$ for $x$ in ${\cal C}^{(k)}\cap {\goth b}^{k}$ 
verifying Conditions (1) and (2) of Lemma~\ref{lisc2} for some $s$ in ${\goth h}$. 

According to Corollary~\ref{c2isc2}, 
$$ \eta ^{-1}(x) \subset 
\{(\poi x1{,\ldots,}{k}{}{}{},\poi x{1,\s}{,\ldots,}{k,\s}{w}{}{}) \ 
\vert \ w \in W({\cal R})\} \mbox{ with } x = (\poi x1{,\ldots,}{k}{}{}{}).$$ 
For $s$  regular, $E_{x}$ is contained in ${\goth h}$ and $x_{i}=x_{i,\s}$ for 
$i=1,\ldots,k$. By definition, 
$$(\poi x1{,\ldots,}{k}{w}{}{},\poi x1{,\ldots,}{k}{w}{}{}) \in 
\iota _{k}({\goth h}^{k})$$ 
and for $g_{w}$ a representative of $w$ in $N_{G}({\goth h})$, 
$$ g_{w}^{-1}.(\poi x1{,\ldots,}{k}{w}{}{},\poi x1{,\ldots,}{k}{w}{}{}) = 
(\poi x1{,\ldots,}{k}{}{}{},\poi x1{,\ldots,}{k}{w}{}{}).$$
Hence $\eta ^{-1}(x)$ is contained in $G.\iota _{k}({\goth h}^{k})$. As a result, 
according to the notations of Lemma~\ref{lisc1}, for all $w$ in $W({\cal R})$, 
$\theta _{w}({\cal E}_{0}^{(k,*)})$ is contained in $G.\iota _{k}({\goth h}^{k})$. Hence, by 
Lemma~\ref{lisc1},(ii), $\theta _{w}({\cal E}_{0}^{(k)})$ is contained in 
$\overline{G.\iota _{k}({\goth h}^{k})}$, whence the proposition. 
\end{proof}

\subsection{} \label{isc3}
Let $\varpi $ be the canonical projection from ${\cal X}^{k}$ to ${\goth g}^{k}$. 
By Corollary~\ref{cbo2},(ii), ${\cal B}_{\x}^{(k)}$ is an irreducible
component of $\varpi ^{-1}({\cal B}^{(k)})$ and the action of $W({\cal R})^{k}$ on 
${\cal X}^{k}$ induces a simply transitive action on the set of irreducible components
of $\varpi ^{-1}({\cal B}^{(k)})$. According to Remark~\ref{rbo5}, there is an 
embedding $\Phi $ of $\e Sh^{\tens k}$ into $\k[{\cal B}_{\x}^{(k)}]$ given by
$$ p \longmapsto ((\poi x1{,\ldots,}{k}{}{}{},\poi y1{,\ldots,}{k}{}{}{})
\mapsto p(\poi y1{,\ldots,}{k}{}{}{})) .$$
By Corollary~\ref{c3bo5},(i), this embedding identifies $\e Sh^{\tens k}$ with 
$\k[{\cal B}_{\x}^{(k)}]^{G}$.

\begin{lemma}\label{lisc3}
Let $\Psi $ be the restriction to $\e Sh^{\tens k}$ of the canonical map from 
$\k[{\cal B}_{\x}^{(k)}]$ to $\k[{\cal C}_{\x}^{(k)}]$. 

{\rm (i)} The subvariety ${\cal C}_{\x}^{(k)}$ of ${\cal X}^{k}$ is invariant under the 
diagonal action of $W({\cal R})$ in ${\cal X}^{k}$.

{\rm (ii)} The map $\Psi $ is an embedding of $\e Sh^{\tens k}$ into 
$\k[{\cal C}_{\x}^{(k)}]$. Moreover, $\Psi (\e Sh^{\tens k})$ is equal to 
$\k[{\cal C}_{\x}^{(k)}]^{G}$.

{\rm (iii)} The image of $(\e Sh^{\tens k})^{W({\cal R})}$ by $\Psi $ equals 
$\k[{\cal C}^{(k)}]^{G}$.
\end{lemma}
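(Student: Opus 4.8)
The plan is to prove the three assertions in sequence, leaning throughout on two structural facts from earlier: ${\cal B}_{\n}^{(k)}={\cal B}_{{\cal X}}^{(k)}$ is a closed subvariety of ${\cal X}^{k}$ (Proposition~\ref{p2bo4}), and ${\cal C}_{\n}^{(k)}$ is the closure of $G.\iota _{\n}({\goth h}^{k})$ in ${\cal B}_{\n}^{(k)}$ (Proposition~\ref{pisc2}), where $\iota _{\n}$ restricted to ${\goth h}^{k}$ is the embedding $z=(\poi x1{,\ldots,}{k}{}{}{})\mapsto (\poi x1{,\ldots,}{k}{}{}{},\poi x1{,\ldots,}{k}{}{}{})$ onto the diagonal of ${\goth h}^{k}\times {\goth h}^{k}\subset {\cal X}^{k}$. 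I will write $\rho $ for the canonical restriction map $\k[{\cal B}_{\n}^{(k)}]\to \k[{\cal C}_{\n}^{(k)}]$, so that $\Psi =\rho \rond \Phi $; note that $\rho $ is surjective, being restriction to a closed subvariety of an affine variety, and $G$-equivariant, and that $\Phi (\e Sh^{\tens k})=\k[{\cal B}_{\n}^{(k)}]^{G}$ by Lemma~\ref{lbo4},(ii) and Proposition~\ref{p2bo4}.

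For (i) I would first reduce, using that the diagonal $W({\cal R})$ acts on ${\cal X}^{k}$ by automorphisms commuting with the diagonal $G$-action (Lemma~\ref{lbo1},(i)), to checking $w.\iota _{\n}({\goth h}^{k})\subset {\cal C}_{\n}^{(k)}$ for each $w\in W({\cal R})$; since $w$ acts by homeomorphism and commutes with $G$, this gives $w.{\cal C}_{\n}^{(k)}=\overline{G.(w.\iota _{\n}({\goth h}^{k}))}\subset {\cal C}_{\n}^{(k)}$, and equality follows by applying it to $w^{-1}$. The point is that $w.\iota _{\n}(z)=(\poi x1{,\ldots,}{k}{}{}{},w(x_{1}),\ldots,w(x_{k}))$ equals, for any representative $g_{w}\in N_{G}({\goth h})$ of $w$, the element $g_{w}^{-1}.\iota _{\n}(w(x_{1}),\ldots,w(x_{k}))$, hence lies in $G.\iota _{\n}({\goth h}^{k})\subset {\cal C}_{\n}^{(k)}$.

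For (ii), injectivity of $\Psi $ is immediate: if $\Psi (p)=0$ then, evaluating on $\iota _{\n}({\goth h}^{k})\subset {\cal C}_{\n}^{(k)}$, one gets $p(z)=0$ for all $z\in {\goth h}^{k}$, so $p=0$. For the image, reductivity of $G$ together with surjectivity of $\rho $ yields a surjection $\k[{\cal B}_{\n}^{(k)}]^{G}\to \k[{\cal C}_{\n}^{(k)}]^{G}$, and since $\k[{\cal B}_{\n}^{(k)}]^{G}=\Phi (\e Sh^{\tens k})$ this surjection is exactly $\Psi $, whence $\Psi (\e Sh^{\tens k})=\k[{\cal C}_{\n}^{(k)}]^{G}$. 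For (iii), I would view $\k[{\cal C}^{(k)}]$ inside $\k[{\cal C}_{\n}^{(k)}]$ through the comorphism of the finite surjective morphism $\eta \colon {\cal C}_{\n}^{(k)}\to {\cal C}^{(k)}$. One inclusion uses Lemma~\ref{lbo4},(iii): it identifies $\k[{\cal B}^{(k)}]^{G}$ with $\Phi ((\e Sh^{\tens k})^{W({\cal R})})$, and since the square relating the restriction maps $\k[{\cal B}^{(k)}]\to \k[{\cal C}^{(k)}]$ and $\k[{\cal B}_{\n}^{(k)}]\to \k[{\cal C}_{\n}^{(k)}]$ to the two $\eta $-comorphisms commutes, $\rho $ carries $\k[{\cal B}^{(k)}]^{G}$ into $\k[{\cal C}^{(k)}]^{G}$, giving $\Psi ((\e Sh^{\tens k})^{W({\cal R})})\subset \k[{\cal C}^{(k)}]^{G}$. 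The reverse inclusion is where (i) enters: for $f\in \k[{\cal C}^{(k)}]^{G}$, write $f=\Psi (p)$ with $p\in \e Sh^{\tens k}$ by (ii); then for $z=(\poi x1{,\ldots,}{k}{}{}{})\in {\goth h}^{k}$ and $w\in W({\cal R})$ the point $(\poi x1{,\ldots,}{k}{}{}{},w(x_{1}),\ldots,w(x_{k}))$ lies in ${\cal C}_{\n}^{(k)}$ by (i), is sent by $\eta $ to $z$, and $\Psi (p)$ takes value $p(w(x_{1}),\ldots,w(x_{k}))$ on it; hence $p(w(x_{1}),\ldots,w(x_{k}))=f(z)=p(z)$, so $p\in (\e Sh^{\tens k})^{W({\cal R})}$ and $f\in \Psi ((\e Sh^{\tens k})^{W({\cal R})})$.

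The delicate point is (iii). One must keep in mind that the diagonal $W({\cal R})$ does \emph{not} stabilize ${\cal B}_{\n}^{(k)}$ itself (by Corollary~\ref{cbo3},(iii) it only permutes the irreducible components of $\varpi ^{-1}({\cal B}^{(k)})$), so the $W({\cal R})$-stability used in the last step is genuinely a property of ${\cal C}_{\n}^{(k)}$ and depends on its description through $\iota _{\n}({\goth h}^{k})$; and the inclusion $\Psi ((\e Sh^{\tens k})^{W({\cal R})})\subset \k[{\cal C}^{(k)}]^{G}$ ultimately rests, via Lemma~\ref{lbo4},(iii), on Joseph's surjectivity theorem. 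Everything else is bookkeeping with the identifications $\Phi $, $\rho $ and the two $\eta $-comorphisms.
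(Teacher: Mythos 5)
Your proofs of (i) and (ii) coincide with the paper's: the identity $w.\iota_{\n}(z)=g_{w}^{-1}.\iota_{\n}(w(x_{1}),\ldots,w(x_{k}))$ is exactly the computation the paper does, and the injectivity/image of $\Psi$ is obtained from Proposition~\ref{p2bo4}, Lemma~\ref{lbo4},(ii) and reductivity just as in the paper. For (iii), however, you take a genuinely different route on the harder inclusion. The paper disposes of (iii) in one stroke: by reductivity, $\k[{\cal C}^{(k)}]^{G}$ is the image of $\k[{\cal B}^{(k)}]^{G}$ under restriction, and $\k[{\cal B}^{(k)}]^{G}=\Phi((\e Sh^{\tens k})^{W({\cal R})})$ by Lemma~\ref{lbo4},(iii), so both inclusions come for free from a second application of reductivity. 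You instead prove the easy inclusion $\Psi((\e Sh^{\tens k})^{W({\cal R})})\subset\k[{\cal C}^{(k)}]^{G}$ by the commuting square of restriction maps (same ingredients), and then for the reverse inclusion you lift $f\in\k[{\cal C}^{(k)}]^{G}$ to $p\in\e Sh^{\tens k}$ via (ii) and show $W({\cal R})$-invariance of $p$ by evaluating $\Phi(p)$ at the points $(z,w(z))$ furnished by (i) and using that $f$ factors through $\eta$. Your argument is more geometric and makes explicit use of (i), which the paper's proof of (iii) does not invoke at all; the paper's argument is shorter because it lets reductivity handle surjectivity on invariants directly, but your version has the merit of isolating where the $W({\cal R})$-stability of ${\cal C}_{\n}^{(k)}$ (and only that, not of ${\cal B}_{\n}^{(k)}$) really enters. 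Both are correct.
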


\begin{proof}
(i) For $x$ in ${\cal B}_{\x}^{(k)}$ and $w$ in $W({\cal R})$, 
$\eta (x)=\eta (w.x)$, whence the assertion by Proposition~\ref{pisc2}. 

(ii) For $P$ in $\e Sh^{\tens k}$, $P=0$ if $P(x)=0$ for all $x$ in 
$\iota _{k}({\goth h}^{k})$. Hence $\Psi $ is injective. Since $G$ is reductive, 
$\k[{\cal C}_{\x}^{(k)}]^{G}$ is the image of $\k[{\cal B}_{\x}^{(k)}]^{G}$ by the 
quotient morphism, whence the assertion.

(iii) Since $G$ is reductive, $\k[{\cal C}^{(k)}]^{G}$ is the image of 
$\k[{\cal B}^{(k)}]^{G}$ by the quotient morphism, whence the assertion since 
$(\e Sh^{\tens k})^{W({\cal R})}$ is equal to $\k[{\cal B}^{(k)}]^{G}$ by
Corollary~\ref{c3bo5},(iii).
\end{proof}

Identify $\e Sh^{\tens k}$ with $\k[{\cal C}_{\x}^{(k)}]^{G}$ by $\Psi $. 

\begin{prop}\label{pisc3}
Let $\widetilde{{\cal C}_{\x}^{(k)}}$ and $\widetilde{{\cal C}^{(k)}}$ be the 
normalizations of ${\cal C}_{\x}^{(k)}$ and ${\cal C}^{(k)}$.

{\rm (i)} The variety ${\cal C}^{(k)}$ is the categorical quotient of 
${\cal C}_{\x}^{(k)}$ under the action of $W({\cal R})$.

{\rm (ii)} The variety $\widetilde{{\cal C}^{(k)}}$ is the categorical quotient of 
$\widetilde{{\cal C}_{\x}^{(k)}}$ under the action of $W({\cal R})$.
\end{prop}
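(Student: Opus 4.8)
The plan is to handle Assertion (i) by presenting $\k[{\cal C}_{\n}^{(k)}]$ as a free $\e Sh^{\tens k}$-module with a basis lying in $\k[{\cal C}^{(k)}]$ and then extracting the $W({\cal R})$-invariants, and to deduce Assertion (ii) from the identification of function fields together with the fact that the invariant subring of a normal finitely generated $\k$-algebra under a finite group is again normal and finitely generated. Write $W:=W({\cal R})$. First I would record the basic facts: by Proposition \ref{pisc2}, ${\cal C}_{\n}^{(k)}$ is irreducible; by Lemma \ref{lisc3}(i), $W$ acts on it, diagonally on the ${\goth h}^{k}$ factor of ${\cal X}^{k}$, faithfully, and the induced action on $\Psi(\e Sh^{\tens k})$ is the diagonal one (since $\Psi$ is $W$-equivariant and injective by Lemma \ref{lisc3}(ii)); the morphism $\eta$ restricts to a finite surjective morphism ${\cal C}_{\n}^{(k)}\rightarrow{\cal C}^{(k)}$ whose fibers, being fibers of $\varpi$, are $W({\cal R})^{k}$-invariant, hence diagonally $W$-invariant, so that $\eta^{*}$ identifies $\k[{\cal C}^{(k)}]$ with a subalgebra of $\k[{\cal C}_{\n}^{(k)}]$ fixed pointwise by $W$. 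Finally, for $x\in{\cal C}^{(k)}$ with regular semisimple first component --- a dense set --- Corollary \ref{c2isc2} gives $|\eta^{-1}(x)|\le|W|$, so $[\k({\cal C}_{\n}^{(k)}):\k({\cal C}^{(k)})]\le|W|$, whence, with $\k({\cal C}^{(k)})\subseteq\k({\cal C}_{\n}^{(k)})^{W}$ and Artin's theorem, $\k({\cal C}_{\n}^{(k)})^{W}=\k({\cal C}^{(k)})$.

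For Assertion (i) the key point is that $\k[{\cal C}_{\n}^{(k)}]$ is a free $\e Sh^{\tens k}$-module admitting a homogeneous basis $\mathcal{B}$ contained in $\k[{\cal C}^{(k)}]$. Granting this, since $W$ fixes each $b\in\mathcal{B}$ and acts diagonally on $\e Sh^{\tens k}$, every $f=\sum_{b\in\mathcal{B}}b\,s_{b}$ with $s_{b}\in\e Sh^{\tens k}$ satisfies $w\cdot f=\sum_{b}b\,(w\cdot s_{b})$, so $\k[{\cal C}_{\n}^{(k)}]^{W}=\bigoplus_{b\in\mathcal{B}}b\,(\e Sh^{\tens k})^{W}$; because $\mathcal{B}\subseteq\k[{\cal C}^{(k)}]$ and $\Psi((\e Sh^{\tens k})^{W})=\k[{\cal C}^{(k)}]^{G}\subseteq\k[{\cal C}^{(k)}]$ (Lemma \ref{lisc3}(iii)), the right-hand side lies in $\k[{\cal C}^{(k)}]$, and combined with $\k[{\cal C}^{(k)}]\subseteq\k[{\cal C}_{\n}^{(k)}]^{W}$ from the first paragraph this gives $\k[{\cal C}_{\n}^{(k)}]^{W}=\k[{\cal C}^{(k)}]$, i.e. ${\cal C}^{(k)}$ is the categorical quotient. (This is also the final clause of Lemma \ref{l2int}, whose hypotheses one would check with $A=\k[{\cal C}_{\n}^{(k)}]$, $A_{1}=\k[{\cal C}^{(k)}]$, $A_{2}=\Psi(\e Sh^{\tens k})$ and $K=W$; the condition $A_{1}\cap A_{2}=A_{2}^{K}$ reads $\k[{\cal C}^{(k)}]\cap\Psi(\e Sh^{\tens k})=\k[{\cal C}^{(k)}]^{G}=\Psi((\e Sh^{\tens k})^{W})$, valid because the embedding is $G$-equivariant and $\Psi(\e Sh^{\tens k})=\k[{\cal C}_{\n}^{(k)}]^{G}$ by Lemma \ref{lisc3}(ii).)

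Everything therefore reduces to the freeness of $\k[{\cal C}_{\n}^{(k)}]$ over $\e Sh^{\tens k}$, and this is the step I expect to be the main obstacle. Since $\k[{\cal C}_{\n}^{(k)}]$ is a quotient of $\k[{\cal B}_{\n}^{(k)}]$, it is generated by the images $\k[{\cal C}^{(k)}]$ and $\Psi(\e Sh^{\tens k})$ of the generators supplied by Proposition \ref{p2bo4}; hence $\k[{\cal C}_{\n}^{(k)}]=\k[{\cal C}^{(k)}]+\ec S{}{{\goth h}^{k}}{}+\k[{\cal C}_{\n}^{(k)}]$, so a graded complement $M$ of $\ec S{}{{\goth h}^{k}}{}+\k[{\cal C}_{\n}^{(k)}]$ can be chosen inside $\k[{\cal C}^{(k)}]$, and, by the graded argument of Proposition \ref{pbo4}, a homogeneous basis of $M$ generates $\k[{\cal C}_{\n}^{(k)}]$ over $\e Sh^{\tens k}$ and is a free basis as soon as $\k[{\cal C}_{\n}^{(k)}]$ is flat over $\e Sh^{\tens k}$. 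For the latter I would proceed as in Propositions \ref{pbo4} and \ref{pnc}: the morphism $\tau\colon{\cal C}_{\n}^{(k)}\rightarrow{\goth h}^{k}$ with comorphism $\Psi$ is $\k^{*}$-equivariant, its fiber over $0$ equals $\eta^{-1}({\cal N}^{(k)}\cap{\cal C}^{(k)})$, of dimension $2n=\dim{\cal C}_{\n}^{(k)}-k\rg$, so, $0$ lying in the closure of every $\k^{*}$-orbit of ${\goth h}^{k}$, $\tau$ is equidimensional; if ${\cal C}_{\n}^{(k)}$ is Cohen--Macaulay then $\tau$ is flat ($\e Sh^{\tens k}$ being regular) and $\k[{\cal C}_{\n}^{(k)}]$ is $\e Sh^{\tens k}$-free. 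The decisive --- and hard --- point is thus the Cohen--Macaulayness of ${\cal C}_{\n}^{(k)}$ for every $k\ge2$; I would attempt it through the desingularization $\sqx G{E^{(k)}}$ of Theorem \ref{t3int}(ii), or by exhibiting the ideal of ${\cal C}_{\n}^{(k)}$ in $\k[{\cal B}_{\n}^{(k)}]$ as generated by a regular sequence compatible with the $\e Sh^{\tens k}$-module structure, in the spirit of Proposition \ref{pnc}.

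For Assertion (ii) I would argue directly: $\k[\widetilde{{\cal C}_{\n}^{(k)}}]^{W}$ is a finitely generated $\k$-algebra, it is integrally closed in its fraction field $\k({\cal C}_{\n}^{(k)})^{W}=\k({\cal C}^{(k)})$ (being the invariant subring of a normal domain under a finite group), and it is integral over $\k[{\cal C}^{(k)}]$ because $\k[\widetilde{{\cal C}_{\n}^{(k)}}]$ is integral over $\k[{\cal C}_{\n}^{(k)}]$, which is integral over $\k[{\cal C}^{(k)}]$ since $\eta$ is finite; any finitely generated, integrally closed $\k$-algebra integral over $\k[{\cal C}^{(k)}]$ with fraction field $\k({\cal C}^{(k)})$ coincides with the integral closure of $\k[{\cal C}^{(k)}]$, that is, with $\k[\widetilde{{\cal C}^{(k)}}]$. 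Hence $\widetilde{{\cal C}^{(k)}}$ is the categorical quotient of $\widetilde{{\cal C}_{\n}^{(k)}}$ by $W$. (Alternatively, once Assertion (i) is known this is an instance of the general fact that normalization commutes with the categorical quotient by a finite group.)
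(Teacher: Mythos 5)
Your treatment of Assertion (ii) is essentially the paper's: both proofs rest on the integral closure $\k[\widetilde{{\cal C}_{\n}^{(k)}}]$ being $W({\cal R})$-stable, on identifying $\k({\cal C}_{\n}^{(k)})^{W({\cal R})}$ with $\k({\cal C}^{(k)})$, and on showing that an invariant element of the integral closure is integral over $\k[{\cal C}^{(k)}]$; the paper does this by averaging the coefficients of an integral dependence relation, which is slightly more elementary than your normality-and-fraction-field comparison, but the substance is the same.

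Assertion (i) is where the proposal breaks. You build it on the claim that $\k[{\cal C}_{\n}^{(k)}]$ is a free $\e Sh^{\tens k}$-module with a homogeneous basis lying in $\k[{\cal C}^{(k)}]$, and you correctly observe that this would follow, via the method of Propositions \ref{pbo4} and \ref{pnc}, from Cohen--Macaulayness of ${\cal C}_{\n}^{(k)}$. But the paper never proves (nor uses) that ${\cal C}_{\n}^{(k)}$ is Cohen--Macaulay: the rational-singularity results of Section \ref{rs} concern $\widetilde{{\goth X}_{0,k}}$, ${\cal N}^{(k)}$, and, only for $k=2$, the \emph{normalization} $\widetilde{{\cal C}_{\n}^{(2)}}$. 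For general $k\ge 3$ no Cohen--Macaulay statement is available, nor is equidimensionality of $\tau\colon{\cal C}_{\n}^{(k)}\rightarrow{\goth h}^{k}$, which you would need for \cite[Theorem 23.1]{Mat}. So the step you flagged as the ``decisive and hard point'' is a genuine gap.

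The missing observation is that averaging by $W({\cal R})$ requires only \emph{algebra} generation, not module freeness, and that is precisely what the paper has. By Proposition \ref{p2bo4}, $\k[{\cal B}_{\n}^{(k)}]$ is generated by the subrings $\k[{\cal B}^{(k)}]$ and $\e Sh^{\tens k}$; restricting to ${\cal C}_{\n}^{(k)}=\eta^{-1}({\cal C}^{(k)})$ shows $\k[{\cal C}_{\n}^{(k)}]$ is generated by $\k[{\cal C}^{(k)}]$ and $\Psi(\e Sh^{\tens k})$. Hence every $f\in\k[{\cal C}_{\n}^{(k)}]$ can be written, non-uniquely, as $f=\sum_{i}a_{i}\Psi(b_{i})$ with $a_{i}\in\k[{\cal C}^{(k)}]$, $b_{i}\in\e Sh^{\tens k}$. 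If $f$ is $W({\cal R})$-invariant then, since $W({\cal R})$ fixes $\k[{\cal C}^{(k)}]$ pointwise and $\Psi$ is equivariant for the diagonal action,
$$ f = \frac{1}{\vert W({\cal R}) \vert}\sum_{w\in W({\cal R})} w.f = \sum_{i}a_{i}\,\Psi\Bigl(\frac{1}{\vert W({\cal R}) \vert}\sum_{w\in W({\cal R})} w.b_{i}\Bigr)\in\k[{\cal C}^{(k)}], $$
because $\Psi\bigl((\e Sh^{\tens k})^{W({\cal R})}\bigr)=\k[{\cal C}^{(k)}]^{G}\subseteq\k[{\cal C}^{(k)}]$ by Lemma \ref{lisc3}(iii). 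Combined with the obvious containment $\k[{\cal C}^{(k)}]\subseteq\k[{\cal C}_{\n}^{(k)}]^{W({\cal R})}$, this gives Assertion (i) directly, with no flatness or Cohen--Macaulay input -- and it is in fact the paper's argument.
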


\begin{proof}
(i) According to Corollary~\ref{c3bo5},(i), $\k[{\cal B}_{\x}^{(k)}]$ is generated 
by $\k[{\cal B}^{(k)}]$ and $\e Sh^{\tens k}$. Since 
${\cal C}_{\x}^{(k)}=\eta ^{-1}({\cal C}^{(k)})$ by Proposition~\ref{pisc2}, the image 
of $\k[{\cal B}^{(k)}]$ in $\k[{\cal C}_{\x}^{(k)}]$ by the quotient morphism is equal
to $\k[{\cal C}^{(k)}]$. Hence $\k[{\cal C}_{\x}^{(k)}]$ is generated by 
$\k[{\cal C}^{(k)}]$ and $\e Sh^{\tens k}$. Then, by Lemma~\ref{lisc3},(iii), 
\sloppy \hbox{$\k[{\cal C}_{\x}^{(k)}]^{W({\cal R})}=\k[{\cal C}^{(k)}]$}.

(ii) Let $K$ be the fraction field of $\k[{\cal C}_{\x}^{(k)}]$. Since 
${\cal C}_{\x}^{(k)}$ is a $W({\cal R})$-variety, there is an action of $W({\cal R})$ in 
$K$ and $K^{W({\cal R})}$ is the fraction field of 
$\k[{\cal C}_{\x}^{(k)}]^{W({\cal R})}$ since $W({\cal R})$ is finite. As a result, the 
integral closure $\k[\widetilde{{\cal C}_{\x}^{(k)}}]$ of $\k[{\cal C}_{\x}^{(k)}]$ in 
$K$ is invariant under $W({\cal R})$ and $\k[\widetilde{{\cal C}^{(k)}}]$ is contained in 
$\k[\widetilde{{\cal C}_{\x}^{(k)}}]^{W({\cal R})}$ by (i). Let $a$ be in 
$\k[\widetilde{{\cal C}_{\x}^{(k)}}]^{W({\cal R})}$. Then $a$ verifies a 
dependence integral equation over $\k[{\cal C}_{\x}^{(k)}]$,
$$ a^{m} + a_{m-1}a^{m-1} + \cdots + a_{0} = 0$$   
whence
$$ a^{m} + (\frac{1}{\vert W({\cal R}) \vert} \sum_{w\in W({\cal R})} w.a_{m-1})a^{m-1} +
\cdots + \frac{1}{\vert W({\cal R}) \vert} \sum_{w\in W({\cal R})} w.a_{0} = 0$$   
since $a$ is invariant under $W({\cal R})$ so that $a$ is in 
$\k[\widetilde{{\cal C}^{(k)}}]$ by (i), whence the assertion.
\end{proof}

\section{Desingularization} \label{ds}
Let $k\geq 2$ be an integer. Let $X$, $X'$ be as in Subsection~\ref{mv5}. Denote
by $X_{\n}$ the normalization of $X$ and by $\thetaup _{0}$ the normalization 
morphism. According to Proposition~\ref{pmv5}, $X'$ identifies with a smooth big open 
subset of $X_{\n}$ and according to~\cite{Hi}, there exists a desingularization 
$(\Gamma ,\pi _{\n})$ of $X_{\n}$ in the category of $B$-varieties such that the 
restriction of $\pi _{\n}$ to $\pi _{\n}^{-1}(X')$ is an isomorphism onto $X'$. Set 
$\pi = \thetaup _{0}\rond \pi _{\n}$ so that $(\Gamma ,\pi )$ is a desingularization of 
$X$ in the category of $B$-varieties. Recall that ${\cal E}_{0}$ is the restriction 
to $X$ of the tautological vector bundle over $\ec {Gr}g{}{}{\rg}$ and ${\goth X}_{0,k}$ 
is the closure in ${\goth b}^{k}$ of $B.{\goth h}^{k}$. Set 
${\goth X}_{k} := \sqx G{{\goth X}_{0,k}}$. Then ${\goth X}_{k}$ is a closed subvariety 
of $\sqx G{{\goth b}^{k}}$. 

\begin{lemma}\label{lds}
Let $\tau '$ be the canonical morphism from ${\cal E}_{0}$ to ${\goth b}$. 

{\rm (i)} The morphism $\tau '$ is projective and birational.

{\rm (ii)} Let $\nu $ be the canonical map from $\pi ^{*}({\cal E}_{0})$ to 
${\cal E}_{0}$. Then $\nu $ and $\tau := \tau '\rond \nu $ are $B$-equivariant birational
projective morphisms from $\pi ^{*}({\cal E}_{0})$ to ${\cal E}_{0}$ and ${\goth b}$ 
respectively. In particular, $\pi ^{*}({\cal E}_{0})$ is a desingularization of 
${\cal E}_{0}$ and ${\goth b}$.
\end{lemma}

\begin{proof}
(i) Since $X$ is a projective variety, $\tau '$ is a projective morphism and 
$\tau '({\cal E}_{0})$ is closed in ${\goth b}$. Moreover, $\tau '({\cal E}_{0})$ is 
$B$-invariant since $\tau '$ is a $B$-equivariant morphism and it contains ${\goth h}$ 
since ${\goth h}$ is in $X$. For $x$ in ${\goth h}_{\r}$, 
$(\tau ')^{-1}(x)=\{({\goth h},x)\}$. Hence $\tau '$ is a birational morphism and 
$\tau '({\cal E}_{0})={\goth b}$ since $B({\goth h}_{\r})$ is an open subset 
of ${\goth b}$. 

(ii) Since ${\cal E}_{0}$ is a vector bundle over $X$ and since $\pi $ is a projective 
birational morphism, $\nu $ is a projective birational morphism. Then $\tau $ is a 
projective birational morphism from $\pi ^{*}({\cal E}_{0})$ to ${\goth b}$ by (i). It is 
$B$-equivariant since so are $\nu $ and $\tau '$. Moreover, $\pi ^{*}({\cal E}_{0})$ is a 
desingularization of ${\cal E}_{0}$ and ${\goth b}$ since $\pi ^{*}({\cal E}_{0})$ is 
smooth as a vector bundle over a smooth variety.
\end{proof}

Denote by $\psi $ the canonical projection from $\pi ^{*}({\cal E}_{0})$ to $\Gamma $. 
Then, according to the above notations, we have the commutative diagram:
$$\xymatrix{ & \pi ^{*}({\cal E}_{0}) \ar[ld]_{\tau }\ar[r]^{\psi } \ar[d]_{\nu } & 
\Gamma  \ar[d]^{\pi } \\ {\goth b} & \ar[l]^{\tau '} {\cal E}_{0} \ar[r] & X }$$
Recall that ${\cal E}_{0}^{(k)}$ is the subvariety of $X\times {\goth b}^{k}$:
$$ {\cal E}_{0}^{(k)} :=  \{(u,\poi x1{,\ldots,}{k}{}{}{}) \in X\times {\goth b}^{k} 
\ \vert \ \poi {u \ni x}1{,\ldots,}{k}{}{}{}\} .$$
As ${\cal E}_{0}$ is a vector bundle over $X$, so is ${\cal E}_{0}^{(k)}$.

\begin{lemma}\label{l2ds}
Set ${\cal E}_{\s}^{(k)} := \pi ^{*}({\cal E}_{0}^{(k)})$. Let $\tau _{k}$ be the 
canonical morphism from ${\cal E}_{\s}^{(k)}$ to ${\goth b}^{k}$.

{\rm (i)} The vector bundle ${\cal E}_{\s}^{(k)}$ over $\Gamma $ is a vector subbundle of 
the trivial bundle $\Gamma \times {\goth b}^{k}$. Moreover, ${\cal E}_{\s}^{(k)}$ has 
dimension $k\rg + n$.

{\rm (ii)} The morphism $\tau _{k}$ is a projective birational morphism from 
${\cal E}_{\s}^{(k)}$ onto ${\goth X}_{0,k}$. Moreover, ${\cal E}_{\s}^{(k)}$ is a 
desingularization of ${\goth X}_{0,k}$ in the category of $B$-varieties.
\end{lemma}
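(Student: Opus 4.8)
The plan is to use that $E^{(k)}$ is a vector bundle over the smooth variety $\Gamma$, so that smoothness is automatic, and to transfer the geometric content to the facts already established about $X$, ${\goth b}$ and ${\goth X}_{0,k}$. For Assertion~(i): by definition $E$ is the subbundle $\{(u,x)\in X\times{\goth b}\mid x\in u\}$ of the trivial bundle $X\times{\goth b}$, so $\pi^{*}(E)$ is a subbundle of $\Gamma\times{\goth b}$ and its $k$-fold fiber product over $\Gamma$ is a subbundle of $\Gamma\times{\goth b}^{k}$ of rank $k\rg$; since $\Gamma$ is a desingularization of $X$ it is smooth and irreducible of dimension $n$, whence $E^{(k)}$ is smooth and irreducible of dimension $k\rg+n$. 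This part is routine.

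For Assertion~(ii) I would first dispose of the formal properties. The morphism $\tau_{k}$ is projective: it factors as the closed immersion $E^{(k)}\hookrightarrow\Gamma\times{\goth b}^{k}$ of part~(i) followed by the projection $\Gamma\times{\goth b}^{k}\to{\goth b}^{k}$, which is projective because $\Gamma$ is a projective variety ($\Gamma$ is projective over $X_{\n}$, which is finite over the projective variety $X$). Since $\pi^{*}(E)=\Gamma\times_{X}E$, one has $E^{(k)}=\Gamma\times_{X}(E\times_{X}\cdots\times_{X}E)$, so the projection of $E^{(k)}$ onto the fiber product $E\times_{X}\cdots\times_{X}E$ of Lemma~\ref{lisc1} is the base change of the surjection $\pi\colon\Gamma\to X$, hence surjective; composing with the first projection of that fiber product onto ${\goth b}^{k}$, whose image is ${\goth X}_{0,k}$ by Lemma~\ref{lisc1},(i), gives $\tau_{k}(E^{(k)})={\goth X}_{0,k}$. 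Finally $\tau_{k}$ is $B$-equivariant, because $E$ is a $B$-equivariant bundle over the $B$-stable $X$, $\pi$ is a morphism of $B$-varieties, and $\tau\colon\pi^{*}(E)\to{\goth b}$ is $B$-equivariant by Lemma~\ref{lds},(ii). Everything then reduces to the birationality of $\tau_{k}$.

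For birationality I would work over the nonempty open dense subset $\Omega_{0}$ of ${\goth X}_{0,k}$ consisting of the tuples $(x_{1},\dots,x_{k})$ with $x_{1}$ in ${\goth g}_{\rs}$. For such a tuple, $x_{1}$ is regular semisimple and lies in ${\goth b}$, so ${\goth g}^{x_{1}}=V_{x_{1}}$ is a Cartan subalgebra contained in ${\goth b}$, hence equal to $g.{\goth h}$ for some $g$ in $U$; since every element of $X$ is a commutative subalgebra of dimension $\rg$ by Corollary~\ref{cmv1}, the subspace ${\goth g}^{x_{1}}$ is the unique $u\in X$ with $x_{1}\in u$, and by Lemma~\ref{lisc1},(i) the remaining $x_{2},\dots,x_{k}$ also lie in it. Therefore the map $(x_{1},\dots,x_{k})\mapsto({\goth g}^{x_{1}},x_{1},\dots,x_{k})$ — a morphism on $\Omega_{0}$ because $x\mapsto V_{x}$ is a morphism on ${\goth g}_{\r}$ by~\cite[Theorem 9]{Ko} — is a two-sided inverse over $\Omega_{0}$ to the projection of $E\times_{X}\cdots\times_{X}E$, which is thus an isomorphism over $\Omega_{0}$. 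On the other hand $U.{\goth h}$ is the open $B$-orbit of $X$, so it is a smooth open subset of $X$ and is contained in $X'$; hence $\thetaup_{0}$ is an isomorphism over $U.{\goth h}$ and, as $\pi_{\n}$ was chosen to be an isomorphism over the inverse image of $X'$, the morphism $\pi$ is an isomorphism over $U.{\goth h}$. Consequently the base-change morphism $E^{(k)}\to E\times_{X}\cdots\times_{X}E$ is an isomorphism over the locus lying over $U.{\goth h}$, and that locus contains $\tau_{k}^{-1}(\Omega_{0})$, since for a point of $\tau_{k}^{-1}(\Omega_{0})$ its image $x_{1}$ being regular semisimple forces the underlying subspace to be ${\goth g}^{x_{1}}\in U.{\goth h}$. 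Composing the two isomorphisms shows that $\tau_{k}$ restricts to an isomorphism of $\tau_{k}^{-1}(\Omega_{0})$ onto $\Omega_{0}$; hence $\tau_{k}$ is birational and $E^{(k)}$, being smooth with a projective birational $B$-equivariant morphism onto ${\goth X}_{0,k}$, is a desingularization of ${\goth X}_{0,k}$ in the category of $B$-varieties.

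The main obstacle is this last step. What makes it go through is that over the regular semisimple locus the $\rg$-dimensional subspace that is being carried along is forced to be the unique Cartan subalgebra ${\goth g}^{x_{1}}=V_{x_{1}}$, and that this Cartan already lies in the open $B$-orbit $U.{\goth h}$, over which the desingularization $\pi$ of $X$ is an isomorphism by construction; the commutativity of the subalgebras parametrized by $X$ (Corollary~\ref{cmv1}) and the description of $X'$ and of its preimage in $X_{\n}$ (Proposition~\ref{pmv5}) are exactly the inputs used here. The remaining ingredients — the subbundle structure, the dimension count, projectivity, the surjectivity onto ${\goth X}_{0,k}$ via Lemma~\ref{lisc1}, and the $B$-equivariance — amount to bookkeeping.
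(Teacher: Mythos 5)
Your proof is correct and follows essentially the same route as the paper: both single out the dense open locus of tuples whose first component is regular semisimple, observe that the parametrizing $\rg$-plane is then forced to be the Cartan ${\goth g}^{x_{1}}$, and deduce birationality from the fact that $\pi$ is an isomorphism over $U.{\goth h}$. The only difference is that the paper states the fiber identity $\tau_{k}^{-1}(x)=\{({\goth g}^{x_{1}},x)\}$ without comment, whereas you carefully justify why the unique Cartan lifts unambiguously through $\thetaup_{0}$ and $\pi_{\n}$ to a point of $\Gamma$ — a small gap in the original that your argument closes.
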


\begin{proof}
(i) By definition, ${\cal E}_{\s}^{(k)}$ is the subvariety of 
$\Gamma \times {\goth b}^{k}$. Since $X$ and $\Gamma $ have dimension 
$n$, ${\cal E}_{\s}^{(k)}$ has dimension $k\rg +n$ as a vector bundle of rank $k\rg$ over 
$\Gamma $. 

(ii) Since $\Gamma $ is a projective variety, $\tau _{k}$ is a projective morphism and 
$\tau _{k}({\cal E}_{\s}^{(k)})={\goth X}_{0,k}$ by Lemma~\ref{lisc1},(i). For 
$(\poi x1{,\ldots,}{k}{}{}{})$ in ${\goth b}_{\rs}^{k}\cap {\goth X}_{0,k}$, 
$\tau _{k}^{-1}(\poi x1{,\ldots,}{k}{}{}{})=
\{(\pi ^{-1}({\goth g}^{x_{1}}),(\poi x1{,\ldots,}{k}{}{}{}))\}$ since 
${\goth g}^{x_{1}}$ is a Cartan subalgebra. Hence $\tau _{k}$ is a birational morphism,
whence the assertion since ${\cal E}_{\s}^{(k)}$ is a smooth $B$-variety as a vector 
bundle over the smooth $B$-variety $\Gamma $.
\end{proof}

Set ${\goth Y} := \sqx G{(\Gamma \times {\goth b}^{k})}$. The canonical projections 
from $G\times \Gamma \times {\goth b}^{k}$ to $G\times \Gamma $ and 
$G\times {\goth b}^{k}$ define through the quotients morphisms from 
${\goth Y}$ to $\sqx G{\Gamma }$ and $\sqx G{{\goth b}^{k}}$. Denote by $\varsigma $ 
and $\zeta $ these morphisms. Then we have the following diagram:
$$ \xymatrix{ {\goth Y} \ar[rr]^{\zeta }\ar[d]_{\varsigma } && 
\sqx G{{\goth b}^{k}}\ar[d]^{\gamma _{\x}} \\ \sqx G{\Gamma } && {\cal B}_{\x}^{(k)}}$$
The map $(g,x)\mapsto (g,\tau _{k}(x))$ from $G\times {\cal E}_{\s}^{(k)}$ to 
$G\times {\goth b}^{k}$ defines through the quotient a morphism 
$\overline{\tau _{k}}$ from $\sqx G{{\cal E}_{\s}^{(k)}}$ to ${\goth X}_{k}$.

\begin{prop}\label{pds}
Set $\xiup  := \gamma _{\x}\rond \overline{\tau _{k}}$.

{\rm (i)} The variety $\sqx G{{\cal E}_{\s}^{(k)}}$ is a closed subvariety of 
${\goth Y}$. 

{\rm (ii)} The variety $\sqx G{{\cal E}_{\s}^{(k)}}$ is a vector bundle of rank $k\rg$ 
over $\sqx G{\Gamma }{}$. Moreover, $\sqx G{\Gamma }{}$ and 
$\sqx G{{\cal E}_{\s}^{(k)}}{}$ are smooth varieties.

{\rm (iii)} The morphism $\xiup $ is a projective birational morphism from 
$\sqx G{{\cal E}_{\s}^{(k)}}$ onto ${\cal C}_{\x}^{(k)}$. 
\end{prop}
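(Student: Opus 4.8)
The plan is to derive all three assertions by applying the functor $G\times _{B}(-)$ to the $B$-equivariant objects furnished by Lemmas~\ref{lds} and~\ref{l2ds}, and then to combine this with the description of ${\cal C}_{\n}^{(k)}$ obtained in Section~\ref{isc}. For~(i): by Lemma~\ref{l2ds},(i) the variety $E^{(k)}$ is a closed subvariety of $\Gamma \times {\goth b}^{k}$, stable under the diagonal action of $B$; hence $G\times E^{(k)}$ is a closed $B$-stable subvariety of $G\times \Gamma \times {\goth b}^{k}$, and, arguing as in the proof of Lemma~\ref{l4int}, its image $\sqx G{E^{(k)}}$ in ${\goth Y}=\sqx G{(\Gamma \times {\goth b}^{k})}$ is closed. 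For~(ii): $E^{(k)}$ is a $B$-equivariant vector bundle of rank $k\rg$ over $\Gamma $, and since $G\rightarrow G/B$ is a Zariski-locally trivial principal $B$-bundle, the induced morphism $\sqx G{E^{(k)}}\rightarrow \sqx G{\Gamma }$ is again a vector bundle of rank $k\rg$; as $\Gamma $ is smooth, $\sqx G{\Gamma }$ is smooth, being a fibre bundle over $G/B$ with smooth fibre $\Gamma $, and $\sqx G{E^{(k)}}$ is then smooth as a vector bundle over a smooth variety.

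For~(iii), I would first observe that $\overline{\tau _{k}}$ is simply the restriction of $\zeta $ to the closed subvariety $\sqx G{E^{(k)}}$ of ${\goth Y}$. Since, by Lemma~\ref{l2ds},(ii), $\tau _{k}$ is a projective birational $B$-equivariant morphism from $E^{(k)}$ onto ${\goth X}_{0,k}$, applying $G\times _{B}(-)$ (using again that $G\rightarrow G/B$ is Zariski-locally trivial and that $\Gamma $ is projective) shows that $\overline{\tau _{k}}$ is a projective birational morphism from $\sqx G{E^{(k)}}$ onto ${\goth X}_{k}=\sqx G{{\goth X}_{0,k}}$. Next I would compute the image of $\xiup $. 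By Lemma~\ref{lisc1},(i), $\gamma ({\goth X}_{k})={\cal C}^{(k)}$, so $\gamma _{\n}({\goth X}_{k})\subset \eta ^{-1}({\cal C}^{(k)})={\cal C}_{\n}^{(k)}$ because $\gamma =\eta \rond \gamma _{\n}$. Conversely $\iota _{\n}({\goth h}^{k})$ is contained in $\gamma _{\n}({\goth X}_{k})$ since ${\goth h}^{k}\subset {\goth X}_{0,k}$, and $\gamma _{\n}({\goth X}_{k})$ is a closed $G$-stable subset ($\gamma _{\n}$ being projective and $G$-equivariant and ${\goth X}_{k}$ closed and $G$-stable), so it contains $\overline{G.\iota _{\n}({\goth h}^{k})}$, which is ${\cal C}_{\n}^{(k)}$ by Proposition~\ref{pisc2}. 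Hence $\gamma _{\n}$ maps ${\goth X}_{k}$ onto ${\cal C}_{\n}^{(k)}$, and $\xiup =\gamma _{\n}\rond \overline{\tau _{k}}$ is a projective surjective morphism onto ${\cal C}_{\n}^{(k)}$.

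It remains to prove that $\xiup $ is birational, and this I expect to be the real point; the strategy is to produce a dense open subset of ${\cal C}_{\n}^{(k)}$ over which $\xiup $ restricts to an isomorphism. Recall that $\gamma _{\n}$ equals $\gamma _{{\cal X}}$ and is projective and birational by Corollary~\ref{cbo3},(i) and Proposition~\ref{p2bo4}, and that, by Corollary~\ref{c2bo4}, it induces an isomorphism of $\gamma _{\n}^{-1}(W_{k})$ onto $W_{k}$. I would let $\Omega $ be the intersection, inside ${\goth X}_{k}$, of $\gamma _{\n}^{-1}(W_{k})$ with the dense open subset of ${\goth X}_{k}$ over which $\overline{\tau _{k}}$ is an isomorphism. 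The key point is that $\Omega $ is dense in ${\goth X}_{k}$: the open set $\gamma _{\n}^{-1}(W_{k})\cap {\goth X}_{k}$ contains $i(\poi x1{,\ldots,}{k}{}{}{})$ for $(\poi x1{,\ldots,}{k}{}{}{})$ in ${\goth h}^{k}$ with $x_{1}$ regular, since then $P_{x}\cap {\goth g}_{\r}$ is nonempty, so it is nonempty; and ${\goth X}_{k}$ is irreducible. Over $\Omega $ both $\overline{\tau _{k}}$ and $\gamma _{\n}$ are isomorphisms onto their images, so $\xiup $ induces an isomorphism of the dense open subset $\overline{\tau _{k}}^{-1}(\Omega )$ of $\sqx G{E^{(k)}}$ onto a dense open subset of ${\cal C}_{\n}^{(k)}$; hence $\xiup $ is birational. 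Together with~(ii) this shows that $(\sqx G{E^{(k)}},\xiup )$ is a desingularization of ${\cal C}_{\n}^{(k)}$. I expect the main obstacle to be precisely the control of $\gamma _{\n}|_{{\goth X}_{k}}$: one has to be sure both that its image is all of ${\cal C}_{\n}^{(k)}$ and that its generic fibre is a single point rather than a $W({\cal R})$-orbit, and this rests on the explicit description ${\goth X}_{0,k}=\overline{B.{\goth h}^{k}}$ together with Corollary~\ref{c2bo4} and the normality of ${\cal B}_{\n}^{(k)}$ proved in Section~\ref{bo}.
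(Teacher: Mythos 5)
Your argument is correct and follows essentially the same route as the paper.  Parts (i) and (ii) are verbatim.  For (iii), the paper also first establishes that $\overline{\tau _{k}}$ is projective birational onto ${\goth X}_{k}$ via Lemma~\ref{l2ds},(ii), then identifies $\gamma _{\n}({\goth X}_{k})$ with ${\cal C}_{\n}^{(k)}$ using $\gamma ({\goth X}_{k})={\cal C}^{(k)}$ and Proposition~\ref{pisc2}.  The only cosmetic difference is how the birationality of $\gamma _{\n}\vert _{{\goth X}_{k}}$ is checked: the paper simply asserts that $\vert \gamma _{\n}^{-1}(z)\vert =1$ for $z$ in $G.\iota _{\n}({\goth h}_{\r}^{k})$ (a fact one verifies directly, since a regular semisimple $x_{1}$ forces the Borel to be determined), whereas you deduce the same thing slightly more formally from Corollary~\ref{c2bo4} by noting $G.\iota _{\n}({\goth h}_{\r}^{k})\subset W_{k}$; the underlying reasoning is the same, and you have correctly identified the one genuinely non-formal point in the proof, namely that the generic fibre is a single point rather than a $W({\cal R})$-orbit.
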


\begin{proof}
(i) According to Lemma~\ref{l2ds},(i), ${\cal E}_{\s}^{(k)}$ is a closed subvariety of 
$\Gamma \times {\goth b}^{k}$, invariant under the diagonal action of $B$. Hence 
$G\times {\cal E}_{\s}^{(k)}$ is a closed subvariety of 
$G\times \Gamma \times {\goth b}^{k}$, invariant under the action of $B$, whence the 
assertion.

(ii) Since ${\cal E}_{\s}^{(k)}$ is a $B$-equivariant vector bundle over $\Gamma $, 
$\sqx G{{\cal E}_{\s}^{(k)}}{}$ is a $G$-equivariant vector bundle over 
$\sqx G{\Gamma }{}$. Since $\sqx G{\Gamma }{}$ is a fiber bundle over the smooth variety 
$G/B$ with smooth fibers, $\sqx G{\Gamma }{}$ is a smooth variety. As a result, 
$\sqx G{{\cal E}_{\s}^{(k)}}{}$ is a smooth variety.  

(iii) According to Lemma~\ref{l2ds},(ii) and Lemma~\ref{l4int}, $\overline{\tau _{k}}$ is
a projective birational morphism from $\sqx G{{\cal E}_{\s}^{(k)}}{}$ to ${\goth X}_{k}$.
Since ${\goth X}_{0,k}$ is a $B$-invariant closed subvariety of ${\goth b}^{k}$, 
${\goth X}_{k}$ is closed in $\sqx G{{\goth b}^{k}}$. According to 
Lemma~\ref{lisc1},(i), $\gamma ({\goth X}_{k})={\cal C}^{(k)}$. Moreover,
$\gamma _{\x}({\goth X}_{k})$ is an irreducible closed subvariety of 
${\cal B}_{\x}^{(k)}$ since $\gamma _{\x}$ is a projective morphism by Lemma~\ref{l4int}.
Hence $\gamma _{\x}({\goth X}_{k})={\cal C}_{\x}^{(k)}$ by Proposition~\ref{pisc2}.
For all $z$ in $G.\iota _{k}({\goth h}_{\r}^{k})$, 
$\vert \gamma _{\x}^{-1}(z) \vert =1$. Hence the restriction of $\gamma _{\x}$ to 
${\goth X}_{k}$ is a birational morphism onto ${\cal C}_{\x}^{(k)}$ since 
$G.\iota _{k}({\goth h}_{\r}^{k})$ is dense in ${\cal C}_{\x}^{(k)}$. Moreover, this 
morphism is projective since $\gamma _{\x}$ is projective. As a result, $\xiup $ is a
projective birational morphism from $\sqx G{{\cal E}_{\s}^{(k)}}{}$ onto 
${\cal C}_{\x}^{(k)}$. 
\end{proof}
 
Theorem~\ref{t3int} results from Proposition~\ref{pisc2} and Proposition~\ref{pds},(ii)
and (iii) and the following corollary results from Lemma~\ref{l2ds},(ii), 
Proposition~\ref{pds},(ii) and (iii), and Lemma~\ref{lint}.

\begin{coro}\label{cds}
Let $\widetilde{{\goth X}_{0,k}}$ and $\widetilde{{\cal C}_{\x}^{(k)}}$ be the 
normalizations of ${\goth X}_{0,k}$ and ${\cal C}_{\x}^{(k)}$ respectively. Then 
$\k[\widetilde{{\goth X}_{0,k}}]$ and $\k[\widetilde{{\cal C}_{\x}^{(k)}}]$ are the 
spaces of global sections of $\an {{\cal E}_{\s}^{(k)}}{}$ and 
$\an {\sqx G{{\cal E}_{\s}^{(k)}}}{}$ respectively.
\end{coro}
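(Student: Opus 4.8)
The plan is to read Corollary~\ref{cds} directly off Lemma~\ref{lint}, once the hypotheses of that lemma are checked. First I would record that both ${\goth X}_{0,k}$ and ${\cal C}_{\n}^{(k)}$ are irreducible affine varieties. Indeed, ${\goth X}_{0,k}$ is by definition the closure of $B.{\goth h}^{k}$ in ${\goth b}^{k}$, hence an irreducible closed subvariety of the affine space ${\goth b}^{k}$. On the other side, ${\cal B}^{(k)}$ is a closed subvariety of ${\goth g}^{k}$ by Lemma~\ref{l4int}, so its normalization ${\cal B}_{\n}^{(k)}$ is affine; since ${\cal C}_{\n}^{(k)}=\eta^{-1}({\cal C}^{(k)})$ is a closed subvariety of ${\cal B}_{\n}^{(k)}$ it is affine too, and it is irreducible by Proposition~\ref{pisc2}.

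Next I would invoke the desingularization statements already established. By Lemma~\ref{l2ds},(ii), $E^{(k)}$ is a smooth variety and $\tau_{k}$ is a projective birational morphism from $E^{(k)}$ onto ${\goth X}_{0,k}$; thus $(E^{(k)},\tau_{k})$ is a desingularization of the irreducible affine variety ${\goth X}_{0,k}$, and in particular $E^{(k)}$ and ${\goth X}_{0,k}$ have the same field of rational functions. Likewise, by Proposition~\ref{pds},(ii) and (iii), $\sqx G{E^{(k)}}$ is smooth and $\xiup$ is a projective birational morphism from $\sqx G{E^{(k)}}$ onto ${\cal C}_{\n}^{(k)}$, so $(\sqx G{E^{(k)}},\xiup)$ is a desingularization of the irreducible affine variety ${\cal C}_{\n}^{(k)}$.

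Finally I would apply Lemma~\ref{lint}: for an irreducible affine algebraic variety, the space of global regular functions on any desingularization equals the integral closure of the coordinate ring in its fraction field, that is, the coordinate ring of the normalization. Applied with $X={\goth X}_{0,k}$ and its desingularization $E^{(k)}$, this gives ${\mathrm {H}}^{0}(E^{(k)},\an {E^{(k)}}{})=\k[\widetilde{{\goth X}_{0,k}}]$; applied with $X={\cal C}_{\n}^{(k)}$ and its desingularization $\sqx G{E^{(k)}}$, it gives ${\mathrm {H}}^{0}(\sqx G{E^{(k)}},\an {\sqx G{E^{(k)}}}{})=\k[\widetilde{{\cal C}_{\n}^{(k)}}]$, which is exactly the assertion. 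There is no genuine obstacle: the only points needing care are the affineness and irreducibility of ${\goth X}_{0,k}$ and ${\cal C}_{\n}^{(k)}$ (already available) and the remark that a birational morphism identifies the two function fields, so that "integral closure in the fraction field" is unambiguous and the global sections of the structural sheaf of the desingularization indeed compute the normalization.
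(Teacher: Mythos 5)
Your proof is correct and follows exactly the same route the paper intends: the paper notes the corollary "results from Lemma~\ref{l2ds},(ii), Proposition~\ref{pds},(iii) and Lemma~\ref{lint}," and you cite precisely those facts (adding the small but legitimate checks that ${\goth X}_{0,k}$ and ${\cal C}_{\n}^{(k)}$ are irreducible and affine, which the paper leaves implicit). Nothing further is needed.
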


\end{document}